\newtheorem{theorem}{Theorem}[section]
\newtheorem{lemma}[theorem]{Lemma}
\newtheorem{proposition}[theorem]{Proposition}
\newtheorem{definition}[theorem]{Definition}
\newtheorem{corollary}[theorem]{Corollary}
\newtheorem{remark}[theorem]{Remark}
\newtheorem{example}[theorem]{Example}
\def\om{\omega}
\def\Om{\Omega}
\def\p{\partial}
\def\ep{\epsilon}
\def\de{\delta}
\def\S{{\Sigma}}
\def\<{\langle}
\def\>{\rangle}
\def\na{\nabla}
\providecommand{\abs}[1]{\lvert#1\rvert}
\providecommand{\Abs}[1]{\left\lvert#1\right\rvert}
\providecommand{\norm}[1]{\lVert#1\rVert}
\newcommand{\mbH}{\mathbb{H}}
\newcommand{\mbN}{\mathbb{N}}
\newcommand{\mbP}{\mathbb{P}}
\newcommand{\mcC}{\mathcal{C}}
\newcommand{\mcF}{\mathcal{F}}
\newcommand{\mcH}{\mathcal{H}}
\newcommand{\mcR}{\mathcal{R}}
\newcommand{\mfn}{\mathbf{n}}
\newcommand{\mfH}{\mathbf{H}}
\newcommand{\mfR}{\mathbf{R}}
\newcommand{\mfS}{\mathbf{S}}
\newcommand{\mfV}{\mathbf{V}}
\newcommand{\mfRV}{\mathbf{R}\mathbf{V}}
\newcommand{\rd}{{\rm d}}
\newcommand{\wsc}{\overset{\ast}{\rightharpoonup}}
\newcommand{\bsmu}{\boldsymbol\mu}
\newcommand{\bstau}{\boldsymbol\tau}
\newcommand{\bseta}{\boldsymbol\eta}
\newcommand{\ra}{\rightarrow}
\newcommand{\eq}[1]{\begin{equation}\begin{alignedat}{2} #1 \end{alignedat}\end{equation}}
\numberwithin{equation} {section}
\begin{document}

	
\title[Varifolds with capillary boundary]{Varifolds with capillary boundary}
\date{\today}

\author[Wang]{Guofang Wang}
	\address[G.W]{Mathematisches Institut\\Universit\"at Freiburg\\Ernst-Zermelo-Str.1\\79104\\Freiburg\\ Germany}
	\email{guofang.wang@math.uni-freiburg.de}

\author[Zhang]{Xuwen Zhang}
	\address[X.Z]{Mathematisches Institut\\Universit\"at Freiburg\\Ernst-Zermelo-Str.1\\79104\\Freiburg\\ Germany}
\email{xuwen.zhang@math.uni-freiburg.de}

\begin{abstract}
In this paper we introduce and study a new class of varifolds in $\mfR^{n+1}$ of arbitrary dimensions and co-dimensions, which satisfy a Neumann-type boundary condition characterizing capillarity.
The key idea is to introduce a Radon measure on a subspace of the trivial Grassmannian bundle over the supporting hypersurface
as a generalized boundary with prescribed angle, which plays a  role as a measure-theoretic capillary boundary.
We show
several structural properties, monotonicity inequality, boundary rectifiability, classification of tangent cones, and integral compactness for such varifolds under reasonable conditions.
This Neumann-type boundary condition fits very well in the context of curvature varifold and Brakke flow, which we also discuss.

\

\noindent {\bf MSC 2020: 49Q15, 53C42}\\
{\bf Keywords:} varifold, capillarity, rectifiability, compactness, curvature varifold, Brakke flow\\
\end{abstract}

\maketitle
\tableofcontents
\section{Introduction}

In this paper, we are interested in a new varifold formulation of capillary submanifolds, which are submanifolds (with possibly prescribed mean curvature) in a given container that meet the supporting hypersurface at a prescribed contact angle.
In the co-dimension $1$ case these hypersurfaces arise as critical points of the {Gauss free energy}, and
its study from mathematical perspective 
goes back at least to Thomas Young \cite{Young1805} in 1805.
For  a historical overview we refer to a nice survey by Finn \cite{Finn99} and also his book \cite{Finn86}.
The mathematical framework is as follows: given a domain (open, connected set) $\Om\subset\mfR^{n+1}$ of class $C^1$ with boundary $S\coloneqq\p\Om$ as a container and functions $h\in C^1(\overline\Om),\beta\in C^1(S,(0,\pi))$, the \textit{Gauss free energy} for an open subset $E\subset\Om$ is defined as
\eq{\label{free_energy}
\mcF_\beta(E;\Om)
=\mcH^n(\p E\cap\Om)-\int_{\p\Om\cap\p E}\cos\beta(x)\rd\mcH^n+\int_Eh(x)\rd\mcH^{n+1}(x).
}
When 
everything is sufficiently regular, the well-known \textit{Young's law} states that a critical point of $\mcF_\beta$, with or without volume constraint, is a capillary hypersurface, 
which admits mean curvature  $h$ (plus a constant if volume-constrained) and meets $S$ at a contact angle prescribed by the function $\beta$.
We emphasize that the second term on the right in \eqref{free_energy}, which is usually called the {\it wetting energy}, determines the contact angle.
{%
For simplicity, we omit the third term in \eqref{free_energy} in this paper.

When $\cos \beta\equiv 0$ the free energy reduces to the relative perimeter, and the critical points give rise to \textit{free boundary hypersurfaces}, which meet the container orthogonally.
In the variational point of view, it is easy to check that when we consider $\mcF_{\frac\pi2}$ among all hypersurfaces with boundary moving { freely} on $S$, then we have Young's law. We call it the {\it free boundary phenomenon}.
Note that the relative perimeter
can be replaced by the area functional of submanifolds of any dimensions and any co-dimensions and hence co-dimension-$1$ is not essential to characterize free boundary phenomenon from the variational point of view.
However, when $\cos \beta \not \equiv 0$, to our knowledge a general capillary submanifold of higher co-dimension cannot be expressed as a critical point of certain functionals, because one can not find a suitable wetting energy.
}

Very recently, the existence, regularity, and geometric properties of capillary hypersurfaces have greatly interested differential geometers and geometric analysts.
Here we only mention some recent progress on this topic.
Using the Min-Max method, the existence of capillary minimal or CMC hypersurfaces in compact $3$-manifolds with boundary has been shown independently by De Masi-De Philippis \cite{DeMP21} and Li-Zhou-Zhu \cite{LZZ21}.
An Allard-type boundary regularity ($\ep$-regularity) for capillary hypersurfaces is studied independently by Wang \cite{Wang24} and De Masi-Edelen-Gasparetto-Li \cite{DEGL24}, under different conditions and using significantly different approaches.
These mentioned progress are mainly built on a varifold formulation of capillary hypersurfaces, which captures the geometric variational essence of such hypersurfaces and was first studied by Kagaya-Tonegawa \cite{KT17}. In this formulation, the area $\mcH^n (\p E\cap \Omega) $ and the wetting energy $\int_{\p \Omega \cap\p E}\cos\beta (x) d\mcH^n $ are characterized 
in 
terms of  two different varifolds $V$ and $\cos\beta W$, 
and the corresponding free energy is given by the mass of the varifold $V-\cos\beta W$.
By dealing with the varifold $V-\cos\beta W$, one is in fact studying a free boundary problem (see \cite{DeMP21,LZZ21}), which has a significant advantage in the study of the co-dimension one case.
However,  for the higher co-dimensional case, there is no suitable wetting energy as mentioned above.

In this paper, we introduce a new Neumann-type boundary condition for varifolds, which gives a non-variational characterization of capillarity (prescribed mean curvature in the interior and prescribed contact angle on the boundary) for hypersurfaces.
In particular, this generalizes easily to submanifolds of arbitrary dimensions and co-dimensions, in contrast to the geometric variational characterization.
We recall that after the celebrated work \cite{Allard72}, Allard studied the boundary behavior of varifolds of arbitrary dimensions and co-dimensions in \cite{Allard75} (see also Section \ref{Sec-2-Allard-vfld-bdry} below), which can be viewed as a Dirichlet-type boundary condition for varifolds.

We first define the following subspace of the trivial Grassmannian bundle over the supporting hypersurface, which provides all possible choices of tangent planes at boundary points of any regular capillary submanifolds.

\begin{definition}[Capillary bundle]\label{Defn:G_m,beta(S)}
\normalfont
For a possibly unbounded domain $\Om\subset\mfR^{n+1} (n\geq2)$ of class $C^1$ with boundary $S\coloneqq\p\Om$, denote by $\nu^S$ the inwards pointing unit normal field along $S$.
Let $\beta:S\to (0,\pi)$ be a $C^1$-function on $S$, which will be used to prescribe the contact angle at $x\in S$. 
For any $m\in\mbN$ with $m\leq n$ and any $x\in S$, we define $G_{m,\beta}(x)$ to be the collection of $m$-planes $P\in G(m,n+1)$ satisfying
\begin{enumerate}
    \item [($i$)]
\eq{\label{eq:P^m(nu^S)}
    \abs{P(\nu^S(x))}=\sin\beta(x),
}
 \item [($ii$)]$\left(P\cap T_xS\right)\perp P(\nu^S(x))$,
    \end{enumerate}
where $P(v)$ denotes the orthogonal projection of a vector $v$ onto the plane $P$.

By collecting all such $G_{m,\beta}(x)$ for $x\in S$, we obtain a subspace of $G_m(S)=S\times G(m,n+1)$, denoted by $G_{m,\beta}(S)$, endowed with the subspace topology, and  call it \textit{capillary bundle}.
\end{definition}

For any $(x,P)\in G_{m,\beta}(S)$ we define a unit vector field
\eq{\label{defn:n(x,P)}
\mfn(x,P)
\coloneqq\frac{P(\nu^S(x))}{\abs{P(\nu^S(x))}},
}
which is well-defined thanks to $\sin\beta>0$ and in fact $\mfn\in C^1(G_{m,\beta}(S),\mfR^{n+1})$.
It is clear by definition
\eq{\label{eq:n(x,Q),nu^S}
\left<\mfn(x,P),\nu^S(x)\right>
=\sin\beta(x).
}
Condition ($ii$) can be also written as
$P=(P\cap T_xS)\oplus\mfn(x,P)$.

Our Neumann boundary condition is
inspired by the definition of classical \textit{varifolds}.
Precisely, since the trivial Grassmannian bundle provides all possible choices of the tangent planes for any regular submanifolds, Radon measures on the trivial Grassmannian bundle are then naturally the measure-theoretic submanifolds, which gives us the nowadays well-known notion, \textit{varifolds}.
In the same spirit we define the measure-theoretic boundary of any capillary submanifolds to be Radon measures supported on the capillary bundle as follows.
Note however, to state it we unavoidably need to first introduce some necessary terminologies from Geometric Measure Theory, which we opt to present in Section \ref{Sec:2}.

\begin{definition}[Varifolds with capillary boundary]
\label{Defn:vfld-prescribed-bdry}
\normalfont
Let $\Om,\beta$ be given as in Definition \ref{Defn:G_m,beta(S)}.
Let $V$ be an $m$-varifold on $\overline\Om$ and  $\Gamma$  a Radon measure on $G_{m,\beta}(S)$.
We say that \textit{$V$ has a prescribed contact angle $\beta$ intersecting $S$ along $\Gamma$} if for all $\varphi\in\mathfrak{X}_t(\Om)$ with compact support, there exists a \textit{generalized mean curvature vector} $\mfH\in L^1_{loc}(\mu_V)$, with $\mfH(x)\in\mfR^{n+1}$ and $\mfH(x)\in T_xS$ for $\mu_V$-a.e. $x\in S$, such that
\eq{\label{defn:1st-vairation-intro}
\de V(\varphi)
=-\int_{\overline\Om}\left<\mfH(x),\varphi(x)\right>\rd\mu_V(x)-\int_{G_{m,\beta}(S)}\left<\mfn(x,P),\varphi(x)\right>\rd\Gamma(x,P),
}
where $G_{m,\beta}(S)$ and the corresponding $\mfn(x,P)$ are defined  in Definition \ref{Defn:G_m,beta(S)}.
We call such $V$ a \textit{varifold with capillary boundary (or capillary varifold)} and the corresponding $\Gamma$ the \textit{capillary boundary varifold with respect to $V$ and $\beta$}, or in short, the boundary varifold of $V$.
The class of all such $V$ is denoted by ${\bf V}^m_\beta(\Om)$.
\end{definition}

This idea is not only suitable for defining measure-theoretic capillary boundary in the context of varifolds, but also useful for curvature varifolds and Brakke flow, see below.
For the case $\beta\equiv\frac \pi 2$, \textit{curvature varifolds with orthogonal boundary} was recently introduced
by Kuwert and M\"uller (\cite{KM22}), which partly motivates our work.

To have a closer look at Definition \ref{Defn:vfld-prescribed-bdry},
we use standard disintegration to write $\Gamma=\sigma_\Gamma\otimes\Gamma^x$, where $\sigma_\Gamma$ is called the \textit{generalized boundary measure} of $V$ and $\Gamma^x$ is a Radon probability measure on $G_{m,\beta}(x)$ for $\sigma_\Gamma$-a.e. $x$.
At the points where $\Gamma^x$ is well-defined
we define a \textit{generalized inwards pointing co-normal} to $V$  artificially to be the average
\eq{\label{defn:n_V-intro}
\mfn_V(x)
\coloneqq\int_{G_{m,\beta}(x)}\mfn(x,P)\rd\Gamma^x(P).
}
The vector field $\mfn_V$ is approximately continuous with respect to $\sigma_\Gamma$ at $\sigma_\Gamma$-a.e. point, and $\norm{\mfn_V}_{L^\infty(\sigma_\Gamma)}\leq1$ but in general not unit,
which will be discussed  in Section \ref{Sec:3}. Using \eqref{defn:n_V-intro}, one can express \eqref{defn:1st-vairation-intro} as 
\eq{\label{defn:1st-vairation-intro2}
\de V(\varphi)
=-\int_{\overline\Om}\left<\mfH(x),\varphi(x)\right>\rd\mu_V(x)-\int_{S}\left<\mfn_V(x),\varphi(x)\right>\rd\sigma_\Gamma(x).
}

For $\beta\equiv\frac\pi2$, it is easy to check that the last integral is nothing but zero, since in this case $\mfn_V$ defined by \eqref{defn:n_V-intro} is just $\nu^S$, thanks to \eqref{eq:n(x,Q),nu^S}.
Therefore our notion agrees with the classical notion of \textit{free boundary varifolds}.
In this case, $\sigma_\Gamma$ is not a true boundary measure, since it plays the same role as $0$ measure.
For $\beta\neq\frac\pi2$, 
it is easy to check that any smooth submanifold in $\overline\Om$ that intersects $S$ at angle $\beta$ satisfies Definition \ref{Defn:vfld-prescribed-bdry} (see Example \ref{exam:submflds}). In this case the last integral is non-vanishing.
However, note that in the non-smooth case the last integral could again be vanishing, and the capillary varifold in the sense of Definition \ref{Defn:vfld-prescribed-bdry} then reduces to a classical free boundary varifold.
In this case $\sigma_\Gamma$ is again not a true boundary measure.
A nice example indicating the  fact is provided in Example \ref{exam:planes}, where two half-planes with the same capillary angle meet along a common boundary from opposite directions so that the tangential parts of the two planes that contribute to the capillary structure cancel with each other.
This is therefore called \textit{degenerate capillary phenomenon}, and we point out that this also occurs in the geometric variational formulation, because the geometric variational formulation of capillary varifold, which we record in Definition \ref{Defn:vfld-co-dim-1} below, is also satisfied by the example provided above (with $V,U$ therein chosen as the union of the two planes and the empty set/supporting hypersurface, respectively).

\subsection{Main results}

\subsubsection{Boundary rectifiability}
Our first main result concerns boundary rectifiability of the ``at most $(m-1)$-dimensional'' part of the generalized boundary measure $\sigma_\Gamma$, defined as the restriction of $\sigma_\Gamma$ to the points with strictly positive lower $(m-1)$-density,
that is,
\eq{\label{defn:E_ast}
\sigma_{\ast\Gamma}
=\sigma_\Gamma\llcorner E_\ast,\text{ where }
E_\ast\coloneqq\{x:0<\Theta^{m-1}_\ast(\sigma_\Gamma,x)\}.
}

We will show that the further restriction of $\sigma_{\ast\Gamma}$ to a specific subset of $S$
is $(m-1)$-rectifiable.
Before that we recall, the boundary rectifiability has been established for free boundary varifolds by De Masi \cite{DeMasi21} in light of the blow-up analysis performed in \cite{DePDeRG18}; and for the so-called $(\beta,S)$-capillary varifolds, which are in fact free boundary varifolds that contain capillary information, by De Masi-Edelen-Gasparetto-Li in \cite{DEGL24}.

To illustrate the main differences between these results and 
our aim, we need Proposition \ref{Prop:1st-variation}, in which we prove that any $V\in{\bf V}^m_\beta(\Om)$ is of bounded first variation and hence we could write the first variation formula of $V$ for not only tangential variation with respect to the supporting hypersurface but also for normal variation.
By virtue of this we obtain in \eqref{eq:1st-variation-V_beta} the \textit{tangential part} of the boundary measure which is exactly $\sigma_\Gamma$, and also the \textit{normal part} of the boundary measure, which we denote by $\sigma_V^\perp$.
In both \cite{DeMasi21} and \cite{DEGL24}, the varifolds under consideration are shown to have bounded first variation,
and due to their free boundary essence, the resulting boundary measure is just like our \textit{normal part} $\sigma_V^\perp$.
In view of this the difference is thus clear, 
namely, our goal is to show rectifiability of \textit{tangential part} of the boundary measure.

In fact, the specific subset of $S$ mentioned above is defined as follows.

\begin{definition}[Capillary boundary point]\label{Defn:capillary-bdry-point}
\normalfont
Let $V\in{\bf V}^m_\beta(\Om)$ with boundary varifold $\Gamma$.
Any point
$x_0\in{\rm spt}\sigma_\Gamma\cap\{x\in S:\cos\beta(x)\neq0\}$ is called a \textit{capillary boundary point} of $V$ if there exist a unit vector $\tau^V_{x_0}\in T_{x_0}S$, $c_{x_0}>0$ and $\ep_{x_0},\rho_{x_0}\in(0,1]$, such that
\begin{enumerate}
    \item [(C1)] (local non-degeneracy) For $\sigma_\Gamma$-a.e. $x\in B_{\rho_{x_0}}(x_0)$ there holds
\eq{\label{defn:genera-capi-bdry-point-1}
\left<\cos\beta(x)\mfn_W(x),\tau^V_{x_0}\right>\geq\ep_{x_0},
}
where $\mfn_W(x)\coloneqq \frac{T_{x}S(\mfn_V(x))}{\cos\beta(x)}$ is the generalized inwards pointing co-normal of $V$ with respect to $S$.
    \item [(C2)] For $\sigma_\Gamma$-a.e. $x\in B_{\rho_{x_0}}(x_0)$ there holds
\eq{\label{defn:genera-capi-bdry-point-2}
\Abs{\left<{x-x_0},\cos\beta(x)\mfn_W(x)\right>}
\leq c_{x_0}\abs{\cos\beta(x_0)}\abs{x-x_0}^2.
}
\end{enumerate}
We denote by $\mathscr{P}_{\rm cb}(\Gamma)$ the collection of all capillary boundary points, and a shorthand $\mathscr{P}_{\rm cb}$ will be used when there is no ambiguity.
\end{definition}
Our boundary rectifiability result then states as follows.
\begin{theorem}[Boundary rectifiability]\label{Thm:bdry-rectifiability}
Let $\Om\subset\mfR^{n+1}$ be a bounded domain of class $C^2$ and $\beta\in C^1(S,(0,\pi))$.
Let $V\in{\bf V}^m_\beta(\Om)$ with boundary varifold $\Gamma$, such that $\mfH\in L^p(\mu_V)$ for some $p\in(m,\infty)$, and $\Theta^m(\mu_V,x)\geq a>0$ for $\mu_V$-a.e. $x$.
Then for any $\sigma_\Gamma$-measurable set $A\subseteq \mathscr{P}_{\rm cb}(\Gamma)$, $\sigma_{\ast\Gamma}\llcorner A$ is $(m-1)$-rectifiable.
\end{theorem}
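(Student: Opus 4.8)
\medskip
\noindent\emph{Sketch of the proof.}
The plan is to deduce the theorem from a Marstrand--Mattila type rectifiability criterion: a Radon measure in $\mfR^{n+1}$ having positive lower and finite upper $(m-1)$-density at a.e.\ point, all of whose tangent measures are of the form $c\,\mcH^{m-1}\llcorner T$ for some $(m-1)$-plane $T$ and $c>0$, is $(m-1)$-rectifiable; this is the measure-theoretic criterion underlying \cite{DePDeRG18}. Since rectifiability is local and countably additive and since the data $\tau^V_{x_0},\ep_{x_0},c_{x_0},\rho_{x_0}$ attached to a capillary boundary point in Definition \ref{Defn:capillary-bdry-point} can be selected $\sigma_\Gamma$-measurably, it suffices to prove the conclusion after replacing $A$ by one of countably many $\sigma_\Gamma$-measurable pieces on each of which $\abs{\tau^V_{x_0}-\tau}\le\de$ for a fixed unit vector $\tau$, $\ep_{x_0}\ge\ep>0$, $c_{x_0}\le c$ and $\rho_{x_0}\ge\rho>0$; on such a piece all estimates below become uniform. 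The lower bound $0<\Theta^{m-1}_\ast(\sigma_\Gamma,x)$ is built into the definition \eqref{defn:E_ast} of $\sigma_{\ast\Gamma}$, so for $\sigma_{\ast\Gamma}\llcorner A$-a.e.\ $x$ only two things remain: the finite upper bound $\Theta^{\ast(m-1)}(\sigma_\Gamma,x)<\infty$, and the flatness of every tangent measure of $\sigma_\Gamma$ at $x$.

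The analytic core is an almost-monotonicity formula at capillary boundary points. By Proposition \ref{Prop:1st-variation} the varifold $V$ has bounded first variation and its full first variation formula \eqref{eq:1st-variation-V_beta}, valid for arbitrary (not merely tangential) vector fields, carries a tangential boundary term governed by $\sigma_\Gamma$ and $\mfn_V$ and a normal boundary term governed by $\sigma_V^\perp$ and $\nu^S$. For $x_0\in A$ we insert the radial field $\varphi(x)=\zeta(\abs{x-x_0})(x-x_0)$ and the frozen field $\psi(x)=\zeta(\abs{x-x_0})\,\tau$, with $\zeta$ a cutoff. The bulk term is treated as usual via H\"older's inequality and $\mfH\in L^p(\mu_V)$ with $p>m$, yielding the integrable error exponent $r^{1-m/p}$; the $C^2$-regularity of $S$ gives $\abs{\nu^S(x)-\nu^S(x_0)}+\abs{\beta(x)-\beta(x_0)}=O(\abs{x-x_0})$, which controls the deviation of $\mfn_V,\mfn_W$ along $\tau,\nu^S(x_0)$ and forces the normal part $\sigma_V^\perp$ to contribute only quadratically small terms against $\varphi$. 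The decisive inputs are the two conditions of Definition \ref{Defn:capillary-bdry-point}: (C1) supplies the sign $\langle\cos\beta(x)\mfn_W(x),\tau\rangle\ge\ep$ making the boundary term of $\psi$ coercive in $\sigma_\Gamma$, while (C2) bounds the boundary term of $\varphi$ against $\sigma_\Gamma$ by $c\,\abs{\cos\beta(x_0)}\int_{B_r(x_0)}\abs{x-x_0}\,\rd\sigma_\Gamma$, i.e.\ by a quantity of order $r$ times the $(m-1)$-ratio. Assembling these estimates produces constants $\La,\lambda>0$ depending only on the fixed data of the piece such that
\eq{
r\longmapsto e^{\La r^{1-m/p}}\left(\frac{\mu_V(B_r(x_0))}{r^m}+\lambda\,\frac{\sigma_\Gamma(B_r(x_0))}{r^{m-1}}\right)\quad\text{is nondecreasing on }(0,\rho).
}
In particular $\Theta^{\ast(m-1)}(\sigma_\Gamma,x_0)<\infty$ with a bound uniform on $A$, which disposes of the first task, and the rescaled pairs $(V_{x_0,r},\Gamma_{x_0,r})$ have locally uniformly bounded mass.

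For the flatness of tangent measures, fix $x_0\in A$ at which $\mfn_V$ and $\mfn_W$ are approximately continuous with respect to $\sigma_\Gamma$, the integrands entering the monotonicity formula have Lebesgue points, and the conclusions of the previous paragraph hold; $\sigma_{\ast\Gamma}\llcorner A$-a.e.\ point qualifies. Given $r_i\downarrow0$, the uniform mass bounds allow extraction of a subsequence along which $V_{x_0,r_i}\rightharpoonup C$ and $\Gamma_{x_0,r_i}\rightharpoonup\Gamma_\infty$; as $S$ and $\Om$ rescale to $T_{x_0}S$ and to the half-space $H_{x_0}=\{y:\langle y,\nu^S(x_0)\rangle\ge0\}$ while $\beta$ freezes to the constant $\beta(x_0)$ with $\cos\beta(x_0)\neq0$, passing to the limit in Definition \ref{Defn:vfld-prescribed-bdry} yields $C\in{\bf V}^m_{\beta(x_0)}(H_{x_0})$ with boundary varifold $\Gamma_\infty$, the hypothesis $\Theta^m(\mu_V,\cdot)\ge a>0$ prevents $C$ from being trivial, and the chosen tangent measure of $\sigma_\Gamma$ at $x_0$ equals $\sigma_{\Gamma_\infty}$. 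Letting $r\downarrow0$ in the monotonicity formula forces $C$, hence $\sigma_{\Gamma_\infty}$, to be a cone with vertex $0$; approximate continuity upgrades (C1) to $\langle\cos\beta(x_0)\,\mfn_W(y),\tau\rangle\ge\ep$ and (C2) to $\langle y,\cos\beta(x_0)\,\mfn_W(y)\rangle=0$, both for $\sigma_{\Gamma_\infty}$-a.e.\ $y$, where $\mfn_W$ now denotes the generalized co-normal of $\Gamma_\infty$. A further blow-up of $\sigma_{\Gamma_\infty}$ at any $y_0\in{\rm spt}\,\sigma_{\Gamma_\infty}\setminus\{0\}$ splits off the line $\mfR y_0$ since $\sigma_{\Gamma_\infty}$ is a cone; iterating this and running a Federer-type dimension reduction---which cannot stall because the quantitative non-degeneracy coming from (C1), and the positive lower and finite upper $(m-1)$-density, are inherited by every blow-up---one concludes that $\sigma_{\Gamma_\infty}$ is supported on an $(m-1)$-plane and is a constant multiple of $\mcH^{m-1}$ there. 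Hence every tangent measure of $\sigma_\Gamma$ at $x_0$ is flat, the criterion of the first paragraph applies, and $\sigma_{\ast\Gamma}\llcorner A$ is $(m-1)$-rectifiable.

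I expect the derivation of the almost-monotonicity formula to be the main obstacle. Unlike the free boundary (normal) boundary measure, the tangential term $\int\langle\mfn_V,\varphi\rangle\,\rd\sigma_\Gamma$ has no definite sign for a general $\varphi$, and it is exactly the interplay of (C1), which makes it coercive along the frozen direction $\tau$, with (C2), which makes its radial part quadratically small, that makes the estimate close; arranging the $C^2$-errors from $S$, $\beta$, $\mfn$, $\mfn_W$ and the normal boundary part $\sigma_V^\perp$ to be genuinely of lower order relative to \emph{both} the bulk and the boundary leading terms is the delicate bookkeeping. A secondary difficulty is to guarantee that the quantitative non-degeneracy (C1) survives the blow-up---this relies on approximate continuity of $\mfn_W$, the bound $\norm{\mfn_V}_{L^\infty(\sigma_\Gamma)}\le1$, and the convergence $\Gamma_{x_0,r_i}\rightharpoonup\Gamma_\infty$---and that the dimension reduction for the limiting cone terminates exactly at dimension $m-1$ rather than collapsing earlier.
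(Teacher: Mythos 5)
Your overall architecture coincides with the paper's: almost-monotonicity at capillary boundary points via the radial field and the frozen field $\zeta\tau$, using (C1) for coercivity and (C2) to make the radial tangential term quadratically small; blow-up to a cone $\mcC$ with boundary measure $\Gamma_\infty$ supported (by the limit of (C2)) in the hyperplane of $T_{x_0}S$ orthogonal to $\mfn_W(x_0)$; and finally the Marstrand--Mattila criterion. The upper density bound $\Theta^{\ast(m-1)}(\sigma_\Gamma,x_0)<\infty$ is obtained in the paper not from a combined monotone quantity but from the local first-variation estimate \eqref{esti:local-tangent-improved} together with the $\mu_V$-monotonicity; your variant is a harmless repackaging.

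There is, however, a genuine gap at the decisive step: the classification of $\sigma_{\Gamma_\infty}$ as $\theta_0\mcH^{m-1}\llcorner L^{m-1}$. You invoke a ``Federer-type dimension reduction'' on the cone $\sigma_{\Gamma_\infty}$, but blowing up $\sigma_{\Gamma_\infty}$ at a point $y_0\in{\rm spt}\,\sigma_{\Gamma_\infty}\setminus\{0\}$ only yields information about \emph{tangent measures of the tangent measure}, not about $\sigma_{\Gamma_\infty}$ itself; the Marstrand--Mattila criterion requires every tangent measure of $\sigma_\Gamma$ at $x_0$ to be flat, and flatness of iterated blow-ups does not imply flatness of $\sigma_{\Gamma_\infty}$. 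What is actually needed, and what the paper proves (Lemmas \ref{Lem:linear-subspace}--\ref{Lem:sigma_Gamma_infty-characterization}), is a rigidity statement for the cone $\mcC$: one has $\Theta^m(\mu_\mcC,y)\le\Theta^m(\mu_\mcC,0)$ for all $y\in L_\infty$, with equality forcing $\mcC$ (and then $\sigma_{\Gamma_\infty}$) to be translation-invariant along $y$; and one must show that equality holds at every $y\in{\rm spt}\,\sigma_{\Gamma_\infty}\cap L_\infty$. This last point is obtained by transferring information from the approximating sequence $V_j$, via the approximate continuity of $x\mapsto\Theta^m(\mu_V,x)$ with respect to $\sigma_\Gamma$ combined with an Egoroff uniformization of the almost-monotonicity error $g_x(\rho)$ (the construction of the set $F$ in Lemma \ref{Lem:sigma_Gamma(G_pb)}, following De Philippis--De Rosa--Ghiraldin and De Masi). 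Your proposal contains no substitute for this density-transfer argument, and without it the set $\{y:\Theta^m(\mu_\mcC,y)=\Theta^m(\mu_\mcC,0)\}$ need not exhaust ${\rm spt}\,\sigma_{\Gamma_\infty}$, so neither the linearity of the support nor the identification of its dimension as exactly $m-1$ (which in the paper follows from scaling plus translation invariance plus $\Theta^{m-1}_\ast(\sigma_\Gamma,x_0)>0$) is established.
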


Let us justify the necessity of $\mathscr{P}_{\rm cb}(\Gamma)$ in Theorem \ref{Thm:bdry-rectifiability}:
First of all, as discussed after Definition \ref{Defn:vfld-prescribed-bdry}, regardless of $\beta=\frac\pi2$ or not, the capillary varifold $V$ might be in fact a free boundary varifold, in which case it makes no sense to talk about $\sigma_\Gamma$, as it plays the same role as $0$ measure in the variational structure \eqref{defn:1st-vairation-intro2}.
For this reason,
we need to exclude the set $\{x\in S:\beta(x)=\frac\pi2\}$, and also assume
Condition (C1), which infers that at the considered points $\sigma_\Gamma$ is a true boundary measure.
Second, our strategy of proof is to carry out blow-up analysis and then apply the Marstrand-Mattila rectifiability criterion as in \cite{DePDeRG18,DeMasi21}.
To be able to do so we assume Condition (C2), which requires that locally $\mfn_W$ (and hence $\mfn_V$) stays perpendicular to the generalized boundary ${\rm spt}\sigma_\Gamma$, just like \textit{co-normals} of regular submanifolds.
(C2) is essentially used to prove a monotonicity identity at boundary points (Proposition \ref{Prop:Mono-bdry}), and to show \eqref{eq:x-n_W(x_0)}, which is crucial for showing that the $(m-1)$-blow-up measure of $\sigma_\Gamma$ takes the form $\theta\mcH^{m-1}\llcorner L^{m-1}$ for some $\theta\in\mfR_+$ and $(m-1)$-dimensional linear space $L^{m-1}$, see Lemma \ref{Lem:sigma_Gamma_infty-characterization}, by virtue of which we could use the Marstrand-Mattila rectifiability criterion as said.
More discussions on Definition \ref{Defn:capillary-bdry-point} can be found in Section \ref{Sec:3} below.

Note that Theorem \ref{Thm:bdry-rectifiability} only partially describes the boundary varifold $\Gamma$.
For the disintegration $\Gamma=\sigma_\Gamma\otimes\Gamma^x$ we have not much information about $\Gamma^x$. The only thing we know, see Corollary \ref{Cor-n_V-perp-app.tangentspace}, is that the generalized inwards pointing co-normal to $V$ is perpendicular to the approximate tangent space of the $(m-1)$-rectifiable measure $\sigma_{\ast\Gamma}\llcorner A$.
This inspires us to consider the following subclass of varifolds of ${\bf V}^m_\beta(\Om)$.
\begin{definition}[Varifolds with rectifiable capillary boundary]\label{Defn:rectifiable-bdry}
\normalfont
Under the assumptions in Definition \ref{Defn:vfld-prescribed-bdry}, if the disintegration of $\Gamma$ takes the form:
\eq{
\Gamma
=\sigma_\Gamma\otimes\Gamma^x
=\theta\mcH^{m-1}\llcorner M\otimes\de_{P_\Gamma(x)},
}
where $\sigma_\Gamma=\theta\mcH^{m-1}\llcorner M$ is an $(m-1)$-rectifiable measure, $P_\Gamma$ is a map
with $P_\Gamma(x)\in G_{m,\beta}(x)$ for $\sigma_\Gamma$-a.e. $x$, such that
\eq{
P_\Gamma(x)\cap T_xS=T_xM,
}
then we call $\Gamma$ \textit{rectifiable boundary varifold} with respect to $V,\beta$.
We denote the class of all $V\in{\bf V}^m_\beta(\Om)\cap{\bf RV}^m(\overline\Om)$
with rectifiable boundary varifold $\Gamma$ by ${\bf RV}^m_\beta(\Om)$.

Of particular interest is the case when $\theta\equiv1$, and in this case we call $\Gamma$ \textit{multiplicity one rectifiable boundary varifold} with respect to $V,\beta$.
\end{definition}

\subsection{Classification of tangent cones}

As mentioned above, we study the blow-up process at capillary boundary points to prove Theorem \ref{Thm:bdry-rectifiability}.
A by-product of independent interest is the following classification of tangent cones at the capillary boundary points.
\begin{theorem}[Classification of tangent cones]\label{Prop:Classify-VarTan}
Let $\Om\subset\mfR^{n+1}$ be a bounded domain of class $C^2$ and $\beta\in C^1(S,(0,\pi))$.
Let
$V\in{\bf V}^m_\beta(\Om)\cap{\bf IV}^m(\overline\Om)$ with boundary varifold $\Gamma$, such that $\mfH\in L^p(\mu_V)$ for some $p\in(m,\infty)$.
Then for any $\sigma_\Gamma$-measurable set $A\subseteq \mathscr{P}_{\rm cb}(\Gamma)$,
there exists a set $F\subset S$ with
\eq{
\sigma_\Gamma(A\setminus F)=0,
}
such that
for every $x_0\in A\cap F\subset \mathscr{P}_{\rm cb}\cap F$ and for any $\mcC\in{\rm VarTan}(V,x_0)$, if
\eq{
\Theta^{m-1}_\ast(\sigma_\Gamma,x_0)>0,
}
and for some $0<\ep<\frac12$
\eq{
\Theta^m(\mu_V,x_0)\leq\frac12+\ep,
}
then, $\mcC$ is the induced varifold of some half-$m$-plane that lies in $T_{x_0}\overline\Om$.

If in addition $V\in{\bf RV}^m_\beta(\overline\Om)$ with multiplicity one rectifiable boundary varifold $\Gamma$ in the sense of Definition \ref{Defn:rectifiable-bdry} then $\mcC$ is the induced varifold of
$P_\Gamma(x_0)\cap T_{x_0}\overline\Om$.

\end{theorem}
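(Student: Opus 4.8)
The plan is to pass to a tangent cone at a capillary boundary point and classify it by combining the boundary monotonicity identity with the density bound. After discarding a $\sigma_\Gamma$-null subset of $A$ to obtain $F$, fix $x_0\in A\cap F$ and $\mcC\in{\rm VarTan}(V,x_0)$, realized as a varifold limit of the rescalings of $V$ around $x_0$. The boundary monotonicity of Proposition~\ref{Prop:Mono-bdry} forces $\mcC$ to be a cone; since $V\in{\bf IV}^m(\overline\Om)$ and $\mfH\in L^p(\mu_V)$ with $p>m$, Allard's integral compactness theorem gives $\mcC\in{\bf IV}^m(\overline H)$ with $H:=T_{x_0}\overline\Om$, $\p H=T_{x_0}S$, while the mean curvature term disappears in the limit because $p>m$. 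The boundary term rescales to a blow-up $\Gamma_\infty$ of $\Gamma$ whose generalized boundary measure, by Lemma~\ref{Lem:sigma_Gamma_infty-characterization} together with $\Theta^{m-1}_\ast(\sigma_\Gamma,x_0)>0$, has the form $\sigma_{\Gamma_\infty}=\theta\,\mcH^{m-1}\llcorner L$ for a positive constant $\theta=\Theta^{m-1}_\ast(\sigma_\Gamma,x_0)$ and an $(m-1)$-dimensional linear plane $L\subset T_{x_0}S$; Condition~(C2), in the form \eqref{eq:x-n_W(x_0)}, passes to the limit to yield $\langle x,\mfn_{\mcC}(x)\rangle=0$ for $x\in L$, hence $\mfn_{\mcC}\perp L$. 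Consequently $\mcC$ is stationary off $L$, with support in $\overline H$ and full first-variation measure concentrated on $L$; in particular the interior monotonicity formula and Allard's density lower bound give $\Theta^m(\mu_{\mcC},z)\geq1$ at every $z\in{\rm spt}\,\mu_{\mcC}\setminus L$.

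Next I record the consequences of the density hypothesis: $\Theta^m(\mu_{\mcC},0)=\Theta^m(\mu_V,x_0)\leq\tfrac12+\ep<1$. Because $\mfn_{\mcC}\perp L$, the monotonicity identity at a point $z\in L$ carries no boundary correction, so $r\mapsto r^{-m}\mu_{\mcC}(B_r(z))$ is nondecreasing; its limit as $r\to0^+$ equals $\om_m\Theta^m(\mu_{\mcC},z)$ and, $\mcC$ being a cone, its limit as $r\to\infty$ equals $\om_m\Theta^m(\mu_{\mcC},0)$, whence $\Theta^m(\mu_{\mcC},z)\leq\Theta^m(\mu_{\mcC},0)<1$ for all $z\in L$. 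The theorem then reduces to the following rigidity statement: an integral $m$-cone with support in a half-space $\overline H$, stationary off a linear $(m-1)$-plane $L\subset\p H$, with nonzero capillary boundary concentrated on $L$, and with vertex density strictly less than $1$, is the multiplicity-one varifold induced by a half-$m$-plane $P^+$ with $\p P^+=L$. Granting this, \eqref{eq:n(x,Q),nu^S} forces $P^+$ to meet $\p H$ at the angle $\beta(x_0)$, and since $P^+\subset T_{x_0}\overline\Om$ the first assertion follows.

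I would prove the rigidity statement by induction on $m$. The base case $m=1$ is immediate: a $1$-cone stationary off the vertex is a union of rays with integer multiplicities, and vertex density $<1$ forces a single multiplicity-one ray. For the inductive step, a tangent cone of $\mcC$ at a point $z\in L\setminus\{0\}$ splits off the line $\mbR z\subset L$ and produces an $(m-1)$-cone $\mcC''$ of the same type, one dimension lower, with vertex density $\Theta^m(\mu_{\mcC},z)<1$; by the inductive hypothesis $\mcC''$ is a multiplicity-one half-$(m-1)$-plane, so the tangent cone of $\mcC$ at $z$ is the single multiplicity-one half-$m$-plane $\mbR z\times\mcC''$. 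Since $\mcC$ is a cone, homogeneity along the ray through each such $z$ upgrades this to the statement that ${\rm spt}\,\mu_{\mcC}$ coincides, with multiplicity one, with a fixed flat half-$m$-plane $P^+$, $\p P^+=L$, in a neighborhood of $L\setminus\{0\}$; any further piece of ${\rm spt}\,\mu_{\mcC}$ would be either a second half-plane along $L$—excluded because the edge tangent cone is already a multiplicity-one half-plane—or a boundaryless integral subcone, contributing at least $1$ to $\Theta^m(\mu_{\mcC},0)$ and violating $\Theta^m(\mu_{\mcC},0)<1$. Hence $\mcC$ is the multiplicity-one varifold induced by $P^+$. The main obstacle is exactly this rigidity: excluding conical vertex singularities when the density lies in $[\tfrac12,\tfrac12+\ep)$ rather than being exactly $\tfrac12$. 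The delicate inputs are that the boundary monotonicity of Proposition~\ref{Prop:Mono-bdry} is genuinely correction-free at points of $L$ (which rests on $\mfn_{\mcC}\perp L$, hence on (C2)) and that the split-off cone $\mcC''$ inherits all the hypotheses of the rigidity statement, in particular positivity of its boundary and the half-space constraint; the passage from ``every edge tangent cone is a half-plane'' to ``$\mcC$ is globally a half-plane'' uses only the cone structure and the density bound and is comparatively soft.

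Finally, for the refinement, if $V\in{\bf RV}^m_\beta(\overline\Om)$ with multiplicity-one rectifiable boundary varifold, then $\Gamma=\sigma_\Gamma\otimes\de_{P_\Gamma(x)}$, so $\mfn_V(x)=\mfn(x,P_\Gamma(x))$, and passing to the blow-up at $x_0$ gives $\mfn_{\mcC}\equiv\mfn(x_0,P_\Gamma(x_0))$, a genuine unit vector. Since the defining properties of ${\bf RV}^m_\beta$ give $P_\Gamma(x_0)\cap T_{x_0}S=T_{x_0}M=L$ and $\abs{P_\Gamma(x_0)(\nu^S(x_0))}=\sin\beta(x_0)$, the half-$m$-plane $P^+$ obtained above—having edge $L$ and co-normal $\mfn(x_0,P_\Gamma(x_0))$—is exactly $P_\Gamma(x_0)\cap T_{x_0}\overline\Om$, which is the last assertion.
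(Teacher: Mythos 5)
Your proof correctly identifies the key ingredients — blow-up, the identification $\sigma_{\Gamma_\infty}=\theta\,\mcH^{m-1}\llcorner L$ via Lemma~\ref{Lem:sigma_Gamma_infty-characterization}, the cone property, integrality from $V\in{\bf IV}^m(\overline\Om)$, and the fact that $\Theta^m(\mu_\mcC,0)\le\tfrac12+\ep<1$ — and your treatment of the refinement (using \eqref{eq:limit-gamma^x_infty} to get $\Gamma_\infty^x=\de_{P_\Gamma(x_0)}$, hence a constant $\mfn_\mcC$) matches the paper. However, your core classification argument takes a genuinely different, and considerably heavier, route than the paper's, and along the way it misses the shortcut the paper builds. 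The paper invokes Lemma~\ref{Lem:linear-subspace}, which already shows that $\mcC$ is invariant under translation along $D_\mcC$, and Lemma~\ref{Lem:sigma_Gamma_infty-characterization}, which shows $D_\mcC$ is an $(m-1)$-plane; together these give $\mcC$ at least $(m-1)$ dimensions of translational symmetry in one step, so $\mcC$ factors as $D_\mcC\times\mcC'$ with $\mcC'$ a one-dimensional integral cone, and the vertex density bound $<1$ then immediately forces $\mcC'$ to be a single multiplicity-one ray. You instead try to prove a stronger abstract rigidity statement by induction on $m$, peeling off one direction at a time through edge blow-ups, and in doing so you essentially re-derive (with a lot more verification burden) the translation invariance that is already available.

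Two points in your exposition need care. First, your claim that $\mcC$ has ``full first-variation measure concentrated on $L$'' is not a priori known: the tangential boundary $\sigma_{\Gamma_\infty}$ is concentrated on $L$ by Lemma~\ref{Lem:sigma_Gamma_infty-characterization}, but the normal part $\sigma_\mcC^\perp$ could, in principle, charge $\p H\setminus L$; the paper deduces ${\rm spt}\,\sigma_\mcC^\perp={\rm spt}\,\sigma_{\Gamma_\infty}$ only a posteriori from the classification. Second, your exclusion of ``further pieces'' asserts they would contribute at least $1$ to the vertex density, which is not what actually rules them out; a free-boundary or capillary half-plane through the origin contributes only $\tfrac12$. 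The correct count is that the capillary half-plane $P^+$ already contributes $\tfrac12$ and any additional nontrivial integral conical piece contributes at least $\tfrac12$ more (at least $1$ if interior, at least $\tfrac12$ if it has boundary on $\p H$), so the total would be $\ge1$, contradicting $\Theta^m(\mu_\mcC,0)\le\tfrac12+\ep<1$. Your inductive step also requires verifying that the edge cross-section $\mcC''$ inherits the hypotheses (integrality, vertex density $<1$, nonvanishing tangential boundary, half-space constraint); while plausible, you acknowledge this is ``delicate'' and it is exactly the overhead the paper avoids by citing Lemma~\ref{Lem:linear-subspace} directly.
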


Recall that a similar result is obtained in \cite[Theorem 3.16]{DEGL24}, under different geometric assumptions.
These results can be viewed as initial steps towards the Allard-type boundary regularity theorem.

\subsubsection{Integral compactness}
Our second main result concerns Allard-type compactness.
\begin{theorem}[Integral compactness]\label{Thm:compactness}
Let $\Om\subset\mfR^{n+1}$ be a bounded domain of class $C^2$, $\beta\in C^1(S,(0,\pi))$, and $p\in(1,\infty)$.
Suppose $\{V_j\}_{j\in\mbN}$ is a sequence of rectifiable $m$-varifolds such that $V_j\in{\bf V}^m_\beta(\Om)$ with boundary varifolds $\{\Gamma_j\}_{j\in\mbN}$.
If for each $j$,
\begin{enumerate}
    \item There exists a universal constant $a>0$ such that
\eq{
\Theta^m(\mu_{V_j},x)\geq a>0,\text{ }\mu_{V_j}\text{-a.e. }x.
}
    \item 
    $\mfH_j\in L^p(\mu_{V_j})$ and
\eq{\label{condi:uniform-bdd-H-mass}
\sup_{j\in\mbN}\left(\mu_{V_j}(\overline\Om)+\norm{\mfH_j}_{L^p(\mu_{V_j})}\right)<\infty.
}
    \item 
    There exist a dense set $E_j$ in ${\rm spt}\sigma_{\Gamma_j}$, universal constants $\ep_0,\rho_0>0$, such that Definition \ref{Defn:capillary-bdry-point} (C1) is satisfied by every $x\in E_j$ with corresponding unit vector $\tau^{V_j}_x\in T_{x}S$ locally in $B_{\rho_0}(x)$.

\end{enumerate}
Then after passing to a subsequence, there exists $V\in{\bf V}_\beta^m(\Om)\cap{\bf RV}^m(\overline\Om)$  with boundary varifold $\Gamma$, such that $V_{j_k}\ra V$ as varifolds, $\de V_{j_k}\ra\de V$, and $\Gamma_{j_k}\wsc\Gamma$ as Radon measures,
with
\begin{enumerate}
    \item [(i)] $\Theta^m(\mu_V,x)\geq a>0$, $\mu_V$-a.e. $x$.
    \item [(ii)] If $\{V_j\}_{j\in\mbN}\in{\bf V}^m_\beta(\Om)\cap{\bf IV}^m(\overline\Om)$, then $V\in{\bf V}^m_\beta(\Om)\cap{\bf IV}^m(\overline\Om)$.    
\end{enumerate}
In particular, for any $x_0\in S$, if there exist $a_{x_0},\ep_{x_0},\rho_{x_0}>0$ and a unit vector $\tau_{x_0}\in T_{x_0}S$, such that for each $j$, there hold
\begin{enumerate}
    \item [(4)] $\sigma_{\Gamma_j}(B_{\frac{\rho_{x_0}}2}(x_0))\geq a_{x_0}$.
    \item [(5)] $\left<\cos\beta(x)\mfn_{W_j}(x),\tau_{x_0}\right>\geq\ep_{x_0}$, $\sigma_{\Gamma_j}$-a.e. $x\in B_\rho(x_0)$.
    Namely, \eqref{defn:genera-capi-bdry-point-1} is satisfied uniformly.
\end{enumerate}
Then
\begin{enumerate}
    \item [(iii)] $\sigma_{\Gamma}( B_{\frac{\rho_{x_0}}2}(x_0))\geq a_{x_0}$.
    \item [(iv)] $\mfn_V$ is non-degenerate locally around $x_0$ in the sense that
\eq{
0<a_{x_0}\ep_{x_0}
\leq\int_{S\cap B_{\rho_{x_0}}(x_0)}\Abs{T_xS\left(\mfn_V(x)\right)}\rd\sigma_\Gamma(x).
}
\end{enumerate}
\end{theorem}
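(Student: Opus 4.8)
The plan is to establish the integral compactness theorem by combining Allard-type compactness for varifolds of bounded first variation with a careful analysis of the limiting behavior of the boundary varifolds $\Gamma_j$, which live on the fixed compact space $G_{m,\beta}(S)$. First I would observe that by Proposition \ref{Prop:1st-variation} each $V_j$ has bounded first variation, and the key quantitative input is that the total mass $\norm{\delta V_j}$ is controlled: indeed $\norm{\delta V_j}(\overline\Om)\leq \norm{\mfH_j}_{L^1(\mu_{V_j})}+\sigma_{\Gamma_j}(S)$, and one needs a uniform bound on the generalized boundary mass $\sigma_{\Gamma_j}(S)$. This is where the density lower bound (1) together with the monotonicity inequality (the interior/boundary monotonicity established earlier in the paper, using $\mfH_j\in L^p$ with $p>m$... but here $p>1$ only, so one must instead use the $L^p$-mean-curvature bound together with H\"older to get a uniform first-variation bound via the standard argument that $\mu_{V_j}(\overline\Om)+\norm{\mfH_j}_{L^p}$ controls $\sigma_{\Gamma_j}(\overline\Om)$ by testing \eqref{defn:1st-vairation-intro2} against a fixed cutoff and using the density bound to convert the resulting estimate). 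Once $\sup_j\norm{\delta V_j}(\overline\Om)<\infty$ is in hand, Allard's compactness theorem for varifolds of bounded first variation gives a subsequence with $V_{j_k}\to V$ as varifolds, $\mu_{V_{j_k}}\to\mu_V$ weakly-$\ast$, $\delta V_{j_k}\to \delta V$ weakly-$\ast$, and $V$ again of bounded first variation; the density lower bound (i) passes to the limit by the standard upper-semicontinuity/monotonicity argument.

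Next I would extract the limit of the boundary data. Since $G_{m,\beta}(S)$ is compact (as $S=\partial\Om$ is compact, $\Om$ being bounded, and $G(m,n+1)$ is compact) and $\sup_j\Gamma_j(G_{m,\beta}(S))=\sup_j\sigma_{\Gamma_j}(S)<\infty$, weak-$\ast$ compactness of Radon measures yields (after a further subsequence) $\Gamma_{j_k}\wsc\Gamma$ for some Radon measure $\Gamma$ on $G_{m,\beta}(S)$. The crucial point is to pass to the limit in the first-variation identity \eqref{defn:1st-vairation-intro}: for fixed $\varphi\in\mathfrak{X}_t(\Om)$ with compact support, the left side $\delta V_{j_k}(\varphi)\to\delta V(\varphi)$; for the boundary term, $(x,P)\mapsto\langle\mfn(x,P),\varphi(x)\rangle$ is a bounded continuous function on $G_{m,\beta}(S)$ (using $\mfn\in C^1$), so $\int\langle\mfn(x,P),\varphi(x)\rangle\rd\Gamma_{j_k}\to\int\langle\mfn(x,P),\varphi(x)\rangle\rd\Gamma$; and for the interior term, the uniform $L^p$-bound on $\mfH_j$ with $p>1$ gives (after a further subsequence) a weak limit $\mfH_j\mu_{V_j}\wsc \mathbf{G}$ of vector-valued measures, and one identifies $\mathbf{G}=\mfH\,\mu_V$ with $\mfH\in L^p(\mu_V)$ via lower semicontinuity of the $L^p$-norm under the density-lower-bound-guaranteed absolute continuity; the tangency constraint $\mfH(x)\in T_xS$ on $S$ passes to the limit since $S$ is $C^2$ and the constraint is closed. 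This shows $V\in{\bf V}^m_\beta(\Om)$ with boundary varifold $\Gamma$. Rectifiability of $V$ (i.e.\ $V\in{\bf RV}^m(\overline\Om)$), and integrality under assumption (ii), follow from Allard's rectifiability theorem and the integral compactness theorem of Allard applied with the uniform density lower bound (1) and the first-variation bound — these are the classical statements, applied here noting that the boundary contribution to $\delta V$ is a measure singular-or-not but in any case controlled, so the interior regularity theory applies on $\overline\Om$ with a controlled total first variation.

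For the localized conclusions (iii) and (iv), I would argue as follows. For (iii): $\sigma_{\Gamma_j}$ is the marginal of $\Gamma_j$ on $S$, i.e.\ $\sigma_{\Gamma_j}=(\pi_S)_\#\Gamma_j$ where $\pi_S:G_{m,\beta}(S)\to S$ is the (continuous, proper) projection; pushforward commutes with weak-$\ast$ limits along proper maps, so $\sigma_{\Gamma_j}\wsc\sigma_\Gamma$, and then lower semicontinuity of mass on open sets gives $\sigma_\Gamma(B_{\rho_{x_0}/2}(x_0))\geq\liminf_k\sigma_{\Gamma_{j_k}}(B_{\rho_{x_0}/2}(x_0))\geq a_{x_0}$ — one must take a slightly smaller ball or use that the claimed inequality is with the closed/open ball consistently; the standard trick of using $B_{\rho_{x_0}/2}$ open and a test function supported in a marginally larger ball handles the boundary-of-ball subtlety. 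For (iv): condition (5) says $\langle\cos\beta(x)\mfn_{W_j}(x),\tau_{x_0}\rangle\geq\ep_{x_0}$ $\sigma_{\Gamma_j}$-a.e.\ on $B_{\rho_{x_0}}(x_0)$; recalling $\cos\beta\,\mfn_W=T_xS(\mfn_V)$ and that $\mfn_{V_j}(x)=\int\mfn(x,P)\rd\Gamma_j^x(P)$, one rewrites $\langle T_xS(\mfn_{V_j}(x)),\tau_{x_0}\rangle=\int_{G_{m,\beta}(x)}\langle P(\nu^S(x))/\sin\beta(x)\text{'s tangential part},\tau_{x_0}\rangle\rd\Gamma_j^x$... more cleanly, $\int_{B_{\rho_{x_0}}}\langle T_xS(\mfn_{V_j}(x)),\tau_{x_0}\rangle\rd\sigma_{\Gamma_j}=\int_{\pi_S^{-1}(B_{\rho_{x_0}})}\langle T_xS(\mfn(x,P)),\tau_{x_0}\rangle\rd\Gamma_j(x,P)$, and the integrand is continuous and bounded on the compact $G_{m,\beta}(S)$, so this integral passes to the limit under $\Gamma_{j_k}\wsc\Gamma$; combining with (iii) and $|T_xS(\mfn_V(x))|\geq\langle T_xS(\mfn_V(x)),\tau_{x_0}\rangle\geq\ep_{x_0}$ $\sigma_\Gamma$-a.e.\ (the latter inherited from (5) in the limit, again via the continuous-test-function argument since the set where the inequality holds is closed) yields $\int_{S\cap B_{\rho_{x_0}}(x_0)}|T_xS(\mfn_V(x))|\rd\sigma_\Gamma\geq\ep_{x_0}\sigma_\Gamma(B_{\rho_{x_0}/2}(x_0))\geq a_{x_0}\ep_{x_0}>0$.

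The main obstacle I anticipate is \textbf{producing the uniform bound on the generalized boundary mass} $\sup_j\sigma_{\Gamma_j}(S)$ from hypotheses (1)–(2): since we only assume $\mfH_j\in L^p$ with $p\in(1,\infty)$ (not $p>m$), one cannot directly invoke the clean monotonicity formula. The resolution is the standard but delicate argument: choose a vector field $\varphi$ that is transverse to $S$ (e.g.\ built from $-\nu^S$ extended off $S$, suitably cut off and made tangential in the right sense — actually one uses a field $\varphi$ with $\varphi\cdot\nu^S$ bounded below, noting $\langle\mfn(x,P),\nu^S\rangle=\sin\beta\geq c>0$ uniformly since $\beta$ ranges in a compact subset of $(0,\pi)$ once we also control it near the boundary — here the bounded domain and $\beta\in C^1(S,(0,\pi))$ with $S$ compact gives $\sin\beta\geq c_0>0$), plug into \eqref{defn:1st-vairation-intro}, and estimate $|\delta V_j(\varphi)|\leq \int|\mathrm{div}_{T}\varphi|\rd\mu_{V_j}\leq C\mu_{V_j}(\overline\Om)$ on the left, $|\int\langle\mfH_j,\varphi\rangle\rd\mu_{V_j}|\leq\norm{\varphi}_\infty\mu_{V_j}(\overline\Om)^{1-1/p}\norm{\mfH_j}_{L^p}$ for the interior term, and on the right the boundary term is bounded below by $c_0\sigma_{\Gamma_j}(S)$ times a constant; rearranging gives $\sigma_{\Gamma_j}(S)\leq C(\mu_{V_j}(\overline\Om)+\mu_{V_j}(\overline\Om)^{1-1/p}\norm{\mfH_j}_{L^p})$, which is uniformly bounded by \eqref{condi:uniform-bdd-H-mass}. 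The density lower bound (1) is not strictly needed for this particular bound but is essential for the rectifiability/integrality conclusions via Allard's theorems. Care is needed that $\varphi\in\mathfrak{X}_t(\Om)$, i.e.\ tangential along $S$ — but the mass bound argument above needs a field transverse to $S$; the fix is that $\mathfrak{X}_t$ allows tangential fields and one instead uses that the boundary term involves only $\mfn$, whose $\nu^S$-component is $\sin\beta$, so one works with the decomposition $\varphi=\varphi^\top+\varphi^\perp$ and uses test fields whose tangential part is chosen to extract the boundary mass — this is exactly the computation behind \eqref{eq:1st-variation-V_beta} referenced in the paper, and I would cite Proposition \ref{Prop:1st-variation} to obtain the needed a priori bound $\sigma_{\Gamma_j}(S)\leq C(\mu_{V_j}(\overline\Om)+\norm{\mfH_j}_{L^p(\mu_{V_j})})$ directly.
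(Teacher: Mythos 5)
Your overall architecture matches the paper's: uniform first-variation bound via Proposition \ref{Prop:1st-variation}, Allard compactness for $V_{j_k}\to V$, Radon-measure compactness for $\Gamma_{j_k}\wsc\Gamma$, passage to the limit in the first-variation identity using the continuity of $(x,P)\mapsto\langle\mfn(x,P),\varphi(x)\rangle$, pushforward to get $\sigma_{\Gamma_{j_k}}\wsc\sigma_\Gamma$, and a fixed continuous test function on $G_{m,\beta}(S)$ to pass hypothesis (5) to the limit for (iv). These steps are all present in the paper and essentially identical.

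However, there is a genuine gap in your treatment of the uniform bound $\sup_j\sigma_{\Gamma_j}(S)<\infty$, and it is precisely the point where hypothesis (3) enters — which you never invoke. Your opening idea, to test $\eqref{defn:1st-vairation-intro}$ with a field transverse to $S$ and exploit $\langle\mfn,\nu^S\rangle=\sin\beta\geq c_0>0$, does not work: Definition \ref{Defn:vfld-prescribed-bdry} only permits $\varphi\in\mathfrak{X}_t(\Om)$, and tangential $\varphi$ only sees the tangential component $\cos\beta\,\mfn_W$ of $\mfn_V$, not the $\sin\beta\,\nu^S$ component. That tangential part can vanish identically (degenerate capillary phenomenon, Example \ref{exam:planes}), so $\sigma_\Gamma$ is in general \emph{invisible} to the first-variation structure with tangential test fields. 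Your corrected fallback of ``cite Proposition \ref{Prop:1st-variation} directly'' is also insufficient: the unconditional estimate $\eqref{esti:local-tangent}$ has $\sigma_\Gamma(B_\rho(x_0))$ on the right-hand side and therefore does \emph{not} bound $\sigma_\Gamma$ in terms of $\mu_V$ and $\norm{\mfH}_{L^p}$ alone. Only the improved estimate $\eqref{esti:local-tangent-improved}$ gives the one-sided bound, and it requires the non-degeneracy hypothesis (C1). Hypothesis (3) of the theorem supplies exactly the uniform (C1) with universal $\ep_0,\rho_0$ on a set dense in $\mathrm{spt}\,\sigma_{\Gamma_j}$; the paper then gets $\sup_j\sigma_{\Gamma_j}(S)<\infty$ by applying $\eqref{esti:local-tangent-improved}$ at a uniform scale and covering. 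Without (3) the bound is not available, and your derivation as written would fail on Example \ref{exam:planes}-type data.

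Two smaller remarks. In (iii), you invoke ``lower semicontinuity of mass on open sets'' to conclude $\sigma_\Gamma(B_{\rho_{x_0}/2}(x_0))\geq\liminf_k\sigma_{\Gamma_{j_k}}(B_{\rho_{x_0}/2}(x_0))$; the direction is wrong for open sets. The relevant fact is upper semicontinuity on compact sets, $\sigma_\Gamma(K)\geq\limsup_k\sigma_{\Gamma_{j_k}}(K)$, applied to the closed ball (recall the paper's $B_r(x)$ is closed), which is what the paper uses. In (iv), your side remark that the pointwise inequality $\langle T_xS(\mfn_V),\tau_{x_0}\rangle\geq\ep_{x_0}$ is ``inherited from (5) in the limit'' is not established by weak-$\ast$ convergence of $\Gamma_{j_k}$ (disintegrations do not converge pointwise in general) and is not needed: the integrated estimate you then write is sufficient and is what the paper proves.
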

The convergence follows from the bounded first variation of $V\in{\bf V}^m_\beta(\Om)$ (Proposition \ref{Prop:1st-variation}) and Allard's integral compactness \cite{Allard72}, provided conditions ($1$)-($3$).
Note that ($3$) is trivially satisfied if $\Gamma_j=0$. Hence,
to study the local property of the boundary measure in the limit,
 we require condition ($4$), the local uniform lower bound of the boundary measures in the sequence, to guarantee the local non-degeneracy of the boundary measure in the limit, which is conclusion ($iii$).
Moreover, we further require condition ($5$), the local uniform non-degeneracy of the generalized inwards pointing co-normals with respect to a fixed tangential direction in the sequence, to exclude the local capillary degenerate phenomenon in the limit, which is conclusion ($iv$).

At first glance condition ($5$) seems to be a technical assumption that could be redundant for an optimal compactness result, as condition ($3$) already provides a local uniform non-degeneracy of the tangential part of $\mfn_{V_j}$.
To justify the necessity of condition ($5$), we revisit Example \ref{exam:planes} (for detailed discussions, see Example \ref{exam:planes-limit}), where we construct a union of two half-planes with the same capillary angle meeting along a common boundary from opposite directions that satisfies Definition \ref{Defn:vfld-prescribed-bdry}.
If we separate these two half-planes along their unit co-normals with respect to the supporting hyperplane respectively, then each separated half-plane is a smooth submanifold that satisfies Definition \ref{Defn:vfld-prescribed-bdry}, and of course condition ($3$) is satisfied by each of the separated half-planes.
It is easy to check that condition ($5$) is satisfied by at most one of these two separated half-planes.
Therefore, reversing this process we obtain Example \ref{exam:planes} as a convergence limit, which clearly violates conclusion ($iv$).
%


\subsection{Comparison with the geometric variational formulation}\label{Sec:1-3}

We first record the aforementioned geometric variational weak formulation of capillarity:
\begin{definition}[Geometric variational formulation]\label{Defn:vfld-co-dim-1}
\normalfont
Let $\Om\subset\mfR^{n+1}$ be a bounded domain of class $C^2$, $\beta\in C^1(S,(0,\pi))$.
Let $V$ be a rectifiable $n$-varifold on $\overline\Om$, and $U$ a $\mcH^n$-measurable set in $S$.
We say that \textit{$V$ has prescribed contact angle $\beta$ with $U$} if for any compactly supported $\varphi\in\mathfrak{X}_t(\Om)$, there exists a \textit{generalized mean curvature vector} $\mfH\in L^1(\mu_V)$ with $\mfH(x)\in\mfR^{n+1}$, and $\mfH(x)\in T_xS$ for $\mu_V$-a.e. $x\in S$, such that
\eq{\label{defn:1st-variation-KT17}
\de V(\varphi)-\int_U{\rm div}_S(\cos\beta\varphi)\rd\mcH^n
=-\int_{\overline\Om}\left<\mfH,\varphi\right>\rd\mu_V,
}
where ${\rm div}_S$ denotes the tangential divergence with respect to $S$.
\end{definition}
Here the set $U$ models for the \textit{wetting region} and the corresponding integral models for the first variation of the wetting energy.
Comparing \eqref{defn:1st-variation-KT17} with \eqref{defn:1st-vairation-intro}, we find that
$V$ in Definition \ref{Defn:vfld-co-dim-1} also satisfies Definition \ref{Defn:rectifiable-bdry} (a stronger form of Definition \ref{Defn:vfld-prescribed-bdry}), provided that the boundary of $U$ is regular enough (e.g., when $U$ is a set of finite perimeter).
Precisely, an easy application of De Giorgi's structure theorem shows that the integrals
\eq{
\int_U{\rm div}_S(\cos\beta\varphi)\rd\mcH^n,\quad\int_{G_{n,\beta}(S)}\left<\mfn(x,P),\varphi(x)\right>\rd\Gamma(x,P)
}
are equivalent for some suitably defined $\Gamma$ (see Proposition \ref{Prop:RV-codim-1}).
The latter integral
plays the role as the boundary term appearing in the first variation of a smooth capillary hypersurface.
For the former integral, note that  under specific situations $U$ can be shown to be a set of finite perimeter, see \cite{DEGL24}, but
in general this is not known.
However, without assuming embeddedness, the boundary of a capillary hypersurface needs not to  arise as the boundary of a wetting region (for example, capillary immersions of orientable surfaces into the Euclidean half-space, see \cite{WXZ24}).
In other words,
Definition \ref{Defn:vfld-prescribed-bdry} provides a direct way to characterize the capillary
boundary.

One nice application concerning this fact is that we can use our Neumann-type boundary condition to study curvature varifolds with capillary boundary.
Another nice application is to study evolution of surfaces in the sense of Brakke \cite{Brakke78}.
Both of these applications are new in the co-dimension-$1$ case, and can be carried out in any dimensions and co-dimensions.
%

\subsubsection{Curvature varifolds with capillary boundary}
In a pioneering work of Hutchinson \cite{Hutchinson86}, varifolds are equipped with second fundamental form in the measure-theoretic sense.
Such varifolds are called \textit{curvature varifolds}, and the idea of introducing \textit{weak second fundamental form} is in particular useful for studying geometric variational problems.
Later on, this idea is used by Mantegazza \cite{Mantegazza96} to define \textit{curvature varifolds with boundary}.

Recently,
Kuwert-M\"uller \cite{KM22} define the so-called \textit{curvature varifolds with orthogonal boundary} and use this notion to study the minimization problem of $L^p$-total curvature energy.
Bellettini studies weak second fundamental form in the context of \textit{oriented integral varifolds} and provides an alternative approach to the \textit{prescribed mean curvature problem} proposed and addressed by Sch\"atzle \cite{Schatzle01}.

Our contribution in this direction is built on the following definition.
\begin{definition}[Curvature varifolds with capillary boundary]\label{Defn:CV-capillary}
\normalfont
Let $\Om\subset\mfR^{n+1}$ be a bounded domain of class $C^2$, $\beta\in C^1(S, (0,\pi))$.
Let $V\in{\bf V}^m(\overline\Om)$, $B\in L^1(V)$ where $B(x,P)\in{BL}(\mfR^{n+1}\times\mfR^{n+1},\mfR^{n+1})$, and $\Gamma$  a Radon measure on $G_{m,\beta}(S)$.
We say that \textit{$V$ has weak second fundamental form $B$ and prescribed contact angle $\beta$ with $S$ along $\Gamma$} if for any test function $\phi\in C^1(\mfR^{n+1}\times\mfR^{(n+1)^2},\mfR^{n+1})$, there holds
\eq{\label{defn:CV-capillary}
\int
\left(D_P\phi\cdot B+\left<{\rm tr}B,\phi\right>+\left<\na_x\phi,P\right>\right)\rd V(x,P)
=-\int_{G_{m,\beta}(S)}\left< 
\mfn(x,P),\phi(x,P)\right>\rd\Gamma(x,P),
}
where $\mfn(x,P)$ is defined as \eqref{defn:n(x,P)} and $D_p\phi\cdot B$ is defined as \eqref{eq:D_Pphi,B}.
We call such $V$ a \textit{curvature varifold with capillary boundary} and the corresponding $\Gamma$ the \textit{boundary varifold} (with respect to $V,\beta$).
The class of all such curvature varifolds $V$ is denoted by ${\bf CV}^m_{\beta}(\Om)$.

\end{definition}
This is a stronger definition compared to Definition \ref{Defn:vfld-prescribed-bdry},
and in this case the degenerate capillary phenomenon does not occur since we are using test functions which also depend on the choice of $m$-planes.
In fact, we will show that our definition is compatible with Mantegazza's curvature varifolds with boundary \cite{Mantegazza96}, and the boundary varifold (in the sense of \cite{Mantegazza96}) is the vector-valued Radon measure $\p V(x,P)\coloneqq\mfn(x,P)\Gamma(x,P)$.
As observed by Mantegazza, $\p V$ carries much more information on the local structure of $V$, while the projection of $\p V$ to $\mfR^{n+1}$ could be even equal to $0$.

We collect the nice properties of curvature varifolds with capillary boundary in Section \ref{Sec:8}.
Compactness results for curvature varifolds are crucial to be exploited for geometric variational problems, and have been discussed intensively in \cite{Hutchinson86,Mantegazza96,Bellettini13,KM22}, in our case we have the following:

\begin{theorem}[Compactness for curvature varifolds]\label{Thm:compactness-CV}
Let $\Om\subset\mfR^{n+1}$ be a bounded domain of class $C^2$, $\beta\in C^1(S,(0,\pi))$.
Suppose $\{V_j\}_{j\in\mbN}$ is a sequence of (integral) $m$-varifolds such that $V_j\in{\bf CV}^m_\beta(\Om)$ with weak second fundamental forms $\{B_j\}_{j\in\mbN}$ and boundary varifolds $\{\Gamma_j\}_{j\in\mbN}$ in the sense of Definition \ref{Defn:CV-capillary}.
If
\begin{enumerate}
    \item $\mu_{V_j}(\overline\Om)\leq C$;
    \item For some fixed $p\in(1,\infty]$,
\eq{
\sup_{j\in\mbN}\{\norm{B_j}_{L^p(V_j)}^p\}\leq\Lambda<\infty.
}
\end{enumerate}
Then after passing to a subsequence, one has $V_{j_k}\ra V$ as varifolds, $\Gamma_{j_k}\wsc\Gamma$ as Radon measures.
Moreover, the limiting (integral) varifold and Radon measure $V,\Gamma$ satisfy Definition \ref{Defn:CV-capillary} with the same $\beta$ for some weak curvature $B\in L^p(V)$ such that
\eq{
\norm{B}_{L^p(V)}^p
\leq\liminf_{k\ra\infty}\norm{B_{j_k}}^p_{L^p(V_{j_k})}\leq\Lambda.
}
In fact, for every convex and lower semicontinuous function $f:\mfR^{(n+1)^3}\ra[0,+\infty]$, there holds
\eq{\label{ineq:liminf-convex-curvature}
\int f(B)\rd V
\leq\liminf_{k\ra\infty}\int f(B_{j_k})\rd V_{j_k}.
}
\end{theorem}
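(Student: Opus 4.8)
\emph{Proof proposal.} The plan is to run the by-now standard compactness scheme for curvature varifolds (Hutchinson \cite{Hutchinson86}, Mantegazza \cite{Mantegazza96}, Kuwert--M\"uller \cite{KM22}), carrying the capillary boundary measure along as an additional weakly-$\ast$ convergent sequence. \textbf{A priori bounds and extraction of limits.} Since $S=\p\Om$ is a compact $C^2$-hypersurface and $\beta\in C^1(S,(0,\pi))$, we have $\sin\beta\geq c_0>0$ on $S$; extend $\nu^S$ to a field $N\in C^1(\mfR^{n+1},\mfR^{n+1})$ with $N|_S=\nu^S$ (possible as $\Om$ is of class $C^2$). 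Testing \eqref{defn:CV-capillary} with the $P$-independent choice $\phi(x,P)=N(x)$, so that $D_P\phi\equiv0$ and $\left<\mfn(x,P),N(x)\right>=\left<\mfn(x,P),\nu^S(x)\right>=\sin\beta(x)$ on $G_{m,\beta}(S)$, and using $\abs{{\rm tr}B_j}\leq c\abs{B_j}$, H\"older's inequality and hypotheses (1)--(2), one obtains
\eq{
c_0\,\Gamma_j\big(G_{m,\beta}(S)\big)
&\leq\int_{G_{m,\beta}(S)}\sin\beta\,\rd\Gamma_j
=\Abs{\int\big(\left<{\rm tr}B_j,N\right>+\left<\na N,P\right>\big)\rd V_j}\\
&\leq c\norm{N}_{L^\infty(\overline\Om)}\norm{B_j}_{L^p(V_j)}\mu_{V_j}(\overline\Om)^{1-\frac1p}+\norm{\na N}_{L^\infty(\overline\Om)}\mu_{V_j}(\overline\Om),
}
which is uniformly bounded. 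Hence $\{\Gamma_j\}$ have uniformly bounded mass on the compact set $G_{m,\beta}(S)$, $\{\mu_{V_j}\}$ on the compact set $\overline\Om\times G(m,n+1)$, and the $\mfR^{(n+1)^3}$-valued measures $B_jV_j$ have total variation $\leq\norm{B_j}_{L^p(V_j)}\mu_{V_j}(\overline\Om)^{1-1/p}$ (for $p=\infty$, $\abs{B_j}\leq\Lambda$ $V_j$-a.e.\ and the bound is $\Lambda\mu_{V_j}(\overline\Om)$). Passing to a subsequence, $V_{j_k}\ra V$ as varifolds, $\Gamma_{j_k}\wsc\Gamma$ as Radon measures on $G_{m,\beta}(S)$, and $B_{j_k}V_{j_k}\wsc\mu$ for some vector measure $\mu$; as $\overline\Om$ and $G_{m,\beta}(S)$ are closed, $V$ and $\Gamma$ are supported where required, so $V\in{\bf V}^m(\overline\Om)$.

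\textbf{Structure of the limit curvature and lower semicontinuity.} Next I would show $\mu\ll V$ with density in $L^p(V)$. For $p>1$, hypothesis (2) gives equi-integrability of $\{B_j\}$ with respect to $\{V_j\}$, so for every Borel $A$ with $V(A)=0$, estimating along open neighbourhoods $U\supseteq A$ with $V(\p U)=0$ yields $\abs{\mu}(A)\leq\liminf_k\int_U\abs{B_{j_k}}\rd V_{j_k}\leq\Lambda^{1/p}V(U)^{1-1/p}\ra0$, hence $\abs{\mu}(A)=0$ and $\mu=B\,V$ with $B\coloneqq\rd\mu/\rd V\in L^1(V)$ (for $p=\infty$ this is immediate). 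That $B\in L^p(V)$ with the stated bound, and more generally \eqref{ineq:liminf-convex-curvature}, I would deduce from the Reshetnyak-type lower semicontinuity of convex functionals of measure-densities under joint weak-$\ast$ convergence, as in \cite{Hutchinson86,Mantegazza96,KM22}: for convex lower semicontinuous $f\colon\mfR^{(n+1)^3}\ra[0,+\infty]$ the functional $(\lambda,\eta)\mapsto\int f\big(\rd\eta/\rd\lambda\big)\rd\lambda$ is weak-$\ast$ lower semicontinuous on pairs with $\eta\ll\lambda$ (the singular part of the limit being excluded by the superlinear bound from (2)), so applied to $(V_{j_k},B_{j_k}V_{j_k})\wsc(V,BV)$ it gives $\int f(B)\rd V\leq\liminf_k\int f(B_{j_k})\rd V_{j_k}$; taking $f(\cdot)=\abs{\cdot}^p$ yields $\norm{B}_{L^p(V)}^p\leq\liminf_k\norm{B_{j_k}}_{L^p(V_{j_k})}^p\leq\Lambda$.

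\textbf{Passage to the limit in the defining equation and integrality.} Fix $\phi\in C^1(\mfR^{n+1}\times\mfR^{(n+1)^2},\mfR^{n+1})$. In \eqref{defn:CV-capillary}, $\int\left<\na_x\phi,P\right>\rd V_j\ra\int\left<\na_x\phi,P\right>\rd V$ since $(x,P)\mapsto\left<\na_x\phi(x,P),P\right>$ is bounded and continuous on $\overline\Om\times G(m,n+1)$. The terms $\int D_P\phi\cdot B_j\,\rd V_j$ and $\int\left<{\rm tr}B_j,\phi\right>\rd V_j$ both have the form $\int L\cdot\rd(B_jV_j)$ with $L$ a fixed bounded continuous $\mfR^{(n+1)^3}$-valued function (the second via the trace being linear in $B$), hence converge to $\int L\cdot\rd\mu=\int L\cdot B\,\rd V$, i.e.\ to $\int D_P\phi\cdot B\,\rd V$ and $\int\left<{\rm tr}B,\phi\right>\rd V$. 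On the right, $\int_{G_{m,\beta}(S)}\left<\mfn(x,P),\phi(x,P)\right>\rd\Gamma_j\ra\int_{G_{m,\beta}(S)}\left<\mfn(x,P),\phi(x,P)\right>\rd\Gamma$ because $\mfn\in C^1(G_{m,\beta}(S),\mfR^{n+1})$ and $\Gamma_{j_k}\wsc\Gamma$ there. Thus $V,\Gamma$ satisfy \eqref{defn:CV-capillary} with weak curvature $B$ and the same $\beta$, so $V\in{\bf CV}^m_\beta(\Om)$. For the integrality statement, taking $\phi=\phi(x)$ in \eqref{defn:CV-capillary} shows $\de V_j(\phi)=-\int\left<{\rm tr}B_j,\phi\right>\rd\mu_{V_j}-\int\left<\mfn,\phi\right>\rd\Gamma_j$, so $\norm{\de V_j}(\overline\Om)\leq c\norm{B_j}_{L^1(V_j)}+\Gamma_j(G_{m,\beta}(S))$ is uniformly bounded; if each $V_j$ is integral, Allard's integral compactness theorem \cite{Allard72} forces $V$ to be integral as well.

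\textbf{Main obstacle.} The delicate point is the second step: identifying the weak-$\ast$ limit of $B_jV_j$ as an absolutely continuous measure with $L^p$-density and proving the convex lower semicontinuity \eqref{ineq:liminf-convex-curvature}. This Reshetnyak-type semicontinuity is the technical core of every curvature-varifold compactness theorem; the remainder is soft weak-$\ast$ convergence, once the uniform mass bound on $\Gamma_j$ --- which crucially exploits the identity $\left<\mfn(x,P),\nu^S(x)\right>=\sin\beta(x)\geq c_0$ --- has been secured.
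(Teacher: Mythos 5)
Your proposal is correct and follows essentially the same route as the paper's proof. The paper obtains the uniform mass bound on $\Gamma_j$ by citing Lemma \ref{Lem:comparable-mass} (which tests \eqref{defn:CV-capillary} with the $P$-independent field $\phi=l\nabla d_S$, playing the same role as your extension $N$ of $\nu^S$), gets absolute continuity of the weak-$\ast$ limit of $B_jV_j$ by citing \cite[Proposition 4.30]{Mag12} (the argument you spell out), derives \eqref{ineq:liminf-convex-curvature} from the Reshetnyak-type semicontinuity you invoke via \cite[Theorem 9.6]{Mantegazza96}, and passes to the limit in the defining equation and handles integrality via Allard's compactness exactly as you do.
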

Theorem \ref{Thm:compactness-CV} could serve as the first step (initial compactness) for studying capillary geometric variational problems, which might have some further applications.
Here we also mention another application of our Neumann boundary condition in the context of curvature varifolds.
Following the ideas of Bellettini \cite{Bellettini23}, the second author studies and exploits the integral oriented curvature varifolds with capillary boundary to solve the capillary prescribed mean curvature problem in \cite{Zhang}.
\subsubsection{Brakke flow with capillary boundary}
The study of \textit{Brakke flow} and the classical \textit{mean curvature flow} has been an important topic in Geometric Analysis since the 1970's as it is one of the fundamental geometric evolution problems, for an overview of the history see e.g., \cite{Bellettini13,CM11,Ecker04,Giga06,Mantegazza11,Tonegawa19} and the references therein.

Recently, White \cite{White21} developed a theory of (hyper)surfaces with boundary moving by Brakke flow, in his work the boundary condition under consideration is in fact a Dirichlet-type boundary condition, since each flow hypersurface is associated with a rectifiable varifold satisfying Allard's definition \cite{Allard75}.
An $\ep$-regularity theorem is established in this framework by Gasparetto \cite{Gasparetto22}.
On the other hand, a notion of \textit{Brakke ﬂow with free boundary} (Neumann-type boundary condition) was originally written down by Mizuno and Tonegawa \cite{MT15}, who proved existence of co-dimension-1 free boundary Brakke ﬂows in convex barriers via the Allen–Cahn functional, see also \cite{Kagaya19}, and then further developed by Edelen \cite{Edelen20}, who established existence, regularity, as well as compactness for free boundary Brakke flow of arbitrary dimensions and co-dimensions.
In terms of capillarity,
a geometric variational characterization of co-dimension-$1$ {Brakke flow with capillary boundary} was introduced by Tashiro in \cite{Tashiro24}.

In light of Definition \ref{Defn:vfld-prescribed-bdry}, we define Brakke flow with capillary boundary for any dimensions and co-dimensions as follows:
\begin{definition}
\normalfont
Let $I\subset\mfR$ be some interval.
Given a bounded domain $\Om\subset\mfR^{n+1}$ of class $C^1$ with boundary $S\coloneqq\p\Om$, and a function $\beta\in C^1(S,(0,\pi))$.
We say a collection $\{\mu(t)\}_{t\in I}$ of Radon measures supported on $\overline\Om$ is an $m$-dimensional \textit{Brakke flow with capillary boundary} prescribed by $\beta$ in $\Om$ if the following holds:
\begin{enumerate}
    \item [($i$)] For a.e. $t\in I$, $\mu(t)=\mu_{V(t)}$ for some $V(t)\in{\bf V}^m_\beta(\Om)\cap{\bf IV}^m(\overline\Om)$ with square integrable generalized mean curvature $\mfH_{V(t)}$ in the sense of Definition \ref{Defn:vfld-prescribed-bdry}.
    \item [($ii$)] For any finite interval $(a,b)\subset I$, the Brakke’s inequality (see e.g., \cite[(4.1)]{Edelen20}) holds.
    Namely, for any non-negative test function $\phi\in C^1(\overline\Om\times[a,b],[0,\infty))$ with $\na_x\phi(\cdot,t)$ tangent to $S$ for all $t\in[a,b]$,
\eq{
\int\phi(\cdot,b)\rd\mu(b)
-\int\phi(\cdot,a)\rd\mu(a)
\leq\int_a^b\int-\Abs{\mfH}^2\phi+\left<\mfH,\na_x\phi\right>+\p_t\phi\rd\mu(t)\rd t. 
}
\end{enumerate}
\end{definition}
One can check that any classical mean curvature flow $(\S_t^m)_t$ with capillary boundary satisfies the above definition by taking $\mu(t)=\mcH^m\llcorner\S^m_t$.
In the co-dimension-$1$ case, we also refer to \cite{BK18,EG24,Kholmatov24,HL24,Tashiro24} for weak solutions to mean curvature flow with capillary boundary from geometric variational perspective.

%

\subsubsection{Allard-type regularity}
As mentioned above, the Allard-type boundary regularity for capillary hypersurfaces has been studied using geometric variational formulation \cite{DEGL24,Wang24}.
Let us now comment on how our Neumann-type boundary condition can be applied in this case.

In fact, as already mentioned at the beginning of Section \ref{Sec:1-3}, we will show in Proposition \ref{Prop:RV-codim-1} that, if $V$ has prescribed contact angle $\beta$ with a set of finite perimeter $U\subset S$ in the sense of Definition \ref{Defn:vfld-co-dim-1}, then $V\in{\bf RV}^n_\beta(\Om)$ with multiplicity one rectifiable boundary in the sense of Definition \ref{Defn:rectifiable-bdry}.
Therefore, we could apply Theorem \ref{Prop:Classify-VarTan} to replace the crucial \textit{multiplicity-one tangent half-plane} condition in Wang's Allard-type regularity theorem \cite{Wang24} by the geometric assumptions as follows.

\begin{theorem}\label{Thm:Allard}
Let $\Om\subset\mfR^{n+1}$ be a bounded domain of class $C^2$, $\beta\in C^1(S,(0,\pi))$.
Let
$V$ be an integral $n$-varifold on $\overline\Om$ which has bounded generalized mean curvature and prescribed contact angle $\beta$ with a set of finite perimeter $U\subset S$ in the sense of Definition \ref{Defn:vfld-co-dim-1}.
Then
\begin{enumerate}
    \item [(i)] $V\in{\bf RV}^n_\beta(\Om)\cap{\bf IV}^n(\overline\Om)$ with multiplicity one rectifiable boundary $\Gamma$ in the sense of Definition \ref{Defn:rectifiable-bdry}, where $\Gamma=\mcH^{n-1}\llcorner\p^\ast U\otimes\de_{P_U(x)}$ with
\eq{
P_U(x)
\coloneqq T_x\p^\ast U\oplus\left(\sin\beta(x)\nu^S(x)+\cos\beta(x)\nu_U(x)\right), \mcH^{n-1}\text{-a.e. }x\in\p^\ast U.
}
Here $\p^\ast U,$ and $\nu_U$ denote  the reduced boundary and measure-theoretic inner unit normal of $U$ respectively.
    \item [(ii)] For any $\sigma_\Gamma$-measurable set $A\subseteq \mathscr{P}_{\rm cb}(\Gamma)$, there exists a set $F\subset S$ with $\sigma_\Gamma(A\setminus F)=0$, such that for every $x_0\in A\cap F$,
    if
\eq{
\Theta^{m-1}_\ast(\sigma_\Gamma,x_0)>0,
}
and for some $0<\ep<\frac12$
    \eq{
    \Theta^n(\mu_V,x_0)\leq\frac12+\ep,
    }
    then ${\rm spt}\mu_V$ near $x_0$ is a $C^{1,\alpha}$-hypersurface with boundary for some $\alpha\in(0,1)$.
    Moreover, the tangent space of this regular hypersurface at $x_0$ meets $S$ with angle $\beta(x_0)$.
    %
\end{enumerate}
\end{theorem}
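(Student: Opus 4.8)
The plan is to obtain both conclusions by assembling results already in place: part~(i) is the content of Proposition~\ref{Prop:RV-codim-1}, and part~(ii) follows by feeding the classification of tangent cones, Theorem~\ref{Prop:Classify-VarTan}, into Wang's Allard-type boundary $\ep$-regularity theorem \cite{Wang24}. For part~(i), the key point is that for a set of finite perimeter $U$ on the compact boundaryless $C^2$ hypersurface $S=\p\Om$, De Giorgi's structure theorem together with the divergence theorem on $S$ rewrites the wetting-energy term as
\[
\int_U{\rm div}_S(\cos\beta\,\varphi)\,\rd\mcH^n
=\int_{\p^\ast U}\cos\beta(x)\left<\varphi(x),\nu_U(x)\right>\rd\mcH^{n-1}(x),\qquad\varphi\in\mathfrak{X}_t(\Om).
\]
With $P_U(x)$ as in the statement, one checks, using $\nu_U(x)\in T_xS$, $\abs{\nu_U(x)}=1$ and $\nu_U(x)\perp T_x\p^\ast U$, that $(x,P_U(x))\in G_{n,\beta}(S)$, that $P_U(x)\cap T_xS=T_x\p^\ast U$, and that $\mfn(x,P_U(x))=\sin\beta(x)\nu^S(x)+\cos\beta(x)\nu_U(x)$; since $\varphi\perp\nu^S$ on $S$, the displayed integral equals $\int_{G_{n,\beta}(S)}\left<\mfn(x,P),\varphi(x)\right>\rd\Gamma(x,P)$ for $\Gamma\coloneqq\mcH^{n-1}\llcorner\p^\ast U\otimes\de_{P_U(x)}$. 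Comparing \eqref{defn:1st-variation-KT17} with \eqref{defn:1st-vairation-intro} shows $V$ satisfies Definition~\ref{Defn:vfld-prescribed-bdry} with this $\Gamma$, and since $\sigma_\Gamma=\mcH^{n-1}\llcorner\p^\ast U$ is $(n-1)$-rectifiable of multiplicity one, $V$ is integral and $P_U(x)\cap T_xS=T_x\p^\ast U$, this is precisely Definition~\ref{Defn:rectifiable-bdry} with multiplicity one, i.e.\ $V\in{\bf RV}^n_\beta(\Om)\cap{\bf IV}^n(\overline\Om)$.

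For part~(ii), the hypotheses of Theorem~\ref{Prop:Classify-VarTan} hold with $m=n$: $V\in{\bf V}^n_\beta(\Om)\cap{\bf IV}^n(\overline\Om)$ by (i); $\mu_V$ is a finite Radon measure on the compact set $\overline\Om$ and $\mfH$ is bounded, so $\mfH\in L^\infty(\mu_V)\subset L^p(\mu_V)$ for every $p\in(n,\infty)$; and, again by (i), $V\in{\bf RV}^n_\beta(\Om)$ carries multiplicity one rectifiable boundary. Theorem~\ref{Prop:Classify-VarTan} then furnishes the set $F\subset S$ with $\sigma_\Gamma(A\setminus F)=0$ such that, whenever $x_0\in A\cap F$ satisfies $\Theta^{n-1}_\ast(\sigma_\Gamma,x_0)>0$ and $\Theta^n(\mu_V,x_0)\leq\frac12+\ep$ for some $\ep<\frac12$, every $\mcC\in{\rm VarTan}(V,x_0)$ is the induced multiplicity one varifold of the $n$-plane $P_\Gamma(x_0)$ intersected with the half-space $T_{x_0}\overline\Om$; since $x_0\in\mathscr{P}_{\rm cb}(\Gamma)$ gives $\cos\beta(x_0)\neq0$ and $(x_0,P_\Gamma(x_0))\in G_{n,\beta}(S)$ gives $\abs{P_\Gamma(x_0)(\nu^S(x_0))}=\sin\beta(x_0)>0$, this intersection is a genuine half-$n$-plane meeting the hyperplane $T_{x_0}S$ at angle $\beta(x_0)$, and boundary monotonicity (Proposition~\ref{Prop:Mono-bdry}) forces $\Theta^n(\mu_V,x_0)\geq\frac12$, so $x_0\in{\rm spt}\mu_V$. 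This verifies the multiplicity one tangent half-plane hypothesis of Wang's boundary $\ep$-regularity theorem \cite{Wang24}, whose conclusion is that ${\rm spt}\mu_V$ is a $C^{1,\alpha}$-hypersurface with boundary near $x_0$ for some $\alpha\in(0,1)$, with tangent space at $x_0$ meeting $S$ at angle $\beta(x_0)$.

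I expect the only real obstacle to be the careful matching of interfaces in part~(ii): one must confirm that the varifold $V$ arising from Definition~\ref{Defn:vfld-co-dim-1} with bounded $\mfH$ and $U$ of finite perimeter lies in the precise admissible class for which \cite{Wang24} is proved, and that the regularity statement there is genuinely conditional on the multiplicity one half-plane tangent cone rather than on a separate quantitative excess/density smallness (the density identity $\Theta^n(\mu_V,x_0)=\frac12$ that is forced above being exactly what bridges that gap). Everything else amounts to bookkeeping with Proposition~\ref{Prop:RV-codim-1}, Theorem~\ref{Prop:Classify-VarTan} and Proposition~\ref{Prop:Mono-bdry}, together with the elementary linear algebra of $P_U(x)$ used in part~(i).
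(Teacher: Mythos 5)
Your proposal is correct and follows essentially the same route as the paper: part (i) is exactly Proposition \ref{Prop:RV-codim-1} (whose De Giorgi structure-theorem argument you reproduce), and part (ii) is obtained, as in the paper, by combining the tangent-cone classification of Theorem \ref{Prop:Classify-VarTan} with Wang's Allard-type boundary regularity theorem. The additional observation that monotonicity forces $\Theta^n(\mu_V,x_0)=\tfrac12$ is harmless and consistent with the multiplicity-one half-plane conclusion.
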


It is natural to ask, if one could establish an Allard-type boundary regularity theorem for varifolds with capillary boundary in the sense of Definition \ref{Defn:vfld-prescribed-bdry}.
In view of Theorem \ref{Prop:Classify-VarTan} we believe, with necessary modifications, the arguments in \cite{DEGL24} can be adapted to our framework to show the Allard-type boundary regularity theorem, at least for integral varifolds, in the co-dimension-$1$ case, under suitable assumption on density and that $\mfH\in{L^\infty(\mu_V)}$.

Since Allard-type regularity theorem has been shown for classical varifolds and free boundary varifolds under natural density assumptions and the mean curvature bounds $\norm{H}_{L^p(\mu_V)}$ for super-critical $p$ in \cite{Allard72,GJ86}.
It is natural to propose the following question:

\

{
\it 
For $V\in{\bf V}^m_\beta(\Om)$, of any dimensions and co-dimensions, with $\mfH\in L^p(\mu_V)$ for $p>m$,
under certain density assumptions,
is Allard-type boundary regularity result true on $\mathscr{P}_{\rm cb}(\Gamma)$?
}

\

\subsection{Brief outline of the paper}

Section \ref{Sec:2} is an introductory section about varifolds and basic facts which we need in this paper.

In Section \ref{Sec:3} we introduce the Neumann-type boundary condition of varifolds and prove several basic properties resulting 
from Definitions \ref{Defn:vfld-prescribed-bdry}, \ref{Defn:capillary-bdry-point}.
An important property is that any $V\in{\bf V}^m_\beta(\Om)$ has bounded first variation (Proposition \ref{Prop:1st-variation}).
Some examples are given for a better understanding of the definition.

Section \ref{Sec:4} is devoted to proving the boundary Monotonicity inequalities for $V\in{\bf V}^m_\beta(\Om)$ on $\mathscr{P}_{\rm cb}(\Gamma)$ when $p>m$ (Corollary \ref{Coro-monoto-p>m}).
Direct consequences are the existence of density and upper semi-continuity of density at capillary boundary points.

In Section \ref{Sec:5} we study the blow-up process of $V\in{\bf V}^m_\beta(\Om)$ on $\mathscr{P}_{\rm cb}(\Gamma)$.
Blow-up sequences are shown to possess very nice properties (Lemma \ref{Lem:blow-up-seq}).
In particular, the establishment of \eqref{eq:x-n_W(x_0)} is the key for our subsequent analysis of the blow-up process.
The first part of this section is devoted to proving that the blow-up measure of $\sigma_\Gamma$ is $(m-1)$-rectifiable (Lemma \ref{Lem:sigma_Gamma_infty-characterization}), which leads to the boundary rectifiability result.
Interesting by-products are the uniqueness of tangent cones (Theorem \ref{Prop:Classify-VarTan}), and the Allard-type boundary regularity (Theorem \ref{Thm:Allard}), as well as a boundary strong maximum principle (Proposition \ref{Prop:bdry-SMP}).

In Sections \ref{Sec:6} and \ref{Sec:7} we present the proof of Theorems \ref{Thm:bdry-rectifiability}, \ref{Thm:compactness} respectively.

In Section \ref{Sec:8} we discuss the basic properties of curvature varifolds with capillary boundary and prove Theorem \ref{Thm:compactness-CV}.

\

\noindent\textbf {Acknowledgements}
The authors would like to thank Professor  Ernst Kuwert for  helpful discussions, especially on curvature varifolds.
XZ would like to thank Kiichi Tashiro and Gaoming Wang for the interesting discussions.
\section{Preliminaries}\label{Sec:2}
We adopt the following basic notations throughout the paper.
\begin{itemize}
    \item $\mfR^{n+1}$ with $n\geq1$ denotes the Euclidean space, with  the Euclidean scalar product denoted by $\left<\cdot,\cdot\right>$ and the corresponding Levi-Civita connection denoted by $\na$.
    When considering the topology of $\mfR^{n+1}$, we denote by $\overline{E}$ the topological closure of a set $E$, by ${\rm int}(E)$ the topological interior of $E$, and by $\p E$ the topological boundary of $E$.
    \item $\bsmu_r$ is the homothety map $y\mapsto ry$;
    \item $\bstau_x$ is the translation map $y\mapsto y-x$;
    \item $\bseta_{x,r}$ is the composition $\bsmu_{1/r}\circ\bstau_x$, i.e., $y\mapsto\frac{y-x}r$;
    \item The \textit{tangent space} of any subset $A\subset\mfR^{n+1}$ at $p$, denoted by $T_pA$, is defined as the set of vectors $v\in\mfR^{n+1}$ satisfying: for every $\ep>0$, there exist $x\in A$, $r>0$, such that $\abs{x-p}<\ep, \abs{\bsmu_r\circ\bstau_p(x)-v}<\ep$.

    \item $\mcH^k$ is the $k$-dimensional Hausdorff measure on $\mfR^{n+1}$ and $\om_k$ is the $\mcH^k$-measure of $k$-dimensional unit ball;
    \item $B_r(x)$ is the closed ball in $\mfR^{n+1}$, centered at $x$ with radius $r>0$;
    \item For a Radon measure $\mu$ on $\mfR^{n+1}$:
    \begin{enumerate}
        \item ${\rm spt}\mu$ is the \textit{support} of the measure $\mu$.
        For $f:\mfR^{n+1}\ra\mfR^{n+1}$ proper, the \textit{push-forward} of $\mu$ through $f$ is the outer measure $(f)_\ast\mu$ defined by the formula
        \eq{
        \left((f)_\ast\mu\right)(E)
        =\mu\left(f^{-1}(E)\right),\quad E\subset\mfR^{n+1}.
        }
        See \cite[Section 2.4]{Mag12};
        \item For $x\in\mfR^{n+1}$ and $k\in\mbN$, the \textit{upper} and the \textit{lower $k$-densities} of $\mu$ at $x$ are given by
        \eq{
        \Theta^{\ast k}(\mu,x)=\limsup_{r\searrow0}\frac{\mu(B_r(x))}{\om_kr^k},\quad
        \Theta^{k}_\ast(\mu,x)=\liminf_{r\searrow0}\frac{\mu(B_r(x))}{\om_kr^k}.
        }
        If the above limits coincide, then we denote by $\Theta^k(\mu,x)$ this common value, which is called \textit{$k$-density} of $\mu$ at $x$;
        \item We say that $\tilde\mu$ is a \textit{k-blow-up} of $\mu$ at $x$ if there exists a sequence $r_j\searrow0$ such that
        \eq{
        \mu_j\coloneqq\frac1{r_j^k}(\bseta_{x,r_j})_\ast\mu\wsc\tilde\mu
        }
        as measures.
        The collection of $k$-blow-ups of $\mu$ at $x$ is denoted by ${\rm Tan}^k(\mu,x)$.
        If $\Theta^{\ast k}(\mu,x)<\infty$ then ${\rm Tan}^k(\mu,x)$ is non-empty by the Banach-Alaoglu Theorem.
        If $\Theta^k_\ast(\mu,x)>0$ then any $k$-blow-up of $\mu$ at $x$ is non-trivial.
        \item We say that $\mu$ is \textit{$k$-rectifiable} if there exists a $k$-rectifiable set $M$ and a positive function $\theta\in L^1_{\rm loc}(M,\mcH^k)$ such that $\mu=\theta\mcH^k\llcorner M$.
        A set $M$ is \textit{$k$-rectifiable} if it can be covered, up to a $\mcH^k$-negligible set, by a countable family of $C^1$, $k$-dimensional submanifolds of $\mfR^{n+1}$.
        One can also write $M$ as a disjoint union
        \eq{\label{eq:rectifiable-M}
        M=\bigcup_{j=0}^\infty M_j
        }
        where $\mcH^k(M_0)=0$, $M_j$ ($j\geq1$) is $\mcH^k$-measurable subset in $N_j$, with $N_j$ an embedded $C^1$, $k$-dimensional submanifold of $\mfR^{n+1}$, such that
        \eq{
        T_xM=T_xN_j,\quad\mcH^k\text{-a.e. }x\in M_j,
        }
        see e.g., \cite[Remark 11.7]{Simon83}.
    \end{enumerate}
    \item For a domain $\Om\subset\mfR^{n+1}$ of class $C^2$, we work with the following vector fields:
\eq{
\mathfrak{X}(\Om)
=&C^1(\overline\Om,\mfR^{n+1}),\\
\mathfrak{X}_t(\Om)
=&\left\{\varphi\in\mathfrak{X}(\Om):\varphi(x)\in T_xS, \forall x\in S\right\},\\
\mathfrak{X}_\perp(\Om)
=&\left\{\varphi\in\mathfrak{X}(\Om):\varphi(x)\in(T_xS)^\perp, \forall x\in S\right\},\\
\mathfrak{X}_0(\Om)
=&\left\{\varphi\in\mathfrak{X}(\Om):\varphi(x)=0, x\in S\right\},\\
\mathfrak{X}_c(\Om)
=&\left\{\varphi\in\mathfrak{X}(\Om):{\rm spt}\, \varphi\subset\subset\Om\right\}.
}
    \item For a bounded domain $\Om$ of class $C^2$, there exists $\rho_S>0$ (see e.g., \cite[Section 14.6]{GT01}) such that the \textit{normal exponential map}
\eq{
S\times[0,\rho_S)\ra U^+_{\rho_S}(S),\quad(x,\rho)\mapsto x+\rho\nu^S(x)
}
is a diffeomorphism.
Here $U^+_{\rho_S}$ is the one-sided tubular neighborhood of $S$.
We use $d_S$ to denote the signed distance function with respect to $S$ such that $d_S>0$ in $\Om$.
\end{itemize}

The following disintegration for Radon measures is well-known, see e.g., \cite{AFP00,AGS08}.
\begin{lemma}[Disintegration]
Let $\gamma$ be a Radon measure on $X\times Y$ with compact support, where $X,Y$ are metric spaces, and denote by $\pi:X\times Y\ra X$ the projection map.
Then $\mu_\gamma=(\pi)_\ast\gamma$ is a Radon measure, and there is a $\mu_\gamma$-a.e. uniquely determined family $(\gamma^x)_{x\in X}$ of Radon probability measures on $Y$, such that for any Borel function $\phi:X\times Y\mapsto[0,\infty]$ one has
\eq{
\int_{X\times Y}\phi\rd\gamma
=\int_X\int_Y\phi(x,y)\rd\gamma^x(y)\rd\mu_\gamma(x).
}
\end{lemma}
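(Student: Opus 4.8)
The plan is to follow the classical functional-analytic proof of the disintegration theorem: first reduce to a compact, hence separable, setting, and then produce the fibre measures $\gamma^x$ by a Radon--Nikodym construction applied to a countable dense family of test functions. For the reduction, since $\gamma$ has compact support $K\subset X\times Y$ it is concentrated on $K_X\times K_Y$, where $K_X=\pi(K)$ and $K_Y$ is the projection of $K$ onto $Y$; both are compact metric spaces, so I would regard $\gamma$ as a finite Borel measure on the compact metric space $K_X\times K_Y$ and construct $\gamma^x$ only for $x\in K_X$ (setting it arbitrarily off $K_X$, where $\mu_\gamma$ vanishes). The gain is that $C(K_Y)$ is then separable, which is the crucial structural point, while $\mu_\gamma=(\pi)_\ast\gamma$ is a finite Borel measure on the compact metric space $K_X$, hence automatically a Radon measure with compact support, giving the first assertion. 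Henceforth I write $X,Y$ for $K_X,K_Y$.

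Next I would, for each $f\in C(Y)$, introduce the signed measure $\nu_f(A)\coloneqq\int_{A\times Y}f(y)\rd\gamma(x,y)$ on $X$; since $\abs{\nu_f}\leq\norm{f}_\infty\mu_\gamma$, the Radon--Nikodym theorem provides $g_f=\rd\nu_f/\rd\mu_\gamma\in L^\infty(\mu_\gamma)$ with $\norm{g_f}_{L^\infty(\mu_\gamma)}\leq\norm{f}_\infty$, with $g_f\geq0$ when $f\geq0$, and with $g_1\equiv1$. Fixing a countable $\mbQ$-linear subspace $\mathcal D\subset C(Y)$ that is dense and contains the constants, and Borel representatives $\tilde g_f$ of $g_f$ for $f\in\mathcal D$, I would discard a single $\mu_\gamma$-null set $N$ off of which $f\mapsto\tilde g_f(x)$ is $\mbQ$-linear, positive, normalized, and bounded by $\norm{f}_\infty$ on $\mathcal D$; for $x\notin N$ this functional then extends uniquely to a positive normalized linear functional on $C(Y)$, i.e.\ by the Riesz representation theorem to a Radon probability measure $\gamma^x$ on $Y$, and for $x\in N$ I would let $\gamma^x$ be a fixed probability measure. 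Measurability of $x\mapsto\int_Y f\rd\gamma^x$ for $f\in C(Y)$ follows from that of the $\tilde g_f$ and the density of $\mathcal D$, and hence $x\mapsto\gamma^x(B)$ is $\mu_\gamma$-measurable for Borel $B\subset Y$ by a monotone-class argument.

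To verify the disintegration identity, I would observe that by construction it holds for $\phi(x,y)=\mathbf 1_A(x)f(y)$ with $A\subset X$ Borel and $f\in\mathcal D$; it then extends to $f\in C(Y)$ by density, hence to all bounded Borel $f$ (two Radon measures on $Y$ agreeing on $C(Y)$ coincide), so that it holds for $\phi(x,y)=\mathbf 1_A(x)\mathbf 1_B(y)$ with $A\subset X$, $B\subset Y$ Borel. Since such products generate the Borel $\sigma$-algebra of $X\times Y$ — which, by separability, equals the product $\sigma$-algebra — a functional monotone-class argument extends the identity to all bounded Borel $\phi\geq0$, and monotone convergence to all Borel $\phi:X\times Y\to[0,\infty]$. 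For $\mu_\gamma$-a.e.\ uniqueness, if $(\gamma^x)$ and $(\tilde\gamma^x)$ both satisfy the formula then, for each fixed $f\in C(Y)$, the maps $x\mapsto\int_Y f\rd\gamma^x$ and $x\mapsto\int_Y f\rd\tilde\gamma^x$ are both Radon--Nikodym derivatives of $\nu_f$ with respect to $\mu_\gamma$, hence agree $\mu_\gamma$-a.e.; running $f$ through a countable dense subset of $C(Y)$ and intersecting the null sets gives $\gamma^x=\tilde\gamma^x$ for $\mu_\gamma$-a.e.\ $x$.

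The hard part will be the step in the second paragraph where one passes from the countable family $\mathcal D$ to all of $C(Y)$ with the exceptional null sets under control — that is, checking that, off a single $\mu_\gamma$-null set, the functional $f\mapsto\tilde g_f(x)$ is simultaneously defined, $\mbQ$-linear, and $\norm{\cdot}_\infty$-bounded on $\mathcal D$ so that it extends, and that the fibre measures thus obtained depend measurably on $x$. This is exactly where separability of $C(Y)$, secured by the compact-support reduction, is indispensable; every other step is a routine approximation or monotone-class argument. One could alternatively invoke the existence of a linear lifting of $L^\infty(\mu_\gamma)$ to sidestep the null-set bookkeeping, at the price of a less elementary ingredient; the full details either way are in \cite{AFP00,AGS08}.
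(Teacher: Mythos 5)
The paper does not prove this lemma; it is stated as well known, with the reader referred to \cite{AFP00,AGS08}. There is therefore no in-paper argument to compare against, and your task is really to reconstruct the standard disintegration theorem. Your sketch does this correctly and along the usual lines: reduce to the compact metric setting to secure separability of $C(K_Y)$, build the fibre functionals $f\mapsto g_f(x)$ by Radon--Nikodym differentiation of the family $\nu_f(A)=\int_{A\times Y}f\,\rd\gamma$, control the exceptional set by restricting to a countable $\mbQ$-linear dense $\mathcal D\ni 1$, extend by continuity and apply Riesz, and then push the identity from $\mathbf 1_A f$ with $f\in\mathcal D$ through density, products of indicators, and a monotone-class/monotone-convergence pass. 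The uniqueness argument via Radon--Nikodym derivatives on a countable dense family is also the standard one. This is essentially the proof given in the cited references.

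One small point worth making explicit: you list positivity of $f\mapsto\tilde g_f(x)$ on $\mathcal D$ among the properties to preserve before extending, and positivity of the extension is not automatic from positivity on an arbitrary dense subspace. Here it is harmless for two reasons. First, since $1\in\mathcal D$ and constants are in $\mathcal D$, nonnegative functions in $\mathcal D$ are dense among nonnegative functions in $C(Y)$ (approximate $f\geq0$ by $g\in\mathcal D$ with $\|f-g\|_\infty<\ep$, $\ep\in\mbQ$, and replace $g$ by $g+\ep\geq0$), so positivity does carry over to the extension. Second, and more slickly, once you know the extension $\Lambda_x$ satisfies $\|\Lambda_x\|\leq1=\Lambda_x(1)$, positivity is automatic for linear functionals on $C(K)$ with norm attained at the constant $1$, so you need not track positivity on $\mathcal D$ at all. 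Either way the proof closes; I flag it only because it is exactly the kind of detail that the null-set bookkeeping tends to obscure.
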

The following lemma will be used in the paper. 
\begin{lemma}[Marstrand-Mattila rectifiability criterion, {\cite[Theorem 5.1]{DeLellis08}}]\label{Lem:M-M-Reciti-Crit}
Let $k\in\mbN$ with $k\leq n+1$.
If a positive Radon measure on $\mfR^{n+1}$, say $\mu$, satisfies for $\mu$-a.e. $x\in\mfR^{n+1}$
\begin{enumerate}
    \item $0<\Theta^k_\ast(\mu,x)\leq\Theta^{\ast k}(\mu,x)<\infty$.
    \item Every $k$-blow-up of $\mu$ at $x$ takes the form $\theta\mcH^k\llcorner{L^k}$ for some $k$-dimensional linear subspace $L^k\subset\mfR^{n+1}$.
\end{enumerate}
Then $\mu$ is $k$-rectifiable.
\end{lemma}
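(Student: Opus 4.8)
Since this lemma is quoted verbatim from \cite[Theorem 5.1]{DeLellis08}, I will only outline the argument I would follow. The plan is to reduce the statement to killing a purely unrectifiable remainder, and then to dispose of that remainder by the Besicovitch--Federer projection theorem.

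First I would pass to a finite measure: restricting $\mu$ to the full-measure set where (1)--(2) hold and to a large ball, I may assume $\mu(\mfR^{n+1})<\infty$ and $0<\Theta^k_\ast(\mu,x)\leq\Theta^{\ast k}(\mu,x)<\infty$ for every $x\in{\rm spt}\,\mu$. A Besicovitch-type covering/comparison argument then gives, using only the density bounds, that $\mu$ is concentrated on a set $A$ which is $\sigma$-finite with respect to $\mcH^k$ and that $\mcH^k\llcorner A\ll\mu\ll\mcH^k\llcorner A$; hence $\mu=\theta\,\mcH^k\llcorner A$ with $\mcH^k(A)<\infty$ and $\theta$ comparable to the density. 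It therefore suffices to prove that $A$ is $k$-rectifiable.

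Next I would split $A=A_{\rm r}\cup A_{\rm u}$ into a $k$-rectifiable part and a purely $k$-unrectifiable part, a decomposition available for any set of finite $\mcH^k$-measure. On $A_{\rm r}$ there is nothing to prove, so everything reduces to showing $\mcH^k(A_{\rm u})=0$. This is where hypothesis (2) enters: at $\mu$-a.e.\ $x\in A_{\rm u}$, assumption (2) together with $\Theta^k_\ast(\mu,x)>0$ forces every element of ${\rm Tan}^k(\mu,x)$ to be a \emph{nontrivial} flat measure $\theta_x\,\mcH^k\llcorner L^k_x$ (one can also check that $L^k_x$ is independent of the chosen blow-up, i.e.\ is an approximate tangent plane, by a connectedness argument in the space of tangent measures, though this is not strictly needed). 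Arguing by contradiction, if $\mcH^k(A_{\rm u})>0$ then the flatness of all tangent measures on a positive-measure subset yields, via a quantitative ``all small scales look like $L^k_x$'' estimate, either a single Lipschitz $k$-graph covering a positive-$\mcH^k$-measure piece of $A_{\rm u}$, or the statement that the orthogonal projection of $A_{\rm u}$ onto some $k$-plane has positive $\mcH^k$-measure. Either conclusion contradicts pure unrectifiability --- the latter through the Besicovitch--Federer projection theorem. Hence $\mu(A_{\rm u})=0$ and $\mu=\theta\,\mcH^k\llcorner A_{\rm r}$ is $k$-rectifiable.

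The hard part is this last step: converting the purely infinitesimal information ``every blow-up is a flat $k$-plane'' into the global conclusion that the carrying set is rectifiable. This is the substantive content of the Marstrand--Mattila criterion, and it rests on the Besicovitch--Federer projection theorem (or, equivalently, on the original Marstrand--Mattila Lipschitz-approximation machinery). The upper density bound is what makes blow-ups exist and be locally finite, while the lower density bound is precisely what rules out the degenerate possibility of a vanishing blow-up; both are genuinely used.
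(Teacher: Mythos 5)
This lemma is quoted in the paper verbatim from \cite[Theorem 5.1]{DeLellis08} and is not proved there, so there is no in-paper argument to compare against; your sketch has to stand on its own as an outline of the classical Marstrand--Mattila proof. Your reduction steps are fine and standard: restricting to the good set, using the two-sided density bounds to get $\mu=\theta\,\mcH^k\llcorner A$ with $A$ $\sigma$-finite for $\mcH^k$, and splitting $A$ into rectifiable and purely unrectifiable parts. You also correctly locate the substantive content in the passage from ``every blow-up is a flat $k$-plane'' to rectifiability of the carrier.

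There is, however, a genuine flaw in the second branch of your concluding dichotomy. Showing that the orthogonal projection of $A_{\rm u}$ onto \emph{some} $k$-plane has positive $\mcH^k$-measure does \emph{not} contradict the Besicovitch--Federer projection theorem: that theorem only asserts that \emph{almost every} projection of a purely unrectifiable set is null, and purely unrectifiable sets can have positive (even full-interval) projections in exceptional directions --- the four-corner Cantor set is the standard example. To run a projection-theorem argument you would need a positive-measure family of planes with positive projections, which is not what the flat-blow-up hypothesis hands you at a single point. In fact the proof in \cite{DeLellis08} avoids the projection theorem altogether: it deduces from (1)--(2) a ``weak $k$-dimensional linear approximation property'' (at most points and most scales the measure concentrates near a $k$-plane, with quantitative control coming from the lower density bound), and then runs the original Marstrand--Mattila stopping-time construction to cover a positive-measure piece by a Lipschitz $k$-graph. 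That construction is exactly your first branch, and it is the one you should keep; the Besicovitch--Federer alternative should be dropped. Note also that the upper density bound alone does not make blow-ups ``flat'' --- Preiss's examples show that flat tangent measures a.e.\ without the lower density bound do not imply rectifiability --- so the lower bound is used twice: once to make blow-ups nontrivial, and once quantitatively inside the graph construction.
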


\subsection{Varifolds}\label{Sec-2-vflds}

Let us begin by recalling some basic concepts of varifolds, we refer to \cite{Allard72,Simon83} for detailed accounts.

For $1\leq m\leq n+1$ we call $G(m,n+1)$ the \textit{Grassmannian} of the un-oriented $m$-palnes of $\mfR^{n+1}$.
For $U\subset\mfR^{n+1}$, $G_m(U)\coloneqq U\times G(m,n+1)$ denotes the \textit{trivial Grassmannian bundle} over $U$.
The space of \textit{$m$-varifolds} on $U\subset\mfR^{n+1}$, denoted by $\mfV^m(U)$, is the set of all positive Radon measures on the Grassmannian $G_m(U)$ equipped with the weak topology.
The \textit{weight measure} of a varifold $V\in\mfV^m(U)$, denoted by $\mu_V$, is the push-forward measure $(\pi)_\ast V$, where $\pi:G_m(U)\ra U$ is the canonical projection map.
For any Borel set $E\subset\mfR^{n+1}$, we denote by $V\llcorner E$ the {restriction} of $V$ to $E\times G(m,n+1)$. By \textit{support} of $V$ we mean ${\rm spt}\, \mu_V$, which is the smallest closed subset $B\subset\mfR^{n+1}$ such that $V\llcorner(\mfR^{n+1}\setminus B)=0$.
For any diffeomorphism $f:U\ra\mfR^{n+1}$, the continuous \textit{push-forward map} $f_\#:\mfV^m(U)\ra\mfV^m(f(U))$ is defined as in \cite[(39.1)]{Simon83}.
Note that this is not the push-forward of Radon measures introduced above,
therefore we adopt different notations.

Let $\Om$ be a domain of class $C^2$ and
let $V\in{\bf V}^m(\overline\Om)$. If $\varphi\in\mathfrak{X}(\Om)$ generates a one-parameter family of diffeomorphisms $\Phi_t$ of $\mfR^{n+1}$ with $\Phi_t(\overline\Om)\subset\overline\Om$ (at a point $x$ on $\p\Om$, one considers the tangent space $T_x\Om$ as the half $(n+1)$-space obtained by the blow-up of $\Om$ at $x$), then $(\Phi_t)_\#V\in{\bf V}^m(\overline\Om)$ and its \textit{first variation} with respect to $\varphi$ is, see \cite[(4.2), (4.4)]{Allard72},
\eq{\label{defn-1st-variationformula}
    \delta V(\varphi)
    \coloneqq\frac{\rd}{\rd t}_{|_{t=0}}\mu_{(\Phi_t)_\# V}(\mfR^{n+1})
    =\int_{G_m(\overline\Om)}{\rm div}_P\varphi(x)\rd V(x,P),
}
where ${\rm div}_P\varphi(x)=\sum_i\left<\na_{e_i}\varphi, e_i\right>$ and $\{e_1,\ldots,e_n\}\subset P$ is any orthonormal basis.
We say that $V\in{\bf V}^m(\overline\Om)$ has \textit{bounded first variation} if
\eq{
\sup\{\abs{\de V(\varphi)}:\varphi\in\mathfrak{X}(\Om),\abs{\varphi}\leq1\}<+\infty.
}

As in \cite[Definition 42.3]{Simon83}, we denote ${\rm VarTan}(V,x)$ to be the set of \textit{varifold tangents} of $V$ at  $x\in{\rm spt}\,\mu_V$.
By the compactness of Radon measures \cite[Theorem 4.4]{Simon83}, ${\rm VarTan}(V,x)$ is compact and non-empty provided that the upper density $\Theta^{\ast m}(\mu_{V},x)$ is finite.
Moreover, there exists a non-zero element in ${\rm VarTan}(V,x)$ if and only if $\Theta^{\ast m}(\mu_V,x)>0$, see \cite[3.4]{Allard72}.
\subsection{Rectifiable varifolds}
Given a $k$-rectifiable measure $\mu=\theta\mcH^k\llcorner M$, the naturally induced \textit{$k$-rectifiable varifold} is given by
\eq{
V=\theta\mcH^k\llcorner M\otimes\de_{T_xM},
}
where $T_xM$ is the \textit{approximate tangent space} of $M$ at $x$, which exists $\mcH^k$-a.e., $\de_{T_xM}$ is the \textit{Dirac measure} and $\theta$ is called the \textit{multiplicity function}.
If $\theta$ only takes integer values, then the rectifiable varifold $V$ is called \textit{integral varifold}.
We denote by $\mfRV^k(U)$ the set of \textit{rectifiable $k$-varifolds} in $U\subset\mfR^{n+1}$, and by ${\bf IV}^k(U)$ the set of \textit{integral $k$-varifolds} in $U\subset\mfR^{n+1}$.

\subsection{Allard's varifolds with boundary}\label{Sec-2-Allard-vfld-bdry}
We recall here Allard's notions for readers' convenience.

Given $2\leq m\leq n+1$, let $B$ be a smooth $(m-1)$-dimensional submanifold of $\mfR^{n+1}$, let $U$ be a tubular neighborhood of $B$ on which the distance function to $B$ is smooth, and let $V\in{\bf V}^m(U)$.
We could consider the boundary behavior of $V$ near $B$ if
\begin{enumerate}
    \item $\mu_V(U\cap B)=0$;
    \item $\norm{\de V}\llcorner(U\setminus B)$ is a Radon measure on $U$, where $\norm{\de V}$ is the total variation of the first variation $\de V$.
\end{enumerate}
This is somewhat a Dirichlet boundary condition of $V$ since it is shown in \cite[Section 3]{Allard75} that for any such $V$, $\norm{\de V}$ is a Radon measure on $U$, and the representing vector of $\de V$ along $B$ points normal to $B$.

\subsection{Prescribed contact angle condition}
We discuss Definition \ref{Defn:G_m,beta(S)} in the following.

Note that condition ($i$) infers that $P$ meets $S$ transversally since $\sin\beta(x)>0$, and hence we have ${\rm dim}\left(P\cap T_xS\right)=m-1$.
Condition ({ii}) is always satisfied when either $\beta(x)=\frac\pi2$, since in that case $P(\nu^S(x))=\nu^S(x)$;
or when $m=n$, since in that case $P$ and $T_xS$ are $n$-planes intersecting transversally along the common $(n-1)$-plane $(P\cap T_xS)$, so that the projection $P(\nu^S(x))$ is always perpendicular to this $(n-1)$-plane.
{
It is clear that from condition ($i$) 
we have
\eq{\label{eq:same-bundles}
G_{m,\beta}(S)
=G_{m,\pi-\beta}(S).
}
}

The fact that $\beta$ serves as the contact angle prescribing function is revealed by the following equivalent definition of $G_{m,\beta}(S)$.

\begin{remark}[Prescribed contact angle]
\normalfont
The function
$\beta$ in Definition \ref{Defn:G_m,beta(S)} is a function prescribing the contact angle in the following sense:
Fix a point $x\in S$, 
the relation
\eq{
\left<\xi,\nu^S(x)\right>=\cos\beta(x), \quad\xi\in\mfS^n
}
determines an $(n-1)$-dimensional sphere on the unit sphere $\mfS^n$, and for each point  $\xi$ on this sphere, we naturally identify it with an $n$-plane $P^n_\xi\in G(n,n+1)$ by the relation $\xi\perp P^n_\xi$. Let $G_{n,\beta}(x)$ be the collection of all such $n$-planes.
Clearly $P_\xi^n\cap T_xS$ is an $(n-1)$-plane.
In particular, once a $P^n_\xi$ is fixed we write
\eq{
\mfn(x,P^n_\xi)
\coloneqq\frac{P^n_\xi(\nu^S(x))}{\abs{P^n_\xi(\nu^S(x))}}.
}
Note that $\mfn$ is well-defined since $\beta(x)\in(0,\pi)$.

For a fixed $m\in\mbN$ with $m\leq n-1$, let $G_{m,\beta}(x)$ be the collection of $m$-planes spanned by $\mfn(x,P^n_\xi),e_1$, $\ldots,e_{m-1}$, where $P^n_\xi\in G_{n,\beta}(x)$,
$\{e_i\}$ is an orthonormal basis satisfying
\eq{
{\rm span}\{e_1,\ldots,e_{m-1}\}\subset \left(P^n_\xi\cap T_xS\right),\qquad\text{  }{\rm span}\{e_1,\ldots,e_{m-1}\}\perp\mfn(x,P^n_\xi).
}
$G_{m,\beta}(S)$ is then obtained by
collecting all such $G_{m,\beta}(x)$ for every $x\in S$.
For the case $m=1$ we simply have
\eq{
G_{1,\beta}(x)=\bigcup_{P^n_\xi\in G_{n,\beta}(x)}{\rm span}\{\mfn(x,P^n_\xi)\}.
}

\end{remark}

\section{Varifolds with capillary boundary 
}\label{Sec:3}

Let $\Om,\beta$ be given as in Definition \ref{Defn:G_m,beta(S)}.
Let $V\in{\bf V}^m_\beta(\Om)$ with boundary varifold $\Gamma$ in the sense of Definition \ref{Defn:vfld-prescribed-bdry}.
We begin with the following simple observations:
\begin{enumerate}
    \item From \eqref{eq:same-bundles} it is clear that, if $V$ has prescribed contact angle $\beta$ with $S$ along $\Gamma$, then $V$ also has prescribed contact angle $(\pi-\beta)$ with $S$ along the same boundary varifold $\Gamma$.
Therefore
\eq{
{\bf V}^m_\beta(\Om)
={\bf V}^m_{\pi-\beta}(\Om).
}
    \item For $\mfn_V$ defined through \eqref{defn:n_V-intro}, we have for $\sigma_\Gamma$-a.e. $x$
\eq{\label{eq:n_V,nu^S=sinbeta}
\left<\mfn_V(x),\nu^S(x)\right>
=\int_{G_{m,\beta}(x)}\left<\mfn(x,P),\nu^S(x)\right>\rd\Gamma^x(P)
=\sin\beta(x),
}
where we have used \eqref{defn:n_V-intro}, \eqref{eq:n(x,Q),nu^S}, and the fact that $\Gamma^x$ is a Radon probability measure.
From \eqref{defn:n_V-intro} we see that $\mfn_V$ is a $\sigma_\Gamma$-measurable vector field on $S$ with $\mfn_V\in L^\infty(S,\sigma_\Gamma)$,
and hence approximate continuous with respect to $\sigma_\Gamma$ at $\sigma_\Gamma$-a.e. points.
Note that $\mfn_V$ is in general {\it not} a unit vector.
In fact, it is easy to see that $\norm{\mfn_V}_{L^\infty(\sigma_\Gamma)}\leq1$.
    \item We define \textit{generalized inwards pointing co-normal} with respect to $S$ as
\eq{\label{defn:n_W}
\abs{\cos\beta(x)}\mfn_W(x)
&\coloneqq{\rm sgn}(\cos\beta(x))\int_{G_{m,\beta}(x)}T_xS\left(\mfn(x,P)\right)\rd\Gamma^x(P) \\
&
={\rm sgn}(\cos\beta(x))T_xS\left(\mfn_V(x)\right)
}
whence $\Gamma^x$ is a Radon probability measure on $G_{m,\beta}(x)$ and we adopt the notation ${\rm sgn}(0)=0$, so that when $\beta(x)=\frac\pi2$, \eqref{defn:n_W} reads as ``$0=0$''.
Clearly
\eq{
\left<\mfn_W(x),\nu^S(x)\right>=0.
}
Moreover, since \eqref{defn:1st-vairation-intro2} deals with $\varphi\in\mathfrak{X}_t(\Om)$, we can now rewrite it as:
\eq{
\de V(\varphi)
=&-\int_{\overline\Om}\left<\mfH(x),\varphi(x)\right>\rd\mu_V(x)-\int_S\left<\mfn_V(x),\varphi(x)\right>\rd\sigma_\Gamma(x)\\
=&-\int_{\overline\Om}\left<\mfH(x),\varphi(x)\right>\rd\mu_V(x)-\int_S\cos\beta(x)\left<\mfn_W(x),\varphi(x)\right>\rd\sigma_\Gamma(x),
}
where we have used the fact that $T_xS(\mfn_V(x))=\cos\beta(x)\mfn_W(x)$ by \eqref{defn:n_W}.
\end{enumerate}


\subsection{Basic properties}

\begin{proposition}\label{Prop:unit-vectors}
Let $\Om,\beta$ be as in Definition \ref{Defn:G_m,beta(S)}.
Let $V\in{\bf V}^m_\beta(\Om)$ with boundary varifold $\Gamma$.
If $\mfn_V(x_0)$ is well-defined, that is, $\Gamma^{x_0}$ is a Radon probability measure on $G_{m,\beta}(x_0)$,  for some $x_0\in{\rm spt}\,\sigma_\Gamma$, then $\abs{\mfn_V(x_0)}=1$ if and only if
\eq{
\mfn(x,P)
=\mfn_V(x_0),\text{ for }\Gamma^{x_0}\text{ -a.e. }P\in G_{m,\beta}(x_0).
}
Moreover, $\abs{\mfn_V(x_0)}=1$ is equivalent to $\abs{\mfn_W(x_0)}=1$, provided that $\cos\beta(x_0)\neq0$.
In particular, under the above conditions if $m=1$ then $\abs{\mfn_V(x_0)}=1$ if and only if
\eq{
\Gamma^{x_0}=\de_{P_0},\quad \text{ for }\mfn_V(x_0)=\mfn(x,P_0)\text{ with }P_0\in G_{1,\beta}(x_0).
}
\end{proposition}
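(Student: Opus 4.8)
The plan is to exploit that $\mfn(x,P)$ is a \emph{unit} vector for every $(x,P)\in G_{m,\beta}(x_0)$ together with the fact that $\Gamma^{x_0}$ is a probability measure, so that $\mfn_V(x_0)=\int_{G_{m,\beta}(x_0)}\mfn(x,P)\rd\Gamma^{x_0}(P)$ is an average of unit vectors. First I would record the elementary convexity fact: if $\mu$ is a Borel probability measure on the closed unit sphere $\mfS^n\subset\mfR^{n+1}$ and $v\coloneqq\int_{\mfS^n}\xi\rd\mu(\xi)$, then $\abs{v}\leq1$ always, and $\abs{v}=1$ holds if and only if $\mu=\de_{v}$, i.e.\ $\mu$ is concentrated at the single point $v$. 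This is the strict-convexity characterization of equality in Jensen's inequality for the (strictly convex on the ball) function $\xi\mapsto\abs{\xi}$, or equivalently: $1=\abs{v}^2=\int\left<\xi,v\right>\rd\mu(\xi)$ forces $\left<\xi,v\right>=1$ for $\mu$-a.e.\ $\xi$, and since $\abs{\xi}=\abs{v}=1$ this means $\xi=v$ for $\mu$-a.e.\ $\xi$. Applying this with $\mu$ the push-forward of $\Gamma^{x_0}$ under $P\mapsto\mfn(x,P)$ (which is continuous by the remark after \eqref{defn:n(x,P)}) gives exactly the first equivalence: $\abs{\mfn_V(x_0)}=1$ iff $\mfn(x,P)=\mfn_V(x_0)$ for $\Gamma^{x_0}$-a.e.\ $P$.

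Next I would handle the equivalence with $\abs{\mfn_W(x_0)}=1$ when $\cos\beta(x_0)\neq0$. By \eqref{eq:n_V,nu^S=sinbeta} we always have $\left<\mfn_V(x_0),\nu^S(x_0)\right>=\sin\beta(x_0)$, and by \eqref{defn:n_W} the tangential part $T_{x_0}S(\mfn_V(x_0))=\cos\beta(x_0)\mfn_W(x_0)$ with $\left<\mfn_W(x_0),\nu^S(x_0)\right>=0$. Hence the orthogonal decomposition along $S$ reads
\eq{
\abs{\mfn_V(x_0)}^2
=\sin^2\beta(x_0)+\cos^2\beta(x_0)\abs{\mfn_W(x_0)}^2.
}
Since $\cos\beta(x_0)\neq0$, the right-hand side equals $1$ precisely when $\abs{\mfn_W(x_0)}^2=1$ (using $\sin^2+\cos^2=1$), which gives the claimed equivalence. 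This step is just bookkeeping with \eqref{eq:n_V,nu^S=sinbeta} and \eqref{defn:n_W}; no real obstacle here.

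Finally, for the case $m=1$: here each $P\in G_{1,\beta}(x_0)$ is a line, and condition (ii) of Definition \ref{Defn:G_m,beta(S)} forces $P\cap T_{x_0}S=\{0\}$, so $P=\mathrm{span}\{\mfn(x,P)\}$ and the assignment $P\mapsto\mfn(x,P)$ is injective on $G_{1,\beta}(x_0)$ (two lines coincide iff their chosen unit vectors are $\pm$ each other, but $\left<\mfn(x,P),\nu^S(x_0)\right>=\sin\beta(x_0)>0$ pins down the sign). Therefore the first equivalence, which says $\mfn(x,P)$ is $\Gamma^{x_0}$-a.e.\ equal to the constant $\mfn_V(x_0)$, translates via this injectivity into: $\Gamma^{x_0}$ is concentrated at the single plane $P_0$ with $\mfn(x,P_0)=\mfn_V(x_0)$, i.e.\ $\Gamma^{x_0}=\de_{P_0}$ with $P_0\in G_{1,\beta}(x_0)$.

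I expect the only genuinely load-bearing point to be the strict-convexity / equality-in-Jensen argument of the first paragraph (the ``$1=\int\left<\xi,v\right>\rd\mu$ forces $\xi=v$ a.e.'' step); everything else is linear algebra on the normal/tangential splitting of $S$ and the injectivity of $P\mapsto\mfn(x,P)$ in the $m=1$ case. One should be slightly careful that $\Gamma^{x_0}$ is only defined for $\sigma_\Gamma$-a.e.\ $x_0$, but the statement already builds this in by hypothesizing $\Gamma^{x_0}$ is a genuine probability measure on $G_{m,\beta}(x_0)$.
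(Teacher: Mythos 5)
Your proposal is correct and follows essentially the same route as the paper: the equality case is extracted from $0=\int_{G_{m,\beta}(x_0)}\bigl(1-\left<\mfn(x,P),\mfn_V(x_0)\right>\bigr)\rd\Gamma^{x_0}(P)\geq0$, the second claim is the normal/tangential Pythagorean splitting via \eqref{eq:n_V,nu^S=sinbeta} and \eqref{defn:n_W}, and the $m=1$ case uses injectivity of $P\mapsto\mfn(x,P)$ on lines. (Only cosmetic caveat: $\xi\mapsto\abs{\xi}$ is not strictly convex, but your explicit "$1=\int\left<\xi,v\right>\rd\mu$ forces $\xi=v$ a.e." argument is exactly the correct one and is what the paper uses.)
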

\begin{proof}
Since $\mfn_V(x_0)$ and $\mfn(x,P)$ are unit, we have by virtue of \eqref{defn:n_V-intro}
\eq{
0
=1-\left<\mfn_V(x_0),\mfn_V(x_0)\right>
=&\int_{G_{m,\beta}(x_0)}\{1-\left<\mfn(x,P),\mfn_V(x_0)\right>\}\rd\Gamma^{x_0}(P)
\geq0,
}
which implies, for $\Gamma^{x_0}$-a.e. $P\in G_{m,\beta}(x_0)$
\eq{
\mfn(x,P)
=\mfn_V(x_0).
}
Reversely, it is clear that $\mfn_V(x_0)$ has unit length, thanks to \eqref{defn:n(x,P)}.

Moreover,
thanks to \eqref{eq:n_V,nu^S=sinbeta}, we know that $\abs{\mfn_V(x_0)}=1$ implies
\eq{
\Abs{T_{x_0}S(\mfn_V(x_0))}
=\sqrt{1-\sin^2\beta(x_0)}
=\abs{\cos\beta(x_0)},
}
when $\cos\beta(x_0)\neq0$ we have by virtue of \eqref{defn:n_W} that $\abs{\mfn_W(x_0)}=1$.
We may use a similar argument to show that $\abs{\mfn_W(x_0)}=1$ implies $\abs{\mfn_V(x_0)}=1$.

Finally, recall the definitions of $G_{1,\beta}(x_0)$, $\mfn_V(x_0)$,
we know that there exists a unique $1$-plane $P_0\in G_{1,\beta}(x_0)$ such that
\eq{
\mfn(x,P_0)
=\mfn_V(x_0).
}
It follows that
$\Gamma^{x_0}
=\de_{P_0}$ as measure on $G_{1,\beta}(x_0)$.
\end{proof}

\begin{proposition}\label{Prop:1st-variation}
Let $\Om\subset\mfR^{n+1}$ be a bounded domain of class $C^2$ and $\beta\in C^1(S,(0,\pi))$, let $V\in{\bf V}^m_\beta(\Om)$ with boundary varifold $\Gamma$.
Then $V$ has bounded first variation.
More precisely, there exist a $\mu_V$-measurable vector field $\widetilde H$ on $S$ which is orthogonal to $S$ for $\mu_V$-a.e. $x\in S$ with $\norm{\widetilde H}_{L^\infty(\mu_V)}$ depends only on the second fundamental form of $S$, and a positive Radon measure $\sigma_V^\perp$ on $S$, 
such that for all $\varphi\in\mathfrak{X}(\Om)$
\eq{\label{eq:1st-variation-V_beta}
\de V(\varphi)
=&-\int_{\overline\Om}\left<\mfH,\varphi\right>\rd\mu_V-\int_S\left<\widetilde H,\varphi\right>\rd\mu_V\\
&-\int_{S}\left<\nu^S,\varphi\right>\rd\sigma_V^\perp-\int_{S\cap\{x: \cos\beta(x)\neq0\}}\left<\mfn_V(x),\varphi^T(x)\right>\rd\sigma_\Gamma(x),
}
where $\varphi^T(x)=T_xS(\varphi(x)),\forall x\in S$.
The last integral can be also written as $$\int_S\left<\cos\beta\,\mfn_W,\varphi\right>(x)\rd\sigma_\Gamma(x).$$
Moreover,
\begin{enumerate}
    \item (normal part of the boundary measure): For any $x_0\in S$ and any $\rho\leq \rho_S$, there exists $c=c(\Om)>0$ such that the following local estimate and global estimate hold:
\eq{\label{esti:local-perp}
\sigma_V^\perp(B_{\frac{\rho}2}(x_0))
\leq\frac{c}\rho\mu_V(B_\rho(x_0))
+\int_{B_\rho(x_0)}\abs{\mfH}\rd(\mu_V\llcorner\Om),
}
\eq{\label{esti:global-perp}
\sigma_V^\perp(S)
\leq c\mu_V(\overline\Om)+\int_\Om\abs{\mfH}\rd\mu_V.
}
    \item (tangential part of the boundary measure): For any $x_0\in{\rm spt}\sigma_\Gamma$ at which ${\mfn_V}$ is approximate continuous with respect to $\sigma_\Gamma$ and satisfies
\eq{\label{condi:n_V}
\abs{\mfn_V(x_0)}
\geq\sqrt{\sin^2\beta(x_0)+\ep^2\cos^2\beta(x_0)}\text{ for some }\ep\in(0,1],
}
there exist some $C_0=C_0(\Om)>0,0<\rho_1<\frac{\rho_S}2$, such that for all $0<\rho<\rho_1$,
the following local estimate holds:
\eq{\label{esti:local-tangent}
\ep\abs{\cos\beta(x_0)}\sigma_\Gamma(B_{\frac{\rho}2}(x_0))
\leq\frac{C_0}{\rho}\mu_V(B_{2\rho}(x_0))+\sigma_\Gamma(B_{\rho}(x_0))
+\int_{B_{2\rho}(x_0)}\abs{\mfH}\rd\mu_V.
}
In particular, if
\eqref{defn:genera-capi-bdry-point-1} is satisfied by $x_0$ with the corresponding $\tau_{x_0}^V$, $\ep_{x_0},$ and $\rho_{x_0}$, then the local estimate improves as follows: for any $0<\rho<\min\{\rho_{x_0},\frac{\rho_S}2\}\}$,
\eq{\label{esti:local-tangent-improved}
\ep_{x_0}\sigma_\Gamma(B_{\frac{\rho}2}(x_0))
\leq\frac{C_0}{\rho}\mu_V(B_{2\rho}(x_0))
+\int_{B_{2\rho}(x_0)}\abs{\mfH}\rd\mu_V.
}
\end{enumerate}
\end{proposition}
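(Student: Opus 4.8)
Since $\Om$ is bounded and of class $C^2$, the spaces $G_m(\overline\Om)$ and $G_{m,\beta}(S)$ are compact, so $\mu_V,\Gamma,\sigma_\Gamma$ are finite and $\mfH\in L^1(\mu_V)$; once the identity \eqref{eq:1st-variation-V_beta} is established, every term on its right-hand side is visibly bounded by $C\norm\varphi_{C^0}$, which yields bounded first variation. So the plan is to prove \eqref{eq:1st-variation-V_beta} for all $\varphi\in\mathfrak{X}(\Om)$. First I split off the tangential part: using the field $X_0:=\na d_S$, which is well-defined and $C^1$ on the tubular neighbourhood $U^+_{\rho_S}(S)$ with $DX_0=D^2d_S$ bounded there in terms of the second fundamental form of $S$ (by $C^2$-regularity), together with a cut-off $\chi\equiv1$ near $S$, write $\varphi=\psi+gX_0$ with $\psi:=\varphi-\chi\langle\varphi,X_0\rangle X_0\in\mathfrak{X}_t(\Om)$ and $g:=\chi\langle\varphi,X_0\rangle$, $\norm g_{C^0}\le\norm\varphi_{C^0}$. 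Applying Definition \ref{Defn:vfld-prescribed-bdry} to $\psi$ and disintegrating $\Gamma=\sigma_\Gamma\otimes\Gamma^x$ produces exactly the $\mfH\mu_V$-term and the last, tangential boundary term of \eqref{eq:1st-variation-V_beta} (using that $\langle\mfn(x,P),\psi(x)\rangle$ averages in $P$ to $\langle\mfn_V(x),\varphi^T(x)\rangle=\cos\beta(x)\langle\mfn_W(x),\varphi(x)\rangle$ on $S$). Everything thus reduces to analysing $\delta V(gX_0)=\int g\,{\rm div}_PX_0\,\rd V+\int\langle\na g,P(X_0)\rangle\,\rd V$.

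\textbf{The normal part (the main obstacle).} The first integral is harmless: $\abs{{\rm div}_PX_0}\le C(\Om)$, so it is $\le C(\Om)\norm g_{C^0}\mu_V(\overline\Om)$, and isolating (by disintegrating $V=\mu_V\otimes V^x$) its contribution on $S$ yields precisely the term $-\int_S\langle\widetilde H,\varphi\rangle\,\rd\mu_V$ with $\widetilde H(x):=-\big(\int{\rm div}_PX_0(x)\,\rd V^x(P)\big)\nu^S(x)$, which is orthogonal to $S$ and, since on $S$ one has ${\rm div}_PX_0=\mathrm{tr}_P\mathrm{II}_S$, has $L^\infty(\mu_V)$-norm controlled by $\norm{\mathrm{II}_S}_{L^\infty}$ alone. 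The genuinely delicate term is $\int\langle\na g,P(X_0)\rangle\,\rd V$, which is a priori only bounded by $\norm g_{C^1}$; away from $S$ it is controlled through $\delta V\llcorner\Om=-\mfH\mu_V$, but obtaining $\norm g_{C^0}$-control near $S$ is the heart of the matter. Here I would follow the reflection strategy going back to Gr\"uter--Jost \cite{GJ86} for free boundaries: using $C^2$-regularity, reflect $V$ across $S$ via the normal exponential map $\Sigma:U_{\rho_S}(S)\to U_{\rho_S}(S)$ and consider $\widetilde V:=V\llcorner U^+_{\rho_S}(S)+\Sigma_\#\big(V\llcorner U^+_{\rho_S}(S)\big)$ on the \emph{open} set $U_{\rho_S}(S)$; since the capillary boundary at $S$ becomes interior upon doubling while $\delta V\llcorner\Om$ is already $-\mfH\mu_V$, the doubled varifold has bounded first variation in $U_{\rho_S}(S)$, the non-isometry of $\Sigma$ contributing only a curvature error again absorbed into $\widetilde H$. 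Reading off the $\nu^S$-component of $\delta V$ along $S$ from this shows $\abs{\delta V(gX_0)}\le C(\Om)\big(\mu_V(\overline\Om)+\norm\mfH_{L^1(\mu_V)}\big)\norm g_{C^0}$ and, via a Riesz-representation argument whose positivity comes from testing with a monotone cut-off in $d_S$, identifies the remaining normal boundary contribution as $-\int_S\langle\nu^S,\varphi\rangle\,\rd\sigma^\perp_V$ for a finite positive Radon measure $\sigma^\perp_V$ on $S$. This completes \eqref{eq:1st-variation-V_beta}; establishing that this normal-direction piece is carried by a finite measure rather than a first-order distribution is the step I expect to be the main obstacle, and it is where $C^2$-regularity of $S$, the reflection, and the monotone cut-off are all essential.

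\textbf{The normal estimates.} Granting \eqref{eq:1st-variation-V_beta}, the local estimate \eqref{esti:local-perp} follows by testing it with $\varphi=\zeta X_0$, where $\zeta$ is a cut-off supported in $B_\rho(x_0)$ with $\zeta\equiv1$ on $B_{\frac\rho2}(x_0)$ and $\abs{\na\zeta}\le c/\rho$: on $S$ this field equals $\zeta\nu^S$, so the tangential boundary term of \eqref{eq:1st-variation-V_beta} vanishes and the $\sigma^\perp_V$-term is $\ge\sigma^\perp_V(B_{\frac\rho2}(x_0))$; meanwhile $\abs{\delta V(\zeta X_0)}\le\int\abs{{\rm div}_P(\zeta X_0)}\rd V\le\frac c\rho\mu_V(B_\rho(x_0))$ (using $\abs{{\rm div}_PX_0}\le C(\Om)$, $\abs{\na\zeta}\le c/\rho$ and $\rho\le\rho_S$), the $\widetilde H$-term is $\le\frac c\rho\mu_V(B_\rho(x_0))$, and since $\mfH\perp\nu^S$ on $S$ the $\mfH$-term is $\le\int_{B_\rho(x_0)}\abs\mfH\,\rd(\mu_V\llcorner\Om)$; rearranging gives \eqref{esti:local-perp}, and summing over a finite bounded-overlap cover of the compact hypersurface $S$ by balls $B_{\frac{\rho_S}2}(x_i)$ yields \eqref{esti:global-perp}.

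\textbf{The tangential estimates.} Finally \eqref{esti:local-tangent}--\eqref{esti:local-tangent-improved} follow by testing \eqref{eq:1st-variation-V_beta} with $\varphi=\zeta\big(\tau^V_{x_0}-\langle\tau^V_{x_0},X_0\rangle X_0\big)$, which lies in $\mathfrak{X}_t(\Om)$, equals $\zeta(\tau^V_{x_0})^T$ on $S$, and for which a short computation gives $\langle\mfn_V(x),\varphi(x)\rangle=\zeta(x)\langle\cos\beta(x)\mfn_W(x),\tau^V_{x_0}\rangle$ on $S$; since $\varphi$ is tangential the $\widetilde H$- and $\sigma^\perp_V$-terms drop out, whence $\int_S\zeta\langle\cos\beta\,\mfn_W,\tau^V_{x_0}\rangle\rd\sigma_\Gamma=-\delta V(\varphi)-\int\langle\mfH,\varphi\rangle\rd\mu_V$. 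One bounds the right-hand side as in the previous step by $\frac{C_0}\rho\mu_V(B_{2\rho}(x_0))+\int_{B_{2\rho}(x_0)}\abs\mfH\,\rd\mu_V$; for the left-hand side, \eqref{condi:n_V} is equivalent to $\abs{\mfn_W(x_0)}\ge\ep$, so taking $\tau^V_{x_0}:=\mfn_W(x_0)/\abs{\mfn_W(x_0)}$ and using the approximate continuity of $\mfn_W$ at $x_0$ gives $\langle\cos\beta(x)\mfn_W(x),\tau^V_{x_0}\rangle\ge\frac\ep2\abs{\cos\beta(x_0)}$ for $\sigma_\Gamma$-a.e.\ $x\in B_\rho(x_0)$ off a bad set whose $\sigma_\Gamma$-mass is absorbed into the $\sigma_\Gamma(B_\rho(x_0))$ term, proving \eqref{esti:local-tangent} after choosing $\rho_1$ small; if instead \eqref{defn:genera-capi-bdry-point-1} holds at $x_0$ with its own $\tau^V_{x_0}$, there is no bad set and one directly gets the improved estimate \eqref{esti:local-tangent-improved}.
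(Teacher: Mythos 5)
Your proposal is correct in substance and reaches all the stated conclusions, but it takes a genuinely different route from the paper at the two places where real work is needed. For the existence of the normal boundary measure $\sigma_V^\perp$ and the vector field $\widetilde H$ together with the estimates \eqref{esti:local-perp}--\eqref{esti:global-perp}, the paper does not argue from scratch: it observes that $\de V(\varphi)=-\int\langle\mfH,\varphi\rangle\rd\mu_V$ for $\varphi\in\mathfrak{X}_0(\Om)$ and then invokes \cite[Theorem 1.1]{DeMasi21} wholesale, whereas you sketch a Gr\"uter--Jost-type reflection. That route is viable, but one clause of your sketch is imprecise in a way worth fixing: after doubling, the capillary boundary does \emph{not} become interior --- unlike the orthogonal case, $\de\widetilde V$ retains a singular term along $S$, namely the doubled tangential co-normal measure $-2\cos\beta\,\mfn_W\,\sigma_\Gamma$. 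The correct mechanism is that for any test field $\varphi$ the symmetrized field $\varphi+D\Sigma\cdot(\varphi\circ\Sigma)$ restricts to $2\varphi^T$ on $S$, hence is admissible in Definition \ref{Defn:vfld-prescribed-bdry}, and this (not interior-ness) is what bounds $\de\widetilde V(\varphi)$ by $C\norm{\varphi}_{C^0}$; the retained singular term is harmless because it is already a measure. For the tangential estimate \eqref{esti:local-tangent}, the paper tests \eqref{eq:1st-variation-V_beta} with the \emph{constant} field $\gamma(\abs{x}/\rho)\tau$, which is not tangent to $S$, so it picks up a $\sigma_V^\perp(B_\rho(x_0))$ term that must then be reabsorbed via \eqref{esti:local-perp}; you instead project the field onto $T_xS$, which kills the $\widetilde H$- and $\sigma_V^\perp$-terms at the cost of an extra $C(\Om)\mu_V(B_{2\rho}(x_0))$ from differentiating $\na d_S$, absorbed into $\frac{C_0}{\rho}\mu_V(B_{2\rho}(x_0))$ for $\rho$ bounded. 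Both give the same estimate; yours is marginally cleaner, the paper's avoids differentiating the projection. Your treatment of the approximate-continuity bad set and of the improved estimate under (C1) coincides with the paper's.
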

\begin{remark}\label{Rem:Item-1}
\normalfont
A direct computation shows that
condition \eqref{condi:n_V} is equivalent to requiring that
\eq{\label{condi:n_W}
\abs{\cos\beta(x_0)\mfn_W(x_0)}
\geq\ep\abs{\cos\beta(x_0)},
}
which clearly holds when $\beta(x_0)=\frac\pi2$.
In this case, the local estimate \eqref{esti:local-tangent} says nothing as the LHS is identically $0$.
We remark that  the estimate blows up when $\ep<<1$, since in this case the generalized outer co-normal $\mfn_W$ has a rather small length. But in the ideal (smooth) case, $\mfn_W$ should have unit length.
\end{remark}
\begin{proof}[Proof of Proposition \ref{Prop:1st-variation}]
First we prove that $V$ has bounded first variation with respect to $\mathfrak{X}_t(\Om)$.
In fact, from \eqref{defn:1st-vairation-intro2} we deduce directly
\eq{
\abs{\de V(\varphi)}
\leq\left(\norm{\mfH}_{L^1(\mu_V)}+\sigma_\Gamma(S)\right)\abs{\varphi}_{C^0(\overline\Om)},\quad\forall\varphi\in\mathfrak{X}_t(\Om).
}
Since $S$ is compact we have $\sigma_\Gamma(S)<\infty$, the claimed fact follows.

Then we prove that $V$ has bounded first variation with respect to $\mathfrak{X}_\perp(\Om)$.
To see this we first note that $\mathfrak{X}_0(\Om)\subset\mathfrak{X}_t(\Om)$ so that we deduce from \eqref{defn:1st-vairation-intro2}
\eq{
\de V(\varphi)
=-\int_{\Om}\left<\mfH(x),\varphi(x)\right>\rd\mu_V(x),\quad\forall\varphi\in\mathfrak{X}_0(\Om).
}
Since $\mfH\in L^1(\mu_V)$, we could directly apply \cite[Theorem 1.1]{DeMasi21} to deduce that $V$ has bounded first variation with respect to $\mathfrak{X}_\perp(\Om)$, and there exists a positive Radon measure $\sigma_V^\perp$ on $S$ and a $\mu_V$-measurable vector field $\widetilde H$ on $S$ such that for any $\varphi\in\mathfrak{X}_\perp(\Om)$,
\eq{\label{eq:1st-variation-perp}
\de V(\varphi)
=-\int_\Om\left<\mfH,\varphi\right>\rd\mu_V-\int_S\left<\widetilde H,\varphi\right>\rd\mu_V-\int_S\left<\nu^S,\varphi\right>\rd\sigma_V^\perp,
}
where $\widetilde H$ is orthogonal to $S$ for $\mu_V$-a.e. $x\in S$, $\widetilde H\in L^\infty(S,\mu_V)$ and the $L^\infty$-norm depends only on the second fundamental form of $S$.
In the co-dimension-$1$ case, $\widetilde H(x)={\rm tr}(h^S(x))$ is just the mean curvature of $S$.
Moreover, the local estimate \eqref{esti:local-perp} and the global estimate \eqref{esti:global-perp} hold.

Thus we have shown that $V$ has bounded first variation, and hence thanks to Radon-Nikodym Theorem we can write: for any $\varphi\in\mathfrak{X}(\Om)$
\eq{\label{eq:1st-variation-Radon-Niko}
\de V(\varphi)
=-\int_{\overline\Om}\langle\widehat H,\varphi\rangle\rd\mu_V-\int\left<\eta,\varphi\right>\rd\sigma_V,
}
for some $\widehat H\in L^1(\mu_V)$ and some
positive Radon measure $\sigma_V\perp\mu_V$.
Since any $\varphi\in\mathfrak{X}(\Om)$ can be decomposed as $\varphi=\varphi^T+\varphi^\perp$ where $\varphi^T\in\mathfrak{X}_t(\Om)$ and $\varphi^\perp\in\mathfrak{X}^\perp(\Om)$, we can therefore compare \eqref{eq:1st-variation-Radon-Niko} with \eqref{defn:1st-vairation-intro2} and \eqref{eq:1st-variation-perp} to obtain that
\eq{\label{eq:widehat-H}
\widehat H\cdot\mu_V\llcorner\overline\Om
=\mfH\cdot\mu_V\llcorner\overline\Om+\widetilde H\cdot\mu_V\llcorner S
}
as measure,
and
\eq{
\eta\cdot\sigma_V
=\nu^S\cdot\sigma_V^\perp+T_xS\left(\mfn_V(x)\right)\cdot\sigma_\Gamma
}
as measure supported on $S$, which proves \eqref{eq:1st-variation-V_beta}.

It is thus left to prove the estimate \eqref{esti:local-tangent}.
We assume in the following $\cos\beta(x_0)\neq0$, otherwise the estimate \eqref{esti:local-tangent} is trivial since LHS vanishes.

First note that by \eqref{condi:n_V} and \eqref{condi:n_W}, there exists some unit vector $\tau\in T_{x_0}S$ such that
\eq{
\left<\tau,\cos\beta(x_0)\mfn_W(x_0)\right>
\geq\ep\abs{\cos\beta(x_0)}
>0.
}
Define a $\sigma_\Gamma$-measurable function $g(x)\coloneqq\left<\tau,\cos\beta(x)\mfn_W(x)\right>$, then $g$ is approximate continuous at $x_0$ with respect to $\sigma_\Gamma$.
By virtue of the approximate continuity, we could find some $\rho_1>0$ such that for any $0<\rho<\rho_1$,
\eq{\label{eq:sigma_Gamma(G)}
\sigma_\Gamma(G_\rho)
\coloneqq
\sigma_\Gamma\left(\left\{x\in B_\rho(x_0):\abs{g(x)-g(x_0)}\geq\frac\ep2\abs{\cos\beta(x_0)}\right\}\right)
\leq\frac12\sigma_\Gamma(B_\rho(x_0)).
}

Now we consider a standard smooth cut-off function $\gamma$ on $\mfR$ with $\gamma\equiv1$ on $[0,1-t]$, $\gamma\equiv0$ on $[1,\infty)$, $-\frac2t\leq\gamma'\leq0$ on $\mfR$, for $t>0$ arbitrarily small.
After translation we may assume that $x_0=0$.
Letting $\varphi(x)=\gamma(\frac{\abs{x}}\rho)\tau$,
we have for every $0<\rho<\rho_1$
\eq{\label{eq:integral-g(x)}
&\int_S\left<\varphi(x),\cos\beta(x)\mfn_W(x)\right>\rd\sigma_\Gamma
=
\int_{B_\rho(0)}\gamma(\frac{\abs{x}}\rho)g(x)\rd\sigma_\Gamma(x)\\
=&\int_{G_\rho}\gamma(\frac{\abs{x}}\rho)g(x)\rd\sigma_\Gamma(x)+\int_{B_\rho(0)\setminus G_\rho}\gamma(\frac{\abs{x}}\rho)g(x)\rd\sigma_\Gamma(x).
}
On $B_\rho(0)\setminus G_\rho$ we have by construction $g(x)\geq\frac\ep2\abs{\cos\beta(0)}$ with
\eq{
\sigma_\Gamma(B_{\frac\rho2}(0)\setminus G_{\rho})
=\sigma_\Gamma(B_{\frac\rho2}(0)\setminus G_{\frac\rho2})
\geq\frac12\sigma_\Gamma(B_{\frac\rho2}(0)).
}
For $t$ sufficiently small we have $1-t>\frac12$,
thus we find
\eq{
\int_{B_\rho(0)\setminus G_\rho}\gamma(\frac{\abs{x}}\rho)g(x)\rd\sigma_\Gamma(x)
\geq\int_{B_{\frac{\rho}2(0)}\setminus G_\rho}\frac\ep2\abs{\cos\beta(0)}\rd\sigma_\Gamma
\geq\frac\ep2\abs{\cos\beta(0)}\sigma_\Gamma(B_\frac\rho2(0)),
}
and in turn
\eq{\label{ineq:rho-2rho}
\int_S\left<\varphi(x),\cos\beta(x)\mfn_W(x)\right>\rd\sigma_\Gamma
\geq&-\sigma_\Gamma(G_\rho)+\frac\ep2\abs{\cos\beta(0)}\sigma_\Gamma(B_\frac\rho2(0))\\
\geq&-\frac12\sigma_\Gamma(B_\rho(0))+\frac\ep2\abs{\cos\beta(0)}\sigma_\Gamma(B_\frac\rho2(0)),
}
where we have simply estimated $\gamma(\frac{\abs{x}}\rho)g(x)\geq-1$ on $G_\rho$.

Using such $\varphi$ in \eqref{eq:1st-variation-V_beta}, we obtain
\eq{\label{eq:integral-g(x)-1st-variation}
\int_S\left<\varphi(x),\cos\beta(x)\mfn_W(x)\right>\rd\sigma_\Gamma
=-\de V(\varphi)-\int_{\overline\Om}\left<\mfH+\widetilde H,\varphi\right>\rd\mu_V-\int_S\left<\nu^S,\varphi\right>\rd\sigma_V^\perp\\
=\int_{G_m(\overline\Om)}-\gamma'(\frac{\abs{x}}\rho)\frac1\rho\left<P(\frac{x}{\abs{x}}),\tau\right>-\gamma(\frac{\abs{x}}\rho){\rm div}_P(\tau)\rd V(x,P)\\
-\int_{\overline\Om}\gamma(\frac{\abs{x}}\rho)\left<\tau,\mfH+\widetilde H\right>\rd\mu_V-\int_S\gamma(\frac{\abs{x}}\rho)\left<\nu^S,\tau\right>\rd\sigma_V^\perp\\
\leq\frac3\rho\mu_V(B_\rho(0))+\int_{B_\rho(0)}(c+\abs{\mfH(x)})\rd\mu_V(x)+\sigma_V^\perp(B_\rho(0)),
}
where we have used the trivial fact that ${\rm div}_P(\tau)=0$ since $\tau$ is a constant vector field.
Taking \eqref{ineq:rho-2rho} and \eqref{esti:local-perp} into account, after further decreasing $\rho_1<\frac12\rho_S$, we obtain \eqref{esti:local-tangent}.

If
\eqref{defn:genera-capi-bdry-point-1} is satisfied by $x_0$ with the corresponding $\tau_{x_0}^V$, $\ep_{x_0},$ and $\rho_{x_0}$, then instead of \eqref{eq:sigma_Gamma(G)} we may directly define $g(x)\coloneqq\left<\tau^V_{x_0},\cos\beta(x)\mfn_W(x)\right>$, which satisfies by assumption
\eq{
g(x)\geq\ep_{x_0}>0,\quad
\sigma_\Gamma\text{-a.e. }x\in B_{\rho_{x_0}}(x_0).
}
So that for any $0<\rho<\min\{\rho_{x_0},\frac{\rho_S}2\}$, \eqref{eq:integral-g(x)} (with $\varphi(x)=\gamma(\frac{\abs{x}}\rho)\tau^V_{x_0}$) can be simply estimated:
\eq{\label{ineq:integral-g-lower-bdd}
\int_S\left<\varphi,\cos\beta\mfn_W\right>\rd\sigma_\Gamma
=&
\int_{B_\rho(0)}\gamma(\frac{\abs{x}}\rho)g(x)\rd\sigma_\Gamma(x)\\
\geq&\ep_{x_0}\int_S\gamma(\frac{\abs{x}}\rho)\rd\sigma_\Gamma(x)
\geq\ep_{x_0}\sigma_\Gamma(B_\frac\rho2(0)).
}
Putting this back into \eqref{eq:integral-g(x)-1st-variation} and taking \eqref{esti:local-perp} into account,
the claimed estimate \eqref{esti:local-tangent-improved} then follows, which completes the proof.
\end{proof}

\begin{corollary}
Let $\Om\subset\mfR^{n+1}$ be a bounded domain of class $C^2$ and $\beta\in C^1(S,(0,\pi))$.
For $V\in{\bf V}^m_\beta(\Om)$,
the following statements are equivalent:
\begin{enumerate}
    \item $V$ is an $m$-rectifiable varifold, that is, $V\in{\bf RV}^m(\overline\Om)$.
    \item $\Theta^m(\mu_V,x)>0$ for $\mu_V$-a.e. $x$.
\end{enumerate}
\end{corollary}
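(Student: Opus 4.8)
The plan is to obtain both implications from the definition of rectifiable varifold together with Proposition \ref{Prop:1st-variation}, which puts $V$ in position to apply Allard's rectifiability theorem.

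The implication $(1)\Rightarrow(2)$ is immediate and uses nothing about the capillary structure: if $V=\theta\,\mcH^m\llcorner M\otimes\de_{T_xM}\in{\bf RV}^m(\overline\Om)$, then $\mu_V=\theta\,\mcH^m\llcorner M$ with $\theta>0$ $\mcH^m$-a.e.\ on the $m$-rectifiable set $M$, and the standard density theorem for rectifiable Radon measures (see e.g.\ \cite{Simon83}) gives $\Theta^m(\mu_V,x)=\theta(x)>0$ for $\mcH^m$-a.e.\ $x\in M$, hence for $\mu_V$-a.e.\ $x$.

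For $(2)\Rightarrow(1)$ I would argue as follows. By Proposition \ref{Prop:1st-variation} the varifold $V$ has bounded first variation, and in fact, viewing $V$ as an $m$-varifold on all of $\mfR^{n+1}$ (supported on $\overline\Om$), the total variation $\norm{\de V}$ is a Radon measure on $\mfR^{n+1}$: any $\varphi\in C^1_c(\mfR^{n+1},\mfR^{n+1})$ restricts to an element of $\mathfrak{X}(\Om)$, and the representation \eqref{eq:1st-variation-V_beta} bounds $\abs{\de V(\varphi)}$ by $\norm{\varphi}_{C^0}\bigl(\int_{{\rm spt}\,\varphi}(\abs{\mfH}+\abs{\widetilde H})\rd\mu_V+\sigma_V^\perp({\rm spt}\,\varphi)+\sigma_\Gamma({\rm spt}\,\varphi)\bigr)$, which is finite on compact sets since $\mfH\in L^1_{loc}(\mu_V)$, $\widetilde H\in L^\infty(\mu_V)$, and $\sigma_V^\perp,\sigma_\Gamma$ are Radon measures on the compact set $S$. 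Since hypothesis $(2)$ implies in particular $\Theta^{\ast m}(\mu_V,x)>0$ for $\mu_V$-a.e.\ $x$, Allard's rectifiability theorem \cite{Allard72} (see also \cite[\S42]{Simon83}) applies to $V$ on $\mfR^{n+1}$ and yields $V\in{\bf RV}^m(\mfR^{n+1})$; since ${\rm spt}\,\mu_V\subset\overline\Om$ this gives $V\in{\bf RV}^m(\overline\Om)$, closing the equivalence.

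The one point deserving care — and the reason Proposition \ref{Prop:1st-variation} is formulated against the full class $\mathfrak{X}(\Om)$ rather than merely tangential or interior test fields — is the passage to $\norm{\de V}$ being Radon \emph{up to the boundary $S$}; this is legitimate because $\sigma_V^\perp(S),\sigma_\Gamma(S)<\infty$ ($S$ being compact of class $C^2$), so that no boundary monotonicity (available only on $\mathscr{P}_{\rm cb}(\Gamma)$) is needed: Allard's rectifiability argument sees $\norm{\de V}$ only as an abstract Radon measure. The a.e.\ finiteness of the upper density, sometimes listed among the hypotheses, is automatic here from the bounded first variation via the monotonicity formula.
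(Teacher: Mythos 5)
Your proposal is correct and follows essentially the same route as the paper, which simply invokes Proposition \ref{Prop:1st-variation} together with the Rectifiability Theorem \cite[Theorem 42.4]{Simon83}; your additional remarks (the easy direction via the density theorem for rectifiable measures, and the observation that $\sigma_V^\perp(S),\sigma_\Gamma(S)<\infty$ make $\norm{\de V}$ a Radon measure up to the boundary) just make explicit what the paper leaves implicit.
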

\begin{proof}
This follows directly from Proposition \ref{Prop:1st-variation} and the Rectifiability Theorem for varifolds, see e.g., \cite[Theorem 42.4]{Simon83}.
\end{proof}

\begin{corollary}\label{Cor:spt}
Let $\Om\subset\mfR^{n+1}$ be a bounded domain of class $C^2$ and $\beta\in C^1(S,(0,\pi))$.
Let $V\in{\bf V}^m_\beta(\Om)$ with boundary varifold $\Gamma$, such that $\mfH\in L^p(\mu_V)$ for some $p\in(1,\infty)$.
Then
\begin{enumerate}
    \item ${\rm spt}\,\sigma_V^\perp\subset{\rm spt}\,\mu_{V}\cap S$.
    \item Under the assumptions which ensure \eqref{esti:local-tangent-improved}, if $\cos\beta(x_0)\neq0$ then the implication holds:
\eq{
x_0\in{\rm spt}\sigma_\Gamma
\Rightarrow x_0\in{\rm spt}\mu_{V}\cap S.
}
Moreover, if the collection of all such points is dense in ${\rm spt}\sigma_\Gamma\cap\{x\in S:\cos\beta(x)\neq0\}$, then we have
\eq{
{\rm spt}\sigma_\Gamma\cap\{x\in S:\cos\beta(x)\neq0\}
\subset{\rm spt}\mu_{V}\cap S.
}
\end{enumerate}
In particular, if $\mu_V(S)=0$
then the above statements hold for $\mu_{V}=\mu_{V_I}$, where $V_I\coloneqq V\llcorner\Om$.
\end{corollary}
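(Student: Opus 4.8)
The plan is to prove both items by contraposition from the localized boundary estimates of Proposition \ref{Prop:1st-variation}, using throughout that $\sigma_V^\perp$ and $\sigma_\Gamma$ are both carried by $S$: the former by construction in Proposition \ref{Prop:1st-variation}, the latter because $\Gamma$ is a Radon measure on $G_{m,\beta}(S)\subset S\times G(m,n+1)$ and $\sigma_\Gamma=(\pi)_\ast\Gamma$ is its push-forward under the bundle projection $\pi$. For item (1), I would take $x_0\notin{\rm spt}\,\mu_V$ and pick $\rho\in(0,\rho_S)$ with $\mu_V(B_\rho(x_0))=0$; then $\int_{B_\rho(x_0)}\abs{\mfH}\rd(\mu_V\llcorner\Om)=0$ as well (the integrand being nonnegative and $B_\rho(x_0)$ being $\mu_V$-null), so \eqref{esti:local-perp} forces $\sigma_V^\perp(B_{\rho/2}(x_0))=0$ and hence $x_0\notin{\rm spt}\,\sigma_V^\perp$. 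This shows ${\rm spt}\,\sigma_V^\perp\subset{\rm spt}\,\mu_V$, which together with ${\rm spt}\,\sigma_V^\perp\subset S$ gives ${\rm spt}\,\sigma_V^\perp\subset{\rm spt}\,\mu_V\cap S$.

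For item (2), fix $x_0$ with $\cos\beta(x_0)\neq0$ satisfying the non-degeneracy hypothesis \eqref{defn:genera-capi-bdry-point-1} that makes the improved estimate \eqref{esti:local-tangent-improved} available for all small admissible radii, with data $\tau^V_{x_0},\ep_{x_0},\rho_{x_0}$. Arguing by contradiction, suppose $x_0\in{\rm spt}\,\sigma_\Gamma$ but $x_0\notin{\rm spt}\,\mu_V$; choose $\rho\in(0,\min\{\rho_{x_0},\rho_S/2\})$ small enough that $\mu_V(B_{2\rho}(x_0))=0$ (possible, since once $\mu_V(B_{2\rho}(x_0))$ vanishes it vanishes for every smaller radius). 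Then $\int_{B_{2\rho}(x_0)}\abs{\mfH}\rd\mu_V=0$ and \eqref{esti:local-tangent-improved} yields $\ep_{x_0}\,\sigma_\Gamma(B_{\rho/2}(x_0))=0$; since $\ep_{x_0}>0$, this contradicts $x_0\in{\rm spt}\,\sigma_\Gamma$. Hence $x_0\in{\rm spt}\,\mu_V$, and as $x_0\in{\rm spt}\,\sigma_\Gamma\subset S$ we get $x_0\in{\rm spt}\,\mu_V\cap S$, the claimed implication. The ``moreover'' assertion is then purely topological: by hypothesis the set $A$ of points to which this implication applies is dense in $B\coloneqq{\rm spt}\,\sigma_\Gamma\cap\{x\in S:\cos\beta(x)\neq0\}$, while $A\subset{\rm spt}\,\mu_V\cap S$ and ${\rm spt}\,\mu_V\cap S$ is closed in $\mfR^{n+1}$; therefore $B\subset\overline{A}\subset{\rm spt}\,\mu_V\cap S$.

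Finally, if $\mu_V(S)=0$ then $\mu_{V_I}=\mu_V\llcorner\Om=\mu_V$ as Radon measures on $\overline\Om$, so ${\rm spt}\,\mu_{V_I}={\rm spt}\,\mu_V$ and all three inclusions above hold verbatim with $\mu_{V_I}$ in place of $\mu_V$. I do not anticipate a genuine obstacle here --- the statement is essentially bookkeeping over Proposition \ref{Prop:1st-variation}. The only two points that demand a little care are (a) in item (2), confirming that the radius used in the contrapositive can be chosen in the range where \eqref{esti:local-tangent-improved} is valid, which is automatic because $\rho\mapsto\mu_V(B_{2\rho}(x_0))$ is nondecreasing, and (b) the ``moreover'' step, where $B$ is only relatively closed in the open subset $\{\cos\beta\neq0\}\cap S$ rather than closed in $\mfR^{n+1}$, so one must argue via $\overline{A}$ instead of a direct closedness argument.
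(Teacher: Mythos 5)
Your proposal is correct and follows essentially the same route as the paper: item (1) is the contrapositive of the local estimate \eqref{esti:local-perp}, item (2) is the contrapositive of the improved tangential estimate \eqref{esti:local-tangent-improved}, and the global inclusion follows from density plus the closedness of ${\rm spt}\,\mu_V\cap S$ (the paper phrases this as a covering argument, but it amounts to the same closure argument you give). Your care about the admissible radius range and about $\mu_{V_I}=\mu_V$ when $\mu_V(S)=0$ is exactly what is needed.
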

\begin{proof}
By virtue of \eqref{esti:local-perp} we have for any $x_0\in S$ and any $\rho<\rho_S$
\eq{
\sigma_V^\perp(B_\frac\rho2(x_0))
\leq\frac{c}\rho\mu_{V}(B_\rho(x_0))
+\norm{H}_{L^p(B_\rho(x_0),\mu_V)}\mu_{V}(B_\rho(x_0))^{1-\frac1p},
}
so that if $\mu_{V}(B_r(x_0))=0$ for some $r>0$, then we must have $\sigma_V^\perp(B_{\frac{r}2}(x_0))=0$.
Since this is true for all $x_0\in S$,  ({1}) is thus proved.

Now we show (2). The assertion concerning local property follows similarly by virtue of \eqref{esti:local-tangent-improved}.
The assertion concerning global property follows by a covering argument.
This completes the proof.
\end{proof}

\begin{remark}\label{Rem:G_pb-spt}
\normalfont
Recall Definition \ref{Defn:capillary-bdry-point},
the second conclusion of Corollary \ref{Cor:spt} can be then restated (in a stronger form) as:
Let $\Om\subset\mfR^{n+1}$ be a bounded domain of class $C^2$ and $\beta\in C^1(S,(0,\pi))$.
Let $V\in{\bf V}^m_\beta(\Om)$ with boundary varifold $\Gamma$, such that $\mfH\in L^p(\mu_V)$ for some $p\in(1,\infty)$.
Then the implication holds:
\eq{
x_0\in \mathscr{P}_{\rm cb}(\Gamma)
\Rightarrow x_0\in{\rm spt}\mu_{V}\cap S.
}
Moreover, if $\mathscr{P}_{\rm cb}(\Gamma)$ is dense in ${\rm spt}\sigma_\Gamma\cap\{x\in S:\cos\beta(x)\neq0\}$, then we have
\eq{
{\rm spt}\sigma_\Gamma\cap\{x\in S:\cos\beta(x)\neq0\}
\subset{\rm spt}\mu_{V}\cap S.
}
\end{remark}

\subsection{Examples}
We collect several examples that are helpful for understanding Definitions \ref{Defn:vfld-prescribed-bdry} and \ref{Defn:capillary-bdry-point}.
\begin{example}[Smooth submanifolds]\label{exam:submflds}
When $\Sigma\subset\Om$ is an $m$-dimensional submanifold with boundary $\p \Sigma$ supported on $S$ and intersecting $S$ with angle $\beta(x)$ at $x\in \p S$ in the following sense
\[
\langle \nu^S (x),n_\S(x)\rangle =\sin \beta(x), 
\]
where $n_\S$ is the inwards-pointing unit co-normal
of $\S$ along $\p\S$, then one can check that $V=\mathcal{H}^m\llcorner \Sigma \otimes \delta_{T_x\Sigma}$ has prescribed contact angle $\beta$ with $\Gamma =\mathcal {H}^{m-1} \llcorner \partial\Sigma\otimes \delta_{T_x\p\Sigma} $.
Precisely, for any $\varphi\in\mathfrak{X}_t(\Om)$, the first variation formula holds:
\eq{
\de V(\varphi)
=\int_\Om\left<\mfH,\varphi\right>\rd\mu_V-\int_{\p\S}\left<n_\S,\varphi\right>\rd\mcH^{m-1}.
}
    
\end{example}

\begin{example}[Unions of two planes in $G_{m,\beta_0}(\p\mfR^{n+1}_+)$ with common boundary]\label{exam:planes}
Let $\beta_0$ be a constant function on $\p\mfR^{n+1}_+$ with values in $(0,\pi)\setminus\{\frac\pi2\}$.
Let $P_\pm$ be two ``antipodal'' $m$-planes in $G_{m,\beta_0}(\p\mfR^{n+1}_+)$, in the sense that they intersect $\p\mfR^{n+1}_+$ along a common $(m-1)$-plane $L$,
with the inwards-pointing unit co-normal of $P_\pm\cap\mfR^{n+1}_+$ along $L$, denoted by $n_\pm$ respectively, satisfies
\eq{\label{condi:n_++n_-}
\left<n_\pm,e_{n+1}\right>=\sin\beta_0,\quad
\frac12(n_++n_-)=\sin\beta_0e_{n+1}.
}
One can directly check that $V=\mcH^m\llcorner(P_+\cap\mfR^{n+1}_+)\otimes\frac{\de_{P_+}}2+\mcH^m\llcorner(P_-\cap\mfR^{n+1}_+)\otimes\frac{\de_{P_-}}2$ has constant prescribed contact angle $\beta_0$ with boundary varifold $\Gamma=\mcH^{m-1}\llcorner L\otimes\frac{\de_{P_+}+\de_{P_-}}2$.
In fact, for any $\varphi\in\mathfrak{X}(\mfR^{n+1}_+)$ with compact support, one can check that
\eq{
\de V(\varphi)
=-\int_L\left<\frac{n_++n_-}2,\varphi\right>\rd\mcH^{m-1}
=-\int_{G_{m,\beta_0}(\p\mfR^{n+1}_+)}\left<\mfn(x,P),\varphi(x)\right>\rd\Gamma(x,P).
}
Moreover, for any $x\in L$, by \eqref{defn:n_V-intro} $\mfn_V(x)=\sin\beta_0e_{n+1}$, while by \eqref{condi:n_++n_-} we have $\de V(\varphi)=0$ for any compactly supported tangential variation $\varphi\in\mathfrak{X}_t(\mfR^{n+1}_+)$ and also $\mfn_W(x)=0$.
Comparing the first variation in \eqref{eq:1st-variation-V_beta}, we find
\eq{
\sigma_\Gamma
=\mcH^{m-1}\llcorner L,
\quad\sigma_V^\perp=\sin\beta_0\mcH^{m-1}\llcorner L.
}
See Figure \ref{Fig-1} (in co-dimension-1 case) for illustration.
\begin{figure}[H]
	\centering
	\includegraphics[width=14cm]{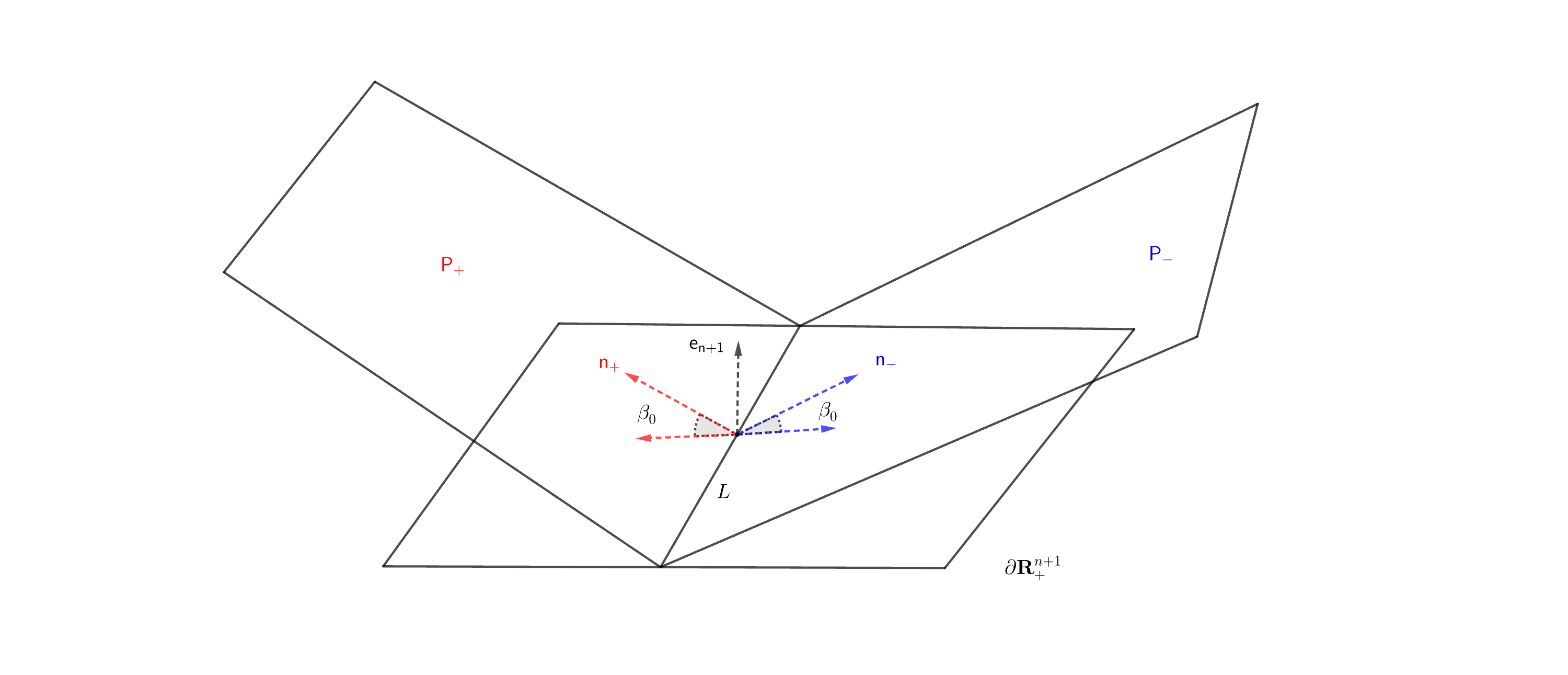}
	\caption{Unions of two planes in $G_{n,\beta_0}(\p\mfR^{n+1}_+)$ with common boundary}
	\label{Fig-1}
\end{figure}

\end{example}

This is an important example to understand the fact that {degenerate capillary phenomenon}
might occur in Definition \ref{Defn:vfld-prescribed-bdry}.
More importantly, it serves as a degenerate limit in compactness result, which can be seen as follows.
\begin{example}[Example \ref{exam:planes} as convergence limit]\label{exam:planes-limit}
Under the notations in Example \ref{exam:planes}, require further that $\beta_0\neq\frac\pi2$.
Put
\eq{
\bar n_{\pm}
=\frac{T_x\p\mfR^{n+1}_+(n_\pm)}{\Abs{T_x\p\mfR^{n+1}_+(n_\pm)}},
}
which are well-defined unit vectors in $\p\mfR^{n+1}_+$ since $\beta_0\in(0,\pi)\setminus\{\frac\pi2\}$.

For every $s>0$ fixed, moving $P_\pm$ and $L$ along the directions $\bar n_\pm$ with distance $s$ respectively, and write the corresponding $m$- and $(m-1)$-planes as $P^s_\pm$ and $L^s_\pm$.
Put
\eq{
V_s
&=\mcH^m\llcorner(P^s_+\cap\mfR^{n+1}_+)\otimes\frac{\de_{P^s_+}}2+\mcH^m\llcorner(P^s_-\cap\mfR^{n+1}_+)\otimes\frac{\de_{P^s_-}}2,\\
\Gamma_s
&=\frac12\mcH^{m-1}\llcorner L^s_+\otimes{\de_{P^s_+}}+\frac12\mcH^{m-1}\llcorner L^s_-\otimes{\de_{P^s_-}}.
}
It is easy to check that $V_s$ has constant prescribed contact angle $\beta_0$ with boundary varifold $\Gamma_s$, along $L^s_+$ and $L^s_+$,
$\mfn_{V_s}(x)=\sin\beta_0e_{n+1}$ and $\mfn_{W_s}(x)=\bar n_\pm$, respectively.
Moreover,
\eq{
V_s\ra V,\quad
\Gamma_s\wsc\Gamma\text{ as }s\searrow0.
}
\end{example}
The next example indicates the fact that $\Gamma$ as a measure-theoretic capillary boundary, might have a complicated structure, in the sense that the Radon probability measure resulting from disintegration at some point could contain information of multiple choices of possible tangent planes.

\begin{example}[Unions of two planes in $G_{m,\beta_0}(\p\mfR^{n+1}_+)$ with distinct boundaries]\label{exam:planes-distinct}
Let $\beta_0$ be a constant function on $\p\mfR^{n+1}_+$ with values in $(0,\pi)\setminus\{\frac\pi2\}$.
Let $P_1,P_2$ be two $m$-planes in $G_{m,\beta_0}(\p\mfR^{n+1}_+)$, such that $L_1\neq L_2$ are the $(m-1)$-planes at which they respectively intersect $\p\mfR^{n+1}_+$ with,
and denote the inwards-pointing unit co-normal of $P_i\cap\mfR^{n+1}_+$ along $L_i$ by $n_i$ for $i=1,2$.

Following the computations conducted in Example \ref{exam:planes},
one can directly check that $V=\mcH^m\llcorner(P_1\cap\mfR^{n+1}_+)\otimes{\de_{P_1}}+\mcH^m\llcorner(P_2\cap\mfR^{n+1}_+)\otimes{\de_{P_2}}$ has constant prescribed contact angle $\beta_0$ with $\Gamma=\mcH^{m-1}\llcorner L_1\otimes{\de_{P_1}}+\mcH^{m-1}\llcorner L_2\otimes\de_{P_2}$.
Moreover,
\eq{
\sigma_\Gamma
=\mcH^{m-1}\llcorner(L_1\cup L_2),\quad
\sigma_V^\perp
=\sin\beta_0\mcH^{m-1}\llcorner(L_1\cup L_2).
}
Next we check the Radon probability measure $\Gamma^x$ resulting from disintegration of $\Gamma$.

Notice that $L_1\cap L_2$ is an $(m-2)$-plane in $\p\mfR^{n+1}_+$, and it is easy to see that for any $x\in L_1\setminus(L_1\cap L_2)$, $\Gamma^x=\de_{P_1}$; also for any $x\in L_2\setminus(L_1\cap L_2)$, $\Gamma^x=\de_{P_2}$.
Now for any $x\in L_1\cap L_2$, by using \cite[Lemma 38.4]{Simon83} for $\Gamma$ at $x$, we find that $\Gamma^x=\frac{\de_{P_1}+\de_{P_2}}2$.

See Figure \ref{Fig-2} (in co-dimension-1 case) for illustration.
\begin{figure}[H]
	\centering
	\includegraphics[width=14cm]{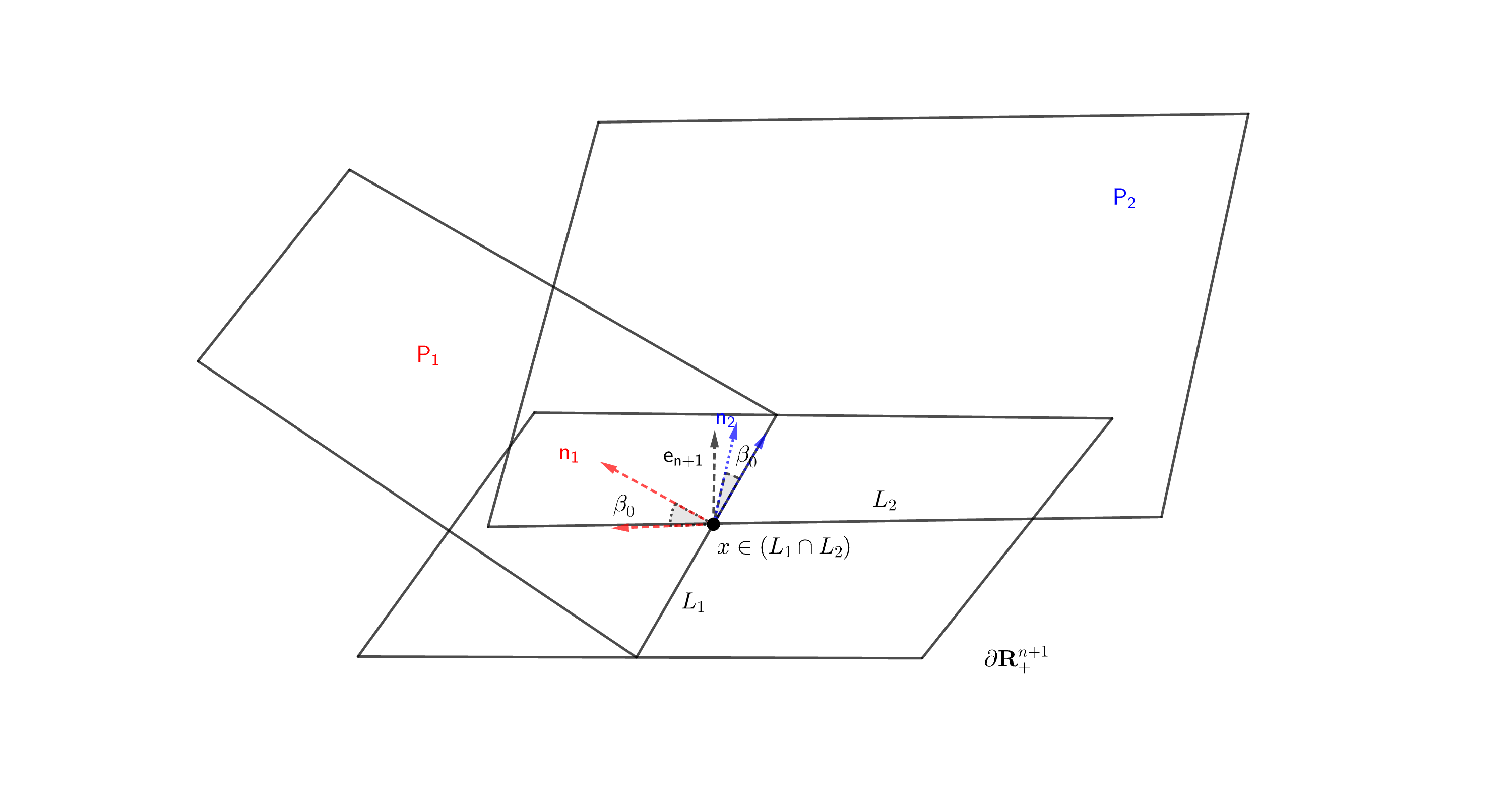}
	\caption{Unions of two planes in $G_{n,\beta_0}(\p\mfR^{n+1}_+)$ with distinct boundaries}
	\label{Fig-2}
\end{figure}
\end{example}

The last example reveals the fact that Definition \ref{Defn:vfld-prescribed-bdry} is weak, in the sense that it admits as many as free boundary components (with possibly non-trivial intersection with the capillary components), since they do not contribute to the first variation when restricted to tangential variation.
\begin{example}[Union of the mutually intersecting $\theta$-cap and free boundary cap]\label{exam:caps}
Let $\beta_0$ in $(0,\pi)\setminus
\{\frac\pi2\}$ and
 $C_{\beta_0}(o_1)$  a $\beta_0$-cap given by
\eq{
C_{\beta_0}(o_1)
\coloneqq\{x\in\overline{\mfR^{n+1}_+}:\Abs{x-\left(o_1-\cos\beta_0e_{n+1}\right)}=1\}.
}
Consider another free boundary cap $C_{\frac\pi2}(o_2)$ which has possibly non-trivial intersection with $C_{\beta_0}(o_1)$.

One can directly check that $V=\mcH^n\llcorner C_{\beta_0}(o_1)\otimes\de_{T_xC_{\beta_0}(o_1)}+\mcH^n\llcorner C_{\frac\pi2}(o_2)\otimes\de_{T_xC_{\frac\pi2}(o_2)}$ has constant prescribed contact angle $\beta_0$ with $\Gamma=\mcH^{n-1}\llcorner\left(\p C_{\beta_0}(o_1)\right)\otimes\de_{T_x\p C_{\beta_0}(o_1)}$.
In fact,
let $n_1,n_2$ denote respectively the inwards pointing unit co-normals to $C_{\beta_0}(o_1), C_{\frac\pi2}(o_2)$ along their boundaries,
computing the first variation and then comparing with \eqref{eq:1st-variation-V_beta}, one finds
\eq{
\mfn_V=n_1,\quad
\sigma_\Gamma
=\mcH^{n-1}\llcorner\p C_{\beta_0}(o_1),
\quad\sigma_V^\perp=\sin\beta_0\mcH^{m-1}\llcorner\p C_{\beta_0}(o_1)+\mcH^{m-1}\llcorner\p C_{\frac\pi2}(o_2).
}
\end{example}

\subsection{Capillary boundary point}

\begin{remark}\label{Rem:G_pb}
\normalfont
In Definition \ref{Defn:capillary-bdry-point},
({C1}) describes the non-degeneracy and is justified by Example \ref{exam:planes}, see also Remark \ref{Rem:Item-1}.
Note, however, that a small perturbation of Example \ref{exam:planes}  satisfies the definition.
More precisely, under the notations in Example \ref{exam:planes}, for  any small  $\ep>0$, one can check that the varifold
\eq{
V
\coloneqq\mcH^m\llcorner P_+\otimes\frac{(1+\ep)\de_{P_+}}2+\mcH^m\llcorner P_-\otimes\frac{(1-\ep)\de_{P_-}}2
}
has constant prescribed contact angle $\beta_0$ with $\Gamma=\mcH^{m-1}\llcorner L\otimes\frac{(1+\ep)\de_{P_+}+(1-\ep)\de_{P_-}}2$.
Moreover, for any $x\in L$, $\mfn_V(x)=\frac{(1+\ep)n_++(1-\ep)n_-}2=\sin\beta_0e_{n+1}+\ep\bar n_+$, where $\bar n_+$ is the projection of $n_+$ onto $\p\mfR^{n+1}_+$, hence $\cos\beta_0\mfn_W(x)=\ep\bar n_+$, and of course $\abs{\mfn_W(x)}=\ep$ for any $x\in L$.
This shows that Definition \ref{Defn:capillary-bdry-point} is not too restrictive.

Now we discuss ({C2}).
If $\p\S$ is induced by the boundary of a $C^{1,1}$-submanifold $\S$ which intersects $S$ transversally
then ({C2}) always holds.
More precisely, assume $x_0=0\in\p\S$, it is direct to check that
\eq{
\Abs{\na^{\p\S}\left<\frac{x}{\abs{x}},\mfn_W(x)\right>}
\leq \tilde c_{x_0}=\tilde c_{x_0}(m,\p\S,S),
}
where $\na^{\p\S}$ denotes the tangential gradient with respect to $\p\S$ and $\mfn_W$ is the unit co-normal along $\p\S$ (with respect to $S$) and is a Lipschitz map since $\p\S$ is a hypersurface in $S$ of class $C^{1,1}$.
On the other hand, since $\lim_{x\ra x_0=0}\frac{x}{\abs{x}}$ lives in the tangent space $T_{x_0}\p\S$, we have
\eq{
\lim_{x\ra x_0=0}\left<\frac{x}{\abs{x}},\mfn_W(x)\right>=0.
}
Since $\beta\in C^1$ we can assume $0<\abs{\cos\beta(x)}\leq2\abs{\cos\beta(x_0)}$ locally and therefore obtain \eqref{defn:genera-capi-bdry-point-2} for any $x_0\in\p\S$ with $c_{x_0}=c_{x_0}(m,\p\S,S)$.
From this example we see that ({C2}) may relate to the local $C^{1,1}$-property of ${\rm spt}\sigma_\Gamma$.

%
\end{remark}


\subsection{Co-dimension-1}

\begin{proposition}\label{Prop:RV-codim-1}
In Definition \ref{Defn:vfld-co-dim-1}, if $U$ is a set of finite perimeter in $S$, then $V\in{\bf RV}^n_\beta(\Om)$ with multiplicity one rectifiable boundary in the sense of Definition \ref{Defn:rectifiable-bdry}.
\end{proposition}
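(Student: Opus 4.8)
The plan is to convert the wetting-energy term in \eqref{defn:1st-variation-KT17} into a boundary integral over the capillary bundle $G_{n,\beta}(S)$ by applying the Gauss--Green formula on the set of finite perimeter $U\subset S$, and then to read off the boundary varifold $\Gamma$ explicitly.

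First I would record the relevant facts about $U$. Since $S=\p\Om$ is a compact hypersurface of class $C^2$ and $U\subset S$ has finite perimeter in the Riemannian manifold $(S,\left<\cdot,\cdot\right>|_S)$, De Giorgi's structure theorem (proved in local charts and patched by a partition of unity, or directly in the Riemannian setting) yields: the reduced boundary $\p^\ast U$ is $(n-1)$-rectifiable with finite $\mcH^{n-1}$-measure; at $\mcH^{n-1}$-a.e.\ $x\in\p^\ast U$ there is a measure-theoretic inner unit normal $\nu_U(x)\in T_xS$ with $\nu_U(x)\perp T_x\p^\ast U$; and for every $C^1$ vector field $X$ on $S$ tangent to $S$,
\eq{
\int_U{\rm div}_SX\rd\mcH^n
=-\int_{\p^\ast U}\left<X,\nu_U\right>\rd\mcH^{n-1}.
}
Taking $X=\cos\beta\,\varphi$, which is a $C^1$ tangential field on $S$ because $\varphi\in\mathfrak{X}_t(\Om)$ and $\beta\in C^1(S,(0,\pi))$, formula \eqref{defn:1st-variation-KT17} becomes
\eq{
\de V(\varphi)
=-\int_{\overline\Om}\left<\mfH,\varphi\right>\rd\mu_V-\int_{\p^\ast U}\cos\beta(x)\left<\varphi(x),\nu_U(x)\right>\rd\mcH^{n-1}(x).
}

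Next I would exhibit the boundary varifold. Put $w(x)\coloneqq\sin\beta(x)\,\nu^S(x)+\cos\beta(x)\,\nu_U(x)$, which is a unit vector orthogonal to $T_x\p^\ast U$, set $P_U(x)\coloneqq T_x\p^\ast U\oplus\mfR w(x)\in G(n,n+1)$, and define $\Gamma\coloneqq\mcH^{n-1}\llcorner\p^\ast U\otimes\de_{P_U(x)}$; since $\nu_U$ is Borel and $T_x\p^\ast U=\nu_U(x)^\perp\cap T_xS$, the map $P_U$ is Borel and $\Gamma$ is a finite Radon measure on $G_n(S)$. A short linear-algebra check shows $P_U(x)\in G_{n,\beta}(x)$: as $\nu^S(x)\perp T_xS$ and $\sin\beta(x)>0$, a vector of $P_U(x)$ lies in $T_xS$ only when its $w(x)$-component vanishes, hence $P_U(x)\cap T_xS=T_x\p^\ast U$; moreover $\nu^S(x)\perp T_x\p^\ast U$ and $\left<\nu^S(x),w(x)\right>=\sin\beta(x)$ give $P_U(x)(\nu^S(x))=\sin\beta(x)\,w(x)$, so $\abs{P_U(x)(\nu^S(x))}=\sin\beta(x)$ (condition $(i)$) and $(P_U(x)\cap T_xS)\perp P_U(x)(\nu^S(x))$ (condition $(ii)$). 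By \eqref{defn:n(x,P)} this also yields $\mfn(x,P_U(x))=w(x)$.

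Finally I would check that $\Gamma$ reproduces the boundary term. Since $\varphi(x)\in T_xS$ we have $\left<\nu^S(x),\varphi(x)\right>=0$, so
\eq{
\left<\mfn(x,P_U(x)),\varphi(x)\right>
&=\sin\beta(x)\left<\nu^S(x),\varphi(x)\right>+\cos\beta(x)\left<\nu_U(x),\varphi(x)\right>\\
&=\cos\beta(x)\left<\nu_U(x),\varphi(x)\right>,
}
and therefore $\int_{G_{n,\beta}(S)}\left<\mfn(x,P),\varphi(x)\right>\rd\Gamma(x,P)=\int_{\p^\ast U}\cos\beta(x)\left<\varphi(x),\nu_U(x)\right>\rd\mcH^{n-1}(x)$, which makes the identity for $\de V(\varphi)$ above coincide with \eqref{defn:1st-vairation-intro}. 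Hence $V\in{\bf V}^n_\beta(\Om)$ with boundary varifold $\Gamma$; $V$ is a rectifiable $n$-varifold by hypothesis, and the disintegration $\Gamma=\sigma_\Gamma\otimes\Gamma^x$ with $\sigma_\Gamma=\mcH^{n-1}\llcorner\p^\ast U$ ($(n-1)$-rectifiable, multiplicity $\theta\equiv1$), $\Gamma^x=\de_{P_U(x)}$, and $P_U(x)\cap T_xS=T_x\p^\ast U$ (the approximate tangent space of $M\coloneqq\p^\ast U$) exhibits $\Gamma$ as a multiplicity-one rectifiable boundary varifold in the sense of Definition \ref{Defn:rectifiable-bdry}, i.e.\ $V\in{\bf RV}^n_\beta(\Om)$. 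The one point that genuinely needs care is the Gauss--Green formula for a set of finite perimeter inside the curved hypersurface $S$, together with the sign convention linking the inner normal $\nu_U$ to the splitting of $P_U(x)$; the rest is elementary.
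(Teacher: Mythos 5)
Your proof is correct and follows essentially the same route as the paper: both apply De Giorgi's structure theorem to rewrite the wetting-energy term as an integral over $\p^\ast U$, define $P_U(x)=T_x\p^\ast U\oplus\mfR\bigl(\sin\beta(x)\nu^S(x)+\cos\beta(x)\nu_U(x)\bigr)$ with $\Gamma=\mcH^{n-1}\llcorner\p^\ast U\otimes\de_{P_U(x)}$, and match the first variation. Your verification that $P_U(x)\in G_{n,\beta}(x)$ is a bit more explicit than what appears in the paper, but the argument is the same.
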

\begin{proof}
For any compactly supported $\varphi\in\mathfrak{X}_t(\Om)$
by De Giorgi's structure theorem (see e.g., \cite[Theorem 15.9]{Mag12}) we could write
\eq{
-\int_U{\rm div}_S(\cos\beta\varphi)\rd\mcH^n
=\int_{\p^\ast U}\left<\cos\beta(x)\nu_U(x),\varphi(x)\right>\rd\mcH^{n-1}(x),
}
where $\p^\ast U$ is the reduced boundary of $U$, which is locally $\mcH^{n-1}$-rectifiable;
and $\nu_U(x)\in T_xS$ is the (measure-theoretic) inner unit normal, which is perpendicular to the approximate tangent space (an $(n-1)$-affine space) $T_{x}\p^\ast U\subset T_xS$ for $\mcH^{n-1}$-a.e. $x\in\p^\ast U$.
At any such point,
put
\eq{
P_U(x)
\coloneqq T_x\p^\ast U\oplus\left(\sin\beta(x)\nu^S(x)+\cos\beta(x)\nu_U(x)\right).
}
Since $\beta(x)\in(0,\pi)$ and $\nu_U(x)\in T_xS$, it is easy to see that $P_U(x)\in G_{n,\beta}(x)$ with
\eq{
\mfn(x,P_U(x))
=\sin\beta(x)\nu^S(x)+\cos\beta(x)\nu_U(x).
}
Define
$\Gamma=\mcH^{n-1}\llcorner\p^\ast U\otimes\de_{P_U(x)}$ we then find:
for any compactly supported $\varphi\in\mathfrak{X}_t(\Om)$
\eq{
\de V(\varphi)
=&-\int_{\overline\Om}\left<\mfH,\varphi\right>\rd\mu_V-\int_{\p^\ast U}\left<\cos\beta(x)\nu_U(x),\varphi(x)\right>\rd\mcH^{n-1}(x)\\
=&-\int_{\overline\Om}\left<\mfH,\varphi\right>\rd\mu_V-\int_{G_{n,\beta}(S)}\left<\mfn(x,P),\varphi(x)\right>\rd\Gamma(x,P)
}
as desired.
\end{proof}
From the proof of this Proposition we observe that $\mfn_W$, the generalized inwards pointing co-normal with respect to $S$, is exactly $\nu_U$.


\section{Monotonicity inequalities}\label{Sec:4}

\subsection{Interior case}
\begin{proposition}[Monotonicity inequality: interior case]\label{Prop-Mono-Interior}
Let $\Om\subset\mfR^{n+1}$ be a bounded domain of class $C^2$ and $\beta\in C^1(S,(0,\pi))$.
Let $V\in{\bf V}^m_\beta(\Om)\cap{\bf RV}^m(\overline\Om)$ with $\mfH\in L^p(\mu_V)$ for some $p\in[1,\infty)$.
Let $h$ be a non-negative $C^1$-function on $\overline\Om$.
For any $\xi\in\Om$, and for either every $0<r_1<r_2<\infty$ when $h_{\mid_S}\equiv0$, or every $0<r_1<r_2<d_S(\xi)$, there holds
\eq{
\frac{1}{r_1^m}\int_{B_{r_1}(\xi)}h\rd\mu_V
\leq&\frac{1}{r_2^m}\int_{B_{r_2}(\xi)}h\rd\mu_V
+\int_{r_1}^{r_2}\frac1{\rho^m}\int_{B_\rho(\xi)}\left(h\abs{\mfH}+\abs{\na^Vh}\right)\rd\mu_V\rd\rho\\
&-\int_{B_{r_2}(\xi)\setminus B_{r_1}(\xi)} h(x)\frac{\abs{(x-\xi)^\perp}^2}{\abs{x-\xi}^{m+2}}\rd\mu_V\\
\leq&\frac{1}{r_2^m}\int_{B_{r_2}(\xi)}h\rd\mu_V
+\int_{r_1}^{r_2}\frac1{\rho^m}\int_{B_\rho(\xi)}\left(h\abs{\mfH}+\abs{\na^Vh}\right)\rd\mu_V\rd\rho.
}
\end{proposition}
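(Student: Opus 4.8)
The plan is to run the classical Allard--Simon first-variation argument for the monotonicity formula (cf.\ \cite[Section 17]{Simon83}), modified to carry the weight $h$ and to exploit that, away from $S$, a member of ${\bf V}^m_\beta(\Om)$ behaves as an ordinary rectifiable varifold with $L^1_{loc}$ generalized mean curvature. Fix $\xi\in\Om$; for a radius $\rho>0$ --- with $\rho<d_S(\xi)$ in general, and arbitrary when $h_{\mid_S}\equiv0$ --- and a smooth non-increasing cut-off $\gamma_\tau\colon[0,\infty)\to[0,1]$ with $\gamma_\tau\equiv1$ on $[0,1-\tau]$ and $\gamma_\tau\equiv0$ on $[1,\infty)$, I would test the first variation of $V$ against
\eq{
\varphi_\rho(x)\coloneqq h(x)\,\gamma_\tau\!\big(\tfrac{\abs{x-\xi}}{\rho}\big)(x-\xi).
}
Since $\gamma_\tau\equiv1$ near $0$ and $h\in C^1(\overline\Om)$, one has $\varphi_\rho\in\mathfrak{X}(\Om)$; moreover $\varphi_\rho\in\mathfrak{X}_t(\Om)$ and $\varphi_\rho$ vanishes on $S$ --- either because ${\rm spt}\,\varphi_\rho\subset\subset\Om$ (when $\rho<d_S(\xi)$) or because $h_{\mid_S}\equiv0$ --- so the boundary term $\int_{G_{m,\beta}(S)}\left<\mfn(x,P),\varphi_\rho(x)\right>\rd\Gamma$ in \eqref{defn:1st-vairation-intro} vanishes and $\de V(\varphi_\rho)=-\int_{\overline\Om}h\,\gamma_\tau(\tfrac{\abs{x-\xi}}{\rho})\left<\mfH,x-\xi\right>\rd\mu_V$ (that $\mu_V$ lives on $\overline\Om$ is what makes enlarging $\rho$ harmless in the case $h_{\mid_S}\equiv0$).

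With $r=\abs{x-\xi}$ and using $V\in{\bf RV}^m(\overline\Om)$, a direct computation gives
\eq{
{\rm div}_P\varphi_\rho=m\,h\,\gamma_\tau(\tfrac r\rho)+h\,\gamma_\tau'(\tfrac r\rho)\tfrac1\rho\Big(r-\tfrac{\abs{(x-\xi)^\perp}^2}{r}\Big)+\gamma_\tau(\tfrac r\rho)\left<\na^Vh,x-\xi\right>.
}
Substituting into $\de V(\varphi_\rho)=\int{\rm div}_P\varphi_\rho\,\rd V$, abbreviating $I(\rho)=\int h\,\gamma_\tau(\tfrac r\rho)\rd\mu_V$, using $\tfrac{\rd}{\rd\rho}\gamma_\tau(\tfrac r\rho)=-\tfrac r{\rho^2}\gamma_\tau'(\tfrac r\rho)$, and multiplying by $\rho^{-m-1}$, the identity rearranges to
\eq{
\frac{\rd}{\rd\rho}\Big(\frac{I(\rho)}{\rho^m}\Big)=-\frac1{\rho^{m+2}}\int h\,\gamma_\tau'(\tfrac r\rho)\,\frac{\abs{(x-\xi)^\perp}^2}{r}\rd\mu_V+\frac1{\rho^{m+1}}\int\gamma_\tau(\tfrac r\rho)\big(h\left<\mfH,x-\xi\right>+\left<\na^Vh,x-\xi\right>\big)\rd\mu_V,
}
where the first term on the right is non-negative since $h\ge0$ and $\gamma_\tau'\le0$.

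Letting $\tau\downarrow0$ (so $\gamma_\tau\to\mathbf{1}_{[0,1)}$), integrating over $\rho\in(r_1,r_2)$, and using Fubini together with the substitution $s=r/\rho$ to identify $\int_{r_1}^{r_2}\big(-\rho^{-m-2}\gamma_\tau'(\tfrac r\rho)\tfrac1r\big)\rd\rho\to \abs{x-\xi}^{-m-2}\mathbf{1}_{B_{r_2}(\xi)\setminus B_{r_1}(\xi)}$, one obtains the exact identity
\eq{
\frac1{r_1^m}\int_{B_{r_1}(\xi)}h\,\rd\mu_V=\frac1{r_2^m}\int_{B_{r_2}(\xi)}h\,\rd\mu_V-\int_{B_{r_2}(\xi)\setminus B_{r_1}(\xi)}h\,\frac{\abs{(x-\xi)^\perp}^2}{\abs{x-\xi}^{m+2}}\rd\mu_V-\int_{r_1}^{r_2}\frac1{\rho^{m+1}}\int_{B_\rho(\xi)}\big(h\left<\mfH,x-\xi\right>+\left<\na^Vh,x-\xi\right>\big)\rd\mu_V\,\rd\rho.
}
Using $\abs{\left<\mfH,x-\xi\right>}\le\rho\abs{\mfH}$ and $\abs{\left<\na^Vh,x-\xi\right>}\le\rho\abs{\na^Vh}$ on $B_\rho(\xi)$ bounds the last double integral by $\int_{r_1}^{r_2}\rho^{-m}\int_{B_\rho(\xi)}(h\abs{\mfH}+\abs{\na^Vh})\rd\mu_V\,\rd\rho$, giving the first inequality; discarding the non-positive perpendicular term then gives the second.

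The main obstacle is performing the passage $\tau\downarrow0$ correctly: one must simultaneously justify differentiating $\rho\mapsto\rho^{-m}I(\rho)$ under the integral, verify that the $m\,h\gamma_\tau+h\gamma_\tau'\tfrac r\rho$ part of ${\rm div}_P\varphi_\rho$ collapses into $\rho^{m+1}\tfrac{\rd}{\rd\rho}(\rho^{-m}I(\rho))$, and pin down the distributional limit of the $\gamma_\tau'$-term as the surface-type perpendicular integral after integration in $\rho$. This rests on $\rho\mapsto\mu_V(B_\rho(\xi))$ being monotone, hence of bounded variation and a.e.\ differentiable, together with routine Fubini and change-of-variables bookkeeping; everything else is the standard weighted monotonicity computation, and the only further point needing care is to keep track of which of the two regimes ($\rho<d_S(\xi)$ versus $h_{\mid_S}\equiv0$) one is in when invoking the vanishing of the boundary term.
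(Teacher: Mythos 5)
Your argument is correct and follows essentially the same route as the paper's proof: the same test field $h(x)\gamma(\tfrac{|x-\xi|}{\rho})(x-\xi)$, the same observation that $\varphi$ vanishes on $S$ in both regimes so the $\Gamma$-term drops out of \eqref{defn:1st-vairation-intro}, the same divergence computation, the same rearrangement into a derivative of $\rho^{-m}I(\rho)$, and the same passage $\gamma\uparrow\chi_{[0,1]}$ after integrating in $\rho$. The only cosmetic difference is that the paper bounds the $\mfH$- and $\nabla^V h$-terms before integrating (producing a differential inequality via $J'(\rho)$), while you integrate to an exact identity first and bound afterward; these are interchangeable.
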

\begin{proof}
We use $\varphi(x)=h(x)\gamma(\frac{x-\xi}\rho)(x-\xi)$ to test the first variation \eqref{eq:1st-variation-V_beta}.
Note that in both cases we have $\varphi_{\mid_S}\equiv0$, thus the first variation simply reads
\eq{
\int_{\overline\Om}{\rm div}_V(\varphi)\rd\mu_V
=-\int_\Om\left<\mfH,\varphi\right>\rd\mu_V.
}
A direct computation gives
\eq{
{\rm div}_V(\varphi)
=&\gamma'(\frac{x-\xi}\rho)h(x)\left(\frac{\abs{x-\xi}}\rho-\frac{\abs{(x-\xi)^\perp}^2}{\rho\abs{x-\xi}}\right)\\
&+m\gamma(\frac{x-\xi}\rho)h(x)+\gamma(\frac{x-\xi}\rho)h(x)\left<\na^Vh(x),x-\xi\right>,
}
and hence
\eq{
\int h\left(\gamma'\frac{\abs{x-\xi}}\rho+m\gamma\right)\rd\mu_V
=&\int h\left(\gamma'\frac{\abs{(x-\xi)^\perp}^2}{\rho\abs{x-\xi}}-\gamma\left<\na^Vh(x),x-\xi\right>\right)\rd\mu_V\\
&-\int_\Om h\gamma\left<\mfH,x-\xi\right>\rd\mu_V\\
\leq&\int h\gamma'\frac{\abs{(x-\xi)^\perp}^2}{\rho\abs{x-\xi}}\rd\mu_V+\rho\int\gamma\left(h\abs{\mfH}+\abs{\na^Vh}\right)\rd\mu_V.
}
Define
\eq{
I(\rho)
=\int h(x)\gamma(\frac{\abs{x-\xi}}\rho)\rd\mu_V,\text{ }
J(\rho)
=\int h(x)\gamma(\frac{\abs{x-\xi}}\rho)\frac{\abs{(x-\xi)^\perp}^2}{\abs{x-\xi}^2}\rd\mu_V,
}
we then have the differential inequality
\eq{
-\rho I'(\rho)+mI(\rho)
\leq-\rho J'(\rho)+\rho\int\gamma\left(h\abs{\mfH}+\abs{\na^Vh}\right)\rd\mu_V,
}
which can be rewritten as
\eq{
I'(\rho)-\frac{m}\rho I(\rho)
\geq J'(\rho)-\int\gamma\left(h\abs{\mfH}+\abs{\na^Vh}\right)\rd\mu_V,
}
or further
\eq{
\frac{\rd}{\rd\rho}\left(\rho^{-m}I(\rho)\right)
\geq\rho^{-m}J'(\rho)-\rho^{-m}\int\gamma\left(h\abs{\mfH}+\abs{\na^Vh}\right)\rd\mu_V.
}
Integrating this differential inequality from $r_1$ to $r_2$ then letting $\gamma$ increase to the indicator function $\chi_{[0,1]}$, also taking into account that $\gamma'\leq0$, we then obtain the desired inequality.

\end{proof}

\begin{remark}\label{Rem-Mono-Interior}
\normalfont
By virtue of Proposition \ref{Prop-Mono-Interior}, when $p>m$ we have for any interior points $\xi\in\Om$ the monotonicity formula and consequently all the nice properties as in \cite[Section 17]{Simon83}.
\end{remark}

\subsection{Boundary case}
\begin{proposition}[Monotonicity inequality: boundary case]\label{Prop:Mono-bdry}
Let $\Om\subset\mfR^{n+1}$ be a bounded domain of class $C^2$ and $\beta\in C^1(S,(0,\pi))$.
Let $V\in{\bf V}^m_\beta(\Om)$ with boundary varifold $\Gamma$, such that $\mfH\in L^p(\mu_V)$ for some $p\in[1,\infty)$.

Let $x_0$ be a capillary boundary point in the sense of
Definition \ref{Defn:capillary-bdry-point}
with corresponding factors $\ep_0,\rho_0,c_0,\tau_{x_0}^V$.
Then there exists a constant $C=C(m,p,S,\ep_0,c_0,\cos\beta(x_0))>0$, such that for any $0<\rho<\min\{\rho_0,\frac{\rho_S}2\}$ (resulting from the last assertion of Proposition \ref{Prop:1st-variation}), there holds
\eq{\label{formu:Monotone-ineq}
&(1+C\rho)\frac{\rd}{\rd\rho}\left(\frac1{\rho^m}\int_{\overline\Om}\gamma(\frac{\abs{x-x_0}}\rho)\rd\mu_V\right)^\frac1p\\
\geq&-\rho^{-\frac{m}p}\left(\frac1p+C\rho\right)\left(\int_{B_\rho(x_1)}\Abs{\mfH+\widetilde H}^p\rd\mu_V\right)^\frac1p
&-C(1+\rho)\left(\frac1{\rho^m}\int_{\overline\Om}\gamma(\frac{\abs{x-x_0}}\rho)\rd\mu_V\right)^\frac1p,
}
where $\gamma$ is a standard smooth cut-off function on $\mfR$ with $\gamma\equiv1$ on $[0,1-t]$, $\gamma\equiv0$ on $[1,\infty)$, $-\frac2t\leq\gamma'\leq0$ on $\mfR$, for $t>0$ arbitrarily small.
    
\end{proposition}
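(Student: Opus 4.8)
The plan is to run the classical radial test-field computation for monotonicity, but now using the \emph{full} first variation identity \eqref{eq:1st-variation-V_beta} --- which at a boundary point carries, besides the mean-curvature terms, the normal boundary term against $\sigma_V^\perp$ and the capillary boundary term against $\sigma_\Gamma$ --- and to show that at a capillary boundary point both boundary terms are genuinely of second order, so that they perturb the interior monotonicity only at order $O(\rho)$. Fix $x_0\in\mathscr{P}_{\rm cb}(\Gamma)$ with factors $\ep_0,\rho_0,c_0,\tau^V_{x_0}$, and restrict to $0<\rho<\min\{\rho_0,\rho_S/2\}$, the range in which the improved estimate \eqref{esti:local-tangent-improved} of Proposition \ref{Prop:1st-variation} applies at $x_0$. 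Test \eqref{eq:1st-variation-V_beta} against the radial field $\varphi(x)=\gamma(\abs{x-x_0}/\rho)(x-x_0)$; this is admissible ($\varphi\in\mathfrak{X}(\Om)$, it equals $x-x_0$ near $x_0$, and it need not be tangent along $S$). With $I(\rho)\coloneqq\int_{\overline\Om}\gamma(\abs{x-x_0}/\rho)\rd\mu_V$, the tangential-divergence side of \eqref{eq:1st-variation-V_beta} expands as usual into $m\,I(\rho)$, a monotone remainder carrying the perpendicularity defect $\abs{(x-x_0)^\perp}^2/\abs{x-x_0}^2$ relative to the varifold plane (which has the favourable sign since $\gamma'\leq0$ and may be discarded in an inequality), and a piece that regroups as $\frac{\rd}{\rd\rho}\bigl(\rho^{-m}I(\rho)\bigr)$ after multiplying by $\rho^{-m}$.

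The two boundary terms are treated as follows. Since $\Om$ is of class $C^2$ and $x_0\in S$, the second-order osculation of $S$ at $x_0$ gives $\abs{\left<\nu^S(x),x-x_0\right>}\leq C_S\abs{x-x_0}^2$ for $x\in S$ near $x_0$, with $C_S$ controlled by the second fundamental form of $S$, so that $\bigabs{\int_S\gamma(\abs{x-x_0}/\rho)\left<\nu^S,x-x_0\right>\rd\sigma_V^\perp}\leq C_S\rho^2\,\sigma_V^\perp(B_\rho(x_0))$. For the capillary term, $\left<\mfn_V(x),(x-x_0)^T\right>=\cos\beta(x)\left<\mfn_W(x),x-x_0\right>$, and condition (C2) of Definition \ref{Defn:capillary-bdry-point} is exactly what bounds $\abs{\left<\cos\beta(x)\mfn_W(x),x-x_0\right>}\leq c_0\abs{\cos\beta(x_0)}\abs{x-x_0}^2$ for $\sigma_\Gamma$-a.e. $x\in B_{\rho_0}(x_0)$, whence $\bigabs{\int_S\gamma(\abs{x-x_0}/\rho)\left<\mfn_V,(x-x_0)^T\right>\rd\sigma_\Gamma}\leq c_0\abs{\cos\beta(x_0)}\rho^2\,\sigma_\Gamma(B_\rho(x_0))$. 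It remains to absorb the boundary measures: by re-testing the first variation against cutoff vector fields adapted to $\gamma(\abs{\cdot-x_0}/\rho)$ --- the argument behind \eqref{esti:local-perp} for the normal part, and \eqref{esti:local-tangent-improved}, which is available precisely because (C1) holds at $x_0$ with $\tau^V_{x_0},\ep_0,\rho_0$, for the capillary part --- one controls $\sigma_V^\perp$ and $\sigma_\Gamma$ near $x_0$ at the scale $\rho$ by $C(\ep_0)$ times $\rho^{-1}I(\rho)$ plus $\int_{B_\rho(x_0)}\abs{\mfH}\rd\mu_V$, up to constants depending on the cutoff. Together with the bounds above, the total boundary contribution to \eqref{eq:1st-variation-V_beta} is therefore an $O(\rho)$-perturbation, at most $C\rho\,I(\rho)+C\rho^2\int_{B_\rho(x_0)}\abs{\mfH}\rd\mu_V$.

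Collecting the divergence identity, these boundary estimates, and the mean-curvature terms (against $\mfH$ and against $\widetilde H$, where $\widetilde H\perp S$ with $\norm{\widetilde H}_{L^\infty(\mu_V)}$ controlled by the second fundamental form of $S$, so that $\widehat H\coloneqq\mfH+\widetilde H\in L^p(\mu_V)$ by \eqref{eq:widehat-H} and $\abs{\left<\widehat H,x-x_0\right>}\gamma\leq\rho\abs{\widehat H}\gamma$ on the support of $\gamma$), one is led, after dividing by $\rho$, to a differential inequality of the shape
\[
\frac{\rd}{\rd\rho}\bigl(\rho^{-m}I(\rho)\bigr)\geq-C\,\rho^{-m}I(\rho)-C\,\rho^{-m}\int_{B_\rho(x_0)}\abs{\widehat H}\rd\mu_V,
\]
with $C=C(m,p,S,\ep_0,c_0,\cos\beta(x_0))$. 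Since $p>m$, H\"older's inequality gives $\int_{B_\rho(x_0)}\abs{\widehat H}\rd\mu_V\leq\norm{\widehat H}_{L^p(B_\rho(x_0),\mu_V)}\bigl(\rho^m\cdot\rho^{-m}\mu_V(B_\rho(x_0))\bigr)^{1-1/p}$, and then the chain rule $\frac{\rd}{\rd\rho}\bigl(\rho^{-m}I\bigr)^{1/p}=\tfrac1p\bigl(\rho^{-m}I\bigr)^{1/p-1}\frac{\rd}{\rd\rho}\bigl(\rho^{-m}I\bigr)$ converts the displayed inequality into the stated $p$-th root form \eqref{formu:Monotone-ineq}; the coefficients $(1+C\rho)$, $(\tfrac1p+C\rho)$ and $C(1+\rho)$ are what one obtains by linearising, rather than hiding inside $C$, the exponential-type corrections coming from the curvature of $S$ and from the cutoff.

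The one genuinely non-routine point --- and the main obstacle --- is the capillary boundary term $\int_S\left<\mfn_V,\varphi^T\right>\rd\sigma_\Gamma$: unlike its normal counterpart it cannot be controlled by the ambient geometry of $S$ alone, and it is exactly hypothesis (C2) (second-order orthogonality of the generalised co-normal $\mfn_W$ to ${\rm spt}\,\sigma_\Gamma$) together with the non-degeneracy (C1) (which turns $\sigma_\Gamma$-mass into $\mu_V$-mass through \eqref{esti:local-tangent-improved}) that render it a lower-order perturbation; this is precisely the role these two conditions play in Definition \ref{Defn:capillary-bdry-point}. Once this is secured, the rest is the by-now-standard $L^p$-mean-curvature monotonicity bookkeeping in the spirit of \cite[Section 17]{Simon83}, carried out with enough care to track the dependence of $C$ and the precise error coefficients.
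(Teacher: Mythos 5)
Your proposal follows essentially the same route as the paper's proof: the radial test field $\gamma(|x-x_0|/\rho)(x-x_0)$ applied to \eqref{eq:1st-variation-V_beta}, the $C^2$-osculation of $S$ to bound the normal boundary term against $\sigma_V^\perp$, condition (C2) to bound the tangential boundary term against $\sigma_\Gamma$, re-testing with $\gamma(|\cdot-x_0|/\rho)\tau_{x_0}^V$ and $\gamma(|\cdot-x_0|/\rho)\nabla d_S$ (using (C1) via \eqref{esti:local-tangent-improved}, and the free-boundary estimate behind \eqref{esti:local-perp}) to absorb $\sigma_\Gamma$ and $\sigma_V^\perp$, then H\"older and the chain rule to reach the $p$-th-root form \eqref{formu:Monotone-ineq}. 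Two small slips in your write-up: the invocation ``Since $p>m$'' before H\"older's inequality is spurious --- Proposition \ref{Prop:Mono-bdry} only assumes $p\in[1,\infty)$ and the H\"older step you use requires no more than $p\geq1$ ($p>m$ only enters downstream, in Corollary \ref{Coro-monoto-p>m}) --- and the displayed ``shape'' of the differential inequality omits the $\rho\,\tfrac{\rd}{\rd\rho}(\rho^{-m}I(\rho))$ contributions that the re-testing step necessarily produces (see \eqref{ineq:bdry-monoto-tang-estimate-2}, \eqref{eq:Monoto-derivative-3}) and that are responsible for the factor $(1+C\rho)$ on the left of \eqref{formu:Monotone-ineq}; you do acknowledge the latter in your closing remark, but as displayed the intermediate inequality is not what the absorption yields.
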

\begin{proof}
After translation we may assume that $x_0=0$.
Our goal is to bound the following derivative from below:
\eq{\label{eq:Monoto-derivative-1}
\frac{\rd}{\rd\rho}\left(\frac1{\rho^m}\int_{\overline\Om}\gamma(\frac{\abs{x}}\rho)\rd\mu_V\right)^\frac1p
=\frac1p\frac{\rd}{\rd\rho}\left(\frac1{\rho^m}\int_{\overline\Om}\gamma(\frac{\abs{x}}\rho)\rd\mu_V\right)\left(\frac1{\rho^m}\int_{\overline\Om}\gamma(\frac{\abs{x}}\rho)\rd\mu_V\right)^\frac{1-p}p.
}
Note that
\eq{
\frac{\rd}{\rd\rho}\left(\frac1{\rho^m}\int_{\overline\Om}\gamma(\frac{\abs{x}}\rho)\rd\mu_V\right)
=-\frac1{\rho^{m+1}}\int_{G_m(\overline\Om)}\left(m\gamma(\frac{\abs{x}}\rho)+\frac{\abs{x}}\rho\gamma'(\frac{\abs{x}}\rho)\right)\rd V(x,P).
}
Consider the test vector field $\varphi(x)=\gamma(\frac{\abs{x}}\rho)x$, a direct computation gives
\eq{
{\rm div}_P\varphi(x)
=m\gamma(\frac{\abs{x}}\rho)+\frac{\abs{x}}\rho\gamma'(\frac{\abs{x}}\rho)\Abs{P(\frac{x}{\abs{x}})}^2,
}
and in turn
\eq{\label{eq:monoto-identity-0}
\frac{\rd}{\rd\rho}\left(\frac1{\rho^m}\int_{\overline\Om}\gamma(\frac{\abs{x}}\rho)\rd\mu_V\right)
=&-\frac1{\rho^{m+1}}\de V(\varphi)\\
&-\frac1{\rho^{m+1}}\int_{G_m(\overline\Om)}\frac{\abs{x}}\rho\gamma'(\frac{\abs{x}}\rho)\Abs{P^\perp(\frac{x}{\abs{x}})}^2\rd V(x,P).
}
Using this $\varphi$ in \eqref{eq:1st-variation-V_beta}, the above equality can be further written as
\eq{\label{eq:monoto-identity}
\frac{\rd}{\rd\rho}\left(\frac1{\rho^m}\int_{\overline\Om}\gamma(\frac{\abs{x}}\rho)\rd\mu_V\right)
=&\frac1{\rho^{m+1}}\int_{\overline\Om}\left<\mfH+\widetilde H,\varphi\right>\rd\mu_V\\
&-\frac1{\rho^{m+1}}\int_S\left<\nu^S,\varphi\right>\rd\sigma_V^\perp-\frac1{\rho^{m+1}}\int_S\left<\cos\beta\mfn_W,\varphi\right>\rd\sigma_\Gamma\\
&-\frac1{\rho^{m+1}}\int_{G_m(\overline\Om)}\frac{\abs{x}}\rho\gamma'(\frac{\abs{x}}\rho)\Abs{P^\perp(\frac{x}{\abs{x}})}^2\rd V(x,P).
}
Again, we wish to bound from below the above equality.
Note that $\gamma'\leq0$, thus we could neglect the last integral, also note that by cut-off $\abs{x}\leq\rho$, therefore
\eq{\label{eq:Monoto-derivative-2}
\frac{\rd}{\rd\rho}&\left(\frac1{\rho^m}\int_{\overline\Om}\gamma(\frac{\abs{x}}\rho)\rd\mu_V\right)
\geq-\frac1{\rho^m}\int_{\overline\Om}\gamma(\frac{\abs{x}}\rho)\Abs{\mfH+\widetilde H}\rd\mu_V\\
&-\frac1{\rho^m}\int_S\gamma(\frac{\abs{x}}\rho)\Abs{\left<\frac{x}{\abs{x}},\nu^S\right>}\rd\sigma_V^\perp-\frac1{\rho^{m+1}}\int_S\gamma(\frac{\abs{x}}\rho)\Abs{\left<{x},\cos\beta(x)\mfn_W(x)\right>}\rd\sigma_\Gamma.
}
For the mean curvature term, since $\mfH\in L^p(\mu_V)$ and $\widetilde H\in L^\infty(\mu_V)$, thus $\mfH+\widetilde H\in L^p(\mu_V)$ and hence by H\"older inequality and the fact that $\gamma\leq1$ we obtain
\eq{\label{ineq:integral-H+tilde-H}
\frac1{\rho^m}\int_{\overline\Om}\gamma(\frac{\abs{x}}\rho)\Abs{\mfH+\widetilde H}\rd\mu_V
\leq\frac1{\rho^m}\norm{\mfH+\widetilde H}_{L^p(B_\rho(0),\mu_V)}\left(\int_{\overline\Om}\gamma(\frac{\abs{x}}\rho)\rd\mu_V\right)^{1-\frac1p}\\
\leq\rho^{-\frac{m}p}\left(\int_{B_\rho(0)}\Abs{\mfH+\widetilde H}^p\rd\mu_V\right)^\frac1p\left(\frac1{\rho^{m}}\int_{\overline\Om}\gamma(\frac{\abs{x}}\rho)\rd\mu_V\right)^{1-\frac1p}.
}
For the term concerning tangential part of the boundary measure, since $x_0=0$ is a capillary boundary point of $V$, 
by condition ({C2})
\eq{\label{ineq:bdry-monoto-tang-estimate-1}
\frac1{\rho^{m+1}}\int_S\gamma(\frac{\abs{x}}\rho)\Abs{\left<{x},\cos\beta(x)\mfn_W(x)\right>}\rd\sigma_\Gamma
\leq\frac{c_0}{\rho^{m-1}}\abs{\cos\beta(0)}\int_S\gamma(\frac{\abs{x}}\rho)\rd\sigma_\Gamma.
}
Then as in the proof of Proposition \ref{Prop:1st-variation}, we test \eqref{eq:1st-variation-V_beta} with $\varphi_1(x)=\gamma(\frac{\abs{x}}\rho)\tau_{x_0}^V$.
In view of \eqref{eq:integral-g(x)-1st-variation} and \eqref{ineq:integral-g-lower-bdd}, we get
\eq{
\frac{\ep_{0}}{\rho^{m-1}}
\int_S\gamma(\frac{\abs{x}}\rho)\rd\sigma_\Gamma
\leq&-\frac1{\rho^{m-1}}
\int_{G_m(\overline\Om)}\gamma'(\frac{\abs{x}}\rho)\frac1\rho\left<P(\frac{x}{\abs{x}}),\tau^V_{x_0}\right>\rd V(x,P)\\
&-\frac1{\rho^{m-1}}\int_{\overline\Om}\gamma(\frac{\abs{x}}\rho)\left<\tau^V_{x_0},\mfH+\widetilde H\right>\rd\mu_V\\
&-\frac1{\rho^{m-1}}\int_S\gamma(\frac{\abs{x}}\rho)\left<\nu^S,\tau^V_{x_0}\right>\rd\sigma_V^\perp.
}
Note that for the term involving $\gamma'$, by definition of $\gamma$ we only consider the points satisfying $1\geq\frac{\abs{x}}\rho\geq(1-t)>\frac12$, hence we further write
\eq{
\frac{\ep_{0}}{\rho^{m-1}}
\int_S\gamma(\frac{\abs{x}}\rho)\rd\sigma_\Gamma
\leq&-\frac2{\rho^{m-1}}
\int_{G_m(\overline\Om)}\gamma'(\frac{\abs{x}}\rho)\frac{\abs{x}}{\rho^2}\rd V(x,P)\\
&+\frac1{\rho^{m-1}}\int_{\overline\Om}\gamma(\frac{\abs{x}}\rho)\Abs{\mfH+\widetilde H}\rd\mu_V+\frac1{\rho^{m-1}}\int_S\gamma(\frac{\abs{x}}\rho)\rd\sigma_V^\perp\\
\leq&\frac2{\rho^{m-1}}\frac{\rd}{\rd\rho}\left(\int_{\overline\Om}\gamma(\frac{\abs{x}}\rho)\rd\mu_V\right)+\frac1{\rho^{m-1}}\int_S\gamma(\frac{\abs{x}}\rho)\rd\sigma_V^\perp\\
&+\rho^\frac{p-m}p\left(\int_{B_\rho(0)}\Abs{\mfH+\widetilde H}^p\rd\mu_V\right)^\frac1p\left(\frac1{\rho^{m}}\int_{\overline\Om}\gamma(\frac{\abs{x}}\rho)\rd\mu_V\right)^{1-\frac1p},
}
where we have used \eqref{ineq:integral-H+tilde-H} in the second inequality.

Observe also that
\eq{
&\frac2{\rho^{m-1}}\frac{\rd}{\rd\rho}\left(\int_{\overline\Om}\gamma(\frac{\abs{x}}\rho)\rd\mu_V\right)-\frac{2(m-1)}{\rho^m}\int_{\overline\Om}\gamma(\frac{\abs{x}}\rho)\rd\mu_V\\
=&2\frac{\rd}{\rd\rho}\left(\frac1{\rho^{m-1}}\int_{\overline\Om}\gamma(\frac{\abs{x}}\rho)\rd\mu_V\right)
=\frac2{\rho^m}\int_{\overline\Om}\gamma(\frac{\abs{x}}\rho)\rd\mu_V+2\rho\frac{\rd}{\rd\rho}\left(\frac1{\rho^m}\int_{\overline\Om}\gamma(\frac{\abs{x}}\rho)\rd\mu_V\right),
}
and in turn
\eq{\label{ineq:bdry-monoto-tang-estimate-2}
\frac{\ep_{0}}{\rho^{m-1}}
\int_S\gamma(\frac{\abs{x}}\rho)\rd\sigma_\Gamma
\leq2\rho\frac{\rd}{\rd\rho}\left(\frac1{\rho^m}\int_{\overline\Om}\gamma(\frac{\abs{x}}\rho)\rd\mu_V\right)
+\frac{2m}{\rho^m}\int_{\overline\Om}\gamma(\frac{\abs{x}}\rho)\rd\mu_V\\
+\rho^\frac{p-m}p\left(\int_{B_\rho(0)}\Abs{\mfH+\widetilde H}^p\rd\mu_V\right)^\frac1p\left(\frac1{\rho^{m}}\int_{\overline\Om}\gamma(\frac{\abs{x}}\rho)\rd\mu_V\right)^{1-\frac1p}
+\frac1{\rho^{m-1}}\int_S\gamma(\frac{\abs{x}}\rho)\rd\sigma_V^\perp.
}
For the normal part of the boundary measure, we test \eqref{eq:1st-variation-V_beta} with the vector field $\varphi_2(x)=\gamma(\frac{\abs{x}}\rho)\na d_S(x)$, as computed in \cite[(4.13), (4.16)]{DeMasi21} one obtains
\eq{\label{eq:Monoto-derivative-3}
\frac1{\rho^m}\int_S\gamma(\frac{\abs{x}}\rho)\Abs{\left<\frac{x}{\abs{x}},\nu^S\right>}\rd\sigma_V^\perp
\leq\frac{c}{\rho^{m-1}}\int_S\gamma(\frac{\abs{x}}\rho)\rd\sigma_V^\perp\\
\leq\frac{2mc+c^2\rho}{\rho^m}\int_{\overline\Om}\gamma(\frac{\abs{x}}\rho)\rd\mu_V
+2c\rho\frac{\rd}{\rd\rho}\left(\frac1{\rho^{m}}\int_{\overline\Om}\gamma(\frac{\abs{x}}\rho)\rd\mu_V\right)\\
+c\rho^\frac{p-m}p\left(\int_{B_\rho(0)}\Abs{\mfH+\widetilde H}^p\rd\mu_V\right)^\frac1p\left(\frac1{\rho^{m}}\int_{\overline\Om}\gamma(\frac{\abs{x}}\rho)\rd\mu_V\right)^{1-\frac1p},
}
which is the desired estimate for the normal part of the boundary measure.
We can then obtain the desired estimate for the tangential part of the boundary measure:
\eq{\label{eq:Monoto-derivative-4}
&\frac1{\abs{\cos\beta(0)}\rho^{m+1}}\int_S\gamma(\frac{\abs{x}}\rho)\Abs{\left<{x},\cos\beta(x)\mfn_W(x)\right>}\rd\sigma_\Gamma
\leq\frac{c_0}{\rho^{m-1}}\int_S\gamma(\frac{\abs{x}}\rho)\rd\sigma_\Gamma\\
\leq&\frac{c_0}{\ep_0}(2+2c)\rho\frac{\rd}{\rd\rho}\left(\frac1{\rho^m}\int_{\overline\Om}\gamma(\frac{\abs{x}}\rho)\rd\mu_V\right)
+\frac{c_0}{\ep_0}\frac{(2m+2mc+c^2\rho)}{\rho^m}\int_{\overline\Om}\gamma(\frac{\abs{x}}\rho)\rd\mu_V\\
&+\frac{c_0}{\ep_0}(c+1)\rho^\frac{p-m}p\left(\int_{B_\rho(0)}\Abs{\mfH+\widetilde H}^p\rd\mu_V\right)^\frac1p\left(\frac1{\rho^{m}}\int_{\overline\Om}\gamma(\frac{\abs{x}}\rho)\rd\mu_V\right)^{1-\frac1p},
}
where we have used \eqref{ineq:bdry-monoto-tang-estimate-1} for the first inequality; and \eqref{ineq:bdry-monoto-tang-estimate-2} together with \eqref{eq:Monoto-derivative-4} for the last one.

Finally, substituting \eqref{eq:Monoto-derivative-4}, \eqref{eq:Monoto-derivative-3}, \eqref{ineq:integral-H+tilde-H} into \eqref{eq:Monoto-derivative-2}, and note that
\eq{
\frac1p\left(\frac1{\rho^m}\int_{\overline\Om}\gamma(\frac{\abs{x}}\rho)\rd\mu_V\right)^\frac{1-p}p
\frac{\rd}{\rd\rho}\left(\frac1{\rho^m}\int_{\overline\Om}\gamma(\frac{\abs{x}}\rho)\rd\mu_V\right)
=\frac{\rd}{\rd\rho}\left(\frac1{\rho^m}\int_{\overline\Om}\gamma(\frac{\abs{x}}\rho)\rd\mu_V\right)^\frac1p,
}
we can then estimate \eqref{eq:Monoto-derivative-1} as follows:
\eq{
\frac{\rd}{\rd\rho}\left(\frac1{\rho^m}\int_{\overline\Om}\gamma(\frac{\abs{x}}\rho)\rd\mu_V\right)^\frac1p
=&\frac1p\frac{\rd}{\rd\rho}\left(\frac1{\rho^m}\int_{\overline\Om}\gamma(\frac{\abs{x}}\rho)\rd\mu_V\right)\left(\frac1{\rho^m}\int_{\overline\Om}\gamma(\frac{\abs{x}}\rho)\rd\mu_V\right)^\frac{1-p}p\\
\geq&-\rho^{-\frac{m}p}\left(\frac1p+C\rho\right)\left(\int_{B_\rho(0)}\Abs{\mfH+\widetilde H}^p\rd\mu_V\right)^\frac1p\\
-C(1+\rho)&\left(\frac1{\rho^m}\int_{\overline\Om}\gamma(\frac{\abs{x}}\rho)\rd\mu_V\right)^\frac1p
-C\rho\frac{\rd}{\rd\rho}\left(\frac1{\rho^m}\int_{\overline\Om}\gamma(\frac{\abs{x}}\rho)\rd\mu_V\right)^\frac1p.
}
Rearranging then we obtain the claimed Monotonicity inequality.
\end{proof}

\subsection{Consequences of monotonicity inequalities}

\begin{corollary}\label{Coro-monoto-p>m}
Let $\Om\subset\mfR^{n+1}$ be a bounded domain of class $C^2$ and $\beta\in C^1(S,(0,\pi))$.
Let $V\in{\bf V}^m_\beta(\Om)$ with boundary varifold $\Gamma$, such that $\mfH\in L^p(\mu_V)$ for some $p\in(m,\infty)$.

Let $x_0$ be a capillary boundary point in the sense of
Definition \ref{Defn:capillary-bdry-point}
with corresponding factors $\ep_0,\rho_0,c_0,\tau_{x_0}^V$.
Then there exists $\Lambda=\Lambda(m,p,S,\norm{\mfH}_{L^p(\mu_V)},\ep_0,c_0,\cos\beta(x_0))>0$ such that the function
\eq{
\rho\mapsto e^{\Lambda\rho}\left(\left(\frac{\mu_V(B_\rho(x_0))}{\rho^m}\right)^\frac1p+\Lambda\rho^\frac{p-m}p\right)
}
is monotone increasing on $[0,\min\{\rho_{0},\frac{\rho_S}2\})$ (resulting from the last assertion of Proposition \ref{Prop:1st-variation}).

Moreover, there exists an increasing function $g_{x_0}:\mfR^+\ra\mfR^+$ such that for any $0<r<\rho<\min\{\rho_{0},\frac{\rho_S}2\}$
\eq{\label{ineq-mu_V-B_r-B_rho}
\frac{\mu_V(B_r(x_0))}{r^m}
\leq\frac{\mu_V(B_{\rho}(x_0))}{\rho^m}+g_{x_0}(\rho),
}
where
\eq{
\lim_{\rho\searrow0}g_{x_0}(\rho)=0.
}
\end{corollary}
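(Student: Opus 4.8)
The plan is to integrate the boundary monotonicity differential inequality \eqref{formu:Monotone-ineq} of Proposition \ref{Prop:Mono-bdry} after calibrating one absorbing constant. Fix the cut-off $\gamma$ (with parameter $t>0$) used there, set $\rho^\ast:=\min\{\rho_0,\tfrac{\rho_S}{2}\}$, and abbreviate $D(\rho):=\bigl(\tfrac{1}{\rho^m}\int_{\overline\Om}\gamma(\tfrac{\abs{x-x_0}}{\rho})\rd\mu_V\bigr)^{1/p}$ and $M:=\norm{\mfH+\widetilde H}_{L^p(\mu_V)}$, the latter finite by Proposition \ref{Prop:1st-variation} and the boundedness of $\Om$. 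Since $x_0\in\mathscr{P}_{\rm cb}(\Gamma)\subset{\rm spt}\,\mu_V$ by Remark \ref{Rem:G_pb-spt}, $D$ is a strictly positive $C^1$ function on $(0,\rho^\ast)$. First I would observe that $1+C\rho\geq1$, that the coefficients in \eqref{formu:Monotone-ineq} are uniformly bounded on the bounded interval $(0,\rho^\ast)$, and that the mean-curvature factor there is $\leq M$; hence \eqref{formu:Monotone-ineq} reduces to the clean inequality
\[ D'(\rho)\geq-C_1M\rho^{-m/p}-C_1D(\rho),\qquad 0<\rho<\rho^\ast, \]
for a constant $C_1$ with the dependence stated in the corollary.

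The key step is the choice $\Lambda:=\max\bigl\{C_1,\,\tfrac{p}{p-m}C_1M\bigr\}$, which is exactly where $p>m$ enters: it makes the exponent $\tfrac{p-m}{p}$ positive, and arranges that the term $\Lambda\tfrac{p-m}{p}\rho^{-m/p}$ coming from differentiating the corrector $\Lambda\rho^{(p-m)/p}$ dominates the singular term $C_1M\rho^{-m/p}$. A short computation then gives, on $(0,\rho^\ast)$,
\[ \frac{\rd}{\rd\rho}\Bigl[e^{\Lambda\rho}\bigl(D(\rho)+\Lambda\rho^{(p-m)/p}\bigr)\Bigr]=e^{\Lambda\rho}\Bigl(D'(\rho)+\Lambda D(\rho)+\Lambda^2\rho^{(p-m)/p}+\Lambda\tfrac{p-m}{p}\rho^{-m/p}\Bigr)\geq0, \]
using $D'(\rho)\geq-C_1D(\rho)-C_1M\rho^{-m/p}\geq-\Lambda D(\rho)-\Lambda\tfrac{p-m}{p}\rho^{-m/p}$ and $\Lambda^2\rho^{(p-m)/p}\geq0$. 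Thus $\rho\mapsto e^{\Lambda\rho}(D(\rho)+\Lambda\rho^{(p-m)/p})$ is nondecreasing on $(0,\rho^\ast)$; since $\Lambda$ is independent of $t$, letting $t\searrow0$ and using monotone convergence (together with the right-continuity of $\rho\mapsto\mu_V(B_\rho(x_0))$ to pass from open to closed balls) replaces $D(\rho)$ by $\bigl(\tfrac{\mu_V(B_\rho(x_0))}{\rho^m}\bigr)^{1/p}$, giving the first assertion. Write $F$ for the nondecreasing function so obtained.

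For \eqref{ineq-mu_V-B_r-B_rho}, take $0<r<\rho<\rho^\ast$. From $F(r)\leq F(\rho)$, discarding $\Lambda r^{(p-m)/p}\geq0$, estimating $e^{\Lambda(\rho-r)}\leq1+C_2\rho$ on the bounded interval, and using $\bigl(\tfrac{\mu_V(B_\rho(x_0))}{\rho^m}\bigr)^{1/p}\leq F(\rho)$, I obtain
\[ \Bigl(\tfrac{\mu_V(B_r(x_0))}{r^m}\Bigr)^{1/p}\leq\Bigl(\tfrac{\mu_V(B_\rho(x_0))}{\rho^m}\Bigr)^{1/p}+g_1(\rho),\qquad g_1(\rho):=C_2\rho F(\rho)+(1+C_2\rho)\Lambda\rho^{(p-m)/p}, \]
and $g_1(\rho)\to0$ as $\rho\searrow0$ because $F$ is bounded near $0$ and $p>m$. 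Raising both sides to the $p$-th power and using $(a+b)^p\leq a^p+p(a+b)^{p-1}b$ for $a,b\geq0$, $p\geq1$, together with $\bigl(\tfrac{\mu_V(B_\rho(x_0))}{\rho^m}\bigr)^{1/p}\leq F(\rho)$ once more, yields \eqref{ineq-mu_V-B_r-B_rho} with $g_{x_0}(\rho):=\sup_{0<s\leq\rho}p\bigl(F(s)+g_1(s)\bigr)^{p-1}g_1(s)$, which is nondecreasing and tends to $0$ as $\rho\searrow0$.

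I do not anticipate a genuine obstacle, since the analytic substance is already contained in Proposition \ref{Prop:Mono-bdry}; the remaining care is confined to the uniform control of the coefficients of \eqref{formu:Monotone-ineq} on $(0,\rho^\ast)$, the calibration of $\Lambda$ (where the assumption $p>m$ is used), and the routine limiting argument in $t$ reconciling $\tfrac{1}{\rho^m}\int\gamma(\tfrac{\abs{x-x_0}}{\rho})\rd\mu_V$ with $\tfrac{\mu_V(B_\rho(x_0))}{\rho^m}$.
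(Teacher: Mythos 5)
Your proposal is correct and follows essentially the same line as the paper: recast \eqref{formu:Monotone-ineq} into a linear differential inequality, calibrate $\Lambda$ so that $e^{\Lambda\rho}(D(\rho)+\Lambda\rho^{(p-m)/p})$ has nonnegative derivative (the choice $\Lambda\geq\tfrac{p}{p-m}C_1 M$ is precisely the calibration the paper's one-line derivation of $\Lambda$ tacitly performs), pass $t\searrow0$, and then expand $e^{\Lambda(\rho-r)}$ and raise to the $p$-th power to extract \eqref{ineq-mu_V-B_r-B_rho}. The only difference is that you supply all the intermediate estimates that the paper's proof sketch compresses.
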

\begin{proof}
Rearranging the Monotonicity inequality \eqref{formu:Monotone-ineq} we see that there exists $\Lambda>0$, depending on the stated factors, such that
\eq{
\frac{\rd}{\rd\rho}e^{\Lambda\rho}\left(\left(\frac1{\rho^m}\int_{\overline\Om}\gamma(\frac{\abs{x-x_1}}\rho)\rd\mu_V\right)^\frac1p+\Lambda\rho^{1-\frac{m}p}\right)
\geq0.
}
Letting $\gamma$ increase to the indicator function $\chi_{[0,1]}$ we see that
\eq{
\rho\mapsto e^{\Lambda\rho}\left(\frac{\mu_V(B_\rho(x_1))}{\rho^m}\right)^\frac1p+\Lambda e^{\Lambda\rho}\rho^\frac{p-m}p
}
is increasing as desired.
Since $e^r\leq 1+cr$ on $r\leq r_0$ for some $c=c(r_0)$,
\eqref{ineq-mu_V-B_r-B_rho} then follows as a by-product.

\end{proof}

\begin{corollary}
Let $\Om\subset\mfR^{n+1}$ be a bounded domain of class $C^2$ and $\beta\in C^1(S,(0,\pi))$.
Let $V\in{\bf V}^m_\beta(\Om)$ with boundary varifold $\Gamma$, such that $\mfH\in L^\infty(\mu_V)$.

Let $x_0$ be a capillary boundary point in the sense of
Definition \ref{Defn:capillary-bdry-point}
with corresponding factors $\ep_0,\rho_0,c_0,\tau_{x_0}^V$.
Then there exists $\Lambda=\Lambda(m,p,S,\norm{\mfH}_{L^\infty(\mu_V)},\ep_0,c_0,\cos\beta(x_0))>0$ such that the function
\eq{
\rho\mapsto e^{\Lambda\rho}\frac{\mu_V(B_\rho(x_0))}{\rho^m}
}
is monotone increasing on $[0,\min\{\rho_{0},\frac{\rho_S}2\})$ (resulting from the last assertion of Proposition \ref{Prop:1st-variation}).
\end{corollary}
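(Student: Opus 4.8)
This is the $p=\infty$ counterpart of Corollary \ref{Coro-monoto-p>m}, and the plan is to re-run the proof of the boundary monotonicity inequality (Proposition \ref{Prop:Mono-bdry}) directly in this case. The key simplification is that when $\mfH\in L^\infty(\mu_V)$ one need not pass to $p$-th roots: one works with the quantity $\rho\mapsto\frac1{\rho^m}\int_{\overline\Om}\gamma(\frac{\abs{x-x_0}}\rho)\rd\mu_V$ itself (with $\gamma$ the standard cut-off from Proposition \ref{Prop:Mono-bdry}) and derives a first-order linear differential inequality for it.

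After translating so that $x_0=0$, I would start from the monotonicity identity \eqref{eq:monoto-identity}, obtained by testing \eqref{eq:1st-variation-V_beta} with $\varphi(x)=\gamma(\frac{\abs{x}}\rho)x$. Since $\gamma'\le0$, its last integral is non-negative and may be dropped, leaving the mean curvature term, the normal boundary term, and the tangential boundary term to be bounded from below. For the first, $\abs{x}\le\rho$ on $\{\gamma(\frac{\abs{x}}\rho)\neq0\}$ together with $\widetilde H\in L^\infty(\mu_V)$ give at once
\[
\frac1{\rho^{m+1}}\int_{\overline\Om}\gamma(\tfrac{\abs{x}}\rho)\,\abs{\mfH+\widetilde H}\,\abs{x}\rd\mu_V
\le\norm{\mfH+\widetilde H}_{L^\infty(\mu_V)}\,\frac1{\rho^m}\int_{\overline\Om}\gamma(\tfrac{\abs{x}}\rho)\rd\mu_V,
\]
which plays the role of the H\"older step \eqref{ineq:integral-H+tilde-H}. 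The normal boundary term is handled exactly as in \eqref{eq:Monoto-derivative-3} (i.e.\ via \cite[(4.13), (4.16)]{DeMasi21} and testing with $\gamma(\frac{\abs{x}}\rho)\na d_S$), now with the $L^\infty$-bound above in place of the H\"older term; this expresses it through $\frac1{\rho^m}\int_{\overline\Om}\gamma\rd\mu_V$, $\rho\frac{\rd}{\rd\rho}\left(\frac1{\rho^m}\int_{\overline\Om}\gamma\rd\mu_V\right)$ and the mean curvature term. For the tangential term, condition (C2) (\eqref{defn:genera-capi-bdry-point-2}) reduces it to $\frac{c_0\abs{\cos\beta(0)}}{\rho^{m-1}}\int_S\gamma\rd\sigma_\Gamma$, and then $\frac1{\rho^{m-1}}\int_S\gamma\rd\sigma_\Gamma$ is controlled as in \eqref{ineq:bdry-monoto-tang-estimate-2} by testing \eqref{eq:1st-variation-V_beta} with $\gamma(\frac{\abs{x}}\rho)\tau^V_{x_0}$ and using (C1) (\eqref{defn:genera-capi-bdry-point-1}); this again produces only $\frac1{\rho^m}\int_{\overline\Om}\gamma\rd\mu_V$, $\rho\frac{\rd}{\rd\rho}\left(\frac1{\rho^m}\int_{\overline\Om}\gamma\rd\mu_V\right)$, $\frac1{\rho^{m-1}}\int_S\gamma\rd\sigma_V^\perp$ (absorbed through the normal estimate), and the mean curvature term.

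Collecting these in \eqref{eq:monoto-identity} yields a differential inequality of the form
\[
(1+C\rho)\,\frac{\rd}{\rd\rho}\left(\frac1{\rho^m}\int_{\overline\Om}\gamma(\tfrac{\abs{x}}\rho)\rd\mu_V\right)
\ge-C(1+\rho)\,\frac1{\rho^m}\int_{\overline\Om}\gamma(\tfrac{\abs{x}}\rho)\rd\mu_V
\]
with $C=C(m,S,\norm{\mfH}_{L^\infty(\mu_V)},\ep_0,c_0,\cos\beta(0))>0$. Shrinking $\rho$ so that $1+C\rho\le2$ (and keeping $\rho<\min\{\rho_0,\frac{\rho_S}2\}$) gives $\frac{\rd}{\rd\rho}(\cdot)\ge-\Lambda\,(\cdot)$ for a suitable $\Lambda$ depending on the same data, hence $\frac{\rd}{\rd\rho}\bigl(e^{\Lambda\rho}\frac1{\rho^m}\int_{\overline\Om}\gamma(\tfrac{\abs{x}}\rho)\rd\mu_V\bigr)\ge0$; letting $\gamma\nearrow\chi_{[0,1]}$ then gives monotonicity of $\rho\mapsto e^{\Lambda\rho}\mu_V(B_\rho(x_0))\rho^{-m}$ on $[0,\min\{\rho_0,\frac{\rho_S}2\})$. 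The only point to watch is, as already in the case $p>m$, that the coefficients of $\rho\frac{\rd}{\rd\rho}(\cdot)$ accumulated from the normal and tangential boundary estimates add up to something $<1$ after a possible further shrinking of the radius, so that they can be absorbed into the left-hand side; no new difficulty arises beyond the bookkeeping already present in the proof of Proposition \ref{Prop:Mono-bdry}.
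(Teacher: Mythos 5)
Your proposal is correct and follows essentially the same route as the paper: the paper likewise replaces the H\"older step \eqref{ineq:integral-H+tilde-H} by the trivial $L^\infty$ bound, substitutes \eqref{eq:Monoto-derivative-4} and \eqref{eq:Monoto-derivative-3} (with $p=1$) into \eqref{eq:Monoto-derivative-2} to get $(1+C\rho)\frac{\rd}{\rd\rho}(\cdot)\geq-C(1+\rho)(\cdot)$, and concludes via the integrating factor $e^{\Lambda\rho}$. The only cosmetic difference is that no shrinking of $\rho$ is actually needed to absorb the $C\rho\frac{\rd}{\rd\rho}(\cdot)$ term, since $1+C\rho\geq1$ lets one divide through directly and bound $\frac{C(1+\rho)}{1+C\rho}$ by a uniform $\Lambda$ on the whole interval.
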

\begin{proof}
Since $\mfH\in L^\infty(\mu_V)$ the term \eqref{ineq:integral-H+tilde-H} can be simply estimated as follows:
\eq{\label{ineq:integral-H+tilde-H-infty}
\frac1{\rho^m}\int_{\overline\Om}\gamma(\frac{\abs{x}}\rho)\Abs{\mfH+\widetilde H}\rd\mu_V
\leq\frac{\norm{\mfH+\widetilde H}_{L^\infty(\mu_V)}}{\rho^m}\int_{\overline\Om}\gamma(\frac{\abs{x}}\rho)\rd\mu_V.
}
Substituting \eqref{eq:Monoto-derivative-4}, \eqref{eq:Monoto-derivative-3} (with $p=1$),  and \eqref{ineq:integral-H+tilde-H-infty} into \eqref{eq:Monoto-derivative-2}, we obtain
\eq{
\frac{\rd}{\rd\rho}&\left(\frac1{\rho^m}\int_{\overline\Om}\gamma(\frac{\abs{x}}\rho)\rd\mu_V\right)
\geq-(1+C\rho)\norm{\mfH+\widetilde H}_{L^\infty(\mu_V)}\left(\frac1{\rho^m}\int_{\overline\Om}\gamma(\frac{\abs{x}}\rho)\rd\mu_V\right)\\
&-C(1+\rho)\left(\frac1{\rho^m}\int_{\overline\Om}\gamma(\frac{\abs{x}}\rho)\rd\mu_V\right)-C\rho\frac{\rd}{\rd\rho}\left(\frac1{\rho^m}\int_{\overline\Om}\gamma(\frac{\abs{x}}\rho)\rd\mu_V\right),
}
that is,
\eq{
(1+C\rho)\frac{\rd}{\rd\rho}\left(\frac1{\rho^m}\int_{\overline\Om}\gamma(\frac{\abs{x}}\rho)\rd\mu_V\right)
\geq-C(1+\rho)\left(\frac1{\rho^m}\int_{\overline\Om}\gamma(\frac{\abs{x}}\rho)\rd\mu_V\right),
}
rearranging and we get
\eq{
\frac{\rd}{\rd\rho}\left(\frac1{\rho^m}\int_{\overline\Om}\gamma(\frac{\abs{x}}\rho)\rd\mu_V\right)
+\frac{C(1+\rho)}{1+C\rho}\left(\frac1{\rho^m}\int_{\overline\Om}\gamma(\frac{\abs{x}}\rho)\rd\mu_V\right)
\geq0,
}
and hence there exists some $\Lambda>0$, depending on the stated quantities, such that
\eq{
\frac{\rd}{\rd\rho}\left(\frac1{\rho^m}\int_{\overline\Om}\gamma(\frac{\abs{x}}\rho)\rd\mu_V\right)
+\Lambda\left(\frac1{\rho^m}\int_{\overline\Om}\gamma(\frac{\abs{x}}\rho)\rd\mu_V\right)
\geq0.
}
Note that the integrating factor is $e^{\Lambda\rho}$, the assertion then follows easily.
\end{proof}

\begin{corollary}[Existence of density]\label{Cor:density-existence}
Let $\Om\subset\mfR^{n+1}$ be a bounded domain of class $C^2$ and $\beta\in C^1(S,(0,\pi))$.
Let $V\in{\bf V}^m_\beta(\Om)$ with boundary varifold $\Gamma$, such that $\mfH\in L^p(\mu_V)$ for some $p\in(m,\infty)$.
\begin{enumerate}
    \item For any $x_0\in \mathscr{P}_{\rm cb}(\Gamma)$, $\Theta^m(\mu_V,x_0)$ exists.
    \item If $V\in{\bf V}^m_\beta(\Om)\cap{\bf RV}^m(\overline\Om)$, then for any $x\in\Om$, $\Theta^m(\mu_V,x)$ exists.
\end{enumerate}
\end{corollary}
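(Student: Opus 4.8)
The plan is to deduce both assertions from the monotonicity inequalities of this section, exactly as the density is shown to exist in the classical interior setting.

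\emph{Part (1).} Let $x_0\in\mathscr{P}_{\rm cb}(\Gamma)$, with associated factors $\ep_0,\rho_0,c_0,\tau_{x_0}^V$ as in Definition~\ref{Defn:capillary-bdry-point}. Since $p>m$, I would invoke Corollary~\ref{Coro-monoto-p>m}, which produces an increasing function $g_{x_0}$ with $\lim_{\rho\searrow0}g_{x_0}(\rho)=0$ and
\[
\frac{\mu_V(B_r(x_0))}{r^m}\leq\frac{\mu_V(B_\rho(x_0))}{\rho^m}+g_{x_0}(\rho)
\qquad\text{for all }0<r<\rho<\min\{\rho_0,\tfrac{\rho_S}{2}\}.
\]
Letting $r\searrow0$ and then $\rho\searrow0$ gives $\limsup_{r\searrow0}\mu_V(B_r(x_0))/r^m\leq\liminf_{\rho\searrow0}\mu_V(B_\rho(x_0))/\rho^m$, so the limit $\lim_{\rho\searrow0}\mu_V(B_\rho(x_0))/\rho^m$ exists; dividing by $\om_m$ shows that $\Theta^m(\mu_V,x_0)$ exists. (Equivalently, one may use the monotone quantity $\rho\mapsto e^{\Lambda\rho}\big((\mu_V(B_\rho(x_0))/\rho^m)^{1/p}+\Lambda\rho^{(p-m)/p}\big)$ from the same corollary together with $\Lambda\rho^{(p-m)/p}\to0$.)

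\emph{Part (2).} Now suppose in addition $V\in{\bf RV}^m(\overline\Om)$ and fix $x\in\Om$. Proposition~\ref{Prop-Mono-Interior} applied with the constant weight $h\equiv1$ (so $\na^Vh=0$) gives, for $0<r_1<r_2<d_S(x)$,
\[
\frac{\mu_V(B_{r_1}(x))}{r_1^m}\leq\frac{\mu_V(B_{r_2}(x))}{r_2^m}+\int_{r_1}^{r_2}\frac{1}{\rho^m}\int_{B_\rho(x)}\abs{\mfH}\rd\mu_V\,\rd\rho.
\]
By H\"older's inequality and $\mfH\in L^p(\mu_V)$, the integrand in $\rho$ is controlled by $\norm{\mfH}_{L^p(\mu_V)}\big(\mu_V(B_\rho(x))/\rho^m\big)^{1-1/p}\rho^{-m/p}$; since $p>m$, first a standard absorption argument (using that $\rho^{-m/p}$ is integrable at $0$) yields the density upper bound $\mu_V(B_\rho(x))\leq C\rho^m$ for $\rho\leq\tfrac{1}{2}d_S(x)$, and then the error term from $r_1$ to $r_2$ is bounded by $C'\int_{r_1}^{r_2}\rho^{-m/p}\,\rd\rho\to0$ as $r_2\searrow0$. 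The same $\limsup$--$\liminf$ argument as in Part~(1) then gives the existence of $\Theta^m(\mu_V,x)$; this is precisely one of the ``nice properties as in \cite[Section~17]{Simon83}'' recorded in Remark~\ref{Rem-Mono-Interior}.

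The only genuine step is the a priori density bound $\mu_V(B_\rho(x))\leq C\rho^m$ needed in Part~(2) to make the mean-curvature error term small; I do not expect this to be an obstacle, since it is the standard consequence of interior monotonicity once $p>m$, and in Part~(1) the analogous bound is already packaged inside Corollary~\ref{Coro-monoto-p>m}. Everything else is routine.
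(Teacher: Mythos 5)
Your proposal is correct and follows exactly the route the paper takes: the paper's proof of this corollary consists of citing Corollary \ref{Coro-monoto-p>m} for the boundary case and Proposition \ref{Prop-Mono-Interior} (plus the standard consequences recorded in Remark \ref{Rem-Mono-Interior}) for the interior case, and your write-up simply fills in the routine $\limsup$--$\liminf$ details that the paper leaves implicit.
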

\begin{proof}
The assertions follow from Corollary \ref{Coro-monoto-p>m} and Proposition \ref{Prop-Mono-Interior}.
\end{proof}

\section{Blow-up at capillary boundary points}\label{Sec:5}

In this section we consider the blow-up at boundary points, the framework is as follows:
Let $\Om\subset\mfR^{n+1}$ be a bounded domain of class $C^2$ and $\beta\in C^1(S,(0,\pi))$.
Let $V\in{\bf V}^m_\beta(\Om)$ with boundary varifold $\Gamma$.
For a fixed $x_0\in S$ and a fixed sequence of numbers $r_j\searrow0$ as $j\ra\infty$, we use the notations:
\eq{
\Om_j
\coloneqq\bseta_{x_0,r_j}(\Om),\quad
S_j
\coloneqq\bseta_{x_0,r_j}(S),\\
V_j
\coloneqq(\bseta_{x_0,r_j})_\#V\in{\bf V}^m(\overline\Om_j),
}
where $(f)_\#V$ is the push-forward of varifold through $f$.
Moreover, the induced prescribed contact angle function on $S_j$ is given by
\eq{
\beta_j(x)
\coloneqq\beta(x_0+r_jx),\quad x\in S_j.
}
And the induced subspace of $G_m(S_j)$ is denoted by
\eq{
G_{m,\beta_j}(S_j).
}
In fact, we have the identifications for any $x=\bseta_{x_0,r_j}(y)\in S_j$:
\eq{\label{eq:identification-G(x)-G(y)}
G_{m,\beta_j}(x)
=G_{m,\beta}(\bseta_{x_0,r_j}^{-1}(x))
=G_{m,\beta}(y),
}
and
\eq{\label{eq:identification-mfn}
\mfn(y,P_0)
=\mfn(x,P_0),
}
where $P_0\in G_{m,\beta_j}(x)=G_{m,\beta}(y)$.

For any $x\in S_j$, we denote by $\nu^{S_j}(x)$ the inwards-pointing unit normal to $\Om_j$ at $x\in S_j$.
As $j\ra\infty$ we have in the sense of $C^2$-topology ($C^1$-topology for the function $\beta_j$) the convergence
\eq{\label{eq:blow-up-settings}
\overline\Om_j\ra T_{x_0}\overline\Om,\quad
S_j\ra T_{x_0}S,\quad
\beta_j\ra\beta(x_0),
}
where $\beta(x_0)$ is understood as the constant function of value $\beta(x_0)$ on ${T_{x_0}S}$.
In particular, this yields the following convergences:
\eq{\label{eq:blow-up-settings-bundle}
G_m(S_j)\ra G_m(T_{x_0}S),\quad
G_{m,\beta_j}(S_j)
\ra G_{m,\beta(x_0)}(T_{x_0}S).
}

We define the push-forward of $\Gamma$ through $\bseta_{x_0,r_j}$, denoted by $\Gamma_j$, to be the Radon measure on $G_{m,\beta_j}(S_j)$ such that $\forall\phi\in C_c(G_{m,\beta_j}(S_j))$
\eq{\label{defn:Gamma_j}
\int_{G_{m,\beta_j}(S_j)}\phi(x,P)\rd\Gamma_j(x,P)
\coloneqq&r_j\int_{G_{m,\beta}(S)}{\rm J}_P(\bseta_{x_0,r_j})(y)\phi(\bseta_{x_0,r_j}(y),{\rm d}\bseta_{x_0,r_j}(P))\rd\Gamma(y,P)\\
=&r_j^{1-m}\int_{G_{m,\beta}(S)}\phi(\frac{y-x_0}{r_j},P)\rd\Gamma(y,P).
}
We have the standard disintegration
\eq{
\Gamma_j=\sigma_{\Gamma_j}\otimes\Gamma_j^x,
}
where $\sigma_{\Gamma_j}=(\pi)_\ast\Gamma_j$ and $\Gamma_j^x$ is the Radon probability measure on $G_{m,\beta_j}(x)$ for $\sigma_{\Gamma_j}$-a.e. $x$.
On the other hand,
in light of \eqref{defn:n_V-intro} we define for $\sigma_{\Gamma_j}$-a.e. $x$
\eq{
\mfn_{V_j}(x)
\coloneqq\int_{G_{m,\beta_j}(x)}\mfn(x,P)\rd\Gamma_j^x(P),\text{ and }
\cos\beta(x)\mfn_{W_j}(x)
\coloneqq T_xS\left(\mfn_{V_j}(x)\right).
}

\subsection{Properties of blow-up limits}

\begin{lemma}\label{Lem:Gamma_j-properties}
Under the above notations, the following properties hold:
\begin{enumerate}
    \item the measure $\sigma_{\Gamma_j}$ is given by
\eq{\label{eq:sigma_Gamma_j}
\sigma_{\Gamma_j}
=\frac1{r_j^{m-1}}(\bseta_{x_0,r_j})_\ast\sigma_\Gamma.
}
    \item For $\sigma_{\Gamma_j}$-a.e. $x=\bseta_{x_0,r_j}(y)\in S_j$, the Radon probability measure $\Gamma_j^x$ is given by
\eq{
\Gamma_j^x
=\Gamma^y,
}
as measures on $G_{m,\beta_j}(x)=G_{m,\beta}(y)$.
    \item For $\sigma_{\Gamma_j}$-a.e. $x=\bseta_{x_0,r_j}(y)\in S_j$, the generalized inwards pointing co-normal to $V_j$ is given by
\eq{
\mfn_{V_j}(x)
=\mfn_V(y).
}
\end{enumerate}
\end{lemma}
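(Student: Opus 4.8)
The plan is to read off all three identities from the defining formula \eqref{defn:Gamma_j} for $\Gamma_j$, using only the uniqueness of disintegrations and two elementary facts about the map $\bseta_{x_0,r_j}$: its differential ${\rm d}\bseta_{x_0,r_j}$ is a dilation, hence acts as the identity on $G(m,n+1)$ (this is why the plane variable $P$ is unchanged between the two lines of \eqref{defn:Gamma_j}), and its $m$-dimensional Jacobian ${\rm J}_P(\bseta_{x_0,r_j})$ equals $r_j^{-m}$ (producing the prefactor $r_j^{1-m}$).

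For part (1), I would test \eqref{defn:Gamma_j} against functions of the form $\phi(x,P)=\psi(x)$ with $\psi\in C_c(S_j)$. Since such $\phi$ ignores the plane variable, the right-hand side of \eqref{defn:Gamma_j} collapses to $r_j^{1-m}\int_S\psi(\bseta_{x_0,r_j}(y))\rd\sigma_\Gamma(y)$, which by definition of the push-forward of a Radon measure is $r_j^{1-m}\int_{S_j}\psi\rd\big((\bseta_{x_0,r_j})_\ast\sigma_\Gamma\big)$; the left-hand side is $\int_{S_j}\psi\rd\sigma_{\Gamma_j}$ by definition of $\sigma_{\Gamma_j}=(\pi)_\ast\Gamma_j$. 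Equating the two gives \eqref{eq:sigma_Gamma_j}.

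For part (2), I would show that the family $\big(\Gamma^{\bseta_{x_0,r_j}^{-1}(x)}\big)_{x\in S_j}$ --- which, by the identification \eqref{eq:identification-G(x)-G(y)}, consists of Radon probability measures on $G_{m,\beta_j}(x)=G_{m,\beta}(\bseta_{x_0,r_j}^{-1}(x))$ --- disintegrates $\Gamma_j$ over $\sigma_{\Gamma_j}$. Writing $y=\bseta_{x_0,r_j}^{-1}(x)$ and starting from the right-hand side of \eqref{defn:Gamma_j} for an arbitrary Borel $\phi\geq0$, one disintegrates $\Gamma=\sigma_\Gamma\otimes\Gamma^y$ in the $y$-variable and then uses part (1) to replace $r_j^{1-m}\rd\sigma_\Gamma(y)$ by $\rd\sigma_{\Gamma_j}(x)$; the outcome is precisely the disintegration formula for $\Gamma_j$ with the proposed conditionals. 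Since each $\Gamma^y$ is a probability measure, the uniqueness statement in the Disintegration Lemma of Section~\ref{Sec:2} forces $\Gamma_j^x=\Gamma^y$ for $\sigma_{\Gamma_j}$-a.e.\ $x$. Part (3) is then immediate: substituting part (2) and the identification \eqref{eq:identification-mfn} into $\mfn_{V_j}(x)=\int_{G_{m,\beta_j}(x)}\mfn(x,P)\rd\Gamma_j^x(P)$ turns it into $\int_{G_{m,\beta}(y)}\mfn(y,P)\rd\Gamma^y(P)=\mfn_V(y)$.

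There is no genuine obstacle here; the proof is pure bookkeeping. The only point that requires care is keeping the scaling exponent $r_j^{1-m}$ and the three plane-variable identifications --- \eqref{defn:Gamma_j}, \eqref{eq:identification-G(x)-G(y)}, \eqref{eq:identification-mfn} --- mutually consistent, and checking that each conditional measure produced is indeed a probability measure so that the uniqueness part of the disintegration applies.
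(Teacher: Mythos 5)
Your proposal matches the paper's own argument essentially step for step: part (1) by testing \eqref{defn:Gamma_j} against functions $\phi(x,P)=\varphi(x)$ and changing variables, part (2) by disintegrating $\Gamma$ in $y$, inserting the identification \eqref{eq:identification-G(x)-G(y)} and the result of part (1), then invoking uniqueness of disintegration, and part (3) by substituting into the defining integral for $\mfn_{V_j}$ using \eqref{eq:identification-mfn}. The bookkeeping and the appeal to uniqueness of the conditional family are exactly what the paper does, so the proof is correct.
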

\begin{proof}

For $\phi(x,P)=\varphi(x)$,
\eqref{defn:Gamma_j} simplifies as
\eq{
\int_{S_j}\varphi(x)\rd\sigma_{\Gamma_j}(x)
=r_j^{1-m}\int_S\varphi(\frac{y-x_0}{r_j})\rd\sigma_\Gamma(y).
}
Changing of variables we see
\eq{
\int_S\varphi(\frac{y-x_0}{r_j})\rd\sigma_\Gamma(y)
=\int_{S_j}\varphi(x)\rd\left((\bseta_{x_0,r_j})_\ast\sigma_\Gamma\right)(x),
}
which proves ({\bf 1}).

For $\phi(x,P)$ in \eqref{defn:Gamma_j}, we use disintegration, \eqref{eq:identification-G(x)-G(y)}, and change of variables to find
\eq{\label{eq:convergence-gamma_infty}
\int_{G_{m,\beta_j}(S_j)}\phi(x,P)\rd\Gamma_j(x,P)
=&r_j^{1-m}\int_{G_{m,\beta}(S)}\phi(\frac{y-x_0}{r_j},P)\rd\Gamma(y,P)\\
=&r_j^{1-m}\int_S\int_{G_{m,\beta}(y)}\phi(\frac{y-x_0}{r_j},P)\rd\Gamma^y(P)\rd\sigma_\Gamma(y)\\
=&r_j^{1-m}\int_{S_j}\int_{G_{m,\beta_j}(x)}\phi(x,P)\rd\Gamma^{\bseta_{x_0,r_j}^{-1}(x)}(P)\rd\left((\bseta_{x_0,r_j})_\ast\sigma_\Gamma\right)(x)\\
=&\int_{S_j}\int_{G_{m,\beta_j}(x)}\phi(x,P)\rd\Gamma^{\bseta_{x_0,r_j}^{-1}(x)}(P)\rd\sigma_{\Gamma_j}(x),
}
where we have used \eqref{eq:sigma_Gamma_j} in the last equality.
In particular, this shows that the following two measures are the same:
\eq{
\Gamma_j
=\sigma_{\Gamma_j}\otimes\Gamma_j^x,\quad
\tilde\Gamma_j\coloneqq\sigma_{\Gamma_j}\otimes\Gamma^{\bseta_{x_0,r_j}^{-1}(x)},
}
which proves ({\bf 2}).


({\bf 3}) is a direct consequence of ({\bf 2}), \eqref{eq:identification-G(x)-G(y)}, and the definition \eqref{defn:n_V-intro}, the proof is completed.
\end{proof}

Next we prove that each $V_j\in {\bf V}^m_{\beta_j}(\Om_j)$, with boundary varifold given by $\Gamma_j$ defined above, so that by virtue of Proposition \ref{Prop:1st-variation} there exist boundary measures
\eq{
\sigma_j^T
\coloneqq\sigma_{\Gamma_j},\quad
\sigma_j^\perp
\coloneqq\sigma_{V_j}^\perp.
}

\begin{lemma}[Blow-up sequence]\label{Lem:blow-up-seq}
Under the notations above, for $\sigma_\Gamma$-a.e. $x_0\in \mathscr{P}_{\rm cb}(\Gamma)$, the following statements hold:
\begin{enumerate}
    \item
    For $j\in\mbN$, $V_j\in{\bf V}^m_{\beta_j}(\Om_j)$ with boundary varifold $\Gamma_j$ and satisfies
\eq{
\mfH_j(x)
=r_j\mfH(x_0+r_jx),\quad
\widetilde H_j(x)
=r_j\widetilde H(x_0+r_jx),\\
\sigma_j^\perp
\coloneqq\sigma_{V_j}^\perp
=\frac1{r_j^{m-1}}(\bseta_{x_0,r_j})_\ast\sigma_V^\perp,\quad
\sigma_j^T
\coloneqq\sigma_{\Gamma_j}
=\frac1{r_j^{m-1}}(\bseta_{x_0,r_j})_\ast\sigma_\Gamma.
}
    \item Passing to a subsequence $\{j_k\}_{k\in\mbN}$, there exist $\mcC\in{\bf V}^m(\mfR^{n+1})$ and a Radon measure $\Gamma_\infty$ on $G_m(\mfR^{n+1})$, such that
\eq{
V_{j_k}\ra\mcC,\quad
\Gamma_{j_k}\wsc\Gamma_\infty.
}
Moreover, $\mcC$ is supported on $T_{x_0}\overline\Om$, $\Gamma_\infty$ is supported on $G_{m,\beta(x_0)}(T_{x_0}S)$, where $\beta(x_0)$ is understood as the constant function of value $\beta(x_0)$ on $T_{x_0}S$.
    \item $\mcC$ has prescribed contact angle $\beta(x_0)$ intersecting $T_{x_0}S$ along $\Gamma_\infty$ in the sense of Definition \ref{Defn:vfld-prescribed-bdry}, and satisfies \eqref{eq:1st-variation-V_beta} with
\eq{\label{eq:H_C}
\mfH_\mcC=0,\quad
\widetilde H_\mcC=0.
}
The disintegration $\Gamma_\infty=\sigma_{\Gamma_\infty}\otimes\Gamma_\infty^x$ is characterized by:
\eq{\label{eq:limit-sigma_jk}
\sigma_{\Gamma_\infty}
=\lim_{k\ra\infty}\sigma_{\Gamma_{j_k}}
}
as measures;
for $\sigma_{\Gamma_\infty}$-a.e. $x$,
\eq{\label{eq:limit-gamma^x_infty}
\Gamma_\infty^x
=\Gamma^{x_0}
}
as measures on $G_{m,\beta(x_0)}(x)$, where
\eq{
G_{m,\beta(x_0)}(x)\cong G_{m,\beta(x_0)}(0)\cong G_{m,\beta}(x_0),\quad
\forall x\in T_{x_0}S.
}
Moreover, for $\sigma_{\Gamma_\infty}$-a.e. $x$
\eq{\label{eq:n_mcC}
\mfn_\mcC(x)
=\mfn_V(x_0).
}
For the normal part of boundary measure, one has
\eq{\label{eq:limit-sigma_j^perp}
\sigma_\mcC^\perp
=\lim_{k\ra\infty}\sigma_j^\perp.
}
    \item For $\sigma_{\Gamma_\infty}$-a.e. $x$, there holds
\eq{\label{eq:x-n_W(x_0)}
\left<x,\mfn_W(x_0)\right>=0.
}
    \item If in addition $\Theta^m(\mu_V,x)\geq a>0$ for $\mu_V$-a.e. $x\in\overline\Om$, then $\mcC$ is a rectifiable cone with
\eq{
\Theta^m(\mu_\mcC,x)\geq a,\text{ for }\mu_C\text{-a.e. }x,
}
    and $\sigma_{\Gamma_\infty}$ is scaling invariant in the sense that for every $\rho>0$
\eq{
\frac1{\rho^{m-1}}(\bseta_{0,\rho})_\ast\sigma_{\Gamma_\infty}
=\sigma_{\Gamma_\infty}.
}

\end{enumerate}
\end{lemma}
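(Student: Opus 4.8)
The plan is to prove the five items in order; the crux is a Lebesgue-differentiation argument that identifies the disintegration of the blow-up limit and, along the way, produces the $\sigma_\Gamma$-conull set of admissible base points in the statement. Item (1) is bookkeeping: the transformation laws for $V_j,\mfH_j,\widetilde H_j,\sigma_j^\perp,\sigma_j^T$ follow from the scaling of the first variation under the homothety $\bseta_{x_0,r_j}$ together with Proposition \ref{Prop:1st-variation}, the identifications \eqref{eq:identification-G(x)-G(y)}, \eqref{eq:identification-mfn}, and Lemma \ref{Lem:Gamma_j-properties}. For item (2) I would first record the mass bounds that make Banach--Alaoglu applicable: since $x_0\in\mathscr{P}_{\rm cb}(\Gamma)$ and $\mfH\in L^p(\mu_V)$ with $p>m$, Corollary \ref{Coro-monoto-p>m} gives $\mu_V(B_r(x_0))\le Cr^m$, hence $\mu_{V_j}(B_R(0))\le CR^m$; feeding this into the improved local estimate \eqref{esti:local-tangent-improved}, and using $\int_{B_r(x_0)}|\mfH|\,\rd\mu_V\le\|\mfH\|_{L^p}(Cr^m)^{1-1/p}=o(r^{m-1})$ because $p>m$, yields $\sigma_\Gamma(B_r(x_0))\le Cr^{m-1}$ and hence $\sigma_{\Gamma_j}(B_R(0))\le CR^{m-1}$. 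A subsequence then has $V_{j_k}\to\mcC$ and $\Gamma_{j_k}\wsc\Gamma_\infty$, with the support claims immediate from \eqref{eq:blow-up-settings}, \eqref{eq:blow-up-settings-bundle}.

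For item (3) I would pass to the limit in \eqref{eq:1st-variation-V_beta} for $V_{j_k}$, tested against fields $\varphi_{j_k}\in\mathfrak{X}_t(\Om_{j_k})$ that approximate a fixed $\varphi\in\mathfrak{X}_t(T_{x_0}\Om)$ in $C^1$. The mean-curvature term vanishes since $\|\mfH_{j_k}\|_{L^p(\mu_{V_{j_k}})}^p=r_{j_k}^{p-m}\|\mfH\|_{L^p(B_{r_{j_k}R}(x_0),\mu_V)}^p\to0$; the $\widetilde H$-term vanishes because the limiting hyperplane $T_{x_0}S$ is flat; the remaining terms converge by the weak convergence of $V_{j_k},\Gamma_{j_k},\sigma_{j_k}^\perp$ (uniform mass bounds) together with convergence of the $\mfn$-fields. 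This would give $\mcC\in{\bf V}^m_{\beta(x_0)}(T_{x_0}\Om)$ with $\mfH_\mcC=\widetilde H_\mcC=0$, boundary varifold $\Gamma_\infty$, and \eqref{eq:limit-sigma_jk}, \eqref{eq:limit-sigma_j^perp}. The substantive point is \eqref{eq:limit-gamma^x_infty}: for $\chi$ ranging over a countable dense family in $C(G(m,n+1))$ set $g_\chi(y):=\int\chi\,\rd\Gamma^y$, a bounded $\sigma_\Gamma$-measurable function having $x_0$ as a Lebesgue point for $\sigma_\Gamma$-a.e.\ $x_0$; using $\sigma_\Gamma(B_r(x_0))\le Cr^{m-1}$ one gets $\int_{B_R(0)}|g_\chi(x_0+r_{j_k}x)-g_\chi(x_0)|\,\rd\sigma_{\Gamma_{j_k}}(x)=r_{j_k}^{1-m}\int_{B_{r_{j_k}R}(x_0)}|g_\chi-g_\chi(x_0)|\,\rd\sigma_\Gamma\to0$, so for test functions of product type $\psi(x)\chi(P)$ the disintegrated form of $\int\psi\chi\,\rd\Gamma_{j_k}$ converges to $g_\chi(x_0)\int\psi\,\rd\sigma_{\Gamma_\infty}$, i.e.\ $\Gamma_\infty=\sigma_{\Gamma_\infty}\otimes\Gamma^{x_0}$. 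The same device applied to $\mfn_V$ (respectively $\mfn_W$) gives \eqref{eq:n_mcC} and the constancy $\mfn_W\equiv\mfn_W(x_0)$ along $\mcC$, using that on $T_{x_0}S$ the normal is constant, $\nu^S(\cdot)\equiv\nu^S(x_0)$, so $\mfn(x,\cdot)=\mfn(x_0,\cdot)$ there.

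For item (4) I would rescale condition (C2): for $\sigma_{\Gamma_j}$-a.e.\ $x\in S_j$, $|\langle x,\cos\beta_j(x)\mfn_{W_j}(x)\rangle|\le c_{x_0}|\cos\beta(x_0)|\,r_j|x|^2$. Testing against $\psi\ge0$ and using the uniform mass bound on $\sigma_{\Gamma_{j_k}}$ sends the right-hand side to $0$; on the left, the $L^1(\sigma_{\Gamma_{j_k}})$-approximation $\mfn_{W_{j_k}}\to\mfn_W(x_0)$ (Lebesgue point again) together with $\cos\beta_{j_k}\to\cos\beta(x_0)$ and weak lower semicontinuity yield $\int\psi\,|\langle x,\cos\beta(x_0)\mfn_W(x_0)\rangle|\,\rd\sigma_{\Gamma_\infty}=0$, and since $\cos\beta(x_0)\ne0$ this is \eqref{eq:x-n_W(x_0)}. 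For item (5): each $V_{j_k}$ inherits $\Theta^m(\mu_{V_{j_k}},x)=\Theta^m(\mu_V,x_0+r_{j_k}x)\ge a$ by scaling invariance of density, and $\mfH_\mcC=0$ makes $\mcC$ stationary in the interior, so the standard monotonicity-plus-convergence argument gives $\Theta^m(\mu_\mcC,x)\ge a$ a.e.\ and, by the Rectifiability Theorem for varifolds, $\mcC$ is rectifiable. Corollary \ref{Cor:density-existence} gives that $\Theta^m(\mu_V,x_0)$ exists, so $\rho\mapsto\mu_\mcC(B_\rho(0))/(\om_m\rho^m)$ is constant; inserting $\varphi(x)=\gamma(|x|/\rho)x$ into the monotonicity identity \eqref{eq:monoto-identity} for $\mcC$ and observing that every boundary term vanishes --- the curvature terms by item (3), the $\sigma_{\Gamma_\infty}$-term by \eqref{eq:x-n_W(x_0)}, and the $\sigma_\mcC^\perp$-term because $\langle x,\nu^S(x_0)\rangle=0$ for $x\in T_{x_0}S$ --- reduces the identity to $\int_{B_R(0)\setminus B_r(0)}|P^\perp(x)|^2|x|^{-m-2}\,\rd\mcC(x,P)=0$ for all $0<r<R$, hence $\mcC$ is a cone. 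Finally, applying $\delta$ to $(\bseta_{0,\rho})_\#\mcC=\mcC$, the $T_{x_0}S$-tangential part of the boundary vector measure equals $\cos\beta(x_0)\mfn_W(x_0)\cdot\rho^{1-m}(\bseta_{0,\rho})_\ast\sigma_{\Gamma_\infty}$ on the one side and $\cos\beta(x_0)\mfn_W(x_0)\,\sigma_{\Gamma_\infty}$ on the other; since $x_0\in\mathscr{P}_{\rm cb}(\Gamma)$ forces $\cos\beta(x_0)\mfn_W(x_0)\ne0$ --- by (C1) and approximate continuity of $\mfn_W$ at $x_0$ --- dividing it out gives the asserted scaling invariance of $\sigma_{\Gamma_\infty}$.

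The step I expect to be the main obstacle is the Lebesgue-differentiation part for the measure-valued slice $y\mapsto\Gamma^y$ and for the fields $\mfn_V,\mfn_W$: this is precisely what produces the $\sigma_\Gamma$-conull set of ``good'' $x_0$, underlies \eqref{eq:limit-gamma^x_infty}, \eqref{eq:n_mcC} and the limit passage in item (4), and requires one to fix at the outset a single conull set on which all the (countably many) relevant Lebesgue-point conditions hold simultaneously.
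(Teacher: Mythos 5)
Your proposal is correct and follows essentially the same route as the paper's proof: scaling the first variation for item (1), the density bound from Corollary~\ref{Coro-monoto-p>m} plus \eqref{esti:local-tangent-improved} for the mass estimates in item (2), the Lebesgue-differentiation argument for the disintegration in item (3) (which you rightly identify as the crux, producing the $\sigma_\Gamma$-conull set), the rescaled (C2) together with the same Lebesgue-point device for item (4), and Allard compactness plus the monotonicity identity with vanishing boundary terms for item (5). The only places where you compress steps the paper makes explicit are the passage from $\mcC\left(\{(x,P):P(x)\neq x\}\right)=0$ to dilation invariance (the paper inserts $0$-homogeneous test functions $h$ to get $(\bseta_{0,\rho})_\#\mcC=\mcC$) and the ``weak lower semicontinuity'' phrase in item (4), which is really a split into a Lebesgue-point error term and a weak-convergence term — both standard fixes that do not affect the substance.
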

\begin{proof}
After translation and rotation we may assume that $x_0=0\in S$, $T_{x_0}\overline\Om=\{x_{n+1}\geq0\}$ is the upper half-space, which we denote by $\overline{\mfR^{n+1}_+}$, and $T_{x_0}S=\{x_{n+1}=0\}=\p\mfR^{n+1}_+$.
We also assume that $\Gamma^{x_0}$ is a well-defined Radon probability measure resulting from disintegration, which is true for $\sigma_\Gamma$-a.e. $x$.

({\bf 1}):
Observe that for every $\varphi\in\mathfrak{X}_t(\Om_j)$ the vector field $\tilde \varphi(y)=\varphi(\frac{y}{r_j})\in\mathfrak{X}_t(\Om)$.
Testing \eqref{defn:1st-vairation-intro} with such $\tilde\varphi$ we obtain
\eq{
\de V_j(\varphi)
=&\int_{G_m(\overline\Om_j)}{\rm div}_P\varphi(x)\rd V_j(x,P)
=\frac1{r_j^{m-1}}\int_{G_m(\overline\Om)}{\rm div}_P\varphi(\frac{y}{r_j})\rd V(y,P)\\
=&-\frac1{r_j^{m-1}}\int_{\overline\Om}\left<\mfH(y),\varphi(\frac{y}{r_j})\right>\rd\mu_{V}(y)-\frac1{r_j^{m-1}}\int_{G_{m,\beta}(S)}\left<\mfn(y,P),\varphi(\frac{y}{r_j})\right>\rd\Gamma(y,P).
}
Changing of variables and recalling Lemma \ref{Lem:Gamma_j-properties} ({\bf 1})({\bf 2}) we get
\eq{\label{eq:1st-variation-V_j-Gamma_j}
\de V_j(\varphi)
=-\int_{\overline\Om_j}\left<\varphi(x),r_j\mfH(r_jx)\right>\rd\mu_{V_j}(x)-\int_{G_{m,\beta_j}(S_j)}\left<\mfn(x,P),\varphi(x)\right>\rd\Gamma_j(x,P),
}
which shows that $V_j\in{\bf V}^m_{\beta_j}(\Om_j)$ with $\mfH_j(x)=r_j\mfH(r_jx)$ and boundary $\Gamma_j$.
By Proposition \ref{Prop:1st-variation} each $V_j$ has bounded first variation and there are correspondingly
\eq{
\widetilde H_j\in L^\infty(\mu_{V_j}),\quad
\sigma_j^\perp
\coloneqq\sigma_{V_j}^\perp,\quad
\sigma_j^T
\coloneqq\sigma_{\Gamma_j},
}
and it is easy to see that
\eq{
\widetilde H_j(x)
=r_j\widetilde H(r_jx),\quad
\mu_{V_j}\text{-a.e. } x\in S_j.
}
In particular, for any test function $\varphi\in\mathfrak{X}(\Om_j)$ we have
\eq{
\de V_j(\varphi)
=&\frac1{r_j^{m-1}}\int_{G_m(\overline\Om)}{\rm div}_P\varphi(\frac{y}{r_j})\rd V(y,P)\\
=&-\frac1{r_j^{m-1}}\int_{\overline\Om}\left<\mfH(y)+\widetilde H(y),\varphi(\frac{y}{r_j})\right>\rd\mu_V(x)\\
&-\frac1{r_j^{m-1}}\int_S\left<\nu^S(y),\varphi(\frac{y}{r_j})\right>\rd\sigma_V^\perp
-\frac1{r_j^{m-1}}\int_S\left<\mfn_V(y),\varphi^T(\frac{y}{r_j})\right>\rd\sigma_\Gamma\\
=&-\int_{\overline\Om_j}\left<\mfH_j+\widetilde H_j,\varphi\right>\rd\mu_{V_j}\\
&-\frac1{r_j^{m-1}}\int_{S_j}\left<\nu^{S_j},\varphi\right>\rd(\bseta_{0,r_j})_\ast\sigma_V^\perp
-\frac1{r_j^{m-1}}\int_{S_j}\left<\mfn_{V_j},\varphi^T\right>\rd(\bseta_{0,r_j})_\ast\sigma_\Gamma,
}
where we have used Lemma \ref{Lem:Gamma_j-properties} ({\bf 3}).
This shows that
\eq{
\sigma_j^\perp
\coloneqq\sigma_{V_j}^\perp
=\frac1{r_j^{m-1}}(\bseta_{0,r_j})_\ast\sigma_V^\perp,\quad
\sigma_j^T
\coloneqq\sigma_{\Gamma_j}
=\frac1{r_j^{m-1}}(\bseta_{0,r_j})_\ast\sigma_\Gamma.
}

Next we study the blow-up limits at capillary boundary points.

({\bf 2}): By Corollary \ref{Coro-monoto-p>m}, especially \eqref{ineq-mu_V-B_r-B_rho}, there exists some $C>0$ such that for $j$ large (or for $r_j$ sufficiently small)
\eq{\label{ineq:mu_V_j-bound}
\mu_{V_j}(B_1(0))
=\norm{(\bseta_{0,r_j})_\#V}(B_1(0))
=\frac{\mu_{V}(B_{r_j}(0))}{r_j^m}
\leq C.
}
Using compactness of Radon measures we see that there exist a subsequence $\{r_{j_k}\}$ and some $\mcC\in{\bf V}^m(\mfR^{n+1})$ such that
\eq{
V_{j_k}\ra\mcC
}
as varifolds.
Moreover, recalling \eqref{eq:blow-up-settings} we have that ${\rm spt}\mu_\mcC\subset\overline{\mfR^{n+1}_+}$.

On the other hand, in view of \eqref{eq:sigma_Gamma_j} and \eqref{esti:local-tangent-improved} we get
\eq{\label{ineq:sigma_Gamma-ratio}
\sigma_{\Gamma_j}(B_1(0))
=&\frac{\sigma_\Gamma(B_{r_j}(0))}{r_j^{m-1}}
\leq\frac{\max\{C_0,1\}}{\ep_{x_0}}\left(\frac{\mu_V(B_{4r_j}(0))}{r_j^m}+\int_{B_{4r_j}(0)}\frac{\abs{\mfH}}{r_j^{m-1}}\rd\mu_V\right)\\
\leq&C\left(\frac{\mu_V(B_{4r_j}(0))}{r_j^m}+r_j^{1-\frac{m}p}\left(\int_{B_{4r_j}(0)}\abs{H}^p\rd\mu_V\right)^\frac1p\left(\frac{\mu_V(B_{4r_j}(0))}{(4r_j)^m}\right)^\frac{p-1}p\right)\\
\leq&C,
}
where we have used again \eqref{ineq-mu_V-B_r-B_rho} in the last inequality, which holds for $j$ sufficiently large.
Again, by compactness of Radon measures, after passing to a subsequence, still denoted by ${r_{j_k}}$, there exists some Radon measure $\Gamma_\infty$ on $G_m(\mfR^{n+1})$ such that
\eq{
\Gamma_{j_k}
\wsc\Gamma_\infty
}
as measures.
Moreover, recalling \eqref{eq:blow-up-settings-bundle} we have that ${\rm spt}\Gamma_\infty\subset G_{m,\beta(0)}(\p\mfR^{n+1}_+)$.

Then we test the first variation of $\mcC$ by any vector field $\varphi\in\mathfrak{X}_t(\mfR^{n+1}_+)$ with compact support.
By \eqref{eq:blow-up-settings} there exists a sequence of vector field $\varphi_{j_k}\in\mathfrak{X}_t(\Om_{j_k})$ with compact support such that $\varphi_{j_k}\ra\varphi$ in the sense of $C^1$-topology.
In particular, using \eqref{eq:1st-variation-V_j-Gamma_j} and recalling \eqref{eq:blow-up-settings-bundle} we obtain
\eq{\label{eq:first-variation-C}
&\de\mcC(\varphi)
=\lim_{k\ra\infty}\de V_{j_k}(\varphi_{j_k})\\
=&\lim_{k\ra\infty}\left(\int_{\overline\Om_{j_k}}\left<\varphi_{j_k},\mfH_{j_k}\right>(x)\rd\mu_{V_{j_k}}(x)-\int_{G_{m,\beta_{j_k}}(S_{j_k})}\left<\mfn(x,P),\varphi_{j_k}(x)\right>\rd\Gamma_{j_k}(x,P)\right)\\
=&0-\int_{G_{m,\beta(0)}(\p\mfR^{n+1}_+)}\left<\mfn(x,P),\varphi(x)\right>\rd\Gamma_\infty(x,P),
}
where we have used in the last equality \eqref{eq:blow-up-settings-bundle} and the fact that as $k\ra\infty$
\eq{
\norm{\mfH_{j_k}}_{L^1(B_1(0),\mu_{V_{j_k}})}
\leq&\left(\int_{B_1(0)}\abs{\mfH_{j_k}}^p\rd\mu_{V_{j_k}}\right)^\frac1p\left(\mu_{V_{j_k}}(B_1(0))\right)^\frac{p-1}p\\
\overset{\eqref{ineq:mu_V_j-bound}}{\leq}&C\left(r_{j_k}^{p-m}\int_{B_{r_{j_k}}(0)}\abs{\mfH}^p\rd\mu_{V}\right)^\frac1p\ra0.
}
In particular, this proves that $\mcC\in{\bf V}^m_{\beta(0)}(\mfR^{n+1}_+)$ in the sense of Definition \ref{Defn:vfld-prescribed-bdry} with boundary varifold given by $\Gamma_\infty$.
By disintegration we write
\eq{
\Gamma_\infty
=\sigma_{\Gamma_\infty}\otimes\Gamma_\infty^x,
}
where $\Gamma_\infty^x$ is the Radon probability measure on $G_{m,\beta_0}(x)$ for $\sigma_{\Gamma_\infty}$-a.e. $x\in\p\mfR^{n+1}_+$.
Note that the inwards pointing unit normal field on $\p\mfR^{n+1}_+$ is the constant vector field $e_{n+1}$, therefore $G_{m,\beta_0}(x)= G_{m,\beta_0}(0)$ for any $x\in\p\mfR^{n+1}_+$.

{({\bf 3})}:
Now we show that $\sigma_{\Gamma_\infty}$ and $\Gamma_\infty^x$ are blow-up limits as well.
First, for any test function $\phi\in C_c(\mfR^{n+1}\times\mfR^{(n+1)^2})$, we have by virtue of $\Gamma_{j_k}\wsc\Gamma_\infty$ that
\eq{
\int_{G_{m,\beta(0)}(\p\mfR^{n+1}_+)}\phi(x,P)\rd\Gamma_\infty(x,P)
=\lim_{k\ra\infty}\int_{G_{m,\beta_{j_k}}(S_j)}\phi(y,P)\rd\Gamma_{j_k}(y,P).
}
For $\phi(x,P)=\varphi(x)$, by disintegrations this simplifies as
\eq{
\int_{\p\mfR^{n+1}}\varphi(x)\rd\sigma_{\Gamma_\infty}(x)
=\lim_{k\ra\infty}\int_{S_{j_k}}\varphi(y)\rd\sigma_{\Gamma_{j_k}}(y),
}
which proves \eqref{eq:limit-sigma_jk}.

Then we show that \eqref{eq:limit-gamma^x_infty} holds for $\sigma_\Gamma$-a.e. $x$, which is inspired by \cite[Proposition 9]{DeLOF03}, see also \cite[Lemma 2.3]{DePDeRG18}.

Precisely, we first fix an arbitrary $\rho>0$.
Let $\mathscr{S}\subset C_c^0(\mfR^{(n+1)^2})$ be a countable family of functions which is dense in $C_c^0(\mfR^{(n+1)^2})$.
For every $\psi\in\mathscr{S}$, define the function $f_\psi\in L^1(\mfR^{n+1})$ by
\eq{
f_\psi(x)\coloneqq\int\psi(P)\rd\Gamma^x(P),
}
where $\Gamma^x$ is resulting from the disintegration $\Gamma=\sigma_\Gamma\otimes\Gamma^x$, and is a well-defined Radon probability measure on $G({m,n+1})$ for $\sigma_\Gamma$-a.e. $x$.
Then we put $S\coloneqq\bigcap_{\psi\in\mathscr{S}}S_\psi$, where
\eq{
S_\psi
\coloneqq\left\{x\in\mfR^{n+1}:x\text{ is a }\text{Lebesgue point for }f_\psi\text{ with respect to }\sigma_\Gamma\right\}.
}
Clearly, $\sigma_\Gamma(\mfR^{n+1}\setminus S)=0$.
Thus, to prove that \eqref{eq:limit-gamma^x_infty} holds for $\sigma_\Gamma$-a.e. $x_0$, it suffices to show that \eqref{eq:limit-gamma^x_infty} holds for every $x_0\in S$.
To this end, we fix an arbitrary $x_0\in S$, and after translation we may assume that $x_0=0$.
For every $\phi\in C_c(\mfR^{n+1}\times\mfR^{(n+1)^2})$ and $r>0$, we define
\eq{
\mathscr{F}(\phi,r)
\coloneqq r^{1-m}\left\{\int_{B_{r\rho}(0)}\int\phi(\frac{x}{r\rho},P)\rd\Gamma^0(P)\rd\sigma_\Gamma(x)-\int_{B_{r\rho}(0)}\int\phi(\frac{x}{r\rho},P)\rd\Gamma^x(P)\rd\sigma_\Gamma(x)\right\}.
}
For test function $\phi(x,P)=\varphi(x)\psi(P)$, where $\varphi\in C_0(\mfR^{n+1})$, and $\psi\in\mathscr{S}$, we have
\eq{
\mathscr{F}(\varphi\psi,r)
=r^{1-m}\int_{B_{r\rho}(0)}\varphi(\frac{x}{r\rho})\left(f_\psi(0)-f_\psi(x)\right)\rd\sigma_\Gamma(x),
}
and hence for $\{j_k\}_{k\in\mbN}$ resulting from ({\bf 2}), we have (recall that $x_0=0$ is a capillary boundary point, and hence we write accordingly $\ep_{x_0},\rho_{x_0},c_{x_0}$.
For $k$ sufficiently large we have $r_{j_k}\rho<\rho_{x_0}$)
\eq{
\Abs{\mathscr{F}(\varphi\psi,r_{j_k})}
\leq&\abs{\varphi}_{L^\infty(\mfR^{n+1})}r_{j_k}^{1-m}\int_{B_{r_{j_k}\rho}(0)}\abs{f_\psi(0)-f_\psi(x)}\rd\sigma_\Gamma(x)\\
=&\abs{\varphi}_{L^\infty(\mfR^{n+1})}\rho^{m-1}\frac{\sigma_\Gamma(B_{r_{j_k}\rho}(0))}{(r_{j_k}\rho)^{m-1}}\frac1{\sigma_\Gamma(B_{r_{j_k}\rho}(0))}\int_{B_{r_{j_k}\rho}(0)}\abs{f_\psi(0)-f_\psi(x)}\rd\sigma_\Gamma(x)\\
\leq&C\rho^{m-1}\abs{\varphi}_{L^\infty(\mfR^{n+1})}\frac1{\sigma_\Gamma(B_{r_{j_k}\rho}(0))}\int_{B_{r_{j_k}\rho}(0)}\abs{f_\psi(0)-f_\psi(x)}\rd\sigma_\Gamma(x),
}
where we have used \eqref{ineq:sigma_Gamma-ratio} for the last inequality.
Since $x_0=0$ is a Lebesgue point for $f_\psi$ with respect to $\sigma_\Gamma$, this infers that $\lim_{k\ra\infty}\mathscr{F}(\varphi\psi,r_{j_k})=0$.
Thereby, for any function $\phi\in C_c(\mfR^{n+1}\times\mfR^{(n+1)^2})$ of the form
\eq{
\phi
=\sum_i\varphi_i\psi_i,\quad\text{where }\varphi_i\in C_c(\mfR^{n+1}),\text{ and }\psi_i\in\mathscr{S},
}
we have $\lim_{k\ra\infty}\mathscr{F}(\phi,r_{j_k})=0$.
Moreover, as these functions are dense in $C_c(\mfR^{n+1}\times\mfR^{(n+1)^2})$, and thanks again to \eqref{ineq:sigma_Gamma-ratio}, it is easy to see that
\eq{
\Abs{\mathscr{F}(\phi,r_{j_k})-\mathscr{F}(\tilde\phi,r_{j_k})}
\leq2\rho^{m-1}C\Abs{\phi-\tilde\phi}_{L^\infty(\mfR^{n+1})},\quad\forall\phi,\tilde\phi\in C_c(\mfR^{n+1}\times\mfR^{(n+1)^2}),
}
we thus find
\eq{\label{eq:lim-mathscr-F=0}
\lim_{k\ra\infty}\mathscr{F}(\phi,r_{j_k})=0,\quad\forall\phi\in C_c(\mfR^{n+1}\times\mfR^{(n+1)^2}).
}

Now consider the standard cut-off function $\gamma$ on $\mfR$ with $\gamma\equiv1$ on $[0,1-t]$, $\gamma\equiv0$ on $[1,\infty)$, $-\frac2t\leq\gamma'\leq0$ on $\mfR$ for $t>0$ arbitrarily small.
Observe that for any $\phi\in C_c(\mfR^{n+1}\times\mfR^{(n+1)^2})$,
if we put $\tilde\gamma(x,P)=\gamma(\abs{x})$, $\tilde\phi(x,P)=\phi(\rho x,P)$, then
\eq{
\mathscr{F}(\tilde\gamma\tilde\phi,r_{j_k})
=&r_{j_k}^{1-m}\int_{S}\gamma(\frac{\abs{x}}{r_{j_k}\rho})\int\phi(\frac{x}{r_{j_k}},P)\rd\Gamma^0(P)\rd\sigma_\Gamma(x)\\
&-r_{j_k}^{1-m}\int_S\gamma(\frac{\abs{x}}{r_{j_k}\rho})\int\phi(\frac{x}{r_{j_k}},P)\rd\Gamma^x(P)\rd\sigma_\Gamma(x)\\
=&r_{j_k}^{1-m}\int_{S_{j_k}}\gamma(\frac{\abs{y}}\rho)\int\phi(y,P)\rd\Gamma^0(P)\rd\left((\eta_{0,r_{j_k}})_\ast\sigma_\Gamma\right)(y)\\
&-r_{j_k}^{1-m}\int_{G_{m,\beta}(S)}\gamma(\frac{\abs{x}}{r_{j_k}\rho})\phi(\frac{x}{r_{j_k}},P)\rd\Gamma(x,P)\\
=&\int_{S_{j_k}}\gamma(\frac{\abs{y}}\rho)\int\phi(y,P)\rd\Gamma^0(P)\rd\sigma_{\Gamma_{j_k}}(y)
-\int_{G_{m,\beta_{j_k}}(S_{j_k})}\gamma(\frac{\abs{y}}{\rho})\phi(y,P)\rd\Gamma_{j_k}(y,P),
}
where we have used changing of variables in the second equality and \eqref{eq:sigma_Gamma_j}, \eqref{defn:Gamma_j} in the last equality.
In particular, letting $k\ra\infty$ in the above expression, we have by virtue of \eqref{eq:lim-mathscr-F=0} and the convergences $\Gamma_{j_k}\wsc\Gamma_\infty$, $\sigma_{\Gamma_{j_k}}\wsc\sigma_{\Gamma_\infty}$ that
\eq{
\int_{\p\mfR^{n+1}_+}\gamma(\frac{\abs{y}}\rho)\int\phi(y,P)\rd\Gamma^0(P)\rd\sigma_{\Gamma_\infty}(y)
=\int_{G_{m,\beta(0)}(\p\mfR^{n+1}_+)}\gamma(\frac{\abs{y}}{\rho})\phi(y,P)\rd\Gamma_\infty(y,P).
}
Since $\gamma(\frac{\abs{y}}\rho)$ converges to $\chi_{B_\rho(0)}(y)$ as $t\searrow0$, by the arbitrariness of $\rho$ we finally deduce: for any $\phi\in C_c(\mfR^{n+1}\times\mfR^{(n+1)^2})$
\eq{
\int_{\p\mfR^{n+1}_+}\int\phi(y,P)\rd\Gamma^0(P)\rd\sigma_{\Gamma_\infty}(y)
=\int_{G_{m,\beta(0)}(\p\mfR^{n+1}_+)}\phi(y,P)\rd\Gamma_\infty(y,P),
}
which shows that $\Gamma_\infty=\sigma_{\Gamma_\infty}\otimes\Gamma_\infty^x=\sigma_{\Gamma_\infty}\otimes\Gamma^0$, and hence proves \eqref{eq:limit-gamma^x_infty}.

Recall that $T_{x_0}S=\p\mfR^{n+1}_+$, and hence
for any $(x,P)\in G_{m,\beta(0)}(\p\mfR^{n+1}_+)$,
$\mfn(x,P)=\frac{P(e_{n+1})}{\abs{P(e_{n+1})}}$ is constant on $x\in\p\mfR^{n+1}_+$ due to \eqref{defn:n(x,P)}.
Therefore \eqref{eq:n_mcC} follows from \eqref{eq:limit-gamma^x_infty} and \eqref{defn:n_V-intro}.


The assertion concerning $\sigma_\mcC^\perp$, \eqref{eq:limit-sigma_j^perp}, follows directly from \cite[(5.11)]{DeMasi21}.

{({\bf 4})}:
To prove ({\bf 4}) it suffices to prove the following claim.

{\bf Claim.} For any fixed $\rho>0$ there always holds
\eq{
\int_{\p\mfR^{n+1}_+}\Abs{\left<\cos\beta(0)\mfn_W(0),\gamma(\frac{\abs{x}}\rho)x\right>}\rd\sigma_{\Gamma_\infty}(x)
=0,
}
where $\gamma$ is the standard cut-off function on $\mfR$ with $\gamma\equiv1$ on $[0,1-t]$, $\gamma\equiv0$ on $[1,\infty)$, $-\frac2t\leq\gamma'\leq0$ on $\mfR$ for $t>0$ arbitrarily small.

To see this, observe that for every $k$
\eq{\label{eq:n_w-Gamma_Gamma_jk}
&\frac1{r_{j_k}^{m-1}}\int_S{\left<\cos\beta(y)\mfn_W(y),\gamma(\frac{\abs{y}}{r_{j_k}\rho})\frac{y}{r_{j_k}}\right>}\rd\sigma_\Gamma(y)\\
=&\frac1{r_{j_k}^{m-1}}\int_S\left<\cos\beta(y)\mfn_W(y)-\cos\beta(0)\mfn_W(0),\gamma(\frac{\abs{y}}{r_{j_k}\rho})\frac{y}{r_{j_k}}\right>\rd\sigma_\Gamma(y)\\
&+\frac1{r_{j_k}^{m-1}}\int_{S_{j_k}}{\left<\cos\beta(0)\mfn_W(0),\gamma(\frac{\abs{x}}\rho)x\right>}\rd\left((\eta_{0,r_{j_k}})_\ast\sigma_\Gamma\right)(x)\\
=&\frac1{r_{j_k}^{m-1}}\int_S\left<\cos\beta(y)\mfn_W(y)-\cos\beta(0)\mfn_W(0),\gamma(\frac{\abs{y}}{r_{j_k}\rho})\frac{y}{r_{j_k}}\right>\rd\sigma_\Gamma(y)\\
&+\int_{S_{j_k}}{\left<\cos\beta(0)\mfn_W(0),\gamma(\frac{\abs{x}}\rho)x\right>}\rd\sigma_{\Gamma_{j_k}}(x)
\eqqcolon({\bf I})+({\bf II}),
}
where we simply changed of variables in the first equality and used \eqref{eq:sigma_Gamma_j} in the second equality.


Next, note that $\abs{\cos\beta\mfn_W}<1$, and hence $\cos\beta\mfn_W\in L^1_{loc}(\mfR^{n+1},\sigma_{\Gamma_\infty})$.
At this stage we further assume that $x_0$ is a Lebesgue point of the function $\cos\beta\mfn_W$, which is true for $\sigma_\Gamma$-a.e. $x_0\in \mathscr{P}_{\rm cb}(\Gamma)$ with
\eq{\label{eq:Lebesgue-point-cosbeta-n_W}
\lim_{r\searrow0}\frac1{\sigma_\Gamma(B_r(x_0))}\int_{B_r(x_0)}\Abs{\cos\beta(y)\mfn_W(y)-\cos\beta(x_0)\mfn_W(x_0)}\rd\sigma_\Gamma(y)=0,
}
thanks to Lebesgue points Theorem.
From this we obtain
(recall that we have assumed $x_0=0$,
and that $x_0$ is a capillary boundary point, and hence we write accordingly $\ep_{x_0},\rho_{x_0},c_{x_0}$.
For $k$ sufficiently large we have $r_{j_k}\rho<\rho_{x_0}$)
\eq{
&\frac1{r_{j_k}^{m-1}}\int_S\left<\cos\beta(y)\mfn_W(y)-\cos\beta(0)\mfn_W(0),\gamma(\frac{\abs{y}}{r_{j_k}\rho})\frac{y}{r_{j_k}}\right>\rd\sigma_\Gamma(y)\\
\leq&\frac\rho{r_{j_k}^{m-1}}\int_{ B_{r_{j_k}\rho}(0)}\Abs{\cos\beta(y)\mfn_W(y)-\cos\beta(0)\mfn_W(0)}\rd\sigma_\Gamma(y)\\
=&\rho^m\frac{\sigma_\Gamma(B_{r_{j_k}\rho}(0))}{(r_{j_k}\rho)^{m-1}}\frac1{\sigma_\Gamma(B_{r_{j_k}\rho}(0))}\int_{ B_{r_{j_k}\rho}(0)}\Abs{\cos\beta(y)\mfn_W(y)-\cos\beta(0)\mfn_W(0)}\rd\sigma_\Gamma(y)\\
\leq&C\rho^m\frac1{\sigma_\Gamma(B_{r_{j_k}\rho}(0))}\int_{ B_{r_{j_k}\rho}(0)}\Abs{\cos\beta(y)\mfn_W(y)-\cos\beta(0)\mfn_W(0)}\rd\sigma_\Gamma(y),
}
where we have used \eqref{ineq:sigma_Gamma-ratio} for the last inequality.
In particular, this infers that the term $({\bf I})$ in \eqref{eq:n_w-Gamma_Gamma_jk} converges to $0$ as $k\ra\infty$, thanks to \eqref{eq:Lebesgue-point-cosbeta-n_W}.

On the other hand, by \eqref{eq:limit-sigma_jk} we have
\eq{
\lim_{k\ra\infty}\int_{S_{j_k}}{\left<\cos\beta(0)\mfn_W(0),\gamma(\frac{\abs{x}}\rho)x\right>}\rd\sigma_{\Gamma_{j_k}}(x)
=\int_{\p\mfR^{n+1}_+}{\left<\cos\beta(0)\mfn_W(0),\gamma(\frac{\abs{x}}\rho)x\right>}\rd\sigma_{\Gamma_\infty}(x).
}
Letting $k\ra\infty$ in \eqref{eq:n_w-Gamma_Gamma_jk}, we thus find
\eq{
&\int_{\p\mfR^{n+1}_+}{\left<\cos\beta(0)\mfn_W(0),\gamma(\frac{\abs{x}}\rho)x\right>}\rd\sigma_{\Gamma_\infty}(x)\\
=&\lim_{k\ra\infty}\frac1{r_{j_k}^{m-1}}\int_S{\left<\cos\beta(y)\mfn_W(y),\gamma(\frac{\abs{y}}{r_{j_k}\rho})\frac{y}{r_{j_k}}\right>}\rd\sigma_\Gamma(y).
}
The claim is then proved by
the following estimate: 
\eq{
&\frac1{r_{j_k}^{m-1}}\int_S\Abs{\left<\cos\beta(y)\mfn_W(y),\gamma(\frac{\abs{y}}{r_{j_k}\rho})\frac{y}{r_{j_k}}\right>}\rd\sigma_\Gamma(y)\\
=&\frac1{r_{j_k}^{m-1}}\int_{B_{r_{j_k}\rho}(0)}\Abs{\left<\cos\beta(y)\mfn_W(y),\gamma(\frac{\abs{y}}{r_{j_k}\rho})\frac{y}{r_{j_k}}\right>}\rd\sigma_\Gamma(y)\\
\leq&\frac{c_{x_0}r_{j_k}^2\rho^2\abs{\cos\beta(0)}}{r_{j_k}}\frac{\sigma_\Gamma(B_{r_{j_k}\rho}(0))}{r_{j_k}^{m-1}}
\leq C\rho^{m+1} r_{j_k}
\ra0\text{ as }k\ra\infty,
}
where we have used \eqref{defn:genera-capi-bdry-point-2} in the first inequality and \eqref{ineq:sigma_Gamma-ratio} for the last inequality.

{({\bf 5})}:
First we show that $\mcC$ is rectifiable, which is based on the following facts:
For $k$ sufficiently large,
\begin{enumerate}
    \item $x_0=0\in{\rm spt}\mu_V\cap S$ thanks to Remark \ref{Rem:G_pb-spt}, and hence $0\in{\rm spt}\mu_{V_{j_k}}\cap S_{j_k}$ for every $k$.
    \item $\Theta^m(\mu_{V_{j_k}},x)\geq a$ for $\mu_{V_{j_k}}$-a.e. $x$ since $\Theta^m(\mu_V,x)\geq a$ for $\mu_V$-a.e. $x$;
    \item $\sup_{k\geq1}\{\mu_{V_{j_k}}(B_1(0))\}<\infty$ thanks to \eqref{ineq:mu_V_j-bound};
    \item $\{V_{j_k}\}_{k\in\mbN}$ have locally uniformly bounded first variation thanks to \eqref{ineq:mu_V_j-bound} and \eqref{ineq:sigma_Gamma-ratio}. 
\end{enumerate}
An immediate consequence of these properties is that each $V_{j_k}$ is rectifiable, by virtue of the Rectifiabilty Theorem \cite[Theorem 42.4]{Simon83}.
The fact that $\mcC$ is rectifiable then follows from Allard's compactness theorem, see \cite[Theorem 42.7]{Simon83}.
As a by-product, we have
\eq{
\Theta^m(\mu_\mcC,x)\geq a, \mu_V\text{-a.e. }x.
}

Next we prove that $\mcC$ is a cone.
Observe that
for $\mcC$ we also have the differential equality \eqref{eq:monoto-identity}, which reads, thanks to \eqref{eq:H_C}, ({\bf 4}), and the fact that $\p\mfR^{n+1}_+$ is flat, as follows
\eq{
\frac{\rd}{\rd\rho}\left(\frac1{\rho^m}\int_{\overline{\mfR^{n+1}_+}}\gamma(\frac{\abs{x}}\rho)\rd\mu_\mcC\right)
=-\frac1{\rho^{m+1}}\int_{G_m(\overline{\mfR^{n+1}_+})}\frac{\abs{x}}\rho\gamma'(\frac{\abs{x}}\rho)\Abs{P^\perp(\frac{x}{\abs{x}})}^2\rd\mcC(x,P).
}
Integrating over $(r_1,r_2)$ we get
\eq{
&\frac1{r_2^m}\int_{\overline{\mfR^{n+1}_+}}\gamma(\frac{\abs{x}}{r_2})\rd\mu_\mcC(x)-\frac1{r_1^m}\int_{\overline{\mfR^{n+1}_+}}\gamma(\frac{\abs{x}}{r_1})\rd\mu_\mcC(x)\\
=&\frac1{r_2^m}\int_{G_m(\overline{\mfR^{n+1}_+})}\gamma(\frac{\abs{x}}{r_2})\Abs{P^\perp(\frac{x}{\abs{x}})}^2\rd\mcC(x,P)-\frac1{r_1^m}\int_{G_m(\overline{\mfR^{n+1}_+})}\gamma(\frac{\abs{x}}{r_1})\Abs{P^\perp(\frac{x}{\abs{x}})}^2\rd\mcC(x,P)\\
&+\int_{G_m(\overline{\mfR^{n+1}_+})}\Abs{P^\perp(\frac{x}{\abs{x}})}^2\left(\int_{r_1}^{r_2}\frac{m}{\rho^{m+1}}\gamma(\frac{\abs{x}}\rho)\rd\rho\right)\rd\mcC(x,P).
}
Letting $\gamma$ increase to $\chi_{[0,1]}$ and note that by an approximation argument, for every $\rho>0$ one has
\eq{
\frac{\mu_\mcC(B_\rho(0))}{\rho^m}
=\lim_{k\ra\infty}\frac{\mu_{V_{j_k}}(B_\rho(0))}{\rho^m}
=\lim_{k\ra\infty}\frac{\mu_V(B_{r_{j_k}\rho}(0))}{r_{j_k}^m\rho^m}
=\Theta^m(\mu_V,0),
}
we find
\eq{
0
=\frac{\mu_\mcC(B_{r_2}(0))}{r_2^m}
-\frac{\mu_\mcC(B_{r_1}(0))}{r_1^m}
=\int_{G_m(B_{r_2}(0)\setminus B_{r_1}(0))}\frac{\Abs{P^\perp(\frac{x}{\abs{x}})}^2}{\abs{x}^m}\rd\mcC(x,P).
}
Since this holds for arbitrary $0<r_1<r_2$,
it follows that
\eq{\label{eq:C-cone-null-set}
\mcC\left((x,P):P(x)\neq x\right)=0.
}
Finally, for any $0$-homogeneous function $h\in C^1(\mfR^{n+1})$, one has
\eq{
\abs{h}_{C^0(\mfR^{n+1})}
=\abs{h}_{C^0(\mfS^n)}
\leq C_h,\quad\left<\na h(x),x\right>=0,
}
and from \eqref{eq:C-cone-null-set} we infer
\eq{\label{eq:nabla-h}
\left<\na h(x),P(x)\right>
=\left<\na h(x),x\right>
=0,\quad\text{for }\mcC\text{-a.e. }(x,P).
}
Going back to the differential equality \eqref{eq:monoto-identity-0} again but with $\varphi(x)=\gamma(\frac{\abs{x}}\rho)h(x)x$ this time, we deduce
\eq{
&\frac{\rd}{\rd\rho}\left(\frac1{\rho^m}\int_{\overline{\mfR^{n+1}_+}}\gamma(\frac{\abs{x}}\rho)h(x)\rd\mu_\mcC(x)\right)\\
=&-\frac1{\rho^{m+1}}\int_{\p\mfR^{n+1}}\left<\cos\beta(0)\mfn_W(0),\gamma(\frac{\abs{x}}\rho)h(x)x\right>\rd\sigma_{\Gamma_\infty}(x)\\
&+\frac1{\rho^{m+1}}\int_{G_m(\overline{\mfR^{n+1}_+})}\gamma(\frac{\abs{x}}\rho)\left<\na h(x),P(x)\right>\rd\mcC(x,P)\\
=&0,
}
where we have used ({\bf 4}) and \eqref{eq:nabla-h} to derive the last equality.
In particular, this is sufficient to show that $\mcC$ is a cone in the sense that
\eq{\label{eq:dilation-mcC}
(\bseta_{0,\rho})_\#\mcC=\mcC,\quad\forall\rho>0,
}
see e.g., \cite[(5.17)]{DeMasi21}.
Moreover, as the first variation of $\mcC$ with respect to $\varphi\in\mathfrak{X}_t(\mfR^{n+1}_+)$ is given by
\eq{
\de\mcC(\varphi)
=\int_{G_m(\overline{\mfR^{n+1}_+})}{\rm div}_P\varphi(x)\rd\mcC(x,P)
=\int_{\p\mfR^{n+1}_+}\left<\cos\beta(0)\mfn_W(0),\varphi(x)\right>\rd\sigma_{\Gamma_\infty}(x),
}
the claimed fact that $\sigma_{\Gamma_\infty}$ is scaling invariant then follows from \eqref{eq:dilation-mcC}.
More precisely, for any $\varphi\in\mathfrak{X}_t(\mfR^{n+1}_+)$ the vector field $\tilde\varphi(y)=\varphi(\frac{y}\rho)$ is also tangent to $\p\mfR^{n+1}_+$.
Testing the first variation of $(\bseta_{0,\rho})_\#\mcC$ with $\varphi$ we obtain by virtue of \eqref{eq:dilation-mcC}
\eq{
\int_{G_m(\overline{\mfR^{n+1}_+})}{\rm div}_P\varphi(x)\rd\mcC(x,P)
=&\int_{G_m(\overline{\mfR^{n+1}_+})}{\rm div}_P\varphi(x)\rd\left((\bseta_{0,\rho})_\#\mcC\right)(x,P)\\
=&\frac1{\rho^{m-1}}\int_{G_m(\overline{\mfR^{n+1}_+})}{\rm div}_P\varphi(\frac{y}\rho)\rd\mcC(y,P),
}
which gives
\eq{
\int_{\p\mfR^{n+1}_+}\left<\cos\beta(0)\mfn_W(0),\varphi(x)\right>\rd\sigma_{\Gamma_\infty}&(x)
=\frac1{\rho^{m-1}}\int_{\p\mfR^{n+1}_+}\left<\cos\beta(0)\mfn_W(0),\varphi(\frac{y}\rho)\right>\rd\sigma_{\Gamma_\infty}(y)\\
=&\frac1{\rho^{m-1}}\int_{\p\mfR^{n+1}_+}\left<\cos\beta(0)\mfn_W(0),\varphi(x)\right>\rd\left((\bseta_{0,\rho})_\ast\sigma_{\Gamma_\infty}\right)(x),
}
and proves the claimed fact.
\end{proof}

The characterization of $\sigma_{\Gamma_\infty}$ is divided into the following Lemmatum.
In what follows
we denote by $L_\infty$ the $(n-1)$-dimensional linear subspace of $T_{x_0}S\cong\mfR^n$
which is perpendicular to $\mfn_W(x_0)$.

\begin{lemma}[Linear subspace]\label{Lem:linear-subspace}
Under the assumptions and notations in Lemma \ref{Lem:blow-up-seq},
let $x_0$ be resulting from Lemma \ref{Lem:blow-up-seq}, and
assume after translation and rotation that $x_0=0$ and $T_{x_0}^+\overline\Om=\overline{\mfR^{n+1}_+}$.
Define the set
\eq{
D_\mcC
\coloneqq\{y\in L_\infty: \Theta^m(\mu_\mcC,y)=\Theta^m(\mu_\mcC,0)\}.
}
Then $D_\mcC$ is a linear subspace of $L_\infty$.
Moreover, $(\bstau_{-y})_\#\mcC=\mcC$ and $(\bstau_{-y})_\ast\sigma_{\Gamma_\infty}=\sigma_{\Gamma_\infty}$ for any $y\in D_\mcC$.
\end{lemma}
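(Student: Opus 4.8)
The plan is to exploit the cone structure of $\mcC$ from Lemma \ref{Lem:blow-up-seq} together with the monotonicity formula at interior points (Proposition \ref{Prop-Mono-Interior}, see Remark \ref{Rem-Mono-Interior}) to show that $D_\mcC$ is closed under vector addition (and scalar multiplication, which is automatic since $\mcC$ is a cone). First I would note the translation invariance: if $y\in D_\mcC\cap L_\infty$, then the shifted varifold $(\bstau_{-y})_\#\mcC$ is again an $m$-varifold with zero generalized mean curvature in the interior $\{x_{n+1}>0\}$, density $\geq a$ and the same mass ratio $\Theta^m(\mu_\mcC,0)$ at the origin (this is exactly the hypothesis $\Theta^m(\mu_\mcC,y)=\Theta^m(\mu_\mcC,0)$). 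Since $y\in T_{x_0}S=\p\mfR^{n+1}_+$ the translation preserves the half-space $\overline{\mfR^{n+1}_+}$ and the flat boundary. The key point is then that $\mcC$ has constant density $\Theta^m(\mu_\mcC,0)$ along the whole ray from $0$ to $y$: by the interior monotonicity formula applied at both $0$ and $y$, combined with the cone property $(x,P)$ satisfies $P(x)=x$ for $\mu_\mcC$-a.e. point (equation \eqref{eq:C-cone-null-set}), one deduces that the translate $(\bstau_{-y})_\#\mcC$ is itself a cone with vertex $0$, i.e. $(\bstau_{-y})_\#\mcC$ is invariant under dilations about $0$; equivalently $\mcC$ is invariant under dilations about $y$. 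This is the standard ``constancy of density $\Rightarrow$ translation invariance along a line'' argument (cf. \cite[Section 19]{Simon83}), adapted here to the half-space setting where $y$ lies on the flat boundary so no boundary term interferes.

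Next I would iterate: given $y_1,y_2\in D_\mcC$, translation invariance gives
\eq{
(\bstau_{-y_1})_\#\mcC=\mcC,\quad (\bstau_{-y_2})_\#\mcC=\mcC,
}
hence $(\bstau_{-(y_1+y_2)})_\#\mcC=(\bstau_{-y_1})_\#(\bstau_{-y_2})_\#\mcC=\mcC$, so in particular $\Theta^m(\mu_\mcC,y_1+y_2)=\Theta^m(\mu_\mcC,0)$, and since $y_1+y_2\in L_\infty$ (as $L_\infty$ is linear) we get $y_1+y_2\in D_\mcC$. For scalar multiples $ty$ with $t>0$, the cone property of $\mcC$ gives $\Theta^m(\mu_\mcC,ty)=\Theta^m(\mu_\mcC,y)=\Theta^m(\mu_\mcC,0)$; for $t<0$ one uses the translation invariance just established ($\mcC$ invariant under $\bstau_{-y}$ implies invariant under $\bstau_{y}$, hence under $\bstau_{-ty}$ for all $t\in\mbR$). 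Thus $D_\mcC$ is a linear subspace of $L_\infty$.

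Finally, the statement $(\bstau_{-y})_\ast\sigma_{\Gamma_\infty}=\sigma_{\Gamma_\infty}$ for $y\in D_\mcC$ follows from $(\bstau_{-y})_\#\mcC=\mcC$ by testing the first variation against tangential vector fields, exactly as in the scaling-invariance argument at the end of the proof of Lemma \ref{Lem:blow-up-seq}: for $\varphi\in\mathfrak{X}_t(\mfR^{n+1}_+)$, the field $\tilde\varphi(x)=\varphi(x-y)$ is again tangent to $\p\mfR^{n+1}_+$ (since $y\in\p\mfR^{n+1}_+$), and
\eq{
\int_{\p\mfR^{n+1}_+}\left<\cos\beta(0)\mfn_W(0),\varphi\right>\rd\sigma_{\Gamma_\infty}
=\de\mcC(\varphi)
=\de\left((\bstau_{-y})_\#\mcC\right)(\varphi)
=\de\mcC(\tilde\varphi)
=\int_{\p\mfR^{n+1}_+}\left<\cos\beta(0)\mfn_W(0),\varphi(x-y)\right>\rd\sigma_{\Gamma_\infty}(x),
}
and since $\mfn_W(0)$ is constant along $\p\mfR^{n+1}_+$ this rearranges to $(\bstau_{-y})_\ast\sigma_{\Gamma_\infty}=\sigma_{\Gamma_\infty}$. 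The main obstacle I expect is the first step — rigorously upgrading ``equality of densities at $0$ and $y$'' to ``full dilation invariance about $y$.'' The subtlety is that the interior monotonicity formula in Proposition \ref{Prop-Mono-Interior} only applies on balls contained in $\{x_{n+1}>0\}$, so I would need to run the argument on the family of balls $B_\rho(ty)$ for $t\in(0,1)$ and $\rho$ small relative to $\mathrm{dist}(ty,\p\mfR^{n+1}_+)$, then take $\rho\to\infty$ limits via the cone scaling — or, more cleanly, observe that $y\in\p\mfR^{n+1}_+$ forces $ty\in\p\mfR^{n+1}_+$ for all $t$, so actually all the balls $B_\rho(ty)$ touch the boundary and one must instead argue at the level of the \emph{interior} varifold $\mcC\llcorner\{x_{n+1}>0\}$, using that its density is constant on the segment $[0,y]$ (which lies in the boundary, hence is a $\mu_\mcC$-null-measure issue only if $\mu_\mcC$ charges the boundary — and one handles that separately using the structure of $\sigma_\mcC^\perp$). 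This delicate point is where the bulk of the work lies.
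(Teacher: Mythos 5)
Your overall architecture matches the paper's: establish that equality of densities at $0$ and $y$ upgrades to ``$\mcC$ is a cone with respect to $y$,'' deduce translation invariance, and then get linearity of $D_\mcC$ from the resulting group structure; the final derivation of $(\bstau_{-y})_\ast\sigma_{\Gamma_\infty}=\sigma_{\Gamma_\infty}$ by testing the first variation with translated tangential fields is also exactly what the paper does. However, the step you yourself flag as ``where the bulk of the work lies'' is a genuine gap, and your proposed workarounds do not close it. Proposition \ref{Prop-Mono-Interior} is only valid for centers $\xi\in\Om$ and radii $r<d_S(\xi)$, and since $y\in L_\infty\subset\p\mfR^{n+1}_+$ every ball $B_\rho(ty)$, $t\in[0,1]$, is centered \emph{on} the boundary; there is no interior monotonicity formula to apply there, and restricting to $\mcC\llcorner\{x_{n+1}>0\}$ does not produce one either, because the first variation of the restricted varifold picks up the full (unknown) boundary contribution.

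The missing ingredient is the \emph{boundary} monotonicity identity \eqref{eq:monoto-identity-0} for $\mcC$ centered at $y$, tested with $\varphi(x)=\gamma(\tfrac{|x-y|}{\rho})(x-y)$. For this to reduce to the clean, error-free cone identity one needs \emph{both} boundary terms of \eqref{eq:1st-variation-V_beta} to vanish: the normal term $\int\langle\nu^S,\varphi\rangle\,\rd\sigma_\mcC^\perp$ vanishes because $x-y$ is tangent to the flat boundary (this is the part you noticed), but the tangential term $\int\langle\cos\beta(0)\mfn_W(0),\gamma(\tfrac{|x-y|}\rho)(x-y)\rangle\,\rd\sigma_{\Gamma_\infty}$ vanishes only because of Lemma \ref{Lem:blow-up-seq}~({\bf 4}), i.e.\ \eqref{eq:x-n_W(x_0)}: $\sigma_{\Gamma_\infty}$ is concentrated on $L_\infty=\mfn_W(x_0)^\perp\cap T_{x_0}S$, so $x-y\perp\mfn_W(x_0)$ for $\sigma_{\Gamma_\infty}$-a.e.\ $x$ precisely when $y\in L_\infty$. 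You never invoke \eqref{eq:x-n_W(x_0)}, and your justification ``$y$ lies on the flat boundary so no boundary term interferes'' is insufficient — it would be false for a boundary point $y\in\p\mfR^{n+1}_+\setminus L_\infty$, which is exactly why the lemma is stated for $y\in L_\infty$ and not for all of $\p\mfR^{n+1}_+$. Once this identity is in place, one gets $\tfrac{\rd}{\rd\rho}\bigl(\rho^{-m}\int\gamma(\tfrac{|x-y|}\rho)\rd\mu_\mcC\bigr)\ge0$, hence $\Theta^m(\mu_\mcC,y)\le\lim_{r\to\infty}\mu_\mcC(B_r(y))/(\om_m r^m)=\Theta^m(\mu_\mcC,0)$ by the cone property at $0$, and for $y\in D_\mcC$ the monotone quantity is constant, yielding the cone property at $y$; the rest of your argument then goes through.
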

\begin{proof}
By definition of $L_\infty$, we have in virtue of Lemma \ref{Lem:blow-up-seq} ({\bf 4}) that for $\sigma_{\Gamma_\infty}$-a.e. $x$ and any $y\in L_\infty$,
\eq{
x-y\in L_\infty,
}
and hence for any $\rho>0$ there always holds
\eq{
\int_{\p\mfR^{n+1}_+}\Abs{\left<\cos\beta(0)\mfn_W(0),\gamma(\frac{\abs{x-y}}\rho)(x-y)\right>}\rd\sigma_{\Gamma_\infty}(x)
=0.
}
As what is done in the proof of Lemma \ref{Lem:blow-up-seq}, for the rectifiable cone $\mcC$ we go back to the differential equality \eqref{eq:monoto-identity-0} but with $\varphi(x)=\gamma(\frac{\abs{x-y}}{\rho})(x-y)$.
Thanks to the observation above, we get
\eq{
&\frac{\rd}{\rd\rho}\left(\frac1{\rho^m}\int_{\overline{\mfR^{n+1}_+}}\gamma(\frac{\abs{x-y}}\rho)\rd\mu_\mcC(x)\right)\\
=&-\frac1{\rho^{m+1}}\int_{G_m(\overline{\mfR^{n+1}_+})}\frac{\abs{x-y}}\rho\gamma'(\frac{\abs{x-y}}\rho)\Abs{P^\perp(\frac{x-y}{\abs{x-y}})}^2\rd\mcC(x,P)
\geq0,
}
so that
\eq{
\Theta^m(\mu_\mcC,y)
\leq\lim_{r\ra\infty}\frac{\mu_\mcC(B_r(y))}{\om_mr^m}.
}
Moreover, recall that $\mcC$ is a rectifiable cone thanks to Lemma \ref{Lem:blow-up-seq} ({\bf 5}), thus
\eq{\label{ineq:Theta^m-mcC-0-y}
\om_m\Theta^m(\mu_\mcC,0)
=&\lim_{r\ra\infty}\frac{\mu_\mcC(B_r(0))}{r^m}
\geq\lim_{r\ra\infty}\frac{\mu_\mcC(B_{r-\abs{y}}(y))}{(r-\abs{y})^m}\frac{(r-\abs{y})^m}{r^m}\\
=&\lim_{r\ra\infty}\frac{\mu_\mcC(B_r(y))}{r^m}
\geq\om_m\Theta^m(\mu_\mcC,y),\quad\forall y\in L_\infty.
}
Now by virtue of the construction of $D_\mcC$ we know that every inequality is in fact equality in the above argument.
In particular,
\eq{
\frac{\mu_\mcC(B_{r_2}(y))}{r_2^m}-\frac{\mu_\mcC(B_{r_1}(y))}{r_1^m}
\equiv0,\quad\forall0<r_1<r_2,
}
which is sufficient to show that $\mcC$ is a cone with respect to $y$, as shown in the proof of Lemma \ref{Lem:blow-up-seq}.
Now observe that if $\mcC$ is a cone with respect to vertex $v$, then for any $z\in\overline{\mfR^{n+1}_+}$ we have
\eq{
\Theta^m(\mu_\mcC,z)
=\Theta^m(\mu_\mcC,v+\frac{z-v}2)
=\Theta^m(\mu_\mcC,\frac{v+z}2).
}
As $\mcC$ is a cone with respect to both $0$ and $y$, we deduce for any $z\in\overline{\mfR^{n+1}_+}$
\eq{\label{eq:mu_C,z-y+z}
\Theta^m(\mu_\mcC,z)
=\Theta^m(\mu_\mcC,y+z),
}
which shows that $(\bstau_{-y})_\#\mcC=\mcC$.
The translation invariance of $\sigma_{\Gamma_\infty}$ follows
similarly to the last part of the proof of Lemma \ref{Lem:blow-up-seq}.

Finally, we show that $D_\mcC$ is in fact a linear subspace, that is, for every $y,z\in D_\mcC$, we have $y+z\in D_\mcC$ and $\lambda y\in D_\mcC, \forall\lambda\in\mfR$.
The first property follows from \eqref{eq:mu_C,z-y+z} and the last property is a direct consequence of the fact that $\mcC$ is a cone with respect to both $0$ and $y$.
The proof is thus completed.
\end{proof}
\begin{lemma}\label{Lem:sigma_Gamma(G_pb)}
Let $V\in{\bf V}^m_\beta(\Om)$ with boundary varifold $\Gamma$, such that $\mfH\in L^p(\mu_V)$ for some $p\in(m,\infty)$, and $\Theta^m(\mu_V,x)\geq a>0$ for $\mu_V$-a.e. $x$.

Then for any $\sigma_\Gamma$-measurable set $A\subseteq \mathscr{P}_{\rm cb}(\Gamma)$, 
there exists a set $F\subset S$ with
\eq{\label{eq:sigma_V-G_pb}
\sigma_\Gamma(A\setminus F)=0,
}
such that the dichotomy holds: for every $x_0\in A\cap F$ and for every $\mcC\in {\rm VarTan}(V,x_0)$, either
\eq{
\sigma_{\Gamma_\infty}=0,
}
or
\eq{\label{eq:mu_C-y-0}
\sigma_{\Gamma_\infty}\neq0,\text{ with }
\Theta^m(\mu_\mcC,y)=\Theta^m(\mu_\mcC,0),\quad\forall y\in{\rm spt}\sigma_\mcC\cap L_\infty.
}
In particular, ${\rm spt}\sigma_{\Gamma_\infty}=D_\mcC$.
\end{lemma}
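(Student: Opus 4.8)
The plan is to separate the inclusion $D_\mcC\subseteq{\rm spt}\,\sigma_{\Gamma_\infty}$, which is essentially free, from the reverse one, which is the real content and will be reduced to a density comparison along ${\rm spt}\,\sigma_{\Gamma_\infty}$. As in Lemma \ref{Lem:blow-up-seq}, fix $x_0$ and assume after translation and rotation that $x_0=0$, $T_{x_0}\overline\Om=\overline{\mfR^{n+1}_+}$. If $\sigma_{\Gamma_\infty}\neq0$, then it is a non-trivial scaling-invariant measure, so $0\in{\rm spt}\,\sigma_{\Gamma_\infty}$, and since $D_\mcC$ acts on $\sigma_{\Gamma_\infty}$ by measure-preserving translations (Lemma \ref{Lem:linear-subspace}) we get $D_\mcC=D_\mcC+0\subseteq{\rm spt}\,\sigma_{\Gamma_\infty}$. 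Recalling moreover ${\rm spt}\,\sigma_{\Gamma_\infty}\subseteq L_\infty$ (Lemma \ref{Lem:blow-up-seq}(4)), $\Theta^m(\mu_\mcC,y)\leq\Theta^m(\mu_\mcC,0)$ for $y\in L_\infty$ (from the proof of Lemma \ref{Lem:linear-subspace}), and $\Theta^m(\mu_\mcC,0)=\Theta^m(\mu_V,x_0)$ (from the proof of Lemma \ref{Lem:blow-up-seq}(5), using Corollary \ref{Cor:density-existence}), the whole statement reduces to proving, for $\sigma_\Gamma$-a.e.\ $x_0\in A$, every $\mcC\in{\rm VarTan}(V,x_0)$ with companion $\Gamma_\infty\neq0$, and every $y\in{\rm spt}\,\sigma_{\Gamma_\infty}$ (the case $y=0$ being trivial), the equality $\Theta^m(\mu_\mcC,y)=\Theta^m(\mu_\mcC,0)$; this yields ${\rm spt}\,\sigma_{\Gamma_\infty}=D_\mcC$ and is exactly \eqref{eq:mu_C-y-0}. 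Here the inequality ``$\leq$'' is already available, so only ``$\geq$'' remains.

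To get a conical structure at $y$ and the inequality ``$\leq$'' in a usable form, I would rerun the boundary monotonicity computation of Section \ref{Sec:4} centered at $y$. In the blow-up the angle is the constant $\beta(x_0)$, $\mfH_\mcC=\widetilde H_\mcC=0$ by \eqref{eq:H_C}, and $\cos\beta(x_0)\mfn_W(x_0)\neq0$ and is perpendicular to $L_\infty\supseteq{\rm spt}\,\sigma_{\Gamma_\infty}$; for the latter one uses that $x_0$ can be taken to be a Lebesgue point of $\cos\beta\,\mfn_W$ with respect to $\sigma_\Gamma$, so that its Lebesgue value still obeys the inequality in (C1). Hence, testing \eqref{eq:1st-variation-V_beta} for $\mcC$ with $\varphi(x)=\gamma(\abs{x-y}/\rho)(x-y)$, exactly as in the derivation of \eqref{eq:monoto-identity}, makes all three boundary terms vanish, so (letting $\gamma\uparrow\chi_{[0,1]}$) the function $\rho\mapsto\mu_\mcC(B_\rho(y))/\rho^m$ is non-decreasing; and by conicality of $\mcC$ at $0$ together with the inclusions $B_{\rho-\abs{y}}(y)\subseteq B_\rho(0)\subseteq B_{\rho+\abs{y}}(y)$ its limit as $\rho\to\infty$ equals $\om_m\Theta^m(\mu_\mcC,0)$. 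Thus $\om_m\Theta^m(\mu_\mcC,y)\leq\mu_\mcC(B_\rho(y))/\rho^m\leq\om_m\Theta^m(\mu_\mcC,0)$ for all $\rho$, and equality in the density will force $\mu_\mcC(B_\rho(y))/\rho^m$ to be constant, i.e.\ $\mcC$ to be a cone with vertex $y$ as well.

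The reverse inequality $\Theta^m(\mu_\mcC,y)\geq\Theta^m(\mu_\mcC,0)$ is the main obstacle, and it is here that the exceptional set must be discarded. I would take $F$ to consist of those $x_0\in A$ that, besides the full-measure conditions used in Lemma \ref{Lem:blow-up-seq}, are Lebesgue points of $x\mapsto\Theta^m(\mu_V,x)$ with respect to $\sigma_\Gamma$, and that have $\sigma_\Gamma$-density one in each stratum $\mathscr{P}_{\rm cb}^N$ (capillary boundary points whose data $\ep_x,\rho_x,c_x$ all lie in $[N^{-1},N]$) containing them; each of these requirements holds $\sigma_\Gamma$-a.e.\ on $A$, so $\sigma_\Gamma(A\setminus F)=0$. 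Given such an $x_0$, fix $N$ with $x_0\in\mathscr{P}_{\rm cb}^N$; since $\sigma_{\Gamma_{j_k}}\wsc\sigma_{\Gamma_\infty}$ and $y\in{\rm spt}\,\sigma_{\Gamma_\infty}$, while the rescaled sets $\bseta_{x_0,r_{j_k}}(A\cap\mathscr{P}_{\rm cb}^N)$ have asymptotically full $\sigma_{\Gamma_{j_k}}$-density near $y$, I can choose $\xi_k=x_0+r_{j_k}z_k\in A\cap\mathscr{P}_{\rm cb}^N$ with $z_k\to y$; the Lebesgue-point property then gives $\Theta^m(\mu_V,\xi_k)\to\Theta^m(\mu_V,x_0)=\Theta^m(\mu_\mcC,0)$. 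Corollary \ref{Coro-monoto-p>m} applied at each $\xi_k$ yields $\om_m\Theta^m(\mu_V,\xi_k)\leq\mu_V(B_s(\xi_k))/s^m+g_{\xi_k}(s)$ for $s$ below a threshold bounded below by $N^{-1}$, with $g_{\xi_k}(s)\to0$ uniformly in $k$ because the constants entering $g_{\xi_k}$ depend only on $N$, $m$, $p$, $S$, $\norm{\mfH}_{L^p(\mu_V)}$ and $\cos\beta(\xi_k)$, which are all controlled; and since $\abs{\xi_k-(x_0+r_{j_k}y)}=o(r_{j_k})$, one has $\mu_\mcC(B_\rho(y))=\lim_k\mu_V(B_{r_{j_k}\rho}(\xi_k))/r_{j_k}^m$ for a.e.\ $\rho$. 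Inserting $s=r_{j_k}\rho$ and letting $k\to\infty$, then $\rho\to0$, gives $\Theta^m(\mu_\mcC,y)\geq\Theta^m(\mu_\mcC,0)$; combined with the previous step this is the desired equality for every $y\in{\rm spt}\,\sigma_{\Gamma_\infty}$, and hence ${\rm spt}\,\sigma_{\Gamma_\infty}=D_\mcC$.

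Finally, both the a.e.\ restriction and the dichotomy are unavoidable, which is worth flagging: a cone consisting of a capillary boundary half-plane together with an extra interior sheet through the vertex satisfies all the blow-up identities of Lemma \ref{Lem:blow-up-seq} yet has its density drop off the vertex; however such configurations can only occur over a $\sigma_\Gamma$-null set of centres, which is precisely what the Lebesgue-point and density selection of $F$ removes. The alternative $\sigma_{\Gamma_\infty}=0$ accounts for the case in which no boundary mass survives the rescaling, so that the density transfer above is vacuous.
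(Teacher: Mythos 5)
Your proposal is correct and follows essentially the same route as the paper: one inclusion comes for free from the scaling and translation invariance of $\sigma_{\Gamma_\infty}$ together with $\mathrm{spt}\,\sigma_{\Gamma_\infty}\subseteq L_\infty$, and the reverse density inequality is obtained by transferring $\Theta^m(\mu_V,\cdot)$ from $x_0$ to points of $\mathrm{spt}\,\sigma_{\Gamma_\infty}$ via approximate continuity with respect to $\sigma_\Gamma$ and a uniform version of the monotonicity inequality. The only substantive difference is technical: the paper builds $F$ by applying Egoroff to the remainder functions $g_x(\rho)$ and then argues by contradiction, whereas you stratify $\mathscr{P}_{\rm cb}$ by the size of the data $\ep_x,\rho_x,c_x$ to get the uniformity and select comparison points $\xi_k$ directly; these are interchangeable, your argument being essentially the contrapositive of the paper's.
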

\begin{proof}
\noindent{\bf Step 1. We construct the set $F$.}

First note that the collection of points $x\in A\subset \mathscr{P}_{\rm cb}\subset{\rm spt}\sigma_\Gamma\cap S$ at which
\begin{enumerate}
    \item $x$ is a point of density $1$ for $A$ with respect to $\sigma_\Gamma$, i.e.,
\eq{
\lim_{\rho\ra0}\frac{\sigma_\Gamma(A\cap B_\rho(x))}{\sigma_\Gamma(B_\rho(x))}
=1.
}
   \item The density function $\Theta^m(\mu_V,\cdot)$ (which is a well-defined function on $\mathscr{P}_{\rm cb}$ thanks to Corollary \ref{Cor:density-existence}) is approximate continuous at $x$ with respect to $\sigma_\Gamma$,
\end{enumerate}
denoted by $E_\Gamma$, is of full measure with respect to $\sigma_\Gamma$ by virtue of Lebesgue differentiation Theorem, i.e.,
\eq{\label{eq:sigma_V-E_Gamma}
\sigma_\Gamma(A\setminus E_\Gamma)=0.
}
Moreover,
note that Lemma \ref{Lem:blow-up-seq} holds for $\sigma_\Gamma$-a.e. $x\in \mathscr{P}_{\rm cb}$, up to modifying a $\sigma_\Gamma$-negligible set, we could assume that every $x\in E_\Gamma$ satisfies the conclusions of Lemma \ref{Lem:blow-up-seq}.

Since $p>m$, by \eqref{ineq-mu_V-B_r-B_rho} for every $x\in E_\Gamma$ we know that the resulting functions $g_x(\rho)$ are increasing on $\rho$ and converge to $0$ point-wisely as $\rho\searrow0$.
By Egoroff's Theorem, for every $i\in\mbN$ we could find a set $F_i\subset E_\Gamma$ with $\sigma_\Gamma(E_\Gamma\setminus F_i)\leq\frac1i$, such that $g_x(\rho)$ uniformly converges to $0$ on $F_i$ as $\rho\searrow0$.

Now we define the desired $F$ as
\eq{
F
\coloneqq E_\Gamma\cap\left(\bigcup_{i\in\mbN}F_i\right).
}
By construction of $F$ and \eqref{eq:sigma_V-E_Gamma} we have
\eq{
0
=\sigma_\Gamma(E_\Gamma\setminus F)
=\sigma_\Gamma(A\setminus F),
}
which proves \eqref{eq:sigma_V-G_pb}.

To prove the dichotomy we recall that by Lemma \ref{Lem:blow-up-seq}, for every $x_0\in F$ and every $\mcC\in{\rm VarTan}(V,x_0)$ there exists a sequence $\{r_j\searrow0\}_{j\in\mbN}$ such that the corresponding $V_j\ra\mcC$ as varifolds.
In particular, $\mcC$ is a non-trivial rectifiable cone with $\Theta^m(\mu_\mcC,x)\geq a$ for $\mu_\mcC$-a.e. $x$.
Again,
by Lemma \ref{Lem:blow-up-seq} ({\bf 1}), each $V_j\in{\bf V}^m_{\beta_j}(\Om_j)$ with $\mfH_j(x)=r_j\mfH(x_0+r_jx)$ belongs to $L^p$ as well, and hence we could apply \eqref{ineq-mu_V-B_r-B_rho} to each $V_j$ and get increasing functions $g^j_{r_jx}(\rho)$.
Moreover, we have the scaling property
\eq{
g^j_x(\rho)
=g_{r_jx}(r_j\rho).
}

We are now ready to prove the dichotomy, and it suffices to consider $\sigma_{\Gamma_\infty}\neq0$.
Before that, after translation we may assume $x_0=0\in S$, $T_{x_0}\overline\Om=\{x_{n+1}\geq0\}$ is the upper half-space, which we denote by $\overline{\mfR^{n+1}_+}$, and $T_{x_0}S=\{x_{n+1}=0\}=\p\mfR^{n+1}_+$.
We also recall the definitions of the $(n-1)$-dimensional linear subspace $L_\infty$ and also $D_\mcC$ introduced in Lemma \ref{Lem:linear-subspace}.

\noindent{\bf Step 2. We show that: ${\rm spt}\sigma_{\Gamma_\infty}\cap L_\infty=D_\mcC$, on which \eqref{eq:mu_C-y-0} holds.}

First by Lemma \ref{Lem:blow-up-seq} ({\bf 5}), $\sigma_{\Gamma_\infty}$ is scaling invariant, since $\sigma_{\Gamma_\infty}\neq0$ we must have $0\in{\rm spt}\sigma_{\Gamma_\infty}$.
On the other hand, by Lemma \ref{Lem:linear-subspace}, $\sigma_{\Gamma_\infty}$ is translation invariant along $D_\mcC$, we see that
\eq{
D_\mcC\subset{\rm spt}\sigma_{\Gamma_\infty}.
}
Note also that by definition of $D_\mcC$ we have $D_\mcC\subset L_\infty$, thus $D_\mcC\subset{\rm spt}\sigma_{\Gamma_\infty}\cap L_\infty$.

To show the opposite inclusion we argue by contradiction.
Recall that by \eqref{ineq:Theta^m-mcC-0-y}
\eq{
\Theta^m(\mu_\mcC,y)
\leq\Theta^m(\mu_\mcC,0),\quad\forall y\in L_\infty.
}
Thus we assume by contradiction that there exists some $y\in{\rm spt}\sigma_{\Gamma_\infty}\cap L_\infty$ and $\ep>0$ such that
\eq{
\Theta^m(\mu_\mcC,y)
<\Theta^m(\mu_\mcC,0)-\ep.
}
Thanks again to Lemma \ref{Lem:linear-subspace} after translation we may assume that $y\in B_{\frac12}(0)$.
By virtue of the construction of the sets $F$ and $F_i$, we can show there exists $J\in\mbN$ large and $0<r$ small such that
\eq{\label{inclu:B_r(y)-F_i}
B_r(y)\cap\frac1{r_j}F_i
\subseteq\left\{x\in B_1(0):\Abs{\Theta^m(\mu_{V_j},x)-\Theta^m(\mu_{V_j},0)}>\frac\ep2\right\},\quad\forall j>J,
}
see \cite[(5.35)]{DeMasi21}.
In particular, this violates the approximate continuity of the density function $\Theta^m(\mu_V,\cdot)$ with respect to $\sigma_\Gamma$ at $0$:
By definition of approximate continuity one must have
\eq{
0
=&\lim\sup_{j\ra\infty}\frac{\sigma_\Gamma\left(\left\{z\in B_{r_j}(0):\Abs{\Theta^m(\mu_{V},z)-\Theta^m(\mu_{V},0)}>\frac\ep2\right\}\right)}{\sigma_\Gamma(B_{r_j}(0))}\\
\overset{\eqref{inclu:B_r(y)-F_i}}{\geq}&\limsup_{j\ra\infty}\frac{\sigma_\Gamma(B_{rr_j}(r_jy)\cap F_i)}{\sigma_\Gamma(B_{r_j}(0))}
\overset{\eqref{eq:sigma_V-E_Gamma}}{=}\limsup_{j\ra\infty}\frac{\sigma_\Gamma(B_{rr_j}(r_jy)}{\sigma_\Gamma(B_{r_j}(0))}
=\limsup_{j\ra\infty}\frac{\sigma_j^T(B_r(y))}{\sigma_j^T(B_1(0))},
}
where we have used Lemma \ref{Lem:blow-up-seq} ({\bf 1}) in the last equality.
This leads to a contradiction since the last term is never vanishing, which can be inferred from the convergence \eqref{eq:limit-sigma_jk} and the fact that the points $0,y$ both belong to ${\rm spt}\sigma_{\Gamma_\infty}$.
In particular, the contradiction argument shows that ${\rm spt}\sigma_{\Gamma_\infty}\cap L_\infty\subset D_\mcC$, which completes the second step.

\noindent{\bf Step 3. We prove that ${\rm spt}\sigma_{\Gamma_\infty}=D_\mcC$.}

In view of {\bf Step 2}, we just need to prove that ${\rm spt}\sigma_{\Gamma_\infty}\subset L_\infty$.
To do so, recall \eqref{eq:x-n_W(x_0)} and the definition of $L_\infty$, we find $\sigma_{\Gamma_\infty}(\mfR^{n+1}\setminus L_\infty)=0$, i.e., $\sigma_{\Gamma_\infty}$ is concentrated on $L_\infty$, and hence ${\rm spt}\sigma_{\Gamma_\infty}\subset L_\infty$ as desired.
In particular, this completes the proof.
\end{proof}

\begin{lemma}\label{Lem:sigma_Gamma_infty-characterization}
Under the assumptions and notations in Lemma \ref{Lem:sigma_Gamma(G_pb)}.
For every $x_0\in A\cap F$ and for any $\mcC\in{\rm VarTan}(V,x_0)$, if
\eq{
\Theta^{m-1}_\ast(\sigma_\Gamma,x_0)>0,
}
then there hold: $D_\mcC$ is a $(m-1)$-dimensional linear subspace and there exists some $\theta_0>0$ such that
\eq{
\sigma_{\Gamma_\infty}
=\theta_0\mcH^{m-1}\llcorner D_\mcC.
}
\end{lemma}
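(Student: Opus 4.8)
The plan is to select, among the two alternatives of Lemma \ref{Lem:sigma_Gamma(G_pb)}, the one forced by the hypothesis $\Theta^{m-1}_\ast(\sigma_\Gamma,x_0)>0$, and then to pin down both the dimension of $D_\mcC$ and the constant $\theta_0$ by matching the translation invariance of $\sigma_{\Gamma_\infty}$ along $D_\mcC$ against its scaling invariance.

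First I would rule out $\sigma_{\Gamma_\infty}=0$. Fix $x_0\in A\cap F$ and $\mcC\in{\rm VarTan}(V,x_0)$, and let $r_{j_k}\searrow0$, $V_{j_k}\ra\mcC$, $\Gamma_{j_k}\wsc\Gamma_\infty$ be the blow-up data supplied by Lemma \ref{Lem:blow-up-seq}; after translating $x_0$ to the origin, \eqref{eq:sigma_Gamma_j} gives $\sigma_{\Gamma_{j_k}}(B_1(0))=r_{j_k}^{1-m}\sigma_\Gamma(B_{r_{j_k}}(x_0))$, so that $\liminf_{k\to\infty}\sigma_{\Gamma_{j_k}}(B_1(0))\geq\om_{m-1}\Theta^{m-1}_\ast(\sigma_\Gamma,x_0)>0$. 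Since $B_1(0)$ is compact and $\sigma_{\Gamma_{j_k}}\wsc\sigma_{\Gamma_\infty}$ by \eqref{eq:limit-sigma_jk}, upper semicontinuity of the mass on compact sets yields $\sigma_{\Gamma_\infty}(B_1(0))\geq\limsup_k\sigma_{\Gamma_{j_k}}(B_1(0))>0$, hence $\sigma_{\Gamma_\infty}\neq0$. Thus the second alternative of Lemma \ref{Lem:sigma_Gamma(G_pb)} holds; in particular ${\rm spt}\,\sigma_{\Gamma_\infty}=D_\mcC$.

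By Lemma \ref{Lem:linear-subspace}, $D_\mcC$ is a linear subspace, say of dimension $d$, and $\sigma_{\Gamma_\infty}$ is invariant under every translation $\bstau_{-y}$, $y\in D_\mcC$; moreover it is concentrated on $D_\mcC={\rm spt}\,\sigma_{\Gamma_\infty}$. Identifying $D_\mcC$ with $\mfR^d$, $\sigma_{\Gamma_\infty}$ becomes a nonzero Radon measure on $\mfR^d$ invariant under all translations, so by uniqueness of Haar measure $\sigma_{\Gamma_\infty}=\theta_0\mcH^d\llcorner D_\mcC$ for some $\theta_0>0$. To identify $d$, recall from Lemma \ref{Lem:blow-up-seq} ({\bf 5}) that $\frac1{\rho^{m-1}}(\bseta_{0,\rho})_\ast\sigma_{\Gamma_\infty}=\sigma_{\Gamma_\infty}$ for every $\rho>0$. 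Since $\rho D_\mcC=D_\mcC$, a change of variables gives $(\bseta_{0,\rho})_\ast(\mcH^d\llcorner D_\mcC)=\rho^d\mcH^d\llcorner D_\mcC$, so that $\theta_0\rho^{d-m+1}\mcH^d\llcorner D_\mcC=\theta_0\mcH^d\llcorner D_\mcC$ for all $\rho>0$; as $\theta_0>0$ this forces $d=m-1$. Hence $D_\mcC$ is $(m-1)$-dimensional and $\sigma_{\Gamma_\infty}=\theta_0\mcH^{m-1}\llcorner D_\mcC$, as claimed.

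I expect essentially no serious obstacle here: all the substantial work — the rectifiable-cone structure of $\mcC$, the existence and scaling invariance of $\sigma_{\Gamma_\infty}$, its translation invariance along $D_\mcC$, and the identity ${\rm spt}\,\sigma_{\Gamma_\infty}=D_\mcC$ — has already been carried out in Lemmas \ref{Lem:blow-up-seq}, \ref{Lem:linear-subspace} and \ref{Lem:sigma_Gamma(G_pb)}. The only points requiring genuine care are (i) invoking the hypothesis $\Theta^{m-1}_\ast(\sigma_\Gamma,x_0)>0$ together with the correct compact-set direction of semicontinuity under weak-$\ast$ convergence in order to exclude $\sigma_{\Gamma_\infty}=0$, and (ii) the elementary exponent-matching in the scaling identity that pins $d$ down to $m-1$; of these, (i) is the one I would be most careful about.
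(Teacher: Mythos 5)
Your proof is correct and follows essentially the same route as the paper's: the hypothesis $\Theta^{m-1}_\ast(\sigma_\Gamma,x_0)>0$ forces $\sigma_{\Gamma_\infty}\neq 0$ via upper semicontinuity of mass on the closed unit ball under the convergence \eqref{eq:limit-sigma_jk}, the dichotomy of Lemma \ref{Lem:sigma_Gamma(G_pb)} then gives ${\rm spt}\,\sigma_{\Gamma_\infty}=D_\mcC$, and the translation invariance (Lemma \ref{Lem:linear-subspace}) combined with the scaling invariance (Lemma \ref{Lem:blow-up-seq} ({\bf 5})) identifies the measure and the dimension. The only (harmless) difference is the final identification: the paper computes $\sigma_{\Gamma_\infty}(Q_{D_\mcC}(x,r))=\lambda r^{m-1}$ on cubes and cites \cite{DeMasi21} to conclude $\dim D_\mcC=m-1$, whereas you invoke uniqueness of Haar measure on $D_\mcC$ and match scaling exponents — the two arguments are equivalent.
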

\begin{proof}
After translation and rotation we may assume that $x_0=0\in S$, $T_{x_0}\overline\Om=\overline{\mfR^{n+1}_+}$, and $T_{x_0}S=\{x_{n+1}=0\}=\p\mfR^{n+1}_+$.
For $\mcC\in{\rm VarTan}(V,x_0)$ we have by definition a sequence $\{r_j\searrow0\}_{j\in\mbN}$ such that $V_j\ra\mcC$ as varifolds.
Now the condition gives
\eq{
\theta_0
\coloneqq\liminf_{j\ra\infty}\frac{\sigma_\Gamma(B_{r_j}(0))}{\om_{m-1}r_j^{m-1}}>0,
}
which together with Lemma \ref{Lem:blow-up-seq} ({\bf 1}) and the convergence \eqref{eq:limit-sigma_jk}, shows that
\eq{
\sigma_{\Gamma_\infty}\neq0.
}
Thus from dichotomy (Lemma \ref{Lem:sigma_Gamma(G_pb)}) we obtain
\eq{\label{eq:spt-sigma_L_infty-D_C}
{\rm spt}\sigma_{\Gamma_\infty}
=D_\mcC,
}
where $D_\mcC$ is a linear subspace of $L_\infty$ as shown in Lemma \ref{Lem:linear-subspace}.
Recall the definition of $L_\infty$ we know that the dimension of $D_\mcC$, say $k\in\mbN$, satisfies $k\leq n-1$.
After rotation we may assume that $D_\mcC=\{x:x_{k+1}=\ldots=x_{n+1}=0\}$.

For any $x\in D_\mcC$ and any $r>0$, consider the closed cube in $D_\mcC$ centered at $x$, of side length $r$, and with faces parallel to the coordinate vectors $e_1,\ldots,e_k$, say $Q_{D_\mcC}(x,r)$.
Note that $\sigma_{\Gamma_\infty}$ is invariant under rescaling (Lemma \ref{Lem:blow-up-seq} ({\bf 5})) and translation in $D_\mcC$ (Lemma \ref{Lem:linear-subspace}), we thus have some $\lambda>0$ such that
\eq{
\sigma_{\Gamma_\infty}(Q_{D_\mcC}(x,r))
=\lambda r^{m-1},\quad\forall x\in D_\mcC, r>0.
}
This is a sufficient condition to show that the dimension $k$ is in fact $m-1$, see \cite[(5.39)]{DeMasi21}.
In particular, again using the rescaling and translation invariance of $\sigma_{\Gamma_\infty}$, we obtain
\eq{
\frac{\sigma_{\Gamma_\infty}
(B_r(x))}{\mcH^{m-1}(D_\mcC\cap B_r(x))}
=\theta_0,\quad\forall x\in D_\mcC, r>0.
}
By Radon-Nikodym Theorem we have
\eq{
\sigma_{\Gamma_\infty}
=\theta_0\mcH^{m-1}\llcorner D_\mcC
}
as required.
This completes the proof.

\end{proof}
\subsection{Classification of tangent cones and Allard-type boundary regularity}
Now we classify tangent cones at the capillary boundary points.
\begin{proof}[Proof of Theorem \ref{Prop:Classify-VarTan}]
Let $F$ be the set resulting from Lemma \ref{Lem:sigma_Gamma(G_pb)}.

We claim that $\mcC$ is a non-trivial stationary integral $m$-cone satisfying
\eq{
\frac{\mu_\mcC(B_\rho(0))}{\om_m\rho^m}
=\Theta^m(\mu_\mcC,0)
=\Theta^m(\mu_V,x_0)\leq\frac12+\ep,\quad\forall\rho>0.
}
In fact, this is a direct consequence of Lemma \ref{Lem:blow-up-seq} ({\bf 5}) and its proof, where the fact that $\mcC$ is non-trivial and integral is due to $V\in{\bf IV}^m(\overline\Om)$.

To proceed, note that Lemmas \ref{Lem:linear-subspace}$\sim$\ref{Lem:sigma_Gamma_infty-characterization} imply that $\mcC$ has at least $(m-1)$-dimensions of translational symmetry, which, together with the claim, shows that $\mcC$ must be the induced varifold of some half-$m$-plane.

If $V\in{\bf RV}^m_\beta(\Om)$ then the disintegration of $\Gamma$ takes the form
\eq{
\Gamma
=\sigma_\Gamma\otimes\Gamma^x
=\mcH^{m-1}\llcorner M\otimes\de_{P_\Gamma(x)},
}
and at $x_0$ we have $\sigma_{\Gamma_\infty}=\mcH^{m-1}\llcorner T_{x_0}M$ by Lemma \ref{Lem:blow-up-seq} ({\bf 1}), \eqref{eq:limit-sigma_jk}, and the fact that $M$ is $(m-1)$-rectifiable;
while by definition $\Gamma^{x_0}=\de_{P_\Gamma(x_0)}$ for the $m$-plane $P_\Gamma(x_0)\in G_{m,\beta}(x_0)$. 
By \eqref{eq:limit-gamma^x_infty}
we have 
\eq{\label{eq:Gamma_infty}
\Gamma_\infty
=\sigma_{\Gamma_\infty}\otimes\Gamma^x_\infty
=\mcH^{m-1}\llcorner T_{x_0}M\otimes\de_{P_\Gamma(x_0)}.
}
Note also that $P_{\Gamma}(x_0)\cap T_{x_0}S=T_{x_0}M$ by Definition \ref{Defn:rectifiable-bdry}.

Since $\mcC$ is induced by some half-$m$-plane that lies in $T_{x_0}^+\overline\Om$, we could write the first variation of $\mcC$ explicitly,
taking \eqref{eq:first-variation-C}, \eqref{eq:Gamma_infty} into account, the assertion follows.
More precisely, say $\mcC$ is induced by the half-$m$-plane $\mbP$ that lies in $T_{x_0}\overline\Om\cong\overline{\mfR^{n+1}_+}$.
Let $\mfn_\mbP$ denote the inwards-pointing unit co-normal along $\mbP\cap\p\mfR^{n+1}_+$ with respect to to $\mbP$,
then we could write
\eq{
\mfn_\mbP
=\sin\alpha e_{n+1}+\cos\alpha\frac{\p\mfR^{n+1}_+(\mfn_\mbP)}{\Abs{\p\mfR^{n+1}_+(\mfn_\mbP)}},
}
where $\alpha\in[0,\frac\pi2]$ is the contact angle of $\mbP$ and $\p\mfR^{n+1}_+$.
It follows that for any $\varphi\in\mathfrak{X}(\mfR^{n+1}_+)$ with compact support,
\eq{\label{eq:de-mbH}
\de\mcC(\varphi)
=&0-\int_{\mbP\cap\p\mfR^{n+1}_+}\sin\alpha\left<e_{n+1},\varphi\right>\rd\mcH^{m-1}\\
&-\int_{\mbP\cap\p\mfR^{n+1}_+}\left<\Abs{\cos\beta(x_0)}\frac{\p\mfR^{n+1}_+(\mfn_\mbP)}{\Abs{\p\mfR^{n+1}_+(\mfn_\mbP)}},\varphi\right>\frac{\cos\alpha}{\Abs{\cos\beta(x_0)}}\rd\mcH^{m-1}.
}
On the other hand, by \eqref{eq:first-variation-C}, \eqref{eq:Gamma_infty}, for any $\varphi\in\mathfrak{X}_t(\mfR^{n+1}_+)$ with compact support,
\eq{
\de\mcC(\varphi)
=&-\int_{T_{x_0}M\cap\p\mfR^{n+1}_+}\left<\mfn(x,P_\Gamma(x_0)),\varphi(x)\right>\rd\mcH^{m-1}\\
=&-\int_{T_{x_0}M\cap\p\mfR^{n+1}_+}\left<\Abs{\cos\beta(x_0)}\frac{\p\mfR^{n+1}_+\left(\mfn(x,P_\Gamma(x_0))\right)}{\Abs{\p\mfR^{n+1}_+\left(\mfn(x,P_\Gamma(x_0))\right)}},\varphi(x)\right>\rd\mcH^{m-1}.
}
Combining these two variational formulas, we obtain
\eq{
\begin{cases}
\mbP\cap\p\mfR^{n+1}_+
=T_{x_0}M
=P_\Gamma(x_0)\cap T_{x_0}S
,\\
\frac{\p\mfR^{n+1}_+(\mfn_\mbP)}{\Abs{\p\mfR^{n+1}_+(\mfn_\mbP)}}
=\frac{\p\mfR^{n+1}_+\left(\mfn(x,P_\Gamma(x_0))\right)}{\Abs{\p\mfR^{n+1}_+\left(\mfn(x,P_\Gamma(x_0))\right)}}
,\quad\forall x\in T_{x_0}M,\\
\alpha
=\beta(x_0)\text{ if }\beta(x_0)\in(0,\frac\pi2);\text{ or }\pi-\beta(x_0)\text{ otherwise}.\label{eq-alpha-beta(x_0)}
\end{cases}
}
Therefore, $\mcC$ is induced by the half-$m$-plane $\mbP=P_\Gamma(x_0)\cap T_{x_0}\overline\Om$ as asserted.

\end{proof}
A direct consequence of Theorem \ref{Prop:Classify-VarTan} is that
\eq{
{\rm spt}\sigma_{\mcC}^\perp
={\rm spt}\sigma_{\Gamma_\infty},
}
since we have from \eqref{eq:1st-variation-V_beta}, Lemma \ref{Lem:sigma_Gamma_infty-characterization} and \eqref{eq:de-mbH} that
\eq{\label{eq:sigma_c^perp-sigma_Gamma-infty}
\sigma_\mcC^\perp
=\sin\alpha\mcH^{m-1}\llcorner(\mbP\cap\p\mfR^{n+1}_+),\quad
\sigma_{\Gamma_\infty}
=\theta_0\mcH^{m-1}\llcorner(\mbP\cap\p\mfR^{n+1}_+)\text{ for some }\theta_0>0.
}

If $\mu_V(S)=0$, then by construction of the blow-up sequence we have $\mcC=\mcC_I\coloneqq\mcC\llcorner\mfR^{n+1}_+$.
Moreover, again the first variation of $\mcC$ gives
\eq{
{\rm spt}\sigma_{\mcC}^\perp
={\rm spt}\sigma_{\Gamma_\infty}
={\rm spt}\mu_{\mcC_I}\cap\p\mfR^{n+1}_+,
}
which can be viewed as the limiting case of \cite[(3.9)]{DEGL24}.

Another application of Theorem \ref{Prop:Classify-VarTan}, as pointed out in the introduction, is as follows.
\begin{proof}[Proof of Theorem \ref{Thm:Allard}]
Conclusion ($i$) follows from Proposition \ref{Prop:RV-codim-1}.
In particular, we conclude from Proposition \ref{Prop:1st-variation} that $V$ has bounded first variation.

As for conclusion ($ii$), the existence of the set $F$ is ensured by Theorem \ref{Prop:Classify-VarTan}; the $C^{1,\alpha}$-regularity of ${\rm spt}\mu_V$ near $x_0$ together with the contact angle information is a direct consequence of \cite[Theorem 2.3, Corollary 2.4]{Wang24} and Theorem \ref{Prop:Classify-VarTan}.

\end{proof}

\subsection{Strong maximum principle}
Next we give a boundary strong maximum principle for the blow-up limits resulting from Theorem \ref{Prop:Classify-VarTan}.

Before we begin, we note that in Definition \ref{Defn:vfld-prescribed-bdry}, any $V\in{\bf V}^m_\beta(\Om)$ admits as many as free boundary components, as shown in Example \ref{exam:caps}.
Therefore, we do not expect a general $V$ to satisfy a boundary strong maximum principle like smooth submanifolds, see e.g., \cite[Lemma 1.13]{LZZ21}.

\begin{corollary}\label{Prop:bdry-SMP}
Let $\mcC\in{\rm VarTan}(V,x_0)$
be a tangent cone resulting from Theorem \ref{Prop:Classify-VarTan}, namely, $\mcC$ is induced by the half-$m$-plane $\mbP$ that lies in $\overline{\mfR^{n+1}_+}=T_{x_0}\overline\Om$ (up to translation and rotation).
Write $\alpha\in[0,\frac\pi2]$ as the contact angle of $\mbP$ and $\p\mfR^{n+1}_+$, in the sense that
the inwards-pointing unit co-normal along the boundary, denoted by $\mfn_{\mbP}$, satisfies
\eq{
\mfn_\mbP
=\sin\alpha e_{n+1}+\cos\alpha\frac{\p\mfR^{n+1}_+(\mfn_\mbP)}{\Abs{\p\mfR^{n+1}_+(\mfn_\mbP)}}
\eqqcolon\sin\alpha e_{n+1}+\cos\alpha\bar\mfn_\mbP,
}
where $\alpha\in[0,\frac\pi2]$ is the contact angle of $\mbP$ with $\p\mfR^{n+1}_+$.

If ${\rm spt}\mu_\mcC=\mbP$ is contained in the closed ($n+1$)-dimensional half-space 
\eq{
\mbH^-=\{x\in\mfR^{n+1}:\left<x,\nu_\mbH\right>\leq0\}
}
for some $\nu_\mbH\in\mfS^n$ having the expression
\eq{
\nu_\mbH
=-\sin\vartheta\bar\mfn_\mbP+\cos\vartheta e_{n+1},
}
where 
\eq{
\vartheta\coloneqq\arccos\left(\left<\nu_\mbH,e_{n+1}\right>\right)\in(0,\pi).
}
Then there holds
\eq{
\vartheta\geq\alpha.
}
Moreover, if equality holds we must have
\eq{
\mbP
\subseteq\p\mbH^-\cap\overline{\mfR^{n+1}_+}.
}
\end{corollary}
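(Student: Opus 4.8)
The plan is to reduce the statement to an elementary computation in the two-dimensional plane ${\rm span}\{e_{n+1},\bar\mfn_\mbP\}$, using that by Theorem \ref{Prop:Classify-VarTan} the tangent cone $\mcC$ is already \emph{known} to be the induced varifold of a half-$m$-plane $\mbP\subseteq\overline{\mfR^{n+1}_+}$. In particular no further varifold-theoretic input (Allard regularity, a PDE maximum principle, etc.) is needed: the ``strong maximum principle'' here is a consequence of the classification of tangent cones. First I would normalize: since by Lemma \ref{Lem:blow-up-seq}({\bf 5}) the cone $\mcC$ has vertex $x_0$, translate so that $x_0=0$, so that the boundary $(m-1)$-plane $L\coloneqq\p\mbP$ is a \emph{linear} subspace of $\p\mfR^{n+1}_+=\{x_{n+1}=0\}$ and $\mbP=\{\,p+t\,\mfn_\mbP:p\in L,\ t\geq0\,\}$.

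Next I would record the two orthogonality relations that drive everything. Since $L\subset\{x_{n+1}=0\}$ we have $L\perp e_{n+1}$; and since $\mfn_\mbP\perp L$ together with $e_{n+1}\perp L$, the tangential part $\bar\mfn_\mbP$ of $\mfn_\mbP$ is also orthogonal to $L$. Hence $\nu_\mbH=-\sin\vartheta\,\bar\mfn_\mbP+\cos\vartheta\,e_{n+1}\in L^\perp$, so that $\langle p,\nu_\mbH\rangle=0$ for every $p\in L$, and for a general point $x=p+t\,\mfn_\mbP\in\mbP$ one obtains $\langle x,\nu_\mbH\rangle=t\,\langle\mfn_\mbP,\nu_\mbH\rangle$. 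Expanding the inner product, using $\langle e_{n+1},\bar\mfn_\mbP\rangle=0$ and $\abs{e_{n+1}}=\abs{\bar\mfn_\mbP}=1$, gives
\[
\langle\mfn_\mbP,\nu_\mbH\rangle=\sin\alpha\cos\vartheta-\cos\alpha\sin\vartheta=\sin(\alpha-\vartheta).
\]

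The inclusion $\mbP\subseteq\mbH^-$ then reads $t\sin(\alpha-\vartheta)\leq0$ for all $t\geq0$, i.e. $\sin(\alpha-\vartheta)\leq0$. Since $\alpha\in[0,\tfrac\pi2]$ and $\vartheta\in(0,\pi)$ we have $\alpha-\vartheta\in(-\pi,\tfrac\pi2)$, an interval on which $\sin$ is nonpositive precisely when its argument is $\leq0$; hence $\alpha-\vartheta\leq0$, which is the asserted inequality $\vartheta\geq\alpha$. For the equality case $\vartheta=\alpha$, one has $\langle\mfn_\mbP,\nu_\mbH\rangle=0$, so $\langle x,\nu_\mbH\rangle=0$ for every $x\in\mbP$, i.e. $\mbP\subseteq\p\mbH^-$; combining this with $\mbP={\rm spt}\,\mu_\mcC\subseteq\overline{\mfR^{n+1}_+}$ (again from Theorem \ref{Prop:Classify-VarTan}) yields $\mbP\subseteq\p\mbH^-\cap\overline{\mfR^{n+1}_+}$.

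There is no serious obstacle once Theorem \ref{Prop:Classify-VarTan} is available; the only points requiring a little care are the bookkeeping of the orthogonality relations among $L$, $e_{n+1}$, $\bar\mfn_\mbP$ (and the tacit convention that $\bar\mfn_\mbP$ is a genuine unit vector, i.e. that we are not in the degenerate situation $\alpha=\tfrac\pi2$ where $\p\mfR^{n+1}_+(\mfn_\mbP)=0$), together with the elementary monotonicity of $\sin$ on $(-\pi,\tfrac\pi2)$ used to pass from $\sin(\alpha-\vartheta)\leq0$ to $\alpha\leq\vartheta$.
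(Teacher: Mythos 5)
Your proof is correct and is precisely the elementary argument the paper alludes to when it states that the corollary ``follows easily from the fact that $\mcC$ is a multiplicity-$1$ half $m$-plane'' and omits the details: once Theorem \ref{Prop:Classify-VarTan} identifies $\mcC$ with $\mbP=L\oplus\mfR_{\geq0}\mfn_\mbP$, the inclusion $\mbP\subseteq\mbH^-$ reduces to $\sin(\alpha-\vartheta)\leq0$ exactly as you compute, and the range restriction on $\alpha-\vartheta$ gives $\vartheta\geq\alpha$ with the stated rigidity in the equality case. Your bookkeeping of the orthogonality relations (and the caveat that $\bar\mfn_\mbP$ requires $\alpha\neq\tfrac\pi2$) is accurate, so nothing is missing.
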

\begin{proof}
The proof follows easily from the fact that $\mcC$ is a multiplicity-$1$ half $m$-plane, and hence omitted.
\end{proof}
\begin{corollary}
Under the assumptions and notations in Corollary \ref{Prop:bdry-SMP}, assume in addition $V\in{\bf RV}^m_\beta(\overline\Om)$ with multiplicity one rectifiable boundary varifold $\Gamma$ in the sense of Definition \ref{Defn:rectifiable-bdry}.

Then the conclusions of Corollary \ref{Prop:bdry-SMP} are true with $\alpha$ replaced by $\beta(x_0)$ if $\beta(x_0)\in(0,\frac\pi2)$; or $\pi-\beta(x_0)$ if $\beta(x_0)\in(\frac\pi2,\pi)$.
\end{corollary}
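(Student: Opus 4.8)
The plan is to deduce this directly from Corollary \ref{Prop:bdry-SMP} together with the last assertion of Theorem \ref{Prop:Classify-VarTan}. First I would recall that, by hypothesis, $V\in{\bf RV}^m_\beta(\overline\Om)$ with multiplicity one rectifiable boundary varifold $\Gamma=\mcH^{m-1}\llcorner M\otimes\de_{P_\Gamma(x)}$, so the final part of Theorem \ref{Prop:Classify-VarTan} applies: for $x_0\in A\cap F$ every tangent cone $\mcC\in{\rm VarTan}(V,x_0)$ is precisely the induced varifold of the half-$m$-plane $\mbP=P_\Gamma(x_0)\cap T_{x_0}\overline\Om$, and the contact angle $\alpha$ of $\mbP$ with $\p\mfR^{n+1}_+=T_{x_0}S$ is pinned down by \eqref{eq-alpha-beta(x_0)}, namely $\alpha=\beta(x_0)$ when $\beta(x_0)\in(0,\frac\pi2)$ and $\alpha=\pi-\beta(x_0)$ when $\beta(x_0)\in(\frac\pi2,\pi)$. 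The borderline value $\beta(x_0)=\frac\pi2$ need not be considered, since $x_0\in\mathscr{P}_{\rm cb}(\Gamma)$ forces $\cos\beta(x_0)\neq0$.

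With this identification in hand, the second step is simply to invoke Corollary \ref{Prop:bdry-SMP} verbatim for this $\mcC$: if ${\rm spt}\mu_\mcC=\mbP$ is contained in a half-space $\mbH^-$ with inner normal $\nu_\mbH=-\sin\vartheta\,\bar\mfn_\mbP+\cos\vartheta\,e_{n+1}$ as in the statement, then $\vartheta\geq\alpha$, with equality forcing $\mbP\subseteq\p\mbH^-\cap\overline{\mfR^{n+1}_+}$. Substituting the value of $\alpha$ supplied by \eqref{eq-alpha-beta(x_0)} then yields exactly the asserted inequality $\vartheta\geq\beta(x_0)$ (resp.\ $\vartheta\geq\pi-\beta(x_0)$) and the corresponding rigidity conclusion.

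Since this is a formal consequence of two results already established, there is essentially no obstacle. The only point deserving a line of care is that the geometric set-up of Corollary \ref{Prop:bdry-SMP}---in particular the decomposition $\mfn_\mbP=\sin\alpha\,e_{n+1}+\cos\alpha\,\bar\mfn_\mbP$ with $\bar\mfn_\mbP=\p\mfR^{n+1}_+(\mfn_\mbP)/\Abs{\p\mfR^{n+1}_+(\mfn_\mbP)}$---transfers unchanged once $\alpha$ is read as $\beta(x_0)$ or $\pi-\beta(x_0)$; this is immediate from \eqref{eq-alpha-beta(x_0)}, because the co-normal direction $\bar\mfn_\mbP$ and the normal $e_{n+1}$ are unaffected by the relabelling of the angle, and the inequality $\vartheta\geq\alpha$ together with its equality case is preserved under the substitution.
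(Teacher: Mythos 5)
Your argument is correct and is exactly the paper's own proof: the authors likewise deduce the corollary directly from the second assertion of Theorem \ref{Prop:Classify-VarTan}, in particular the third equality of \eqref{eq-alpha-beta(x_0)}, which identifies $\alpha$ with $\beta(x_0)$ or $\pi-\beta(x_0)$, and then apply Corollary \ref{Prop:bdry-SMP} verbatim. Your added remark that $\beta(x_0)\neq\frac\pi2$ on $\mathscr{P}_{\rm cb}(\Gamma)$ is a harmless clarification.
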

\begin{proof}
This follows directly from the second assertion of Theorem \ref{Prop:Classify-VarTan}.
In particular, the third equality of \eqref{eq-alpha-beta(x_0)}.
\end{proof}
%

\section{Rectifiability of boundary measures}\label{Sec:6}

\subsection{Tangential part}

We record some basic properties resulting from the definition of $\sigma_{\ast\Gamma}$.
\begin{lemma}\label{Lem:sigma_astGamma<<H^m-1}
Let $\Om\subset\mfR^{n+1}$ be a bounded domain of class $C^2$ and $\beta\in C^1(S,(0,\pi))$.
Let $V\in{\bf V}^m_\beta(\Om)$ with boundary varifold $\Gamma$, such that $\mfH\in L^p(\mu_V)$ for some $p\in(m,\infty)$.
Then for any $\sigma_\Gamma$-measurable set $A\subseteq \mathscr{P}_{\rm cb}(\Gamma)$,
$\sigma_{\ast\Gamma}\llcorner A<<\mcH^{m-1}$.
\end{lemma}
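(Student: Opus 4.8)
The plan is to reduce the statement to an a priori bound on the upper $(m-1)$-density of $\sigma_\Gamma$ at capillary boundary points, and then to invoke the standard comparison between a Radon measure of locally finite upper density and Hausdorff measure. Note first that $\sigma_{\ast\Gamma}\llcorner A=\sigma_\Gamma\llcorner(E_\ast\cap A)\le\sigma_\Gamma\llcorner A$, so it in fact suffices to prove $\sigma_\Gamma\llcorner A\ll\mcH^{m-1}$; the set $E_\ast$ plays no role here and is retained only for consistency with the rectifiability statement in Theorem \ref{Thm:bdry-rectifiability}.

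First I would fix $x_0\in A\subseteq\mathscr{P}_{\rm cb}(\Gamma)$ with its associated data $\ep_{x_0},\rho_{x_0},c_{x_0},\tau^V_{x_0}$ as in Definition \ref{Defn:capillary-bdry-point}. Since $x_0$ satisfies (C1), item (2) of Proposition \ref{Prop:1st-variation}, in the form \eqref{esti:local-tangent-improved}, gives for all small $\rho$
\[
\ep_{x_0}\,\sigma_\Gamma(B_{\rho/2}(x_0))\le\frac{C_0}{\rho}\,\mu_V(B_{2\rho}(x_0))+\int_{B_{2\rho}(x_0)}\abs{\mfH}\,\rd\mu_V ,\qquad C_0=C_0(\Om).
\]
Because $p>m$ and $x_0$ is a capillary boundary point, Corollary \ref{Coro-monoto-p>m}, in particular the almost-monotonicity \eqref{ineq-mu_V-B_r-B_rho} (which uses (C2) through the boundary monotonicity of Proposition \ref{Prop:Mono-bdry}), produces a finite constant $C_{x_0}$ with $\mu_V(B_r(x_0))\le C_{x_0}r^m$ for all small $r$. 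Substituting this back, the first term on the right is $O(\rho^{m-1})$, while by H\"older's inequality the mean-curvature term is at most $\norm{\mfH}_{L^p(\mu_V)}\,\mu_V(B_{2\rho}(x_0))^{1-1/p}=O(\rho^{m-m/p})$; since $p>m$ one has $m-m/p>m-1$, so this contribution is of strictly lower order. Hence $\sigma_\Gamma(B_\rho(x_0))\le C_{x_0}\,\rho^{m-1}$ for all small $\rho$, i.e. $\Theta^{\ast(m-1)}(\sigma_\Gamma,x_0)<\infty$ for every $x_0\in A$.

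Second, I would exhaust $A=\bigcup_{k\in\mbN}A_k$ with $A_k\coloneqq\{x\in A:\Theta^{\ast(m-1)}(\sigma_\Gamma,x)\le k\}$, an increasing family of $\sigma_\Gamma$-measurable sets (the upper density of a Radon measure being Borel). On each $A_k$ the standard density--comparison lemma, e.g. \cite[Theorem 3.2]{Simon83}, yields $\sigma_\Gamma\llcorner A_k\le 2^{m-1}k\,\mcH^{m-1}\llcorner A_k$, hence $\sigma_\Gamma\llcorner A_k\ll\mcH^{m-1}$. For any $\mcH^{m-1}$-negligible set $N$ this forces $\sigma_\Gamma(A_k\cap N)=0$ for every $k$, so $\sigma_\Gamma(A\cap N)=\lim_{k\to\infty}\sigma_\Gamma(A_k\cap N)=0$; therefore $\sigma_\Gamma\llcorner A\ll\mcH^{m-1}$, and a fortiori $\sigma_{\ast\Gamma}\llcorner A\ll\mcH^{m-1}$. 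The only genuine difficulty is the first step — extracting the uniform-in-$\rho$ estimate $\sigma_\Gamma(B_\rho(x_0))\lesssim\rho^{m-1}$ from the interplay between the boundary first-variation estimate and the volume growth bound supplied by the boundary monotonicity — where the hypothesis $p>m$ is indispensable; once finiteness of the upper density is known on all of $A$, the passage to absolute continuity is routine measure theory.
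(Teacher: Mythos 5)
Your proof is correct and follows essentially the same route as the paper: the paper's proof invokes the estimate \eqref{ineq:sigma_Gamma-ratio} (which is exactly your combination of \eqref{esti:local-tangent-improved} with the volume growth bound from Corollary \ref{Coro-monoto-p>m} and H\"older for the $\mfH$-term, using $p>m$) to conclude $\Theta^{\ast(m-1)}(\sigma_\Gamma,x)<\infty$ at every point of $\mathscr{P}_{\rm cb}$, and then cites the comparison theorem for upper densities (Maggi, Theorem 6.4). Your exhaustion by the sets $A_k$ simply makes explicit the standard passage from pointwise finiteness of the upper density to absolute continuity that the paper leaves to the cited theorem.
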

\begin{proof}
Since $p>m$, we have by \eqref{ineq:sigma_Gamma-ratio}
\eq{
\Theta^{\ast(m-1)}(\sigma_{\ast\Gamma}\llcorner A,x)<\infty
}
for $\sigma_{\ast\Gamma}\llcorner A$-a.e. $x$ (in fact, for every $x\in A\subset \mathscr{P}_{\rm cb}$).
The claimed property then follows directly from the comparison Theorem for upper density, see e.g., \cite[Theorem 6.4]{Mag12}.
\end{proof}
At the points $x$ of density $1$ for the set $E_\ast\cap A$ ($E_\ast$ defined in \eqref{defn:E_ast}) with respect to the measure $\sigma_\Gamma$, using direct computations, see e.g., \cite[Lemma 5.3]{DeMasi21} (see also \cite[Remark 3.13]{DeLellis08}), one can show that
\eq{
{\rm Tan}^{m-1}(\sigma_{\ast\Gamma}\llcorner A,x)
={\rm Tan}^{m-1}(\sigma_{\Gamma},x).
}
By virtue of Lebesgue differentiation Theorem this holds for $\sigma_\Gamma$-a.e. $x\in E_\ast\cap A$, which implies in conjunction with Lemma \ref{Lem:blow-up-seq} the following fact.

\begin{lemma}\label{Lem:Tan^m-1-sigma_Gamma_infty}
Let $\Om\subset\mfR^{n+1}$ be a bounded domain of class $C^2$ and $\beta\in C^1(S,(0,\pi))$.
Let $V\in{\bf V}^m_\beta(\Om)$ with boundary varifold $\Gamma$, such that $\mfH\in L^p(\mu_V)$ for some $p\in(m,\infty)$.
Then for any $\sigma_\Gamma$-measurable set $A\subseteq \mathscr{P}_{\rm cb}(\Gamma)$, there holds
\eq{
{\rm Tan}^{m-1}(\sigma_{\ast\Gamma}\llcorner A,x)
=\{\sigma_{\Gamma_\infty}:\mcC\in{\rm VarTan}(V,x)\},\quad\text{for }\sigma_{\ast\Gamma}\llcorner A\text{-a.e. }x.
}
\end{lemma}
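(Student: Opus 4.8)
The plan is to reduce the statement, via the observation recorded just above, to a clean identification of blow-up families. Recall that $\sigma_{\ast\Gamma}\llcorner A=\sigma_\Gamma\llcorner(E_\ast\cap A)$ with $E_\ast$ as in \eqref{defn:E_ast}, and that at $\sigma_\Gamma$-a.e.\ point $x_0$ of $E_\ast\cap A$—namely at points of density $1$ for $E_\ast\cap A$ with respect to $\sigma_\Gamma$—one has ${\rm Tan}^{m-1}(\sigma_{\ast\Gamma}\llcorner A,x_0)={\rm Tan}^{m-1}(\sigma_\Gamma,x_0)$. Since Lemma~\ref{Lem:blow-up-seq} also holds for $\sigma_\Gamma$-a.e.\ $x_0\in\mathscr{P}_{\rm cb}(\Gamma)$, I fix $x_0\in A$ in the intersection of these two $\sigma_\Gamma$-full-measure sets; it then suffices to show
\[
{\rm Tan}^{m-1}(\sigma_\Gamma,x_0)=\{\sigma_{\Gamma_\infty}:\mcC\in{\rm VarTan}(V,x_0)\}.
\]

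The one structural fact driving both inclusions is that, by Lemma~\ref{Lem:blow-up-seq}~(1), the rescaled measure $\sigma_{\Gamma_j}=r_j^{-(m-1)}(\bseta_{x_0,r_j})_\ast\sigma_\Gamma$ is exactly the approximating family whose weak-$\ast$ subsequential limits constitute ${\rm Tan}^{m-1}(\sigma_\Gamma,x_0)$. For the inclusion $\supseteq$: given $\mcC\in{\rm VarTan}(V,x_0)$, choose $r_j\searrow0$ with $V_j=(\bseta_{x_0,r_j})_\#V\to\mcC$; Lemma~\ref{Lem:blow-up-seq}~(2)--(3) produces, along a subsequence, a boundary varifold $\Gamma_\infty$ of $\mcC$ with $\sigma_{\Gamma_{j_k}}\wsc\sigma_{\Gamma_\infty}$ by \eqref{eq:limit-sigma_jk}, and since $\sigma_{\Gamma_{j_k}}=r_{j_k}^{-(m-1)}(\bseta_{x_0,r_{j_k}})_\ast\sigma_\Gamma$ this exhibits $\sigma_{\Gamma_\infty}$ as an element of ${\rm Tan}^{m-1}(\sigma_\Gamma,x_0)$. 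For the inclusion $\subseteq$: let $\tilde\sigma\in{\rm Tan}^{m-1}(\sigma_\Gamma,x_0)$ be realized as $\sigma_{\Gamma_j}\wsc\tilde\sigma$ along some $r_j\searrow0$. Because $x_0\in\mathscr{P}_{\rm cb}(\Gamma)$ and $\mfH\in L^p(\mu_V)$ with $p>m$, Corollary~\ref{Coro-monoto-p>m} (through \eqref{ineq-mu_V-B_r-B_rho}, i.e.\ \eqref{ineq:mu_V_j-bound}) gives $\sup_j\mu_{V_j}(B_1(0))<\infty$; since moreover $\Theta^{\ast m}(\mu_V,x_0)<\infty$ by Corollary~\ref{Cor:density-existence}, compactness of Radon measures lets me pass to a subsequence with $V_{j_k}\to\mcC\in{\rm VarTan}(V,x_0)$, and then Lemma~\ref{Lem:blow-up-seq}~(2)--(3) gives, after a further subsequence, $\sigma_{\Gamma_{j_k}}\wsc\sigma_{\Gamma_\infty}$. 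Uniqueness of weak-$\ast$ limits forces $\tilde\sigma=\sigma_{\Gamma_\infty}$, which is of the required form.

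Combining the two inclusions yields ${\rm Tan}^{m-1}(\sigma_\Gamma,x_0)=\{\sigma_{\Gamma_\infty}:\mcC\in{\rm VarTan}(V,x_0)\}$, and together with ${\rm Tan}^{m-1}(\sigma_{\ast\Gamma}\llcorner A,x_0)={\rm Tan}^{m-1}(\sigma_\Gamma,x_0)$ at the chosen $x_0$ this finishes the argument. The only genuinely load-bearing input is the uniform mass bound $\mu_{V_j}(B_1(0))\le C$ along an \emph{arbitrary} blow-up sequence at a capillary boundary point, which is precisely the content of the $p>m$ boundary monotonicity of Corollary~\ref{Coro-monoto-p>m}; the remaining difficulty is only the bookkeeping of extracting a single subsequence along which the varifolds $V_{j_k}$, the boundary varifolds $\Gamma_{j_k}$, and hence the measures $\sigma_{\Gamma_{j_k}}$ converge simultaneously. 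No estimate beyond those already established in Sections~\ref{Sec:3}--\ref{Sec:5} is needed.
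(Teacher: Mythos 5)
Your proposal is correct and follows exactly the route the paper intends: the paper does not even write out a separate proof of this lemma, deducing it directly from the identity ${\rm Tan}^{m-1}(\sigma_{\ast\Gamma}\llcorner A,x)={\rm Tan}^{m-1}(\sigma_\Gamma,x)$ at density-one points together with Lemma \ref{Lem:blow-up-seq}, and your two-inclusion argument (using \eqref{eq:sigma_Gamma_j}, \eqref{eq:limit-sigma_jk}, the mass bounds \eqref{ineq:mu_V_j-bound} and \eqref{ineq:sigma_Gamma-ratio}, and a diagonal subsequence extraction) is precisely the bookkeeping that makes this deduction explicit.
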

\begin{proof}[Proof of Theorem \ref{Thm:bdry-rectifiability}]
The idea is to apply the Marstrand-Mattila Rectifiability Criterion (Lemma \ref{Lem:M-M-Reciti-Crit}) to the measure $\sigma_{\ast\Gamma}\llcorner A$.

Since $p>m$, the upper density part of the first condition of Lemma \ref{Lem:M-M-Reciti-Crit} follows easily from \eqref{ineq:sigma_Gamma-ratio}.

To check the second condition of Lemma \ref{Lem:M-M-Reciti-Crit}, we first use Lemma \ref{Lem:Tan^m-1-sigma_Gamma_infty} and obtain for $\sigma_{\ast\Gamma}\llcorner A$-a.e. $x$ the characterization
\eq{
{\rm Tan}^{m-1}(\sigma_{\ast\Gamma}\llcorner A,x)
=\{\sigma_{\Gamma_\infty}:\mcC\in{\rm VarTan}(V,x)\}.
}
Thanks to \eqref{eq:sigma_V-G_pb} we just have to consider those points belonging to $A\cap F$, also recall that by definition of $\sigma_{\ast\Gamma}$ we have $\Theta_\ast^{m-1}(\sigma_\Gamma,x)>0$ for any such $x$.
It then follows from Lemma \ref{Lem:sigma_Gamma_infty-characterization} that for $\sigma_{\ast\Gamma}\llcorner A$-a.e. $x$, any $(m-1)$-blow-up of $\sigma_{\ast\Gamma}\llcorner A$ takes the form $\theta_0\mcH^{m-1}\llcorner L^{m-1}$ for some $(m-1)$-dimensional linear subspace $L^{m-1}\subset\mfR^{n+1}$, which verifies the second condition of Lemma \ref{Lem:M-M-Reciti-Crit}.
As a by-product, the lower density part of the first condition of Lemma \ref{Lem:M-M-Reciti-Crit} is also verified, which completes the proof.

\end{proof}

\begin{corollary}\label{Cor-n_V-perp-app.tangentspace}
Under the assumptions of Theorem \ref{Thm:bdry-rectifiability}, $\mfn_V$ is perpendicular to the approximate tangent space of the $(m-1)$-rectifiable measure $\sigma_{\ast\Gamma}\llcorner A$ for $\sigma_{\ast\Gamma}\llcorner A$-a.e. 
\end{corollary}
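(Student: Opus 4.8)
The plan is to combine the $(m-1)$-rectifiability of $\sigma_{\ast\Gamma}\llcorner A$ furnished by Theorem \ref{Thm:bdry-rectifiability} with the explicit description of its $(m-1)$-blow-ups already extracted in the blow-up analysis, and then to read off the orthogonality from the structure of the subspace $L_\infty$.

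First I would use that, since $\sigma_{\ast\Gamma}\llcorner A$ is $(m-1)$-rectifiable, writing $\sigma_{\ast\Gamma}\llcorner A=\theta\mcH^{m-1}\llcorner M$, at $\sigma_{\ast\Gamma}\llcorner A$-a.e. point $x$ the approximate tangent space $T_xM$ exists as an $(m-1)$-dimensional linear subspace and the set of $(m-1)$-blow-ups of $\sigma_{\ast\Gamma}\llcorner A$ at $x$ is the singleton $\{\theta(x)\mcH^{m-1}\llcorner T_xM\}$. On the other hand, Lemma \ref{Lem:Tan^m-1-sigma_Gamma_infty} identifies ${\rm Tan}^{m-1}(\sigma_{\ast\Gamma}\llcorner A,x)$ with $\{\sigma_{\Gamma_\infty}:\mcC\in{\rm VarTan}(V,x)\}$ for $\sigma_{\ast\Gamma}\llcorner A$-a.e. $x$. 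Restricting to the points of $A\cap F$, which is of full $\sigma_\Gamma$-measure in $A$ by \eqref{eq:sigma_V-G_pb}, and which moreover lie in $E_\ast$ so that $\Theta^{m-1}_\ast(\sigma_\Gamma,x)>0$, Lemma \ref{Lem:sigma_Gamma_infty-characterization} gives $\sigma_{\Gamma_\infty}=\theta_0\mcH^{m-1}\llcorner D_\mcC$ for some $\theta_0>0$, with $D_\mcC$ an $(m-1)$-dimensional linear subspace. Comparing the two descriptions of the (unique) blow-up then forces $T_xM=D_\mcC$ at $\sigma_{\ast\Gamma}\llcorner A$-a.e. $x$.

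With this identification, the orthogonality is immediate: by Lemma \ref{Lem:linear-subspace} and the definition of $L_\infty$ we have $D_\mcC\subseteq L_\infty$, where $L_\infty\subseteq T_xS$ is the $(n-1)$-plane perpendicular to $\mfn_W(x)$; hence $D_\mcC\perp\mfn_W(x)$, and, being contained in $T_xS$, also $D_\mcC\perp\nu^S(x)$. Since \eqref{eq:n_V,nu^S=sinbeta} and \eqref{defn:n_W} yield the orthogonal decomposition $\mfn_V(x)=\sin\beta(x)\,\nu^S(x)+\cos\beta(x)\,\mfn_W(x)$, a linear combination of $\nu^S(x)$ and $\mfn_W(x)$, I conclude $\mfn_V(x)\perp D_\mcC=T_xM$ for $\sigma_{\ast\Gamma}\llcorner A$-a.e. $x$, which is the claim.

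This corollary is essentially a bookkeeping consequence of the machinery already in place, so I do not anticipate a genuine obstacle; the only points demanding a little care are the alignment of the several full-measure sets (those from Theorem \ref{Thm:bdry-rectifiability} and from Lemmas \ref{Lem:sigma_Gamma(G_pb)}, \ref{Lem:sigma_Gamma_infty-characterization}, \ref{Lem:Tan^m-1-sigma_Gamma_infty}), so that all cited properties hold simultaneously a.e., and the use of uniqueness of the tangent measure of a rectifiable measure to legitimately pass from ``some $\sigma_{\Gamma_\infty}$ equals $\theta_0\mcH^{m-1}\llcorner D_\mcC$'' to ``$T_xM=D_\mcC$''.
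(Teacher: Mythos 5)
Your argument is correct and follows essentially the same route as the paper: the paper likewise invokes Lemmas \ref{Lem:Tan^m-1-sigma_Gamma_infty} and \ref{Lem:sigma_Gamma_infty-characterization} to identify the approximate tangent space at $\sigma_{\ast\Gamma}\llcorner A$-a.e.\ point with an $(m-1)$-dimensional subspace of $L_\infty$, and then concludes perpendicularity of $\mfn_W$ and hence of $\mfn_V$ from the construction of $L_\infty$. Your version merely spells out the intermediate identification $T_xM=D_\mcC$ via uniqueness of tangent measures, which the paper leaves implicit.
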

\begin{proof}
By Lemmas \ref{Lem:Tan^m-1-sigma_Gamma_infty}, \ref{Lem:sigma_Gamma_infty-characterization}, for $\sigma_{\ast\Gamma}\llcorner A$-a.e. $x_0$, the approximate tangent space of $\sigma_{\ast\Gamma}\llcorner A$ at $x_0$ is an $(m-1)$-dimensional linear subspace of $L_\infty$.
This implies, because of the construction of $L_\infty$, that $\mfn_W(x_0)$ is perpendicular to this approximate tangent space, and hence so is $\mfn_V(x_0)$.
\end{proof}

\subsection{Normal part}

To discuss rectifiability of the \textit{normal part} of the boundary measure, in view of the smooth examples (see e.g., Example \ref{exam:submflds}) it is natural to ask if $\sin\beta\sigma_V^\perp$ agrees with $\sigma_\Gamma$ as measures (or if ${\rm spt}\sigma_V^\perp$ agrees with ${\rm spt}\sigma_\Gamma$) so that Theorem \ref{Thm:bdry-rectifiability} already gives the desired result.
The answer is \textit{no}, as indicated from the examples given by the unions of mutually intersecting free boundary submanifolds and capillary submanifolds, see e.g., Example \ref{exam:caps}.
In this situation $\sigma_\Gamma$ is given by the boundary measure of the capillary submanifolds, yet $\sigma_V^\perp$ consists of not only the boundary measure of the capillary submanifolds but also that of the free boundary submanifolds.

In fact, we are able to show the following rectifiability of the \textit{normal part} of the boundary measure when it is restricted to $\{x\in S:\cos\beta(x)=0\}$.
Similar to $\sigma_{\ast\Gamma}$ we define the ``at most $(m-1)$-dimensional'' part of $\sigma_V^\perp$ to be the restriction of $\sigma_V^\perp$ to the points with strictly positive lower $(m-1)$-density, denoted by $\sigma_{\ast V}^\perp$.
\begin{theorem}[Boundary rectifiability: normal part]\label{Thm:bdry-rectifiability-normal}
Let $\Om\subset\mfR^{n+1}$ be a bounded domain of class $C^2$, and $\beta\in C^1(S,(0,\pi))$.
Let $V\in{\bf V}^m_\beta(\Om)$ with boundary varifold $\Gamma$, such that $\mfH\in L^p(\mu_V)$ for some $p\in(m,\infty)$, and $\Theta^m(\mu_V,x)\geq a>0$ for $\mu_V$-a.e. $x$.

Then $\sigma_{\ast V}^\perp\llcorner\left\{x\in S:\cos\beta(x)=0\right\}$ is $(m-1)$-rectifiable.
\end{theorem}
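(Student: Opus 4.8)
The plan is to reduce the statement to De Masi's boundary rectifiability theorem for free boundary varifolds \cite{DeMasi21} by exploiting that, near a point $x_0$ with $\cos\beta(x_0)=0$, the capillary contribution to the first variation is of lower order. First I would recall that, by Proposition \ref{Prop:1st-variation}, $V$ has bounded first variation with the decomposition \eqref{eq:1st-variation-V_beta}; in particular $\sigma_V^\perp$ exists, obeys the local estimate \eqref{esti:local-perp}, and for every $\varphi\in\mathfrak{X}_0(\Om)$ one has $\de V(\varphi)=-\int\langle\mfH,\varphi\rangle\rd\mu_V$, so the hypotheses of De Masi's structure theorem \cite[Theorem 1.1]{DeMasi21} are met; since $\Theta^m(\mu_V,x)\geq a>0$ $\mu_V$-a.e. we also have $V\in{\bf RV}^m(\overline\Om)$. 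By definition of $\sigma_{\ast V}^\perp$ it then suffices to verify the Marstrand--Mattila criterion (Lemma \ref{Lem:M-M-Reciti-Crit}) with $k=m-1$ for $\mu\coloneqq\sigma_{\ast V}^\perp\llcorner\{x\in S:\cos\beta(x)=0\}$.

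For the density bounds I would establish, at $\mu$-a.e. $x_0$, a boundary monotonicity of the flavour of Corollary \ref{Coro-monoto-p>m}, but at points of $\{\cos\beta=0\}$, which are deliberately excluded from $\mathscr{P}_{\rm cb}(\Gamma)$. The key point is that, since $\beta\in C^1(S,(0,\pi))$ and $\cos\beta(x_0)=0$, one has $\abs{\cos\beta(x)}\leq L\abs{x-x_0}$ for $x$ near $x_0$; combined with $\abs{\cos\beta(x)\mfn_W(x)}=\abs{T_xS(\mfn_V(x))}\leq\abs{\cos\beta(x)}$ this gives $\abs{\langle x-x_0,\cos\beta(x)\mfn_W(x)\rangle}\leq L\abs{x-x_0}^2$ automatically, i.e. the analogue of condition (C2) is free at $x_0$. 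Testing \eqref{eq:1st-variation-V_beta} with $\varphi(x)=\gamma(\abs{x-x_0}/\rho)(x-x_0)$ and, as in the proof of Proposition \ref{Prop:Mono-bdry}, with $\varphi(x)=\gamma(\abs{x-x_0}/\rho)\tau$ for $\tau\in T_{x_0}S$, together with $\abs{\langle\nu^S(x),x-x_0\rangle}\leq c_S\abs{x-x_0}^2$ from the $C^2$-regularity of $S$, reduces the whole computation to controlling $r^{1-m}\sigma_\Gamma(B_r(x_0))$ near $x_0$. Since the tangential term now carries the extra factor $\abs{\cos\beta}\lesssim\rho$ it is genuinely lower order, and one may either run the self-improving iteration of \eqref{esti:local-perp}--\eqref{esti:local-tangent} as in Proposition \ref{Prop:Mono-bdry}, or pass to the $\mu$-full-measure set of points at which $\sigma_\Gamma$ has finite upper $(m-1)$-density; either way one obtains $\Theta^{\ast m}(\mu_V,x_0)<\infty$ and hence, via \eqref{esti:local-perp} and H\"older, $\Theta^{\ast(m-1)}(\sigma_V^\perp,x_0)<\infty$. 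The lower density is positive by the very definition of $\sigma_{\ast V}^\perp$, so Condition (1) of Lemma \ref{Lem:M-M-Reciti-Crit} holds.

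For Condition (2) I would blow up at such an $x_0$ as in Section \ref{Sec:5}: with $r_j\searrow0$, $V_j\coloneqq(\bseta_{x_0,r_j})_\#V\ra\mcC$, $r_j^{1-m}(\bseta_{x_0,r_j})_\ast\sigma_V^\perp\ra\sigma_\mcC^\perp$, $\beta_j\ra\pi/2$, while the rescaled tangential boundary measure has total variation on $B_1$ at most $Lr_j\,r_j^{1-m}\sigma_\Gamma(B_{r_j}(x_0))\ra0$ by the finite-upper-density reduction above, hence disappears in the limit. So $\mcC$ is a non-trivial rectifiable $m$-cone with $\Theta^m(\mu_\mcC,\cdot)\geq a$, stationary in the half-space up to the normal boundary measure $\sigma_\mcC^\perp$, i.e. a free boundary cone in the sense of \cite{DeMasi21}; moreover $\mcC$ and $\sigma_\mcC^\perp$ inherit scaling and, by the argument of Lemma \ref{Lem:linear-subspace} with $\mfn_W(x_0)$ replaced by $\nu^S$, translation invariance along a linear subspace $D_\mcC$, on which all $(m-1)$-blow-ups of $\sigma_V^\perp$ are supported. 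The dichotomy and dimension-counting of Lemmata \ref{Lem:sigma_Gamma(G_pb)}--\ref{Lem:sigma_Gamma_infty-characterization} then force $D_\mcC$ to be $(m-1)$-dimensional and each such blow-up to equal $\theta_0\mcH^{m-1}\llcorner D_\mcC$, since $\Theta^{m-1}_\ast(\sigma_V^\perp,x_0)>0$. Applying Lemma \ref{Lem:M-M-Reciti-Crit} finishes the proof.

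The main obstacle will be the density and monotonicity analysis at points of $\{\cos\beta=0\}$: the non-degeneracy condition (C1) fails there, so $\sigma_\Gamma$ is a priori uncontrolled near such points (the estimate \eqref{esti:local-tangent} degenerates as $\cos\beta(x_0)\to0$), and everything hinges on upgrading the heuristic ``the capillary term is lower order because $\abs{\cos\beta}\lesssim{\rm dist}(\cdot,\{\cos\beta=0\})$'' into a genuine $o(\rho^{m-1})$ bound for the rescaled tangential boundary measure on small balls. Once this is in place, the reduction to De Masi's free-boundary rectifiability together with the Marstrand--Mattila argument is a routine adaptation of Sections \ref{Sec:5}--\ref{Sec:6}.
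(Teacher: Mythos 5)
Your strategy is the same as the paper's: at a point $x_0$ with $\cos\beta(x_0)=0$ one treats $V$ as a free boundary varifold, shows every element of ${\rm VarTan}(V,x_0)$ is a stationary free boundary rectifiable cone whose first variation is carried by $\nu^S\sigma_\mcC^\perp$ alone, and then runs the Marstrand--Mattila criterion (Lemma \ref{Lem:M-M-Reciti-Crit}) on $\sigma_{\ast V}^\perp\llcorner\{\cos\beta=0\}$. The difference is only in presentation: the paper does not re-derive the blow-up analysis but cites \cite[Lemma 5.4]{DeMasi21} for the structure of the tangent cones and \cite[Lemmas 5.6--5.8]{DeMasi21} for condition (2) of Lemma \ref{Lem:M-M-Reciti-Crit}, and verifies condition (1) by the argument of \eqref{ineq:sigma_Gamma-ratio} with \eqref{esti:local-perp} in place of \eqref{esti:local-tangent-improved}; your proposal reconstructs exactly these steps by adapting Lemmas \ref{Lem:blow-up-seq}--\ref{Lem:sigma_Gamma_infty-characterization} with $\nu^S$ playing the role of $\mfn_W(x_0)$.

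One concrete caution about the step you yourself single out as the main obstacle: neither of your two proposed routes to controlling $r^{1-m}\sigma_\Gamma(B_r(x_0))$ works as written. Route (a), the ``self-improving iteration of \eqref{esti:local-perp}--\eqref{esti:local-tangent}'', is vacuous here, because the left-hand side of \eqref{esti:local-tangent} is $\ep\abs{\cos\beta(x_0)}\sigma_\Gamma(B_{\rho/2}(x_0))=0$ when $\cos\beta(x_0)=0$ (cf.\ Remark \ref{Rem:Item-1}), so it gives no information on $\sigma_\Gamma$; indeed, when $\beta\equiv\frac\pi2$ locally, \emph{any} Radon measure $\Gamma$ is admissible near $x_0$ and $\sigma_\Gamma$ carries no local bound whatsoever. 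Route (b) is circular: the set where $\Theta^{\ast(m-1)}(\sigma_\Gamma,\cdot)=\infty$ is only known to be $\mcH^{m-1}$-null, and to conclude that it is null for $\sigma_{\ast V}^\perp\llcorner\{\cos\beta=0\}$ you would already need the absolute continuity of that measure with respect to $\mcH^{m-1}$, i.e.\ the finite upper density you are in the middle of proving. The way out, consistent with what the paper actually does, is to never estimate $\sigma_\Gamma$ itself at such points: the capillary contribution to $\de V$ is the vector measure $T_xS(\mfn_V)\,\sigma_\Gamma$, whose density satisfies $\abs{T_xS(\mfn_V(x))}\leq\abs{\cos\beta(x)}\leq L\abs{x-x_0}$ near $x_0$, and all the quantities entering condition (1) and the blow-up (namely $\mu_V$, $\mfH$, $\sigma_V^\perp$ via \eqref{esti:local-perp}) can be organized so that only this vector measure, not $\sigma_\Gamma$, appears. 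If you rewrite your monotonicity and blow-up estimates in terms of $T_xS(\mfn_V)\sigma_\Gamma$ rather than $\sigma_\Gamma$, the argument aligns with the paper's citation of De Masi and the circularity disappears.
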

\begin{proof}
Fix any $x_0\in{\rm spt}\sigma_{\ast V}^\perp\cap\left\{x\in S:\cos\beta(x)=0\right\}$, after translation and rotation we assume that $x_0=0\in S$, $T_{x_0}\overline\Om=\overline{\mfR^{n+1}_+}$.

It follows from \cite[Lemma 5.4]{DeMasi21} that ${\rm VarTan}(V,x_0)$ is non-empty and any $\mcC\in{\rm VarTan}(V,x_0)$ is a stationary free boundary rectifiable cone in $\mfR^{n+1}_+$ such that the first variation of $\mcC$ with respect to compactly supported $\varphi\in\mathfrak{X}(\mfR^{n+1}_+)$ is given by
\eq{
\de\mcC(\varphi)
=-\int_{\p\mfR^{n+1}_+}\left<e_{n+1},\varphi\right>\rd\sigma_\mcC^\perp,
}
which guarantees us to repeat the proof of \cite[Lemmas 5.6$\sim$5.8]{DeMasi21}, thus verifying the second condition of the Marstrand-Mattila Rectifiability Criterion
for the measure $\sigma_{\ast V}^\perp\llcorner\left\{x\in S:\cos\beta(x)=0\right\}$.

To verify the first condition of Lemma \ref{Lem:M-M-Reciti-Crit}, note that the lower bound is a direct consequence of the definition of $\sigma_{\ast V}^\perp$, while the upper bound follows from a similar argument as \eqref{ineq:sigma_Gamma-ratio}.
More precisely, we could use Lemma \ref{Lem:blow-up-seq} ({\bf 1}), \eqref{esti:global-perp}, and $p>m$ to show that
\eq{
\Theta^{\ast(m-1)}(\sigma_V^\perp,x_0)<\infty,
}
which completes the proof.
\end{proof}

A follow-up consideration is whether we could talk about the rectifiability of $\sigma_V^\perp$ on $\mathscr{P}_{\rm cb}(\Gamma)$.
One possible solution is, in view of Example \ref{exam:caps}, to show that ${\rm spt}\sigma_V^\perp$ agrees with ${\rm spt}\sigma_\Gamma$ locally at any $x\in \mathscr{P}_{\rm cb}(\Gamma)$, provided certain density restrictions.
Once this is done, we could combine Theorem \ref{Thm:bdry-rectifiability} with Theorem \ref{Thm:bdry-rectifiability-normal} to obtain the $(m-1)$-rectifiability of the measure
\eq{
\sigma_{\ast V}^\perp\llcorner\left(A\cup\{x\in S:\cos\beta(x)=0\}\right),\quad\text{for any }\sigma_\Gamma\text{-measurable set }A\subseteq \mathscr{P}_{\rm cb}(\Gamma).
}
In certain cases it is true that ${\rm spt}\sigma_V^\perp$ agrees with ${\rm spt}\sigma_\Gamma$, 
see the discussion subsequent to the proof of Theorem \ref{Prop:Classify-VarTan}.

\section{Integral compactness}\label{Sec:7}

\begin{proof}[Proof of Theorem \ref{Thm:compactness}]

We divide the proof into two steps.

\noindent{\bf Step 1. We prove ($i$) and ($ii$).}

We claim that $\{V_j\}_{j\in\mbN}$ have uniformly bounded first variation.
By \eqref{eq:1st-variation-V_beta} and standard disintegration, we have for $j\in\mbN$
\eq{\label{eq:de-V_j(varphi)}
\de V_j(\varphi)
=&-\int_{\overline\Om}\left<\mfH_j,\varphi\right>\rd\mu_{V_j}-\int_S\left<\widetilde H_j,\varphi\right>\rd\mu_{V_j}\\
&-\int_{S}\left<\nu^S,\varphi\right>\rd\sigma_{V_j}^\perp-\int_{G_{m,\beta}(S)}\left<\mfn(x,P),\varphi^T(x)\right>\rd{\Gamma_j}(x,P),\quad\forall\varphi\in\mathfrak{X}(\Om).
}
Note that thanks to Proposition \ref{Prop:1st-variation}, $\{\norm{\widetilde H_{j}}_{L^\infty(\mu_{V_{j}})}\}_{j\in\mbN}$ are uniformly bounded.

In view of \eqref{condi:uniform-bdd-H-mass}, to show the claim we just have to show that $\{\sigma_{V_j}^\perp(S)\}_{j\in\mbN}$ and $\{\sigma_{\Gamma_j}(S)\}_{j\in\mbN}$ are uniformly bounded.
Thanks to \eqref{esti:global-perp}, $\{\sigma_{V_j}^\perp(S)\}_{j\in\mbN}$ are uniformly bounded.
The uniform bound on $\{\sigma_{\Gamma_j}(S)\}_{j\in\mbN}$ can be obtained by virtue of the local estimate \eqref{esti:local-tangent-improved} (which holds for a uniform scale $\tilde\rho$ depending on $S,\rho_0$ thanks to condition ({\bf 3})), together with a covering argument.
The claim is thus proved.
As a by-product of the uniform bound on $\{\sigma_{\Gamma_j}(S)\}_{j\in\mbN}$, we deduce that after passing to a subsequence $\Gamma_{j_k}\wsc\Gamma$.

Then we apply the classical Allard's integral compactness Theorem, see e.g., \cite[Theorem 42.7]{Simon83}, to deduce the subsequence convergent of $V_{j_k}\ra V$, $\de V_{j_k}\ra\de V$, and conclusions ($i$) and ($ii$).
In particular, if for each $j$, $V_j\in{\bf IV}^m(\overline\Om)$, then so is $V$.

As in \eqref{eq:widehat-H}, for $k\in\mbN$, we put $\widehat H_{j_k}\mu_{V_{j_k}}\coloneqq\mfH_{j_k}\mu_{V_{j_k}}\llcorner\overline\Om+\widetilde H_{j_k}\mu_{V_{j_k}}\llcorner S$,
then define the vector-valued Radon measure
$\vec{\mathcal{V}}_{j_k}=\widehat H_{j_k}\mu_{V_{j_k}}$.
Using H\"older inequality, we get
\eq{
\abs{\vec{\mathcal{V}}_{j_k}}(\mfR^{n+1})
=\int\abs{\widehat H_{j_k}}\rd\mu_{V_{j_k}}
\leq\left(\int\abs{\widehat H_{j_k}}^p\rd\mu_{V_{j_k}}\right)^\frac1p\mu_{V_{j_k}}(\overline\Om)^{1-\frac1p},
}
and hence by \eqref{condi:uniform-bdd-H-mass} and the uniform bound on $\{\norm{\widetilde H_{j_k}}_{L^\infty(\mu_{V_{j_k}})}\}_{k\in\mbN}$, we find
\eq{
\sup_{k\in\mbN}\left\{\abs{\vec{\mathcal{V}}_{j_k}}(\mfR^{n+1})\right\}
<\infty.
}
By weak-star compactness (see e.g., \cite[Corollary 4.34]{Mag12}), there exists a vector-valued Radon measure $\vec{\mathcal{V}}$, to which $\vec{\mathcal{V}}_{j_k}$ subsequentially converges to.
By the convergence $V_{j_k}\ra V$ and the fact that the weight measure of a varifold is in fact a push-forward measure, we have the convergence $\mu_{V_{j_k}}\wsc\mu_V$.

Then we argue as in the proof of \cite[Proposition 4.30]{Mag12}.
Consider any bounded open set $A\subset\mfR^{n+1}$, and let $A_t=\{x\in A:{\rm dist}(x,\p A)>t\}$ for $t>0$.
Let $\phi\in C_c^1(A,[0,1])$ be such that $\chi_{A_t}\leq\phi$, then we have (put $p'=\frac{p}{p-1}$, $\Lambda=\sup_{k\in\mbN}\left\{\int\abs{\hat H_{j_k}}^p\rd \mu_{V_{j_k}}\right\}<\infty$)
\eq{
\abs{\vec{\mathcal{V}}}(A_t)
\leq&\liminf_{k\ra\infty}\abs{\vec{\mathcal{V}}_{j_k}}(A_t)
\leq\liminf_{k\ra\infty}\int\phi\abs{\hat H_{j_k}}\rd\mu_{V_{j_k}}\\
\leq&\liminf_{k\ra\infty}\left(\int\abs{\hat H_{j_k}}^p\rd\mu_{V_{j_k}}\right)^\frac1p\left(\int\phi^{p'}\rd\mu_{V_{j_k}}\right)^\frac1{p'}
\leq\Lambda^\frac1p\liminf_{k\ra\infty}\left(\int\phi^{p'}\rd\mu_{V_{j_k}}\right)^\frac1{p'}\\
\leq&\Lambda^\frac1p\left(\int\phi^{p'}\rd\mu_V\right)^\frac1{p'}
\leq\Lambda^\frac1p\mu_V(A)^\frac1{p'},
}
and hence we get $\abs{\vec{\mathcal{V}}}(A)\leq\Lambda^\frac1p\mu_V(A)^\frac1{p'}$ for any Borel set $A$,
implying that ${\vec{\mathcal{V}}}$ is absolutely continuous with respect to $\mu_V$ and, by virtue of \cite[Corollary 5.11]{Mag12}, is of the type $\vec{\mathcal{V}}=\widehat H\mu_V$ for $\widehat H\in L^1(\mu_V)$.

Since for $\varphi\in\mathfrak{X}(\Om)$, the function $\left<\mfn(x,P),\varphi^T(x)\right>\in C^1(G_{m,\beta}(S))$, by the convergence $\Gamma_{j_k}\wsc\Gamma$ we have
\eq{
\lim_{k\ra\infty}\int_{G_{m,\beta}(S)}\left<\mfn(x,P),\varphi^T(x)\right>\rd{\Gamma_j}(x,P)
=\int_{G_{m,\beta}(S)}\left<\mfn(x,P),\varphi^T(x)\right>\rd{\Gamma}(x,P).
}
Letting $j_k\ra\infty$ in \eqref{eq:de-V_j(varphi)}, by the above convergences we obtain
\eq{
\de V(\varphi)
=&-\int_{\overline\Om}\left<\widehat H,\varphi\right>\rd\mu_{V}\\
&-\int_{S}\left<\nu^S,\varphi\right>\rd\sigma_{V}^\perp-\int_{G_{m,\beta}(S)}\left<\mfn(x,P),\varphi^T(x)\right>\rd{\Gamma}(x,P),\quad\forall\varphi\in\mathfrak{X}(\Om),
}
which shows that $V\in{\bf V}^m_\beta(\Om)$ with boundary varifold $\Gamma$.

\noindent{\bf Step 2. We prove ($iii$) and ($iv$).
}

We first use $\phi(x,P)=\varphi(x)$ to test $\Gamma_{j_k}\wsc\Gamma$, using disintegration this gives
\eq{
\lim_{k\ra\infty}\int_{S}\varphi(x)\rd\sigma_{\Gamma_{j_k}}(x)
=\int_S\varphi(x)\rd\sigma_\Gamma(x).
}
That is,
\eq{\label{eq:sigma_Gammajk-sigma_Gamma}
\sigma_{\Gamma_{j_k}}\wsc\sigma_\Gamma
}
as Radon measures on $S$.
Since $B_{\rho_{x_0}}(x_0)$ is compact, it follows immediately from condition ($4$) that
\eq{
\sigma_\Gamma(B_{\frac{\rho_{x_0}}2}(x_0))
\geq\limsup_{k\ra\infty}\sigma_{\Gamma_{j_k}}(B_{\frac{\rho_{x_0}}2}(x_0))
\geq a_{x_0},
}
which proves ($iii$).

Consider the standard smooth cut-off function $\gamma$ on $\mfR$ with $\gamma\equiv1$ on $[0,\frac12]$, $\gamma\equiv0$ on $[1,\infty)$, $-3\leq\gamma'\leq0$ on $\mfR$.
By conditions ($4$) and ($5$) we have
\eq{
&\int_S\gamma(\frac{\abs{x-x_0}}{\rho_{x_0}})\left<T_xS\left(\mfn_{V_j}(x)\right),\tau_{x_0}\right>\rd\sigma_{\Gamma_j}(x)\\
=&\int_S\gamma(\frac{\abs{x-x_0}}{\rho_{x_0}})\left<\cos\beta(x)\mfn_{W_j}(x),\tau_{x_0}\right>\rd\sigma_{\gamma_j}(x)
\geq\ep_{x_0}\sigma_{\Gamma_j}(B_{\frac{\rho_{x_0}}2}(x_0))\geq\ep_{x_0}a_{x_0},\quad\forall j.
}
Also notice that by disintegration and \eqref{defn:n_V-intro}
\eq{
&\int_{G_{m,\beta}(S)}\gamma(\frac{\abs{x-x_0}}{\rho_{x_0}})\left<T_xS\left(\mfn(x,P)\right),\tau_{x_0}\right>\rd\Gamma_{j}(x,P)\\
=&\int_S\gamma(\frac{\abs{x-x_0}}{\rho_{x_0}})\left<T_xS\left(\int_{G_{m,\beta}(x)}\mfn(x,P)\rd\Gamma_{j}^{x}(P)\right),\tau_{x_0}\right>\rd\sigma_{\Gamma_{j}}(x)\\
=&\int_S\gamma(\frac{\abs{x-x_0}}{\rho_{x_0}})\left<T_xS\left(\mfn_{V_j}(x)\right),\tau_{x_0}\right>\rd\sigma_{\Gamma_j}(x)\\
=&\int_S\gamma(\frac{\abs{x-x_0}}{\rho_{x_0}})\left<\cos\beta(x)\mfn_{W_j}(x),\tau_{x_0}\right>\rd\sigma_{\Gamma_j}(x).
}

Since $\gamma(\frac{\abs{x-x_0}}{\rho_{x_0}})\left<T_xS\left(\mfn(x,P)\right),\tau_{x_0}\right>$ is a continuous function on $G_{m,\beta}(S)$,
it follows from the convergence $\Gamma_{j_k}\wsc\Gamma$ that
\eq{
&\int_S\gamma(\frac{\abs{x-x_0}}{\rho_{x_0}})\left<T_xS\left(\mfn_{V}(x)\right),\tau_{x_0}\right>\rd\sigma_{\Gamma}(x)\\
=&\int_{G_{m,\beta}(S)}\gamma(\frac{\abs{x-x_0}}{\rho_{x_0}})\left<T_xS\left(\mfn(x,P)\right),\tau_{x_0}\right>\rd\Gamma(x,P)\\
=&\lim_{k\ra\infty}\int_{G_{m,\beta}(S)}\gamma(\frac{\abs{x-x_0}}{\rho_{x_0}})\left<T_xS\left(\mfn(x,P)\right),\tau_{x_0}\right>\rd\Gamma_{j_k}(x,P)\\
=&\lim_{k\ra\infty}\int_{S}\gamma(\frac{\abs{x-x_0}}{\rho_{x_0}})\left<\cos\beta(x)\mfn_{W_{j_k}(x)},\tau_{x_0}\right>\rd\sigma_{\Gamma_{j_k}}(x).
}
Combining the above facts, we get
\eq{
0<\ep_{x_0}a_{x_0}
\leq&\lim_{k\ra\infty}\int_S\gamma(\frac{\abs{x-x_0}}{\rho_{x_0}})\left<\cos\beta(x)\mfn_{W_{j_k}}(x),\tau_{x_0}\right>\rd\sigma_{\Gamma_{j_k}}(x)\\
=&\int_S\gamma(\frac{\abs{x-x_0}}{\rho_{x_0}})\left<T_xS\left(\mfn_{V}(x)\right),\tau_{x_0}\right>\rd\sigma_{\Gamma}(x)
\leq\int_{S\cap B_{\rho_{x_0}}(x_0)}\Abs{T_xS\left(\mfn_V(x)\right)}\rd\sigma_\Gamma(x),
}
which proves ($iv$) and completes the proof.

\end{proof}
\begin{remark}
\normalfont
As shown in the proof, condition ($3$) can be simply replaced by the stronger assumption that
\eq{
\sup_{j\in\mbN}\{\sigma_{\Gamma_j}(S)\}
<\infty.
}
\end{remark}

Theorem \ref{Thm:compactness} can be stated in the following more general form:
\begin{theorem}
Let $\{\Om_j\}_{j\in\mbN},\Om$ be bounded domains of class $C^2$ in $\mfR^{n+1}$ such that
\eq{
\overline\Om_j\ra\overline\Om\text{ in the sense of }C^2\text{-topology as }j\ra\infty.
}
For $j\in\mbN$, let $S_j$ denote the boundary of $\Om_j$, $\beta_j\in C^1(S_j,(0,\pi))$ with $\beta_j$ converges as $j\ra\infty$,
in the sense of $C^1$-topology, to $\beta\in C^1(S,(0,\pi))$, where $S=\p\Om$.
Suppose that $\{V_j\}_{j\in\mbN}$ is a sequence of rectifiable $m$-varifolds such that $V_j\in{\bf V}^m_{\beta_j}(\Om_j)$ with boundary varifolds $\{\Gamma_j\}_{j\in\mbN}$.
For a fixed $p\in(1,\infty)$, if for each $j$,
\begin{enumerate}
    \item There exists a universal constant $a>0$ such that
\eq{
\Theta^m(\mu_{V_j},x)\geq a>0,\text{ }\mu_{V_j}\text{-a.e. }x.
}
    \item $\mfH_j\in L^p(\mu_{V_j})$ and
\eq{
\sup_{j\in\mbN}\left(\mu_{V_j}(\overline\Om_j)+\norm{\mfH_j}_{L^p(\mu_{V_j})}\right)<\infty.
}
    \item There exist a dense set $E_j$ in ${\rm spt}\sigma_{\Gamma_j}$, universal constants $\ep_0,\rho_0>0$, such that Definition \ref{Defn:capillary-bdry-point} (C1) is satisfied by every $x\in E_j$ with corresponding unit vector $\tau^{V_j}_x\in T_{x}S_j$ locally in $B_{\rho_0}(x)$.
\end{enumerate}
Then the conclusions of Theorem \ref{Thm:compactness} hold.
\end{theorem}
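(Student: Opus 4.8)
The plan is to reduce the moving-domain statement to the fixed-domain Theorem \ref{Thm:compactness} by a uniform change of variables, after which only the (now mild) dependence on the varying angle functions $\beta_j$ remains to be tracked. Since $\overline\Om_j\to\overline\Om$ in $C^2$, for $j$ large there exist diffeomorphisms $\Psi_j:\mfR^{n+1}\to\mfR^{n+1}$ with $\Psi_j(\overline\Om)=\overline\Om_j$, $\Psi_j(S)=S_j$, and $\Psi_j\to{\rm id}$ in $C^2$. Set $\widetilde V_j\coloneqq(\Psi_j^{-1})_\#V_j\in{\bf V}^m(\overline\Om)$, let $\widetilde\Gamma_j$ be the induced boundary measure on $G_{m,\widetilde\beta_j}(S)$ with $\widetilde\beta_j\coloneqq\beta_j\circ\Psi_j|_S\to\beta$ in $C^1$, and use the bundle identifications $G_{m,\widetilde\beta_j}(S)\to G_{m,\beta}(S)$ together with the convergences $\nu^{S_j}\circ\Psi_j\to\nu^S$ and $\mfn(\Psi_j(\cdot),{\rm d}\Psi_j(\cdot))\to\mfn(\cdot,\cdot)$ — exactly the type of convergences already established in \eqref{eq:blow-up-settings}--\eqref{eq:blow-up-settings-bundle}. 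Because $D\Psi_j\to I$ uniformly, the transformation law for the first variation of area shows $\widetilde V_j\in{\bf V}^m_{\widetilde\beta_j}(\Om)$ with boundary varifold $\widetilde\Gamma_j$, the transformed mean curvature vector is comparable to $\mfH_j\circ\Psi_j$ up to a uniformly bounded error, and hypotheses $(1)$--$(3)$ are inherited with universal constants.

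The technical point in the second step is that every constant in Proposition \ref{Prop:1st-variation} — namely $\rho_S$, $\|\widetilde H\|_{L^\infty(\mu_V)}$, the constant $c=c(\Om)$ in \eqref{esti:local-perp}--\eqref{esti:global-perp}, and $C_0=C_0(\Om)$ together with the scale $\rho_1$ entering \eqref{esti:local-tangent-improved} — depends only on the $C^2$-geometry of the boundary, hence can be chosen uniformly on $\{\overline\Om_j\}\cup\{\overline\Om\}$ for $j$ large. Combining this with $(2)$ and the uniform condition (C1) from $(3)$, a covering argument gives $\sup_j\sigma_{\Gamma_j}(S_j)<\infty$ and $\sup_j\sigma_{V_j}^\perp(S_j)<\infty$, just as in Step 1 of the proof of Theorem \ref{Thm:compactness}; after the reduction the same bounds hold for $\widetilde V_j,\widetilde\Gamma_j$ on the fixed domain $\Om$.

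From here the argument is a re-run of the proof of Theorem \ref{Thm:compactness} applied to $\{\widetilde V_j\}$, with $\beta$ replaced throughout by the convergent sequence $\widetilde\beta_j$. Allard's integral compactness \cite[Theorem 42.7]{Simon83} yields $\widetilde V_{j_k}\to\widetilde V$, $\de\widetilde V_{j_k}\to\de\widetilde V$, and (by the uniform mass of the boundary measures) $\widetilde\Gamma_{j_k}\wsc\widetilde\Gamma$, $\sigma_{\widetilde V_{j_k}}^\perp\wsc\sigma_{\widetilde V}^\perp$. Passing to the limit in \eqref{eq:de-V_j(varphi)} uses the $C^1$-convergence $\widetilde\beta_j\to\beta$ (so the integrand $\langle\mfn(x,P),\varphi^T(x)\rangle$ is a fixed continuous function on the limiting bundle), together with the weak-$\ast$ compactness of the vector-valued measures $\widehat H_{j_k}\mu_{\widetilde V_{j_k}}$ and the lower-semicontinuity argument of \cite[Proposition 4.30]{Mag12} (and \cite[Corollary 5.11]{Mag12}) to produce $\widehat H\in L^1(\mu_{\widetilde V})$; this gives $\widetilde V\in{\bf V}^m_\beta(\Om)$ with boundary varifold $\widetilde\Gamma$. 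Pushing forward by $\Psi_{j_k}\to{\rm id}$ transfers everything back to the moving domains and yields $V_{j_k}\to V$, $\Gamma_{j_k}\wsc\Gamma$, $V\in{\bf V}^m_\beta(\Om)$, together with conclusions $(i)$ and $(ii)$ (preserved under $\Psi_{j_k}\to{\rm id}$ and Allard's theorem). Under the appropriate moving-domain analogues of $(4)$--$(5)$ (local uniform lower bound $\sigma_{\Gamma_j}(B_{\rho_{x_0}/2}(x_0))\ge a_{x_0}$ and uniform non-degeneracy of $\cos\beta_j\,\mfn_{W_j}$ along directions $\tau_{x_0}^{V_j}\to\tau_{x_0}\in T_{x_0}S$), conclusions $(iii)$--$(iv)$ follow from the cut-off test-function computation in Step 2 of the proof of Theorem \ref{Thm:compactness}, now noting that after the reduction the supporting hypersurface is fixed and $\gamma(|x-x_0|/\rho_{x_0})\langle T_xS(\mfn(x,P)),\tau_{x_0}\rangle\in C^1(G_{m,\beta}(S))$ is admissible for the convergence $\widetilde\Gamma_{j_k}\wsc\widetilde\Gamma$.

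I expect the main obstacle to be the reduction step itself: producing the $\Psi_j$ that are simultaneously boundary-respecting ($\Psi_j(S)=S_j$) and $C^2$-close to the identity, and then checking carefully that the defining identity \eqref{defn:1st-vairation-intro} is preserved under $(\Psi_j^{-1})_\#$ with the transformed angle function $\widetilde\beta_j$ — equivalently, that the $C^2$-closeness of $\Psi_j$ to the identity controls all the error terms in the change of variables uniformly in $j$. Once this bookkeeping is in place, everything else is the already-written proof with $\beta$ upgraded to the convergent sequence $\widetilde\beta_j$, and the bundle and $\mfn$ convergences it needs are precisely those recorded in Section \ref{Sec:5}, so no new analytic input is required.
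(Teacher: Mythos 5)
Your reduction-by-diffeomorphism strategy is not the route the paper takes, and as written it has a genuine gap at precisely the step you flag as ``the main obstacle.'' The issue is not mere bookkeeping: the capillary bundle $G_{m,\beta}(S)$ is defined by the \emph{metric} conditions $\abs{P(\nu^S(x))}=\sin\beta(x)$ and $(P\cap T_xS)\perp P(\nu^S(x))$, and these are not preserved by push-forward under a general diffeomorphism $\Psi_j^{-1}$, even one that is $C^2$-close to the identity and maps $S_j$ onto $S$. If $(x,P)\in G_{m,\beta_j}(S_j)$, the image plane ${\rm d}\Psi_j^{-1}(P)$ makes an angle with $T_{\Psi_j^{-1}(x)}S$ that depends on $P$ itself, not only on the base point, so there is in general \emph{no} function $\widetilde\beta_j\in C^1(S,(0,\pi))$ for which $\widetilde\Gamma_j$ is supported on $G_{m,\widetilde\beta_j}(S)$. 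Consequently $\widetilde V_j\notin{\bf V}^m_{\widetilde\beta_j}(\Om)$ in the sense of Definition \ref{Defn:vfld-prescribed-bdry}, the hypotheses of Theorem \ref{Thm:compactness} are not met by the transformed sequence, and the theorem cannot be invoked as a black box. Repairing this would require either enlarging the framework to angle data depending on $(x,P)$ or carrying quantitative error terms of size $\norm{\Psi_j-{\rm id}}_{C^2}$ through every estimate of Proposition \ref{Prop:1st-variation} and Step 1 of the compactness proof; neither is done in your proposal.

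The paper avoids the change of variables entirely. It keeps each $V_j$ on its own domain $\Om_j$, observes that the $C^2$-convergence $\overline\Om_j\to\overline\Om$ gives uniformly bounded geometry of the $S_j$ (hence uniform constants $c(\Om_j)$, $C_0(\Om_j)$, $\rho_{S_j}$ in the first-variation estimates \eqref{esti:local-perp} and \eqref{esti:local-tangent-improved}), notes the convergences $G_m(S_j)\to G_m(S)$ and $G_{m,\beta_j}(S_j)\to G_{m,\beta}(S)$ as subsets of the fixed ambient bundle $G_m(\mfR^{n+1})$, and then repeats the proof of Theorem \ref{Thm:compactness} verbatim: all measures live on the fixed ambient space, so compactness of Radon measures and the passage to the limit in the first-variation identity go through with test functions on $\mfR^{n+1}$, no pull-back needed. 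Your second and third paragraphs (uniformity of the constants, re-running Steps 1--2) are essentially correct and match the paper's argument; it is only the preliminary reduction that should be discarded.
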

\begin{proof}
Since $\Om_j\ra\Om$, $\beta_j\ra\beta$ as $j\ra\infty$, we could assume that $\{S_j\}_{j\in\mbN}$ have uniformly bounded geometry, namely, $\{\abs{h_{S_j}}_{C^0(S_j)}\}_{j\in\mbN}$ are uniformly bounded.
Furthermore, we have the convergences
\eq{
G_m(S_j)\ra G_m(S),\quad
G_{m,\beta_j}(S_j)
\ra G_{m,\beta}(S)\text{ as }j\ra\infty.
}
Repeating the proof of Theorem \ref{Thm:compactness} and note that when using local estimate \eqref{esti:local-tangent-improved}, we have a uniform upper bound of the constant $C_0=C_0(\Om_j)$ on the RHS for each $j$ thanks to the fact that $\{S_j\}_{j\in\mbN}$ have uniformly bounded geometry.
The rest of the proof is essentially the same.
\end{proof}

\section{Curvature varifolds with capillary boundary}\label{Sec:8}
\subsection{Basic properties}

\begin{lemma}
Let $\Om\subset\mfR^{n+1}$ be a bounded domain of class $C^2$, $\beta\in C^1(S, (0,\pi))$.
Then
the boundary varifold $\Gamma$ with respect to $V,\beta$ in the sense of Definition \ref{Defn:CV-capillary} is unique.
\end{lemma}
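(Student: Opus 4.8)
The plan is to show that the defining identity \eqref{defn:CV-capillary} determines the boundary varifold once $V$ (together with its weak second fundamental form $B$) and $\beta$ are fixed. Suppose $\Gamma_1$ and $\Gamma_2$ are two Radon measures on $G_{m,\beta}(S)$ for which \eqref{defn:CV-capillary} holds with the same $V$, $B$ and $\beta$. Subtracting the two identities, the interior terms cancel and one is left with
\[
\int_{G_{m,\beta}(S)}\left<\mfn(x,P),\phi(x,P)\right>\rd\left(\Gamma_1-\Gamma_2\right)(x,P)=0
\]
for every test function $\phi\in C^1(\mfR^{n+1}\times\mfR^{(n+1)^2},\mfR^{n+1})$. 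Thus it suffices to prove that this family of linear conditions forces $\Gamma_1=\Gamma_2$.

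The key observation is that $\mfn(x,P)$ is a \emph{unit} vector field on $G_{m,\beta}(S)$ by \eqref{defn:n(x,P)}, so if we could insert test functions of the form $\phi=\psi\,\mfn$ with $\psi$ scalar, the integrand would collapse to $\psi$. To make this rigorous I would first produce a global $C^1$ extension of $\mfn$. Since $\Om$ is of class $C^2$, the inward unit normal $\nu^S$ extends to a $C^1$ vector field $N$ on a tubular neighborhood of the compact hypersurface $S$; as the Grassmannian embeds smoothly via its orthogonal projections, the formula $\tilde{\mfn}(x,P)\coloneqq P(N(x))/\Abs{P(N(x))}$ defines a $C^1$, $\mfR^{n+1}$-valued map on the open set $\{(x,P):P(N(x))\neq0\}$, which contains $G_{m,\beta}(S)$ because there $\Abs{P(\nu^S(x))}=\sin\beta(x)\geq\min_{S}\sin\beta>0$ (using $\beta\in C^1(S,(0,\pi))$ and $S$ compact). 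Multiplying $\tilde{\mfn}$ by a $C^1$ cutoff which equals $1$ on a neighborhood of $G_{m,\beta}(S)$ and is supported in that open set, one obtains a globally defined $C^1$ map, still denoted $\tilde{\mfn}$, agreeing with $\mfn$ on $G_{m,\beta}(S)$.

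Now for an arbitrary $\psi\in C^1(\mfR^{n+1}\times\mfR^{(n+1)^2},\mfR)$, apply the displayed identity with $\phi=\psi\,\tilde{\mfn}\in C^1(\mfR^{n+1}\times\mfR^{(n+1)^2},\mfR^{n+1})$. Since $\left<\mfn(x,P),\tilde{\mfn}(x,P)\right>=\Abs{\mfn(x,P)}^2=1$ for $(x,P)\in G_{m,\beta}(S)$, this yields $\int_{G_{m,\beta}(S)}\psi\rd(\Gamma_1-\Gamma_2)=0$. As this holds for every $C^1$ scalar $\psi$, and $C^1$ functions are dense (uniformly on compacta) in the continuous functions on the compact set $G_{m,\beta}(S)$, while $\Gamma_1-\Gamma_2$ is a finite signed Radon measure supported there, the Riesz representation theorem gives $\Gamma_1=\Gamma_2$.

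The only step requiring genuine care is the construction of the $C^1$ extension $\tilde{\mfn}$, but as indicated this is immediate from the explicit formula \eqref{defn:n(x,P)} for $\mfn$, the positive lower bound $\min_{S}\sin\beta>0$, and the $C^2$-regularity of $\Om$; everything else is routine. If one prefers not to regard the weak second fundamental form $B$ as part of the given data, one first notes that testing \eqref{defn:CV-capillary} against $\phi$ supported away from $S$ kills the boundary term and reduces the difference of two solutions to the standard interior uniqueness statement for the weak curvature of a curvature varifold, so one may indeed assume $\Gamma_1,\Gamma_2$ correspond to a common $B$ before running the argument above.
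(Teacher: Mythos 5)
Your proposal is correct and follows essentially the same route as the paper: subtract the two identities, build a test vector field whose inner product with $\mfn(x,P)$ realizes an arbitrary scalar on $G_{m,\beta}(S)$, and conclude by density and Riesz representation. The only (cosmetic) difference is the choice of dual field — the paper pairs $\mfn$ with $\varphi(x,P)\,\nu^S(x)/\sin\beta(x)$, using $\left<\mfn(x,P),\nu^S(x)\right>=\sin\beta(x)$ and the trivial extension of $\nu^S$, whereas you extend $\mfn$ itself and use $\abs{\mfn}=1$; both work, the paper's choice just avoids extending a $P$-dependent field.
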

\begin{proof}
We argue by contradiction.
Suppose there are $\Gamma_1$, $\Gamma_2$ that satisfy the definition for the same $V,\beta$ and $B\in L^1(V)$.
Subtracting the identities \eqref{defn:CV-capillary}, we get for all $\phi\in C^1(\mfR^{n+1}\times\mfR^{(n+1)^2},\mfR^{n+1})$
\eq{\label{eq:mu_1-mu_2}
\int_{G_{m,\beta}(S)}\left<\mfn(x,P),\phi(x, P)\right>\rd(\Gamma_1-\Gamma_2)(x,P)
=0,
}
which also holds for all $\phi\in C^0(\mfR^{n+1}\times\mfR^{(n+1)^2},\mfR^{n+1})$ by approximation.

Recall that $\sin\beta(x)>0$ by definition,
for a given $\varphi\in C^0(\mfR^{n+1}\times\mfR^{(n+1)^2})$ we define $\phi(x,P)=\frac{\varphi(x,P)}{\sin\beta(x)}\nu^S(x)$.
Using suitable extension, we obtain a vector field $\phi\in C^0(\mfR^{n+1}\times\mfR^{(n+1)^2},\mfR^{n+1})$ such that for all $(x,P)\in G_{m,\beta}(S)$,
\eq{
\left<\mfn(x,P),\phi(x,P)\right>
=\varphi(x,P),
}
where we have used \eqref{eq:n(x,Q),nu^S}.

Testing \eqref{eq:mu_1-mu_2} with such $\phi$ we get a signed measure $\Lambda$ on $G_{m,\beta}(S)$ such that for all  $\varphi\in C^0(\mfR^{n+1}\times\mfR^{(n+1)^2})$,
\eq{
\int_{G_{m,\beta}(S)}\varphi(x,P)\rd\Lambda(x,P)
=0,
}
which
implies that $\Lambda=0$ and consequently the uniqueness of the boundary varifold.
\end{proof}

\begin{lemma}\label{Lem:basic-prop-CV^m_beta}
Let $\Om\subset\mfR^{n+1}$ be a bounded domain of class $C^2$, $\beta\in C^1(S, (0,\pi))$, and 
let $V\in{\bf CV}^m_\beta(\Om)$ in the sense of Definition \ref{Defn:CV-capillary}.
Then
$V$ has bounded first variation
and $\Theta^m(\mu_V,x)$ exists for $\mu_V$-a.e. $x$.
Moreover, if $\Theta^m(\mu_V,x)\geq a>0$ for $\mu_V$-a.e. $x$,
then $V\in{\bf RV}^m(\overline\Om)$.
\end{lemma}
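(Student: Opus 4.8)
The plan is to extract a first variation formula for $V$ from the defining identity \eqref{defn:CV-capillary} by restricting to test functions that do not depend on the Grassmannian variable, and then to feed the outcome into the structure theory for curvature varifolds.

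First I would take, for an arbitrary $\varphi\in\mathfrak{X}(\Om)$ (extended to a $C^1$ field on $\mfR^{n+1}$), the admissible test function $\phi(x,P)=\varphi(x)$ in \eqref{defn:CV-capillary}. Since $\phi$ is independent of $P$ we have $D_P\phi\equiv0$, hence $D_P\phi\cdot B\equiv0$, while $\left<\na_x\phi,P\right>={\rm div}_P\varphi(x)$; thus \eqref{defn:CV-capillary} collapses to
\eq{
\de V(\varphi)
=-\int_{G_m(\overline\Om)}\left<{\rm tr}B(x,P),\varphi(x)\right>\rd V(x,P)
-\int_{G_{m,\beta}(S)}\left<\mfn(x,P),\varphi(x)\right>\rd\Gamma(x,P).
}
Because $B\in L^1(V)$ we have ${\rm tr}B\in L^1(V)$, and since $S=\p\Om$ is compact the set $G_{m,\beta}(S)\subset S\times G(m,n+1)$ is compact, so $\Gamma$ is a finite Radon measure; consequently $\abs{\de V(\varphi)}\leq\left(\norm{{\rm tr}B}_{L^1(V)}+\Gamma(G_{m,\beta}(S))\right)\norm{\varphi}_{C^0(\overline\Om)}$ for every $\varphi\in\mathfrak{X}(\Om)$, which is the claimed bounded first variation. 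Disintegrating $V=\mu_V\otimes V^x$ and setting $\mfH(x)\coloneqq\int{\rm tr}B(x,P)\rd V^x(P)\in L^1(\mu_V)$, the interior term becomes $-\int\left<\mfH,\varphi\right>\rd\mu_V$; in particular $V\llcorner\Om$ is a curvature varifold in the interior sense with generalized mean curvature in $L^1(\mu_V\llcorner\Om)$, and the full $V$ is a curvature varifold with boundary in the sense of Mantegazza \cite{Mantegazza96}, whose boundary is the finite vector-valued Radon measure $(x,P)\mapsto\mfn(x,P)\Gamma(x,P)$.

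For the existence of $\Theta^m(\mu_V,x)$ at $\mu_V$-a.e.\ $x$ I would invoke the density-existence results for curvature varifolds, which rest on a monotonicity identity controlling the density ratio via the full weak second fundamental form $B$ rather than merely its trace; this is exactly where the argument must go beyond the $L^p$, $p>m$, theory used elsewhere in the paper (cf.\ Remark \ref{Rem-Mono-Interior}), since here only $B\in L^1(V)$ is available. Applied to $V\llcorner\Om$ this gives the density at $\mu_V$-a.e.\ interior point, and the boundary points are covered by the corresponding statements for curvature varifolds with boundary in \cite{Hutchinson86,Mantegazza96} (or, where applicable, by noting $\mu_V(S)=0$). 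I expect this step — pinning down the right monotonicity/density statement for $L^1$ curvature varifolds up to the $C^2$ boundary $S$ — to be the main obstacle; the rest is bookkeeping.

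Finally, under the additional hypothesis $\Theta^m(\mu_V,x)\geq a>0$ for $\mu_V$-a.e.\ $x$, the varifold $V$ has (locally) bounded first variation by the first step and strictly positive lower density $\mu_V$-a.e., so Allard's Rectifiability Theorem \cite[Theorem 42.4]{Simon83} applies — in the interior and in a one-sided tubular neighborhood of the $C^2$ boundary $S$, rectifiability being a local property — and yields $V\in{\bf RV}^m(\overline\Om)$.
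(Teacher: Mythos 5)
Your first and last steps coincide with the paper's: test \eqref{defn:CV-capillary} with $\phi(x,P)=\varphi(x)$ to obtain \eqref{eq:1stvariation-CV_beta}, bound the interior term by $\sqrt{m}\norm{B}_{L^1(V)}$, and conclude rectifiability from \cite[Theorem 42.4]{Simon83}. Your treatment of the boundary term is in fact slightly more economical: the paper routes the finiteness of $\sigma_\Gamma(S)$ through the quantitative bound of Lemma \ref{Lem:comparable-mass}, whereas you observe directly that a Radon measure on the compact space $G_{m,\beta}(S)$ is finite --- the same argument the paper uses in the proof of Proposition \ref{Prop:1st-variation}, and sufficient for the qualitative statement here.

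The genuine gap is the middle step. You declare the existence of $\Theta^m(\mu_V,\cdot)$ at $\mu_V$-a.e.\ point to be ``the main obstacle'' and propose to resolve it by a monotonicity identity specific to curvature varifolds involving the full second fundamental form $B$, worrying that only $B\in L^1(V)$ is available. This misdiagnoses what is needed: no curvature-varifold structure enters at all. Once $V$ has bounded first variation on $\overline\Om$ --- which you have already established --- the existence of $\Theta^m(\mu_V,x)$ for $\mu_V$-a.e.\ $x$ is a standard fact for arbitrary varifolds of locally bounded first variation (\cite[Lemma 40.5]{Simon83}): by Besicovitch differentiation, at $\mu_V$-a.e.\ $x$ one has $\limsup_{r\searrow0}\norm{\de V}(B_r(x))/\mu_V(B_r(x))<\infty$, and the monotonicity computation then makes $e^{\Lambda\rho}\rho^{-m}\mu_V(B_\rho(x))$ nondecreasing for small $\rho$, so the density exists. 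This covers interior and boundary points alike and requires neither $p>m$ nor any control on $B$ beyond its trace. The paper's proof is exactly this one-line citation. As written, your proposal leaves this step unproven and defers to results in \cite{Hutchinson86,Mantegazza96} whose applicability in the $L^1$, up-to-the-boundary setting you do not verify --- when the conclusion already follows immediately from your own first step.
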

\begin{proof}
For test functions $\phi(x,P)=\varphi(x)$ in \eqref{defn:CV-capillary}, the identity simplifies as follows
\eq{\label{eq:1stvariation-CV_beta}
\de V(\varphi)
&=\int_{G_m(\mfR^{n+1})}\left<\na\varphi(x),P\right>\rd V(x,P)\\
=-&\int_{G_m(\mfR^{n+1})}\left<{\rm tr}B(x,P),\varphi(x)\right>\rd V(x,P)-\int_{G_{m,\beta}(S)}\left<\mfn(x,P),\varphi(x)\right>\rd\Gamma(x,P).
}
As $\abs{{\rm tr}B}\leq\sqrt{m}\abs{B}$, we have
\eq{
\abs{\de V(\varphi)}
\leq\left(\sqrt{m}\norm{B}_{L^1(V)}+\sigma_\Gamma(S)\right)\norm{\varphi}_{C^0(\overline\Om)}.
}
Once we have shown that $\sigma_\Gamma(S)$ is controlled in terms of $\mu_V(\overline\Om)$ and the curvature energy, which we postpone to Lemma \ref{Lem:comparable-mass}, we then obtain
\eq{
\abs{\de V(\varphi)}
\leq C\left(\norm{B}_{L^1(V)}+\mu_V(\overline\Om)\right)\norm{\varphi}_{C^0(\overline\Om)}.
}
Note that $\mu_V(\overline\Om)<\infty$ as $\overline\Om$ is compact, thus $V$ has bounded first variation in $\overline\Om$.
As a by-product, we have the existence of $\Theta^m(\mu_V,x)$ for $\mu_V$-a.e. $x$, thanks to \cite[Lemma 40.5]{Simon83}.
The last statement follows from Rectifiability Theorem for varifolds, see \cite[Theorem 42.4]{Simon83}.
This completes the proof.
\end{proof}

\begin{proposition}
Let $\Om\subset\mfR^{n+1}$ be a bounded domain of class $C^2$, $\beta\in C^1(S, (0,\pi))$.
Let $V\in{\bf CV}^m_\beta(\Om)$ in the sense of Definition \ref{Defn:CV-capillary} with weak second fundamental form $B$ and boundary varifold $\Gamma$.
Then $V\in{\bf V}^m_\beta(\Om)$ with boundary varifold $\Gamma$ in the sense of Definition \ref{Defn:vfld-prescribed-bdry}.
In particular,
\eq{\label{inclu:CV_beta-V_beta}
{\bf CV}^m_\beta(\Om)
\subset{\bf V}^m_\beta(\Om).
}
\end{proposition}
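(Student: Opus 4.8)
The plan is to show that any curvature varifold with capillary boundary, in the sense of Definition \ref{Defn:CV-capillary}, is in particular a varifold with capillary boundary in the sense of Definition \ref{Defn:vfld-prescribed-bdry}, by passing to test functions that depend only on the spatial variable $x$ and identifying the generalized mean curvature vector of $V$ with $\mathrm{tr}\,B$. The starting point is equation \eqref{eq:1stvariation-CV_beta} from Lemma \ref{Lem:basic-prop-CV^m_beta}: restricting the defining identity \eqref{defn:CV-capillary} to test functions of the form $\phi(x,P)=\varphi(x)$ with $\varphi\in C^1(\mfR^{n+1},\mfR^{n+1})$ (so $D_P\phi=0$ and $\na_x\phi$ is just $\na\varphi$) yields
\eq{
\de V(\varphi)
=-\int_{G_m(\overline\Om)}\left<{\rm tr}B(x,P),\varphi(x)\right>\rd V(x,P)
-\int_{G_{m,\beta}(S)}\left<\mfn(x,P),\varphi(x)\right>\rd\Gamma(x,P).
}
This already has the structural shape of \eqref{defn:1st-vairation-intro}, with the same boundary varifold $\Gamma$ on the same capillary bundle $G_{m,\beta}(S)$, provided we can rewrite the first integral as $-\int_{\overline\Om}\left<\mfH(x),\varphi(x)\right>\rd\mu_V(x)$ for some $\mfH\in L^1_{loc}(\mu_V)$.

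The key step is thus the definition and integrability of the generalized mean curvature. Using the disintegration of $V$ with respect to the projection $\pi\colon G_m(\overline\Om)\to\overline\Om$, write $V=\mu_V\otimes V^x$ where $V^x$ is a probability measure on $G(m,n+1)$ for $\mu_V$-a.e.\ $x$, and set
\eq{
\mfH(x)
\coloneqq\int_{G(m,n+1)}{\rm tr}B(x,P)\rd V^x(P).
}
Then $\int_{G_m(\overline\Om)}\left<{\rm tr}B,\varphi\right>\rd V=\int_{\overline\Om}\left<\mfH(x),\varphi(x)\right>\rd\mu_V(x)$ by Fubini, and since $\abs{{\rm tr}B}\le\sqrt m\,\abs B$ with $B\in L^1(V)$, Jensen's inequality gives $\int_{\overline\Om}\abs{\mfH}\rd\mu_V\le\sqrt m\,\norm B_{L^1(V)}<\infty$, so $\mfH\in L^1(\mu_V)\subset L^1_{loc}(\mu_V)$. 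This is exactly the generalized mean curvature vector required by Definition \ref{Defn:vfld-prescribed-bdry}. Since \eqref{defn:CV-capillary} is assumed for all $\phi\in C^1(\mfR^{n+1}\times\mfR^{(n+1)^2},\mfR^{n+1})$, it holds in particular for $\phi$ independent of the second variable restricted to $\varphi\in\mathfrak X_t(\Om)$ with compact support, which is what Definition \ref{Defn:vfld-prescribed-bdry} asks for.

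The one genuine point requiring care — and the place I expect to spend the most effort — is verifying the condition $\mfH(x)\in T_xS$ for $\mu_V$-a.e.\ $x\in S$. This tangency is automatic in the smooth setting but in the weak setting must be extracted from the structure of \eqref{defn:CV-capillary}. The argument is: test \eqref{defn:CV-capillary} with functions $\phi(x,P)=\varphi(x)$ where $\varphi\in\mathfrak X_\perp(\Om)$ is supported near $S$ and normal to $S$ along $S$; comparing with the decomposition of $\de V$ into its tangential and normal boundary contributions (available since $V$ has bounded first variation by Lemma \ref{Lem:basic-prop-CV^m_beta}, via the Radon--Nikodym argument in the proof of Proposition \ref{Prop:1st-variation}), and using that $\left<\mfn(x,P),\varphi(x)\right>$ involves $\left<\mfn(x,P),\nu^S(x)\right>=\sin\beta(x)$ via \eqref{eq:n(x,Q),nu^S}, one isolates the normal part of the boundary measure and shows the $\mu_V\llcorner S$-absolutely-continuous part of $\de V$ has no normal component beyond what $\widetilde H$ (the curvature of $S$) contributes. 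More directly: the identity forces the measure $\de V\llcorner S$ restricted to its $\mu_V$-absolutely continuous part to be represented by a vector field which, tested against arbitrary normal $\varphi$, picks up only boundary-measure terms; hence $\mfH$ itself must be tangential $\mu_V$-a.e.\ on $S$. Once tangency is established, all requirements of Definition \ref{Defn:vfld-prescribed-bdry} are met with the same $\Gamma$, giving $V\in{\bf V}^m_\beta(\Om)$ and the inclusion \eqref{inclu:CV_beta-V_beta}. I would remark that the uniqueness of $\Gamma$ (already proven) ensures the boundary varifold is genuinely the same object in both senses.
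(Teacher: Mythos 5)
Your main step is exactly the paper's argument: disintegrate $V=\mu_V\otimes V^x$, set $\mfH(x)\coloneqq\int_{G(m,n+1)}{\rm tr}B(x,P)\,\rd V^x(P)$, check $\mfH\in L^1(\mu_V)$ via $\abs{{\rm tr}B}\leq\sqrt m\abs B$, and observe that testing \eqref{defn:CV-capillary} with $\phi(x,P)=\varphi(x)$ for $\varphi\in\mathfrak X_t(\Om)$ of compact support yields precisely \eqref{defn:1st-vairation-intro} with the same $\Gamma$. This much coincides with the paper's proof.

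Where you diverge is on the tangency requirement $\mfH(x)\in T_xS$ for $\mu_V$-a.e.\ $x\in S$. You are right that this is a genuine clause in Definition \ref{Defn:vfld-prescribed-bdry} which $\mfH(x)=\int{\rm tr}B\,\rd V^x$ does not automatically satisfy, and the paper's own proof silently skips it. However, your proposed remedy is much heavier than what is needed, and is at risk of circularity: you invoke the decomposition of $\de V$ into normal and tangential boundary pieces, which in Proposition \ref{Prop:1st-variation} is derived for $V\in{\bf V}^m_\beta(\Om)$ --- the very membership you are trying to establish. The clean fix is an elementary normalization. Since every admissible test field in Definition \ref{Defn:vfld-prescribed-bdry} lies in $\mathfrak X_t(\Om)$, i.e.\ $\varphi(x)\in T_xS$ on $S$, replacing $\mfH(x)$ by $T_xS(\mfH(x))$ for $x\in S$ (and leaving it unchanged on $\Om$) does not alter $\int_{\overline\Om}\left<\mfH,\varphi\right>\rd\mu_V$ for any such $\varphi$, and the projected vector field still lies in $L^1(\mu_V)$ because orthogonal projection does not increase the norm. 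The projected field satisfies the tangency requirement by construction, so nothing need be ``extracted'' from the structure of \eqref{defn:CV-capillary}: the tangency is a free renormalization, not a consequence that requires isolating the normal boundary measure. With this one-line replacement your proof is complete and your remark about uniqueness of $\Gamma$ stands.
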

\begin{proof}
By standard disintegration we write $V=\mu_V\otimes V^x$, where $\mu_V$ is the weight measure of $V$ and $V^x$ is the Radon probability measure on $G(m,n+1)$ for $\mu_V$-a.e. $x$.
Letting $\varphi\in\mathfrak{X}_t(\Om)$ in \eqref{eq:1stvariation-CV_beta}, we find
\eq{
\de V(\varphi)
=&-\int_{\overline\Om}\left<\int
{\rm tr}B(x,P)\rd V^x(P),\varphi(x)\right>\rd\mu_V(x)-\int_{G_{m,\beta}(S)}\left<\mfn(x,P),\varphi(x)\right>\rd\Gamma(x,P)\\
\eqqcolon&-\int_{\overline\Om}\left<\mfH(x),\varphi(x)\right>\rd\mu_V(x)-\int_{G_{m,\beta}(S)}\left<\mfn(x,P),\varphi(x)\right>\rd\Gamma(x,P).
}
Note also that $\mfH\in L^1(\mu_V)$ since
\eq{\label{ineq-CV-H-B-bound}
\int\abs{\mfH}\rd\mu_V
\leq\int_{\overline\Om}\int_{G(m,n+1)}\Abs{{\rm tr}B(x,P)}\rd V^x(P)\rd\mu_V(x)
\leq\sqrt{m}\int_{G_m(\overline\Om)}\Abs{B}\rd V
<\infty,
}
which shows that $V\in{\bf V}^m_\beta(\Om)$ with boundary varifold $\Gamma$ and generalized mean curvature $\mfH$, the proof is thus completed.
\end{proof}

Next we show that our definition is compatible with the definitions of Mantegazza \cite{Mantegazza96} and Kuwert-M\"uller \cite{KM22}, which we recall in Appendix \ref{Appen-1} for readers' convenience.

First let $\beta\equiv \frac \pi 2$, observe that by definition, any $(x,P)\in G_{m,\frac\pi2}(S)$ admits the unique decomposition
\eq{
P=\nu^S(x)\oplus Q
}
for some $(m-1)$-plane $Q\in T_xS$, which we write as $Q=P\ominus\nu^S(x)$.
Let $\mathbf{q}:G_{m,\frac\pi2}(S)\ra G_{m-1}(TS)$ denote the map
\eq{
\mathbf{q}(x,P)
=(x,P\ominus\nu^S(x)),
}
which is clearly a bijection.
On the other hand, thanks to \eqref{eq:n(x,Q),nu^S} we have for any such $(x,P)$ \eq{
\mfn(x,P)=\nu^S(x).
}
It is then easy to see that 
Definition
\ref{Defn:CV-capillary} is compatible with Definition \ref{Defn:CV-orthogonal} in the following sense:
If $\Gamma$ is the boundary varifold of $V$ with contact angle $\frac\pi2$ in the sense of Definition \ref{Defn:CV-capillary}, then the push-forward measure $\widetilde\Gamma={\bf q}_\ast(\Gamma)$ on $G_{m-1}(TS)$
is such that $V$ is orthogonal to $S$ along $\widetilde\Gamma$ in the sense of Definition \ref{Defn:CV-orthogonal}, vice versa.

Now we show that Definition \ref{Defn:CV-capillary} is compatible with Mantegazza's Curvature Varifolds with boundary:
\begin{lemma}\label{Lem:comptaible}
Let $\Om\subset\mfR^{n+1}$ be a bounded domain of class $C^2$, $\beta\in C^1(S, (0,\pi))$.
Let $V\in{\bf CV}^m_\beta(\Om)\cap{\bf IV}^m(\overline\Om)$
with boundary varifold $\Gamma$ in the sense of Definition \ref{Defn:CV-capillary}.
Then $V$ is a curvature varifold with boundary $\p V=\mfn\Gamma$, in the sense of Definition \ref{Defn:CV-Mantegazza} (with $U=\mfR^{n+1}$).
Moreover, the total variation of the boundary measure in this case is just
\eq{\label{eq-total-variation-measure}
\abs{\p V}
=\abs{\mfn\Gamma}
=\Gamma.
}
\end{lemma}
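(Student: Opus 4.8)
The plan is to prove the statement by a direct comparison of the defining identity \eqref{defn:CV-capillary} with Mantegazza's curvature identity from Definition \ref{Defn:CV-Mantegazza}. First I would regard $\Gamma$ — a priori a Radon measure on the (compact) subspace $G_{m,\beta}(S)$ — as a Radon measure on the full trivial Grassmannian bundle $G_m(\mfR^{n+1})$ by trivial extension (assigning zero mass to $G_m(\mfR^{n+1})\setminus G_{m,\beta}(S)$), and likewise view $\mfn\Gamma$, the $\mfR^{n+1}$-valued measure with Radon--Nikodym density $\mfn\in C^1(G_{m,\beta}(S),\mfR^{n+1})$ against $\Gamma$, as a vector-valued Radon measure on $G_m(\mfR^{n+1})$ supported in $G_m(\overline\Om)$. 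Finiteness of $\mfn\Gamma$ is immediate once $\sigma_\Gamma(S)<\infty$, which is Lemma \ref{Lem:comparable-mass} (or just compactness of $G_{m,\beta}(S)$); together with $B\in L^1(V)$ and $\mu_V(\overline\Om)<\infty$ this makes all the terms in \eqref{defn:CV-capillary} finite. Since $\overline\Om$ is compact, multiplying by a fixed cutoff shows that a test function $\phi\in C^1(\mfR^{n+1}\times\mfR^{(n+1)^2},\mfR^{n+1})$ can be replaced by one in $C^1_c(G_m(\mfR^{n+1}),\mfR^{n+1})$ without altering either side of \eqref{defn:CV-capillary}, so the two admissible test-function classes agree.

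With these identifications, the right-hand side of \eqref{defn:CV-capillary} reads $-\int_{G_m(\mfR^{n+1})}\langle\phi(x,P),\rd(\mfn\Gamma)(x,P)\rangle$, i.e.\ \eqref{defn:CV-capillary} is precisely Mantegazza's curvature identity with boundary measure $\p V=\mfn\Gamma$. It then remains to check that the triple $(V,B,\p V)$ meets the remaining requirements of a curvature varifold with boundary: $V$ is integral by hypothesis ($V\in{\bf IV}^m(\overline\Om)$), $B\in L^1(V)$ is the given weak second fundamental form, and the tangency/symmetry conditions on $B$ are either part of the formulation of Definition \ref{Defn:CV-capillary} or, for integral $V$, are forced by the identity itself (cf.\ Hutchinson \cite{Hutchinson86} and Mantegazza \cite{Mantegazza96}). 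The identity \eqref{eq-total-variation-measure} is then a one-line consequence of the polar decomposition of vector-valued Radon measures (e.g.\ \cite[Corollary 5.11]{Mag12}): since $\abs{\mfn(x,P)}=1$ for every $(x,P)\in G_{m,\beta}(S)$ by \eqref{defn:n(x,P)}, the total variation of $\mfn\Gamma$ equals $\abs{\mfn}\,\Gamma=\Gamma$.

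The only non-mechanical point, and thus the main obstacle, is the careful matching of sign and index conventions between \eqref{defn:CV-capillary} and the precise form of Mantegazza's identity — the placement of the minus sign on the boundary term, the symmetry axioms on $B$, and the fact that no further regularity is demanded of $\p V$ (its spatial projection may vanish, as the text already notes) — so that the extension by zero of $\mfn\Gamma$ to $G_m(\mfR^{n+1})$ is seen to be an admissible boundary measure. This is bookkeeping rather than genuine analysis, and once it is settled the lemma follows.
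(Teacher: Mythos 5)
Your proposal is correct and follows essentially the same route as the paper: the first claim is the observation that, viewing $\Gamma$ (extended by zero) as a measure on $G_m(\mfR^{n+1})$ and using compactness of $\overline\Om$ to reconcile the test-function classes, the identity \eqref{defn:CV-capillary} is literally Mantegazza's identity with $\p V=\mfn\Gamma$; and \eqref{eq-total-variation-measure} follows because $\abs{\mfn}\equiv1$, which the paper verifies via the supremum definition of total variation and you via polar decomposition — the same fact in different clothing.
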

\begin{proof}
The fact that $\p V=\mfn\Gamma$ in the sense of Definition \ref{Defn:CV-Mantegazza} is a simple consequence of the fact that $G_{m,\beta}(S)\subset G_m(\mfR^{n+1})$.

To show \eqref{eq-total-variation-measure} we verify by definition: for any open $A\subset G_m(\mfR^{n+1})$,
\eq{
\abs{\mfn\Gamma}(A)
=&\sup\{\mfn\Gamma(\phi):\phi\in C_c^0(A;\mfR^{n+1}),\abs{\phi}\leq1\}\\
=&\sup\{\int_{G_{m,\beta}(S)}\left<\mfn(x,P),\phi(x,P)\right>\rd\Gamma(x,P):\phi\in C_c^0(A;\mfR^{n+1}),\abs{\phi}\leq1\}
=\Gamma(A),
}
which completes the proof.
\end{proof}
We write ${\bf ICV}^m_\beta(\Om)$ as the set of curvature varifolds discussed in Lemma \ref{Lem:comptaible}.
Thanks to this observation,
the following nice properties of $V\in{\bf ICV}^m_\beta(\Om)$ inherit directly from \cite{Mantegazza96}.

\begin{lemma}[Uniqueness]\label{Lem:uniqueness}
Let $V\in{\bf ICV}^m_\beta(\Om)$, then
the weak second fundamental form $B$ and the boundary varifold $\Gamma$ are uniquely determined by \eqref{defn:CV-capillary}.
\end{lemma}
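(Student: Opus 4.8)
The plan is to obtain both uniqueness statements from Mantegazza's corresponding result for curvature varifolds with boundary, transported to our setting via Lemma \ref{Lem:comptaible}. By construction, an element of ${\bf ICV}^m_\beta(\Om)$ is exactly an integral curvature varifold $V$ which, by Lemma \ref{Lem:comptaible}, is a curvature varifold with boundary $\p V=\mfn\Gamma$ in the sense of Definition \ref{Defn:CV-Mantegazza} over $U=\mfR^{n+1}$, and moreover $\abs{\p V}=\Gamma$ by \eqref{eq-total-variation-measure}.

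First I would invoke Mantegazza's uniqueness theorem \cite{Mantegazza96}: for an integral curvature varifold with boundary, the generalized second fundamental form $B\in L^1(V)$ and the boundary vector measure $\p V$ are uniquely determined by the defining weak identity. Applied to our $V$ this gives at once that $B$ is unique. For the uniqueness of $\Gamma$, suppose $(B,\Gamma_1)$ and $(B,\Gamma_2)$ both satisfy \eqref{defn:CV-capillary} with the common $B$ just obtained; then $\mfn\Gamma_1=\p V=\mfn\Gamma_2$ as $\mfR^{n+1}$-valued Radon measures on $G_m(\mfR^{n+1})$, and taking total variations while using \eqref{eq-total-variation-measure} yields $\Gamma_1=\abs{\mfn\Gamma_1}=\abs{\mfn\Gamma_2}=\Gamma_2$. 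This last step may equally be read off from the uniqueness lemma at the start of this section, which already settles uniqueness of the boundary varifold once $B$ is fixed.

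I expect the only point requiring care to be the verification that Mantegazza's hypotheses are genuinely met — in particular that integrality of $V$, which is built into the definition of ${\bf ICV}^m_\beta(\Om)$, is precisely what licenses the uniqueness of $B$ (it fails for general real curvature varifolds), and that Lemma \ref{Lem:comptaible} exhibits $V$ as a Mantegazza curvature varifold with boundary over the full ambient space $\mfR^{n+1}$, so that no localization or domain-boundary issue intervenes. Once this is in place the argument is a direct translation of definitions, with no further analytic obstacle.
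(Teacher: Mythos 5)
Your proposal is correct and follows essentially the same route as the paper: the paper's proof simply cites Mantegazza's uniqueness result (Proposition 3.4 of \cite{Mantegazza96}) together with Lemma \ref{Lem:comptaible}, exactly as you do, with the recovery of $\Gamma$ from $\p V=\mfn\Gamma$ via \eqref{eq-total-variation-measure} (or equivalently via the first uniqueness lemma of the section). No gaps.
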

\begin{proof}
This follows from \cite[Proposition 3.4]{Mantegazza96}, thanks to Lemma \ref{Lem:comptaible}.
\end{proof}

\begin{proposition}[Tangential properties]\label{Prop-CV-Tangent}
Let $V\in{\bf ICV}^m_\beta(\Om)$, let $P(x)$ be the corresponding approximate tangent space which exists $\mu_V$-a.e., and let $\{e_1,\ldots,e_{n+1}\}$ be the canonical orthonormal basis of $\mfR^{n+1}$.
Then for $\mu_V$-a.e. $x$, the components of the tangent space function $P_{jk}(x)$ are approximately differentiable with approximate gradients
\eq{
\na_{e^T_i}P_{jk}(x)
=B_{ijk}(x,P(x)),
}
where $e_i^T$ denotes the projection of $e_i$ onto the approximate tangent space.
Moreover,
for $\mu_V$-a.e.$x$ the following are true:
\begin{enumerate}
    \item For vectors $u,v,w\in\mfR^{n+1}$,
\eq{
\left<B\mid_{(x,P(x))}(u,v),w\right>
=\left<B\mid_{(x,P(x))}(u,w),v\right>.
}
    \item For any vector $v\in\mfR^{n+1}$
\eq{
\sum_{i=1}^{n+1}\left<B\mid_{(x,P(x))}(v,e_i),e_i\right>
=0.
}
    \item The functions $B_{ijk}(x,P)$ satisfy the relations:
\eq{
\sum_{l=1}^{n+1}P_{il}(x)B_{ljk}(x,P(x))
=B_{ijk}(x,P(x)),
}
also
\eq{
B_{ijk}(x,P(x))
=\sum_{l=1}^n\left(P_{jl}(x)B_{ilk}(x,P(x))+P_{lk}(x)B_{ijl}(x,P(x)\right),
}
and
\eq{
\sum_{l=1}^{n+1}P_{il}(x)H_{l}(x,P(x))=0,
}
where we write $H_l(x,P(x))=\sum_{j=1}^{n+1}B_{jlj}(x,P(x))$.
\end{enumerate}
\end{proposition}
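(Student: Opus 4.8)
The plan is to reduce the whole statement to Mantegazza's structure theory for curvature varifolds with boundary \cite{Mantegazza96} (which builds on Hutchinson \cite{Hutchinson86}), via the identification provided by Lemma \ref{Lem:comptaible}. Recall that for $V\in{\bf ICV}^m_\beta(\Om)$ that lemma exhibits $V$ as a curvature varifold with boundary $\p V=\mfn\Gamma$ in the sense of Definition \ref{Defn:CV-Mantegazza} (with $U=\mfR^{n+1}$), and moreover $\abs{\p V}=\Gamma$, a Radon measure carried by $G_{m,\beta}(S)\subset G_m(\mfR^{n+1})$. Hence, once the test vector fields $\phi\in C^1(\mfR^{n+1}\times\mfR^{(n+1)^2},\mfR^{n+1})$ appearing in \eqref{defn:CV-capillary} are read as functions on $G_m(\mfR^{n+1})$, identity \eqref{defn:CV-capillary} is precisely Mantegazza's defining identity for $V$ with boundary measure $\p V$. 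Every assertion of the proposition is then an instance of his general results for such varifolds.

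In detail, I would first invoke the basic differentiability result of \cite[Section~3]{Mantegazza96} (see also \cite{Hutchinson86}): the components $P_{jk}$ of the $\mu_V$-a.e.\ defined approximate tangent-plane map are approximately differentiable with $\na_{e^T_i}P_{jk}(x)=B_{ijk}(x,P(x))$, obtained by testing \eqref{defn:CV-capillary} with vector fields $\phi$ depending linearly on the entries $P_{jk}$ and localising in $x$. The pointwise algebraic relations (1)--(3) then follow by feeding this differentiability into the three defining identities of an orthogonal projection, namely that $P$ is symmetric, $P^2=P$ and ${\rm tr}\,P=m$: differentiating $\sum_l P_{jl}P_{lk}=P_{jk}$ tangentially along $V$ gives the product-rule relation in (3); using that $B\mid_{(x,P)}(u,\cdot)$ has image contained in $P$ together with $P_{jk}=P_{kj}$ gives $\sum_l P_{il}B_{ljk}=B_{ijk}$ and the symmetry (1); and taking traces, using ${\rm tr}\,P=m$, gives (2) as well as $\sum_l P_{il}H_l=0$. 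These are exactly the statements collected in \cite[Section~3]{Mantegazza96}, so in the final write-up I would state them and cite loc.\ cit., indicating in each case which test function is used.

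The one point demanding care — and the main, if mild, obstacle — is to check that the boundary term $\int_{G_{m,\beta}(S)}\left<\mfn(x,P),\phi(x,P)\right>\rd\Gamma(x,P)$ plays exactly the role of Mantegazza's $\int\phi\,\rd(\p V)$ in all of his integrations by parts, so that the derivations transfer without change. This is where the geometry of $G_{m,\beta}(S)$ enters: for $(x,P)\in G_{m,\beta}(S)$ one has $\mfn(x,P)\in P$, so $P\big(\mfn(x,P)\big)=\mfn(x,P)$, and condition $(ii)$ of Definition \ref{Defn:G_m,beta(S)} yields the splitting $P=(P\cap T_xS)\oplus\mfn(x,P)$; consequently $\p V=\mfn\Gamma$ meets Mantegazza's structural requirements for the boundary of a curvature varifold, and since (1)--(3) are asserted only $\mu_V$-a.e.\ the boundary contribution cannot affect them. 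Thus the cleanest formulation of the proof is: by Lemma \ref{Lem:comptaible} the pair $(V,\p V=\mfn\Gamma)$ satisfies the hypotheses of the structure results of \cite[Section~3]{Mantegazza96}, whose conclusions are exactly the displayed identities; the only additional work is the bookkeeping of index conventions and of the two boundary notations, carried out above and in Lemma \ref{Lem:comptaible}. When $\beta\equiv\tfrac\pi2$ one may equivalently appeal to the compatibility with Kuwert--M\"uller \cite{KM22} recorded earlier, but the route through \cite{Mantegazza96} covers all $\beta$ at once.
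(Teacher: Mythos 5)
Your proposal is correct and follows essentially the same route as the paper: both reduce the statement to Mantegazza's structure theory via Lemma \ref{Lem:comptaible}, the paper citing \cite[Theorem 5.4]{Mantegazza96} for the approximate differentiability and \cite[Propositions 3.6, 3.7]{Mantegazza96} for the algebraic identities (1)--(3). Your additional remarks on how the relations arise from symmetry, $P^2=P$, and the trace, and on why the boundary term $\mfn\Gamma$ fits Mantegazza's framework, are consistent with (and slightly more explicit than) the paper's one-line argument.
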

\begin{proof}
The approximate differentiability follows from \cite[Theorem 5.4]{Mantegazza96}, thanks to Lemma \ref{Lem:comptaible}.
The rest of the statement follows as corollaries of the approximate differentiability, see \cite[Propositions 3.6, 3.7]{Mantegazza96}.
\end{proof}

\begin{remark}
\normalfont
The second order rectifiability of integral varifolds of locally bounded first variation shown by Menne in \cite{Menne13}
applies for the class ${\bf ICV}^m_\beta(\Om)$, as it is easy to check that for any $V\in{\bf ICV}^m_\beta(\Om)$, $\norm{\de V}$ is a Radon measure on $\mfR^{n+1}$.
\end{remark}

\begin{lemma}[Singularity of ${\bf ICV}_\beta^m(\Om)$]
Let $V\in{\bf ICV}^m_\beta(\Om)$ with boundary varifold $\Gamma$ in the sense of Definition \ref{Defn:CV-capillary}, then
$\Gamma$ has support included in the support of $V$.
Moreover, $\sigma_\Gamma\perp\mu_V$.
\end{lemma}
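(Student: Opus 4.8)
The plan is to derive both statements from the identification in Lemma \ref{Lem:comptaible}: $V$ is a curvature varifold with boundary $\p V=\mfn\Gamma$ in Mantegazza's sense, and by \eqref{eq-total-variation-measure} its total variation satisfies $\abs{\p V}=\Gamma$, so that ${\rm spt}\,\Gamma={\rm spt}\,\p V$ and $\sigma_\Gamma=\pi_\ast\Gamma$ is the projection to $\mfR^{n+1}$ of $\abs{\p V}$.

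For the support inclusion I would argue directly (this also reproves the corresponding fact in \cite{Mantegazza96}). Let $(x_0,P_0)\in G_{m,\beta}(S)\setminus{\rm spt}\,V$ and choose an open neighborhood $A$ of $(x_0,P_0)$ in $G_m(\mfR^{n+1})$ with $V\llcorner A=0$, small enough that a $C^1$-extension $\tilde\mfn$ of $\mfn$ off $G_{m,\beta}(S)$ is defined on $A$ --- such an extension is obtained exactly as in the proof of uniqueness of the boundary varifold (the first lemma of Section \ref{Sec:8}), e.g.\ from $C^1$-extensions of $\nu^S$ and $\beta$ to a tubular neighborhood of $S$. For any scalar $\varphi\in C^1_c(A)$ the field $\phi\coloneqq\varphi\,\tilde\mfn$ is an admissible test function in \eqref{defn:CV-capillary}; its left-hand side integrand $D_P\phi\cdot B+\langle{\rm tr}\,B,\phi\rangle+\langle\na_x\phi,P\rangle$ is supported in $A$, so, since $B\in L^1(V)$ and $V\llcorner A=0$, the corresponding integral vanishes and \eqref{defn:CV-capillary} reduces to $\int_{G_{m,\beta}(S)}\varphi\,\abs{\mfn}^2\,\rd\Gamma=\int_{G_{m,\beta}(S)}\varphi\,\rd\Gamma=0$, using $\abs{\mfn}\equiv1$ on $G_{m,\beta}(S)$. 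As $\varphi$ is arbitrary, $\Gamma\llcorner A=0$, so $(x_0,P_0)\notin{\rm spt}\,\Gamma$; hence ${\rm spt}\,\Gamma\subseteq{\rm spt}\,V$, and projecting, ${\rm spt}\,\sigma_\Gamma\subseteq{\rm spt}\,\mu_V$.

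For the mutual singularity I would combine two inputs. First, $V\in{\bf IV}^m(\overline\Om)$, so by the rectifiability theorem $\mu_V=\theta\,\mcH^m\llcorner M$ with $M$ an $m$-rectifiable set and $\theta$ integer valued; in particular $\mu_V\ll\mcH^m$. Second, since $V$ is moreover an integral curvature varifold with boundary, the structure theory for such varifolds in \cite{Mantegazza96} shows that $\Gamma=\abs{\p V}$ is carried by a countably $(m-1)$-rectifiable set; as $\pi$ is $1$-Lipschitz, $\sigma_\Gamma=\pi_\ast\Gamma$ is then carried by a countably $(m-1)$-rectifiable set $\Si\subseteq S$. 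Such a set is $\sigma$-finite for $\mcH^{m-1}$, hence $\mcH^m$-negligible, so that $\mu_V(\Si)=0$ while $\sigma_\Gamma(\mfR^{n+1}\setminus\Si)=0$; therefore $\sigma_\Gamma\perp\mu_V$.

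The step needing care is the dimensional bound on $\Gamma$ in the second input. It cannot be supplied by the general blow-up analysis of this paper (cf.\ Theorem \ref{Thm:bdry-rectifiability}), where density bounds for $\sigma_\Gamma$ are available only at capillary boundary points and only under $\mfH\in L^p$ with $p>m$, whereas here we assume merely $B\in L^1(V)$; indeed, for a general $V\in{\bf V}^m_\beta(\Om)$ the conclusion $\sigma_\Gamma\perp\mu_V$ may fail --- for instance when $\beta\equiv\frac\pi2$ the measure $\Gamma$ is left unconstrained by \eqref{defn:1st-vairation-intro2}. It is precisely because \eqref{defn:CV-capillary} tests against fields depending also on the plane variable $P$, so that $\p V$ and not merely its projection is controlled, that Mantegazza's structure theorem applies and this degenerate behaviour is ruled out.
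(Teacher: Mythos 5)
Your proof is correct, but it takes a genuinely different route from the paper, whose entire argument is the one\nobreakdash-line citation ``follows from \cite[Proposition 3.5]{Mantegazza96} in virtue of \eqref{eq-total-variation-measure}'', i.e.\ it transfers both conclusions wholesale from Mantegazza's structure theory through the identification $\abs{\p V}=\Gamma$ of Lemma \ref{Lem:comptaible}. Your treatment of the support inclusion is a self-contained re-derivation: testing \eqref{defn:CV-capillary} with $\phi=\varphi\,\tilde\mfn$ for $\varphi\in C^1_c(A)$, where $A$ is a neighborhood of a point of $G_{m,\beta}(S)\setminus{\rm spt}\,V$ on which $V$ vanishes and $\tilde\mfn$ is a local $C^1$ extension of $\mfn$ (e.g.\ via $P(\na d_S(x))/\abs{P(\na d_S(x))}$, nondegenerate near $G_{m,\beta}(S)$ since $\sin\beta>0$), does kill the left-hand side because $B\in L^1(V)$ and the integrand is supported in $A$, and yields $\Gamma\llcorner A=0$ from $\langle\mfn,\tilde\mfn\rangle=1$ on $G_{m,\beta}(S)$; this is sound and in fact also delivers item (2) of Proposition \ref{Prop-CV-bdry}. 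For the mutual singularity you reroute through the $(m-1)$-rectifiability of the boundary measure (the paper's subsequent lemma, i.e.\ \cite[Theorem 7.1]{Mantegazza96}) combined with $\mu_V=\theta\,\mcH^m\llcorner M\ll\mcH^m$ from integrality, which is logically valid and not circular since the rectifiability theorem is independent of the singularity statement; it does, however, lean on a substantially deeper result than \cite[Proposition 3.5]{Mantegazza96}, which obtains $\sigma_\Gamma\perp\mu_V$ without any rectifiability input. Your closing remark correctly identifies why the argument is unavailable for general $V\in{\bf V}^m_\beta(\Om)$: it is the $P$-dependence of the test functions in \eqref{defn:CV-capillary} that pins down $\p V$ itself rather than its spatial projection.
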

\begin{proof}
The assertion follows from \cite[Proposition 3.5]{Mantegazza96} in virtue of \eqref{eq-total-variation-measure}.
\end{proof}

\begin{lemma}[$(m-1)$-rectifiability of boundary measure]
Let $V\in{\bf ICV}^m_\beta(\Om)$ with boundary varifold $\Gamma$ in the sense of Definition \ref{Defn:CV-capillary}, then there exists a countably $(m-1)$-rectifiable set $\mcR_S\subset S$ together with a positive Borel function $\theta:\mcR_S\ra\mfR$ such that $\sigma_\Gamma=\theta\mcH^{m-1}\llcorner\mcR_S$.
\end{lemma}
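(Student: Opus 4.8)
The plan is to derive the statement directly from Mantegazza's structure theorem for the boundary of an integral curvature varifold, using the translation provided by Lemma \ref{Lem:comptaible}. Thus the first step is to recall from Lemma \ref{Lem:comptaible} that, viewed as an element of ${\bf ICV}^m_\beta(\Om)$, the varifold $V$ is a curvature varifold with boundary in the sense of Definition \ref{Defn:CV-Mantegazza} (with $U=\mfR^{n+1}$), with boundary $\p V=\mfn\Gamma$ and, crucially, total variation measure $\Abs{\p V}=\Gamma$ as Radon measures on $G_m(\mfR^{n+1})$; this last identity is precisely \eqref{eq-total-variation-measure}.

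The second step is to invoke the $(m-1)$-rectifiability of the boundary of integral curvature varifolds, see \cite[Theorem 4.1]{Mantegazza96}: the Radon measure on $\mfR^{n+1}$ obtained by pushing $\Abs{\p V}$ forward through the bundle projection $\pi\colon G_m(\mfR^{n+1})\ra\mfR^{n+1}$ is $(m-1)$-rectifiable and is supported in ${\rm spt}\,\mu_V$. Since $(\pi)_\ast\Gamma=\sigma_\Gamma$ by definition of the generalized boundary measure and $\Gamma=\Abs{\p V}$, this yields at once that $\sigma_\Gamma$ is an $(m-1)$-rectifiable Radon measure. Unwinding the definition of rectifiable measure recalled in Section \ref{Sec:2} then produces a countably $(m-1)$-rectifiable set $\mcR\subset\mfR^{n+1}$ and a positive function $\theta\in L^1_{\rm loc}(\mcR,\mcH^{m-1})$ with $\sigma_\Gamma=\theta\mcH^{m-1}\llcorner\mcR$. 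Finally, since $\Gamma$ is supported in $G_{m,\beta}(S)$, which lies over $S$, the measure $\sigma_\Gamma$ is concentrated on $S$; discarding an $\mcH^{m-1}$-negligible (equivalently $\sigma_\Gamma$-negligible) subset of $\mcR$ and relabelling, we may take $\mcR_S\coloneqq\mcR\subset S$, which gives the asserted decomposition.

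The only delicate point, which lies in Mantegazza's work rather than here, is that his boundary rectifiability is naturally phrased in terms of the vector measure $\p V$ on the Grassmann bundle, so one must check that it passes faithfully to the base measure $\sigma_\Gamma=(\pi)_\ast\Gamma$; the identity $\Abs{\p V}=\Gamma$ from \eqref{eq-total-variation-measure} is exactly what makes this passage immediate. With that in hand the argument is a direct citation, and in particular requires only $B\in L^1(V)$ — no super-critical integrability of the curvature — in contrast with Theorem \ref{Thm:bdry-rectifiability}.
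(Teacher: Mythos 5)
Your argument is correct and follows exactly the same route as the paper: translate via Lemma \ref{Lem:comptaible} into Mantegazza's framework, note that \eqref{eq-total-variation-measure} identifies $\abs{\p V}$ with $\Gamma$ so that $(\pi)_\ast\abs{\p V}=\sigma_\Gamma$, and then cite Mantegazza's boundary rectifiability theorem. One small slip: the relevant result is \cite[Theorem 7.1]{Mantegazza96} (rectifiability of the boundary), not Theorem 4.1 — the paper cites the former, and your remaining steps go through unchanged with the corrected reference.
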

\begin{proof}
The assertion follows from \cite[Theorem 7.1]{Mantegazza96} in virtue of \eqref{eq-total-variation-measure}.
\end{proof}

Summarizing the boundary properties, we have obtained:
\begin{proposition}\label{Prop-CV-bdry}
Let $V\in{\bf ICV}^m_\beta(\Om)$ with boundary varifold $\Gamma$ in the sense of Definition \ref{Defn:CV-capillary}, then
\begin{enumerate}
    \item There exists a countably $(m-1)$-rectifiable set $\mcR_S\subset S$ together with a positive Borel function $\theta:\mcR_S\ra\mfR$ such that
\eq{
\Gamma
=\theta\mcH^{m-1}\llcorner \mcR_S\otimes\Gamma^{x},
}
where $\Gamma^x$ is the Radon probability measure on $G_{m,\beta}(S)$ resulting from disintegration.
    \item For any $(x,P)\in{\rm spt}\Gamma\subset G_{m,\beta}(S)$, one also has
\eq{
(x,P)\in{\rm spt}V.
}
In particular, $\left<\mfn(x,P),\nu^S(x)\right>
=\sin\beta(x)$.
\end{enumerate}
\end{proposition}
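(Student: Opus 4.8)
The plan is to obtain Proposition~\ref{Prop-CV-bdry} as a direct corollary of the two lemmas immediately preceding it (the singularity lemma and the $(m-1)$-rectifiability lemma), combined with the disintegration theorem. No new estimate is needed; the work is purely to package what has already been proved.

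First I would establish part~(1). The preceding $(m-1)$-rectifiability lemma provides a countably $(m-1)$-rectifiable set $\mcR_S\subset S$ and a positive Borel function $\theta\colon\mcR_S\to\mfR$ with $\sigma_\Gamma=\theta\mcH^{m-1}\llcorner\mcR_S$. Since $S=\p\Om$ is compact and $G(m,n+1)$ is compact, $G_{m,\beta}(S)$ is compact, so $\Gamma$ is a compactly supported Radon measure and its push-forward under the bundle projection $\pi\colon G_{m,\beta}(S)\to S$ is $\sigma_\Gamma$. The disintegration lemma then yields a $\sigma_\Gamma$-a.e. uniquely determined family $(\Gamma^x)_x$ of Radon probability measures on the fibres $G_{m,\beta}(x)$ with $\Gamma=\sigma_\Gamma\otimes\Gamma^x$. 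Substituting the rectifiable expression for $\sigma_\Gamma$ gives $\Gamma=\theta\mcH^{m-1}\llcorner\mcR_S\otimes\Gamma^x$, which is the claimed decomposition; positivity of $\theta$ $\mcH^{m-1}$-a.e. on $\mcR_S$ is inherited from the previous lemma.

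For part~(2), I would invoke the singularity lemma, which asserts ${\rm spt}\,\Gamma\subset{\rm spt}\,V$ for $V\in{\bf ICV}^m_\beta(\Om)$ --- this itself follows from \cite[Proposition~3.5]{Mantegazza96} together with the identity $\abs{\p V}=\Gamma$ recorded in \eqref{eq-total-variation-measure}. Hence every $(x,P)\in{\rm spt}\,\Gamma$ lies in ${\rm spt}\,V$. On the other hand, Definition~\ref{Defn:CV-capillary} forces $\Gamma$, and therefore ${\rm spt}\,\Gamma$, to be contained in the capillary bundle $G_{m,\beta}(S)$; so $(x,P)$ satisfies conditions $(i)$--$(ii)$ of Definition~\ref{Defn:G_m,beta(S)}, in particular $\abs{P(\nu^S(x))}=\sin\beta(x)$. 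Combining this with the definition \eqref{defn:n(x,P)} of $\mfn(x,P)$ immediately gives $\left<\mfn(x,P),\nu^S(x)\right>=\sin\beta(x)$, that is, \eqref{eq:n(x,Q),nu^S}.

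I do not expect a genuine obstacle: the proposition is a repackaging of facts already in hand. The only point requiring a line of care is the compatibility of the two descriptions of $\sigma_\Gamma$, namely that the disintegration family $(\Gamma^x)_x$ can be taken to be supported on the fibres $G_{m,\beta}(x)$ of the capillary bundle rather than on the full Grassmannian $G(m,n+1)$; this is immediate because $\Gamma$ is supported on $G_{m,\beta}(S)$ and disintegration is local over the base.
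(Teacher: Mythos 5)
Your proposal is correct and coincides with the paper's treatment: the paper states this proposition as a summary ("Summarizing the boundary properties, we have obtained:") of the two immediately preceding lemmas, so part (1) is exactly the $(m-1)$-rectifiability lemma combined with disintegration, and part (2) is the singularity lemma together with the defining properties \eqref{eq:P^m(nu^S)} and \eqref{eq:n(x,Q),nu^S} of the capillary bundle. Nothing further is needed.
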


\begin{remark}
\normalfont
Propositions \ref{Prop-CV-Tangent} and \ref{Prop-CV-bdry} extend to our class of curvature varifolds ${\bf ICV}^m_\beta(\Om)$ the nice geometric properties that are valid for the smooth capillary submanifolds.
\end{remark}

\subsection{Compactness and variational problems}
The following are capillary generalizations of the results by Kuwert-M\"uller \cite{KM22}.

\begin{lemma}[Comparable masses]\label{Lem:comparable-mass}
Let $\Om\subset\mfR^{n+1}$ be a bounded domain of class $C^2$, $\beta\in C^1(S, (0,\pi))$, $p\in[1,\infty]$.
Then there exists a positive constant $C=C(n,m,p,\beta,\Om)<\infty$, such that for any $V\in{\bf CV}^m_\beta(\Om)$ with boundary varifold $\Gamma$ in the sense of Definition \ref{Defn:CV-capillary}, there hold
\eq{
\mu_V(\overline\Om)
\leq C\left(\sigma_\Gamma(S)+\norm{B}_{L^p(V)}^p\right),
}
\eq{
\sigma_\Gamma(S)\leq
C\left(\mu_V(\overline\Om)+\norm{B}_{L^p(V)}^p\right).
}
\end{lemma}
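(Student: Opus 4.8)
The plan is to prove the two inequalities in Lemma \ref{Lem:comparable-mass} by a test-function argument in the defining identity \eqref{defn:CV-capillary}, mimicking the classical Michael--Simon / Hutchinson technique for curvature varifolds adapted to the capillary boundary. The key observation is the \emph{tangential identity}: from \eqref{eq:1stvariation-CV_beta} (the case $\phi(x,P)=\varphi(x)$), for any $\varphi\in\mathfrak{X}(\Om)$ one has
\eq{
\int_{G_m(\overline\Om)}\left<\na\varphi(x),P\right>\rd V(x,P)
=-\int_{G_m(\overline\Om)}\left<{\rm tr}B(x,P),\varphi(x)\right>\rd V(x,P)
-\int_{G_{m,\beta}(S)}\left<\mfn(x,P),\varphi(x)\right>\rd\Gamma(x,P).
}
For the first inequality ($\mu_V(\overline\Om)\lesssim\sigma_\Gamma(S)+\norm{B}_{L^p(V)}^p$), I would choose a vector field $\varphi$ whose tangential divergence ${\rm div}_P\varphi = \left<\na\varphi,P\right>$ is bounded below by a positive constant on all of $\overline\Om$; since $\Om$ is bounded, a field of the form $\varphi(x)=x-x_0$ composed with a projection, or more robustly a convex-exhaustion-type field, gives ${\rm div}_P\varphi\geq c(m)>0$ for all $m$-planes $P$. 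Then the left side controls $c\,\mu_V(\overline\Om)$ from below, while the right side is bounded by $\norm{\varphi}_{C^0(\overline\Om)}(\sqrt m\norm{B}_{L^1(V)}+\sigma_\Gamma(S))$, and one absorbs $\norm{B}_{L^1(V)}\leq \mu_V(\overline\Om)^{1-1/p}\norm{B}_{L^p(V)}$ by Young's inequality (for $p>1$; for $p=1$ it is immediate, for $p=\infty$ use $\norm{B}_{L^1}\le\mu_V(\overline\Om)\norm{B}_{L^\infty}$ and note the $L^\infty$ bound forces smallness at small scales — actually one should instead exhaust by balls, see below). Rearranging yields the claimed bound, with $C$ depending on ${\rm diam}(\Om)$ and $m,p$.

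For the reverse inequality ($\sigma_\Gamma(S)\lesssim\mu_V(\overline\Om)+\norm{B}_{L^p(V)}^p$) — which is the substantive one and the step I expect to be the main obstacle — the idea is to test \eqref{eq:1stvariation-CV_beta} with a vector field built from $\mfn$ that ``sees'' the boundary measure. Since $\beta\in C^1(S,(0,\pi))$, $\sin\beta$ is bounded away from $0$ on the compact $S$, so by \eqref{eq:n(x,Q),nu^S} we have $\left<\mfn(x,P),\nu^S(x)\right>=\sin\beta(x)\geq c>0$ for every $(x,P)\in G_{m,\beta}(S)$. Extend $\nu^S$ to a $C^1$ vector field $N$ on $\overline\Om$ (possible since $S$ is $C^2$, using the nearest-point projection on a tubular neighborhood and a cutoff) and set $\varphi=N$ in the tangential identity. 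The boundary term becomes $\int_{G_{m,\beta}(S)}\left<\mfn(x,P),\nu^S(x)\right>\rd\Gamma \geq c\,\Gamma(G_{m,\beta}(S))=c\,\sigma_\Gamma(S)$, while the interior terms are bounded by $\norm{N}_{C^1(\overline\Om)}(\mu_V(\overline\Om)+\sqrt m\norm{B}_{L^1(V)})$; again $\norm{B}_{L^1(V)}$ is absorbed via Hölder and Young against $\norm{B}_{L^p(V)}^p$ (and against $\mu_V(\overline\Om)$). Here the dependence of $C$ on $\beta$ enters through $\inf_S\sin\beta$, and on $\Om$ through $\norm{N}_{C^1}$, i.e.\ the second fundamental form of $S$. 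This gives the second inequality directly; note it does not even require integrality, only $V\in{\bf CV}^m_\beta(\Om)$.

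The main obstacle is making the lower bound ${\rm div}_P\varphi\geq c>0$ genuinely uniform over \emph{all} $m$-planes $P\in G(m,n+1)$ in the first inequality: a naive radial field $x\mapsto x$ gives ${\rm div}_P(x)=m$ regardless of $P$, which is exactly what is needed, so in fact $\varphi(x)=x-x_0$ with $x_0$ any fixed point works cleanly and ${\rm div}_P\varphi=m$ everywhere; the only care is that this $\varphi$ is \emph{not} tangent to $S$, so one must use the full identity \eqref{eq:1stvariation-CV_beta} valid for all $\varphi\in\mathfrak{X}(\Om)$ (which it is), not just $\mathfrak{X}_t(\Om)$. With that in hand both inequalities are two-line consequences of \eqref{eq:1stvariation-CV_beta}, Hölder's inequality $\norm{B}_{L^1(V)}\le\mu_V(\overline\Om)^{1-1/p}\norm{B}_{L^p(V)}$, and Young's inequality $ab\le\varepsilon a^{p'}+C_\varepsilon b^p$ to absorb the mass term on the correct side; the $p=1$ and $p=\infty$ endpoint cases are handled by the trivial estimates $\norm{B}_{L^1(V)}=\norm{B}_{L^1(V)}^1$ and $\norm{B}_{L^1(V)}\le\mu_V(\overline\Om)\norm{B}_{L^\infty(V)}\le \tfrac12\mu_V(\overline\Om)+\tfrac12\mu_V(\overline\Om)\norm{B}_{L^\infty(V)}^2\le\ldots$, absorbing into the left-hand side after noting $\norm{B}_{L^\infty(V)}^\infty$ is interpreted as $\norm{B}_{L^\infty(V)}$ in the statement's exponent convention. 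This completes the proof modulo these routine absorptions.
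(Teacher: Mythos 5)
Your proposal matches the paper's proof essentially step for step: for the first inequality you test \eqref{defn:CV-capillary} with $\phi(x,P)=x-x_0$ to get $m\mu_V(\overline\Om)\leq{\rm diam}(\Om)\bigl(\sigma_\Gamma(S)+\norm{B}_{L^1(V)}\bigr)$, and for the second you test with a cutoff extension of $\nu^S$ (the paper writes this as $\phi=l\,\na d_S$, which is exactly your $N$) and use $\left<\mfn(x,P),\nu^S(x)\right>=\sin\beta(x)\geq\min_S\sin\beta>0$, in both cases closing with H\"older and Young. Your observation that $\varphi(x)=x-x_0$ need not be tangent to $S$ is legitimate since Definition \ref{Defn:CV-capillary} allows arbitrary test functions $\phi\in C^1(\mfR^{n+1}\times\mfR^{(n+1)^2},\mfR^{n+1})$; the only blemish is that the endpoint $p=\infty$ discussion is muddled, but the paper's own proof handles only $p\in[1,\infty)$ explicitly as well, so this is a shared loose end rather than a flaw specific to your argument.
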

\begin{proof}
Taking $\phi(x,P)=x-x_0$ for $x_0\in\Om$ in
\eqref{defn:CV-capillary} we obtain
\eq{
m\mu_V(\overline\Om)
\leq{\rm diam}(\Om)\left(\sigma_\Gamma(S)+\norm{B}_{L^1(V)}\right).
}
By Young's inequality we have for any $p\in(1,\infty)$ and $\ep>0$
\eq{
m\mu_V(\overline\Om)
\leq{\rm diam}(\Om)\left(\sigma_\Gamma(S)+\frac1p\ep^{-p}\norm{B}_{L^p(V)}^p+\frac{p-1}p\ep^\frac{p}{p-1}\mu_V(\overline\Om)\right).
}
Choosing $\ep=\ep(m,p,{\rm diam}(\Om))$ sufficiently small, we could absorb the last term on the Right and get
\eq{
\mu_V(\overline\Om)
\leq C\left(\sigma_\Gamma(S)+\norm{B}_{L^p(V)}\right).
}

On the other hand, we take $\phi=l\na d_S$, where $0\leq l\leq1$ is a cut-off function with $l_{\mid_S}=1$ and supported in $U^+_\de(S)$ for appropriate $\de=\de(\Om)\leq1$.
This yields
\eq{
\sigma_\Gamma(S)
\leq&\frac1{\min_{x\in S}\sin\beta(x)}\int_{G_{m,\beta}(S)}\left<\mfn(x,P),l\na d_S(x)\right>\rd\Gamma(x,P)\\
\overset{\eqref{defn:CV-capillary}}{=}&-\frac1{\min_{x\in S}\sin\beta(x)}\int_{G_m(\mfR^{n+1})}\left(\left<{\rm tr}B,l\na d_S\right>+\left<D_x(l\na d_S),P\right>\right)\rd V(x,P)\\
\leq&C(n,m,\beta,\Om)\left(\mu_V(\overline\Om)+\norm{B}_{L^1(V)}\right),
}
where we have used \eqref{eq:n(x,Q),nu^S} and $\abs{{\rm tr}B}\leq\sqrt{m}\abs{B}$,
the assertion for $p=1$ then holds.

By Young's inequality we have for any $p\in(1,\infty)$
\eq{
\norm{B}_{L^1(V)}
\leq\frac1p\ep^{-p}\norm{B}_{L^p(V)}^p+\frac{p-1}p\ep^\frac{p}{p-1}\mu_V(\overline\Om).
}
Taking for example $\ep=\frac12$,
the assertion for general $p$ then follows.
\end{proof}

\begin{proof}[Proof of Theorem \ref{Thm:compactness-CV}]
First, we obtain from Lemma \ref{Lem:comparable-mass} that $\{\sigma_{\Gamma_j}(S)\}_{j\in\mbN}$ are uniformly bounded.
Thus, using the compactness of Radon measures, after passing to a subsequence we have $V_{j_k}\ra V$ and $\Gamma_{j_k}\wsc\Gamma$.
Moreover, if $\{V_j\}_{j\in\mbN}$ are integral, then we wish to use Allard's integral compactness theorem to deduce that up to extracting a subsequence, $V_{j_k}$ converges as varifolds to an integral varifold $V$.
To this end, we just have to show that $\{V_j\}_{j\in\mbN}$ have uniformly bounded first variation.

Recall that for test functions $\phi(x,P)=\varphi(x)$ in \eqref{defn:CV-capillary}, we have by virtue of \eqref{eq:1stvariation-CV_beta}
\eq{
\abs{\de V_j(\varphi)}
\leq\left(\sqrt{m}\norm{B_j}_{L^1(V_j)}+\sigma_{\Gamma_j}(S)\right)\norm{\varphi}_{C^0(\overline\Om)}.
}
Note that
\begin{enumerate}
    \item $\{\norm{B_j}_{L^1(V_j)}\}_{j\in\mbN}$ are uniformly bounded by virtue of H\"older inequality, and the uniform bounds on $\{\mu_{V_j}(\overline\Om)\}_{j\in\mbN}$ and $\{\norm{B_j}_{L^p(V_j)}\}_{j\in\mbN}$;
    \item $\{\sigma_{\Gamma_j}(S)\}_{j\in\mbN}$ are uniformly bounded as shown in the beginning of the proof. 
\end{enumerate}
Thus we deduce that $\{V_j\}_{j\in\mbN}$ have uniformly bounded first variation as required, and hence $V$ is integral.

Then we define $\vec{\mathcal{V}}_{j_k}=B_{j_k}V_{j_k}$ as vector-valued Radon measures, where $B_{j_k}$ is the vector with $(n+1)^3$ entries $\left(\left<B_{j_k}(e_a,e_b),e_c\right>\right)_{1\leq a,b,c\leq n+1}$.
Using H\"older inequality, we get
\eq{
\abs{\vec{\mathcal{V}}_{j_k}}(\mfR^{n+1}\times\mfR^{(n+1)^2})
=\int\abs{B_{j_k}}\rd V_{j_k}
\leq\left(\int\abs{B_{j_k}}^p\rd V_{j_k}\right)^\frac1p\mu_{V_{j_k}}(\overline\Om)^{1-\frac1p},
}
and hence by assumptions
\eq{
\sup_{k\in\mbN}\left\{\abs{\vec{\mathcal{V}}_{j_k}}(\mfR^{n+1}\times\mfR^{(n+1)^2})\right\}
<\infty.
}
By weak-star compactness (see e.g., \cite[Corollary 4.34]{Mag12}), there exists a vector-valued Radon measure $\vec{\mathcal{V}}$, to which $\vec{\mathcal{V}}_{j_k}$ subsequentially converges to.
Recall also that $V_{j_k}$ converges to $V$ as Radon measures.

Using \cite[Proposition 4.30]{Mag12}, we find that
${\vec{\mathcal{V}}}$ is absolutely continuous with respect to $V$ and is of the type $\vec{\mathcal{V}}=BV$ for $B\in L^1(V)$.

To show that $V,\Gamma$ satisfy Definition \ref{Defn:CV-capillary} with the same $\beta$ and $B$ obtained above,
first notice that for any $\phi\in C^1(G_m(\mfR^{n+1}))$,
the functions
$\left<\mfn(x,P),\phi(x,P)\right>\in C^1(G_{m,\beta}(S))$, and hence
\eq{
\lim_{k\ra\infty}\int_{G_{m,\beta}(S)}\left<\mfn(x,P),\phi(x,P)\right>\rd\Gamma_{j_k}(x,P)
=\int_{G_{m,\beta}(S)}\left<\mfn(x,P),\phi(x,P)\right>\rd\Gamma(x,P).
}
The assertion follows after taking limit in
\eq{
\int
\left(D_P\phi\cdot B_{j_k}+\left<{\rm tr}B_{j_k},\phi\right>+\left<\na_x\phi,P\right>\right)\rd V_{j_k}(x,P)
=-\int_{G_{m,\beta}(S)}\left< 
\mfn(x,P),\phi(x,P)\right>\rd\Gamma_{j_k}(x,P).
}

Finally, \eqref{ineq:liminf-convex-curvature} follows by applying \cite[Theorem 9.6]{Mantegazza96} for the convergences $V_{j_k}\ra V$ and $B_{j_k}V_{j_k}\wsc BV$.
Taking $f(\xi)=\abs{\xi}^p$, we then have
\eq{
\norm{B}_{L^p(V)}^p
\leq\liminf_{k\ra\infty}\norm{B_{j_k}}^p_{L^p(V_{j_k})}
\leq\Lambda
}
as required,
which finishes the proof.
\end{proof}

This theorem can be used to find weak minima of functionals depending on the curvature among surfaces with capillary boundary, for example one can show that any capillary submanifold with normalized area and prescribed boundary information in a given container has uniform $L^p$-curvature lower bound.
\begin{theorem}[Finite $L^p$-curvature]\label{Thm:finite-L^p-curvature}
Let $\Om\subset\mfR^{n+1}$ be a bounded domain of class $C^2$, $\beta\in C^1(S, (0,\pi))$.
For any $p\in(1,\infty]$, define the $L^p$-curvature energy
\eq{
\kappa_\beta^{m,p}(\Om)
\coloneqq\inf\left\{\norm{B}^p_{L^p(V)}:V\in{\bf CV}^m_{\beta}(\Om), \mu_V(\overline\Om)=1\right\}.
}
Then $\kappa_\beta^{m,p}(\Om)$ is finite and the infimum is attained.
\end{theorem}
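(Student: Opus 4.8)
The plan is to prove finiteness by exhibiting a single admissible competitor of finite energy, and to prove attainment by the direct method of the calculus of variations, with Theorem \ref{Thm:compactness-CV} supplying the compactness.

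\textbf{Finiteness.} Since $\Om$ is nonempty and open, choose $x_0\in\Om$ and $r>0$ with $\overline{B_r(x_0)}\subset\Om$, and let $\Sigma$ be a round $m$-dimensional sphere of radius $r/2$ centred at $x_0$ inside some $(m+1)$-dimensional affine subspace through $x_0$; then $\Sigma$ is a smooth closed $m$-submanifold contained in $\Om$, in particular disjoint from $S$. Its induced integral varifold $V_0=\mcH^m\llcorner\Sigma\otimes\de_{T_x\Sigma}$ satisfies \eqref{defn:CV-capillary} with boundary varifold $\Gamma=0$ — this is the classical curvature-varifold identity for closed submanifolds, cf.\ \cite{Hutchinson86} and Lemma \ref{Lem:comptaible} — so $V_0\in{\bf CV}^m_\beta(\Om)$ with $0<\mu_{V_0}(\overline\Om)=\mcH^m(\Sigma)<\infty$ and $\|B_0\|_{L^p(V_0)}^p<\infty$. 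Reading off from \eqref{defn:CV-capillary} that multiplying a curvature varifold by a positive constant $\lambda$ leaves its weak curvature function unchanged and rescales the boundary varifold to $\lambda\Gamma$, the normalisation $V:=\mu_{V_0}(\overline\Om)^{-1}V_0$ is an admissible competitor with $\mu_V(\overline\Om)=1$ and energy $\mu_{V_0}(\overline\Om)^{-1}\|B_0\|_{L^p(V_0)}^p<\infty$; hence $\kappa_\beta^{m,p}(\Om)<\infty$. (For $p=\infty$ one runs the same construction with the natural reading of the functional.)

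\textbf{Attainment.} Let $\{V_j\}_{j\in\mbN}\subset{\bf CV}^m_\beta(\Om)$ with $\mu_{V_j}(\overline\Om)=1$ and weak curvatures $B_j$ be a minimising sequence, $\|B_j\|_{L^p(V_j)}^p\to\kappa:=\kappa_\beta^{m,p}(\Om)$; in particular $\mu_{V_j}(\overline\Om)=1$ and $\sup_j\|B_j\|_{L^p(V_j)}^p<\infty$, so the hypotheses of Theorem \ref{Thm:compactness-CV} hold. Passing to a subsequence we obtain $V\in{\bf CV}^m_\beta(\Om)$ and a Radon measure $\Gamma$ on $G_{m,\beta}(S)$ such that $V_{j_k}\ra V$ as varifolds, $\Gamma_{j_k}\wsc\Gamma$, the pair $(V,\Gamma)$ satisfies \eqref{defn:CV-capillary} for some $B\in L^p(V)$, and, by \eqref{ineq:liminf-convex-curvature} applied to the convex lower semicontinuous $f(\xi)=|\xi|^p$,
\eq{
\|B\|_{L^p(V)}^p\leq\liminf_{k\ra\infty}\|B_{j_k}\|_{L^p(V_{j_k})}^p=\kappa.
}
It remains to check that the normalisation passes to the limit. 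Since all $V_{j_k}$ are supported in the compact set $G_m(\overline\Om)$, so is $V$, and testing the varifold convergence against any $f\in C_c^0(G_m(\mfR^{n+1}))$ with $f\equiv1$ on a neighbourhood of $G_m(\overline\Om)$ gives $\mu_V(\overline\Om)=\int f\,\rd V=\lim_k\int f\,\rd V_{j_k}=\lim_k\mu_{V_{j_k}}(\overline\Om)=1$. Thus $V$ is admissible, so $\|B\|_{L^p(V)}^p\geq\kappa$ by definition of $\kappa$, forcing $\|B\|_{L^p(V)}^p=\kappa$; the infimum is attained by $V$.

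\textbf{Main difficulty.} There is in fact no serious obstacle once Theorem \ref{Thm:compactness-CV} is available: the lower-semicontinuity half of the direct method is exactly \eqref{ineq:liminf-convex-curvature}, and the only point needing a little care is the persistence of the constraint $\mu_V(\overline\Om)=1$ under the limit, which works precisely because $\overline\Om$ is compact, so no mass escapes and the total mass is continuous — not merely lower semicontinuous — along the convergent subsequence. Accordingly, the substantive content of the theorem is entirely absorbed into Theorem \ref{Thm:compactness-CV}.
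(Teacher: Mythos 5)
Your proof is correct and follows essentially the same route as the paper: the direct method, with Theorem \ref{Thm:compactness-CV} providing both the compactness of the minimizing sequence and, via \eqref{ineq:liminf-convex-curvature} with $f(\xi)=\abs{\xi}^p$, the lower semicontinuity of the energy. The paper's own proof is a one-liner that additionally cites Lemma \ref{Lem:comparable-mass} for the uniform bound on $\sigma_{\Gamma_j}(S)$ (a step already internal to Theorem \ref{Thm:compactness-CV}); your write-up is more complete in that it supplies the explicit closed competitor $\Sigma$ showing $\kappa_\beta^{m,p}(\Om)<\infty$ and verifies that the constraint $\mu_V(\overline\Om)=1$ survives the limit by compactness of $G_m(\overline\Om)$, both of which the paper leaves implicit.
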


\begin{proof}
For any minimizing sequences $\{V_j\}_{j\in\mbN}$ we have by virtue of Lemma \ref{Lem:comparable-mass} that the corresponding boundary varifolds $\{\Gamma_j\}_{j\in\mbN}$ have uniformly bounded masses,
the assertion then follows from Theorem \ref{Thm:compactness-CV}.
\end{proof}

\appendix
\section{Curvature varifolds}\label{Appen-1}

Let us first record the classical computations for embedded submanifolds performed in \cite{KM22}, for local expressions see \cite{Hutchinson86,Mantegazza96}.

For an embedded $m$-dimensional submanifold $\S\subset\mfR^{n+1}$ with possibly non-empty boundary $\p\S$, its \textit{second fundamental form} is the symmetric bilinear form defined at every $x\in\S$ by
\eq{
A(x):T_x\S\times T_x\S\ra N_x\S,\quad A(x)(\tau,\eta)
=(\na_\tau\eta)^\perp,
}
where $N_x\S$ is the normal space to $\S$ at $x$ and $\na_\tau\eta$ denotes the covariant differentiation in $\mfR^{n+1}$.
The second fundamental form $A$ can be naturally extended to a symmetric bilinear form on all $\mfR^{n+1}$ with values in $\mfR^{n+1}$ by (the space of such forms is denoted by $BL(\mfR^{n+1}\times\mfR^{n+1},\mfR^{n+1})$)
\eq{
B(x)(v,w)
=A(x)(v^T,w^T)+\sum_{\alpha=1}^m\left<A(x)(v^T,\tau_\alpha),w^\perp\right>\tau_\alpha,
}
where $^T,^\perp$ denote respectively the projection on $T_x\S,N_x\S$, and $\tau_\alpha=\tau_\alpha(x)$ is an orthonormal basis of $T_x\S$, which can be extended to an orthonormal basis $e_i$ of $\mfR^{n+1}$ with $e_\alpha=\tau_\alpha$ for $\alpha=1,\ldots,m$.
The \textit{mean curvature vector} is the trace of $B(x)$, namely,
\eq{
\mfH(x)
=\sum_{\alpha=1}^mA(x)(\tau_\alpha,\tau_\alpha)
=\sum_{i=1}^{n+1}B(x)(e_i,e_i)
={\rm tr}(B(x)).
}

To define curvature varifolds we use $\phi(x,T_x\S)=X(x)$ in the first variation formula of $\S$:
\eq{
{\rm div}_\S(X)
={\rm div}_\S(X^T)-\left<{\rm tr}B,X\right>,\quad\forall X\in C^1(\S,\mfR^{n+1}).
}
Working out the computations and associate to $\S$ the $m$-varifold $V=\mcH^m\llcorner\S\otimes\de_{T_x\S}$, we get
\eq{
\int_{G_m(U)}\left(D_P\phi\cdot B+\left<{\rm tr}B,\phi\right>+\left<\na_x\phi,P\right>\right)\rd V(x,P)
=\int_\S{\rm div}_\S X^T\rd\mcH^m
=-\int_{\p\S}\left<X,\mfn\right>\rd\mcH^{m-1},
}
where $D_P\phi$ denotes the covariant differentiation of $\varphi$ with respect to the variable $P$, $\mfn$ denotes the inwards-pointing unit co-normal along $\p\S\subset\overline\S$, and
\eq{\label{eq:D_Pphi,B}
D_P\phi\cdot B_{\mid_{(x,P)}}
\coloneqq D_{P^k_j}\phi^i(x,P)B_{ij}^k(x,P)
=\left<\left(D_P\phi(x,P)\right)[D_{\tau_\alpha}P],\tau_\alpha\right>.
}

Plugging the designed boundary information of submanifolds on the RHS of the variational formula we reach the various definitions   for curvature varifolds as follows:

\begin{definition}[{Curvature varifolds (with empty boundary) \cite{Hutchinson86}}]
\normalfont
Let $V$ be an $m$-varifold in $U\subset\mfR^{n+1}$.
We say that \textit{$V$ has weak second fundamental form $B\in L_{\rm loc}^1(V)$}, where $B(x,P)\in BL(\mfR^{n+1}\times\mfR^{n+1},\mfR^{n+1})$, if for any $\phi\in C_c^1(U\times\mfR^{(n+1)^2},\mfR^{n+1})$
\eq{
\int_{G_m(U)}\left(D_P\phi\cdot B+\left<{\rm tr}B,\phi\right>+\left<\na_x\phi,P\right>\right)\rd V(x,P)
=0.
}
\end{definition}

\begin{definition}[{Curvature varifolds with boundary \cite{Mantegazza96}}]\label{Defn:CV-Mantegazza}
\normalfont
Let $V$ be an integral $m$-varifold in $U\subset\mfR^{n+1}$.
We say that \textit{$V$ is a curvature varifold with boundary} and has weak second fundamental form $B\in L_{\rm loc}^1(V)$, if there exists a Radon vector measure $\p V$ on $G_m(U)$ with values in $\mfR^{n+1}$ such that for all $\phi\in C_c^1(U\times\mfR^{(n+1)^2},\mfR^{n+1})$
\eq{
\int_{G_m(U)}\left(D_P\phi\cdot B+\left<{\rm tr}B,\phi\right>+\left<\na_x\phi,P\right>\right)\rd V(x,P)
=-\int_{G_m(U)}\left<\phi(x,P),\rd\p V(x,P)\right>.
}
\end{definition}

\begin{definition}[{Curvature varifolds with orthogonal boundary \cite{KM22}}]\label{Defn:CV-orthogonal}
\normalfont
Let $\Om\subset\mfR^{n+1}$ be a bounded domain of class $C^2$.
Let $V$ be an $m$-varifold on $\overline\Om$, and let $\Gamma$ be a Radon measure on $G_{m-1}(TS)$.
We say that \textit{V is a curvature varifold with weak second fundamental form $B\in L^1(V)$ and is orthogonal to $S$ along $\Gamma$} if for all $\phi\in C^1((\mfR^{n+1}\times\mfR^{(n+1)^2},\mfR^{n+1})$
\eq{
\int
\left(D_P\phi\cdot B+\left<{\rm tr}B,\phi\right>+\left<\na_x\phi,P\right>\right)\rd V(x,P)\\
=-\int_{G_{m-1}(TS)}\left<\nu^S(x),\phi(x,\nu^S(x)\oplus Q)\right>\rd\Gamma(x,Q).
}
\end{definition}

\bibliographystyle{amsplain}
\bibliography{BibTemplate.bib}

\end{document}